\def\elystyle{0}
\renewcommand{\phi}{\varphi}
 \newcommand{\Z}{\mathbb{Z}}
 \newcommand{\R}{\mathbb{R}}
 \newcommand{\F}{\mathbb{F}}
  \newcommand{\E}{\mathbb{E}}
  \newcommand{\B}{\mathbb{B}}
  \newcommand{\calF}{\mathcal{F}}
  \newcommand{\calG}{\mathcal{G}}
  \newcommand{\ul}{\underline}
\newcommand{\Lap}{\Delta}
  \newcommand{\p}{\partial}
\DeclareMathOperator{\id}{id}
\renewcommand{\div}{\operatorname{div}}
\newcommand{\grad}{\operatorname{grad}}
  \newtheorem{thm}{Theorem}[section]
  \newtheorem{lem}[thm]{Lemma}
    \newtheorem{defin}[thm]{Definition}
\newcommand{\g}{\gamma}
\newcommand{\omg}{\omega}
\renewcommand{\o}{\omega}
\renewcommand{\O}{\Omega}
\newcommand{\al}{\alpha}
\newcommand{\be}{\beta}
\newcommand{\ep}{\epsilon}
\newcommand{\de}{\delta}
\newcommand{\si}{\sigma}
\newcommand{\s}{\sigma}
\newcommand{\G}{\Gamma}
\newcommand{\la}{\lambda}
\renewcommand{\th}{\theta}
\newcommand{\rt}{\tilde{\rho}}
\newcommand{\ut}{\tilde{u}}
\newcommand{\ot}{\tilde{\omega}}
\newcommand{\wt}{\tilde{w}}
\newcommand{\vt}{\tilde{v}}
\newcommand{\pt}{\tilde{p}}
\newcommand{\ct}{\tilde{c}}
\newcommand{\Ft}{\tilde{\F}}
\newcommand{\Et}{\tilde{\E}}
\newcommand{\Ht}{\tilde{H}}
\newcommand{\Ct}{\tilde{C}}
\newcommand{\St}{\tilde{S}}
\newcommand{\psit}{\tilde{\psi}}
\newcommand{\chit}{\tilde{\chi}}
\newcommand{\At}{\tilde{A}}
\newcommand{\Bt}{\tilde{B}}
\newcommand{\zt}{\tilde{z}}
\newcommand{\Qt}{\tilde{Q}}
\newcommand{\wb}{\bar{w}}
\newcommand{\ub}{\bar{u}}
\newcommand{\chib}{\bar{\chi}}
\newcommand{\Hb}{\bar{H}}
\newcommand{\Sb}{\bar{S}}
\newcommand{\Pb}{\bar{P}}
\renewcommand{\E}{E}
\renewcommand{\F}{F}
\renewcommand{\a}{a}
\renewcommand{\b}{b}
\newcommand{\pr}{\mathring{p}}
\newcommand{\fr}{\mathring{f}}
\newcommand{\omgr}{\mathring{\o}}
\newcommand{\yr}{\mathring{y}}
\newcommand{\sr}{\mathring{\si}}
\newcommand{\yf}{y_f}
\newcommand{\Wr}{\mathring{W}}
\newcommand{\Qr}{\mathring{Q}}
\newcommand{\Vr}{\mathring{V}}
\newcommand{\Ar}{\mathring{A}}
\newcommand{\rhor}{\mathring{\rho}}
\newcommand{\pul}{\underline{p}}
\newcommand{\oul}{\underline{\o}}
\newcommand{\Eul}{\underline{\E}}
\newcommand{\Ful}{\underline{\F}}
\newcommand{\Sul}{\underline{S}}
\newcommand{\ph}{\hat{p}}
\newcommand{\oh}{\hat{\o}}
\newcommand{\Eh}{\hat{\E}}
\newcommand{\Fh}{\hat{\F}}
\newcommand{\Sh}{\hat{S}}
\newcommand{\Nh}{\hat{N}}
\newcommand{\rs}{\slashed{\rho}}
\newcommand{\us}{\slashed{u}}
\newcommand{\As}{\slashed{A}}
\newcommand{\Bs}{\slashed{B}}
\newcommand{\rc}{\check{\rho}}
\newcommand{\uc}{\check{u}}
\newcommand{\Ac}{\check{A}}
\newcommand{\Bc}{\check{B}}
\newcommand{\mqty}[1]{\begin{pmatrix}#1\end{pmatrix}}
\newcommand{\abs}[1]{|#1|}
\newcommand{\ev}[1]{\langle#1\rangle}
\newcommand{\norm}[1]{||#1||}
\renewcommand{\d}{\mathrm{d}}
\newcommand{\dv}[2][]{\frac{\d #1}{\d #2}}
\newcommand{\pdv}[2][]{\frac{\partial #1}{\partial #2}}
\newcommand{\dvn}[3][]{\frac{\d^{#2} #1}{\d #3^{#2}}}
\begin{document}
\title[Polytropic Hunter-type Solutions]{Hunter-type Implosion Profiles for Energy-supercritical polytropic equations of state}
\author{Ely Sandine}
\thanks{This material is based upon work supported by the National Science Foundation Graduate Research Fellowship Program under Grant No. DGE 2146752. Any opinions, findings, and conclusions or recommendations expressed in this material are those of the author(s) and do not necessarily reflect the views of the National Science Foundation.}
\address{Ely Sandine, Department of Mathematics, University of California, Berkeley, 94720.}
\email{ely\_sandine@berkeley.edu}
\date{8/7/2025}

\maketitle
\begin{abstract}
  We rigorously construct a family of smooth self-similar solutions to the isentropic gravitational Euler-Poisson system with a polytropic equation of state for polytropic indices lying in the full energy-supercritical range, $1<\g<\frac{6}{5}$. The result is an extension of the author's previous construction of Hunter solutions in the isothermal case, $\g=1$, and complements a construction of Larson-Penston-type solutions by Guo-Had\v zi\'c-Jang-Schrecker for the same system in the full mass-supercritical range, $1<\g<\frac{4}{3}$. As an ingredient in the proof, a general framework is introduced for proving local analyticity of solutions to this system in the vicinity of singular points. This framework could be used for other quasilinear self-similar blow-up constructions.
\end{abstract}
\tableofcontents
\section{Introduction}
\label{sec:intro}
The (gravitational) Euler-Poisson system of partial differential equations models the evolution of a compressible gas. We will work under the assumptions that the gas is supported on the Euclidean space $\R_x^3$ and that there is no vacuum. We will let $I_t$ denote a time interval. We shall also assume that the gas is isentropic, and described completely by its density, $\rho\colon I_t\times \R_x^3\to \R_{>0}$, and velocity vector field, $u\colon I_t\times \R_x^3\to \R^3$. It is also assumed that the gas self-gravitates according to Newtonian gravity, and we shall choose units so that the gravitational constant is $1$. We shall also assume that equation of state is polytropic with polytropic index $\g\geq 1$. We shall choose units so that the pressure is given by $\rho^{\g}$. The laws of local conservation of mass and momentum are equivalent (for smooth solutions) to the following closed equations of motion
\begin{equation}
  \label{eq:epsystem}
  \begin{cases}    
    \p_t \rho+\div(\rho u)=0\\
    \p_t u+u\cdot \grad u+\frac{\grad (\rho^\g)}{\rho}+4\pi \grad(\Lap^{-1}(\rho))=0
  \end{cases}.
\end{equation}
From now on we shall refer to this system as the Euler-Poisson system.

We now motivate the critical polytropic indices. From the first equation it is easy to see that the total mass
\begin{equation*}
  M(\rho,u):=\int_{\R_x^3}\rho dx
\end{equation*}
is conserved. One can also check that the physical energy of this system, given by
\begin{equation*}
  E(\rho,u):=
  \begin{cases}
    \int_{\R_x^3} \frac{1}{2}\rho \abs{u}^2+\frac{1}{\g-1}\rho^{\g}-2\pi\abs{\grad \Delta^{-1}\rho}^2 dx & \g>1\\
    \int_{\R_x^3} \frac{1}{2}\rho \abs{u}^2+\rho \log(\rho)-2\pi\abs{\grad \Delta^{-1}\rho}^2 dx & \g=1
  \end{cases},
\end{equation*}
is also conserved.

In addition, the set of solutions to the equation is invariant under the scaling symmetry ($\la>0$)
\begin{equation}
  \label{eq:scaling_symmetry}
  (\rho(t,x),u(t,x))\mapsto (\frac{1}{\la^2}\rho(\frac{t}{\la},\frac{x}{\la^{2-\g}}),\la^{1-\g}u(\frac{t}{\la},\frac{x}{\la^{2-\g}})).
\end{equation}
We note that under this scaling
\begin{align*}
  M(\frac{1}{\la^2}\rho(\frac{t}{\la},\frac{x}{\la^{2-\g}}),\la^{1-\la}u(\frac{t}{\la},\frac{x}{\la^{2-\g}}))&=\la^{4-3\g}M(\rho(t,x),u(t,x)),\\
  E(\frac{1}{\la^2}\rho(\frac{t}{\la},\frac{x}{\la^{2-\g}}),\la^{1-\la}u(\frac{t}{\la},\frac{x}{\la^{2-\g}}))&=\la^{6-5\g}E(\rho(t,x),u(t,x)).
\end{align*}
In particular, we see that if $\g=\frac{4}{3}$, then the mass is invariant under this scaling. If $1\leq \g<\frac{4}{3}$, then the equation is mass-supercritical, in the sense that we do not expect a norm at the same scaling as mass to control small-scale behavior. Similarly, $\g=\frac{6}{5}$ is the energy-critical exponent, and $1\leq \g<\frac{6}{5}$ is the energy-supercritical regime. 

We also shall assume that the motion is radially symmetric and let $r=\abs{x}$. We will use the notations
\begin{equation*}
  \div_r:=\p_r+\frac{2}{r},\qquad \div_r^{-1}f(r):=\frac{1}{r^2}\int_0^r (r')^2 f(r')dr'.
\end{equation*}
Thus, in radial symmetry, we obtain the following system of $2$ partial differential equations in $2$ unknowns:
\begin{equation}
  \label{eq:ep_radial}
  \begin{cases}
    \p_t \rho+\div_r(\rho u)=0\\
    \p_{t} u+u\p_r u+\g \rho^{\g-2}\p_r \rho+4\pi \div^{-1}(\rho)=0
  \end{cases}.
\end{equation}
We shall also restrict attention to self-similar evolutions, i.e. solutions to \eqref{eq:ep_radial} which are invariant under the symmetry \eqref{eq:scaling_symmetry}. This is done by considering the ansatz ($I_t=(-\infty,0)$)
\begin{equation*}
  \rho=\frac{1}{(-t)^2}\rt(\frac{r}{(-t)^{2-\g}}),\quad u=(-t)^{1-\g}\ut(\frac{r}{(-t)^{2-\g}}).
\end{equation*}
We introduce the self-similar coordinate
\begin{equation*}
  y=\frac{r}{(-t)^{2-\g}}.
\end{equation*}
With these conventions, \eqref{eq:ep_radial} takes the form
\begin{equation}
  \begin{cases}
    2\tilde{\rho}+(2-\gamma)y\p_y \tilde{\rho}+\div_y(\tilde{\rho}\tilde{u})=0\\
    (\gamma-1)\tilde{u}+(2-\gamma)y\p_y \tilde{u}+\tilde{u}\p_y \tilde{u}+\gamma \tilde{\rho}^{\gamma-2}\p_y \tilde{\rho}+4\pi \div_y^{-1}(\tilde{\rho})=0.
  \end{cases}
  \label{selfsimsys}
\end{equation}
Fortunately, for $\g\neq \frac{4}{3}$, this non-local system admits a reduction to a (local) system of ODEs. In particular, the first equation can be rewritten as
\[
  \tilde{\rho}=\frac{1}{4-3\gamma}\div_y\left(\tilde{\rho}(\tilde{u}+(2-\gamma)y)\right).
\]
Substituting this into the non-local term of the second equation yields the main system of ODEs we study
\begin{equation}
  \begin{pmatrix}\ut+(2-\gamma)y & \rt\\ \gamma \rt^{\gamma-2} & \ut+(2-\gamma) y\end{pmatrix}\begin{pmatrix}\rt'\\ \ut'\end{pmatrix}+\begin{pmatrix}\frac{2\rt (\ut+y)}{y} \\ (\gamma-1)\ut+\frac{4\pi}{4-3\gamma}\rt(\ut+(2-\gamma)y)\end{pmatrix}=0.
  \label{polytropicselfsimeq}
\end{equation}

As a comment, requiring smoothness at at the origin imposes the boundary conditions
\begin{equation}
  \label{eq:selfsim_boundary_conds}
  \rt'(0)=0,\qquad \ut(0)=0,\qquad \ut'(0)=-\frac{2}{3}.
\end{equation}

This system has two explicit solutions. We follow the conventions of \cite{ghj}, and use the name ``Friedman solution'' to refer to
\begin{equation*}
  (\rt_F,\ut_F)=(\frac{1}{6\pi},-\frac{2}{3}y).
\end{equation*}
Similarly, we use the label ``far-field solution'' to refer to
\begin{equation}
  \label{eq:farfield_def}
  (\tilde{\rho}_{f},\tilde{u}_f)=\left(k y^{-\frac{2}{2-\gamma}},0\right),
\end{equation}
where the constant $k$ is defined as
\begin{equation}
  \label{eq:kdef}
  k:=(\frac{\gamma(4-3\gamma)}{2\pi(2-\gamma)^2})^{\frac{1}{2-\gamma}}.
\end{equation}
Neither of these models implosion at a point. The Friedman solution models mass-density blowing up at all locations simultaneously. The mass-density of the far-field solution is singular at the origin and does not evolve. In this work we study smooth self-similar solutions to \eqref{polytropicselfsimeq} with the mass density radially decaying, in pursuit of the larger goal of gaining a more detailed understanding of the local structure of the type of singularities which would develop from regular and localized initial data.

\subsection{Review of literature  and statement of the theorem}
We now discuss the existing literature on non-explicit smooth solutions to \eqref{polytropicselfsimeq}. Some sources are omitted, and further discussions can be found, for instance, in \cite{ghj}, \cite{ghjs} and \cite{sandine}.

Historically, the isothermal case, $\g=1$, has receieved the most attention. In this case, the self-similar equations were first derived by Larson (\cite{larson}) and Penston (\cite{penston}) independently. Moreover, through the use of numerical integration they identified a smooth solution, known now as the Larson-Penston solution. Hunter (\cite{hunter}) numerically found an additional discrete family of solutions. Numerical mode stability analyses of the Larson-Penston and Hunter solutions were carried out in \cite{hanawanakayama} and \cite{maedaharada}. These numerics support the hypothesis that the Larson-Penston solution is a stable description of gravitational collapse and the $i$th Hunter solution ($i\in \Z_{\geq 1}$) is codimension $i$ stable. In particular, the first Hunter solution is believed to play a role in critical collapse theory (see, for instance, \cite{haradamaedasemelin} for a discussion).

The first rigorous mathematical result on the isothermal self-similar system is \cite{ghj}, in which the authors proved existence of a solution whose properties match the Larson-Penston solution. We comment that this result was preceeded by \cite{continuedcollapse}, a study of a dust-like, non-self-similar, regime of gravitational collapse for the Euler-Poisson system. In \cite{sandine}, for each $i$ \emph{sufficiently large} a solution whose properties matches the $i$th Hunter solution was constructed. We comment that constructing the $i$th Hunter solutions for small $i$ (in particular $i=1$) remains an open problem.

We now discuss the polytropic, mass-supercritical regime. The first numerical work in this area was by Yahil (\cite{yahil}) who numerically constructed analogs of the Larson-Penston solution for $\frac{6}{5}<\g<\frac{4}{3}$. This work was in part motivated by the observation that for the isothermal case, the Larson-Penston solution has infinite mass and energy, but for $\frac{6}{5}<\g<\frac{4}{3}$ the analog would have finite energy (but still infinite mass).

These Larson-Penston-type solutions studied by Yahil were constructed in the full mass-supercritical range ($1<\g<\frac{4}{3}$) in \cite{ghjs}. A difficulty in the construction of self-similar solutions for $\g>1$ is that the algebra governing the local behavior of the solution near sonic points complicates significantly. The authors of \cite{ghjs} made the novel observation that for $\g>1$ the algebraic Larson-Penston and Hunter sonic-point conditions merge into a single solution branch, termed the Larson-Penston-Hunter-type Taylor expansion. In this work, we use this insight to extend the result of \cite{sandine} to the energy-supercritical range $1<\g<\frac{6}{5}$. In particular, we establish the following theorem (where $\ev{y}=\sqrt{1+y^2}$ denotes the Japanese bracket).
\begin{thm}
  \label{thm:result}
  For some integer $N\gg 1$ there exists real analytic solutions $\{(\rt_i,\ut_i)\}_{i=N}^{\infty}$ to \eqref{selfsimsys} on $[0,\infty)$ subject to the boundary conditions \eqref{eq:selfsim_boundary_conds} and the requirement $\rt_i>0$. In addition, the graph of $\rt_i(y)$ interesects that of the far-field solution $\rt_f(y)$, \eqref{eq:farfield_def}, exactly $i+1$ times. For these solutions, the sonic point condition, $(\ut+(2-\g)y)^2-\g\rt^{\g-1}=0$, is satisfied at exactly one point $y=y_*\in (0,2y_f)$, where $y_f$ is the sonic point for the far-field solution, \eqref{eq:yf_def}. At $y_*$, the solution $\rt,\ut$ admits the Larson-Penston-Hunter-type Taylor expansion (as defined in \cite{ghjs}). Finally, these solutions satisfy the global pointwise bounds $\rt_i\lesssim_i \ev{y}^{-\frac{2}{2-\g}}, \ut_i\lesssim \ev{y}^{-\frac{\g-1}{2-\g}}$.
\end{thm}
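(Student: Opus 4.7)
The plan is to follow the sonic-point shooting strategy from the author's earlier isothermal construction \cite{sandine}, adapted to the polytropic range $1 < \gamma < \frac{6}{5}$ via the unified Larson-Penston-Hunter-type local expansion from \cite{ghjs}. Candidate solutions are parametrized by the sonic point location $y_* \in (y_f, 2y_f)$. The first technical input is that, for each such $y_*$, there exists a real-analytic germ of \eqref{polytropicselfsimeq} at $y_*$ whose Taylor coefficients follow the Larson-Penston-Hunter-type recursion of \cite{ghjs}. Because the coefficient matrix of \eqref{polytropicselfsimeq} is singular at $y_*$, the existence of an actual analytic solution, as opposed to just a formal power series, is delicate, and this is precisely what the paper's general local-analyticity framework is designed to handle, presumably via a Cauchy--Kovalevskaya-type majorant argument yielding a radius of convergence depending only mildly on $y_*$.

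Next, I would extend the local analytic solution in both directions. Outward on $[y_*, \infty)$, the plan is to construct an invariant region in the $(\rt, \ut)$-plane, defined by comparison inequalities with the far-field solution $(\rt_f, 0)$, in which the vector field of \eqref{polytropicselfsimeq} traps trajectories; standard phase-plane arguments then yield continuation to $y = +\infty$ together with the pointwise decay $\rt \lesssim \langle y \rangle^{-2/(2-\gamma)}$ and $\ut \lesssim \langle y \rangle^{-(\gamma-1)/(2-\gamma)}$. Inward on $(0, y_*]$, continue by standard ODE theory while the solution remains subsonic and positive, and rule out premature sonic recrossing or blowup of $\rt$ by constructing a suitable inward-invariant region valid for every $y_*$ sufficiently close to $y_f$. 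This produces, for each admissible $y_*$, a smooth solution on $(0, y_*]$ whose trace at $y = 0$ depends continuously on $y_*$.

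The decisive step is a shooting plus oscillation-counting argument. The smoothness conditions \eqref{eq:selfsim_boundary_conds} reduce to a one-dimensional equation in $y_*$, say the vanishing of a suitable mismatch functional of the inner trace. To produce infinitely many zeros and simultaneously control the number of crossings with $\rt_f$, I would carry out a WKB-type asymptotic analysis of the inward flow as $y_* \to y_f^+$. In this limit the inner system linearizes around the Friedman profile, and the number of oscillations of $\rt - \rt_f$ (and of the mismatch functional) tends to infinity. An intermediate value argument applied on each half-oscillation then yields a discrete sequence $\{y_*^{(i)}\}_{i \geq N} \to y_f^+$ at which \eqref{eq:selfsim_boundary_conds} holds exactly, with $N$ being the first index at which the WKB-type analysis becomes quantitatively valid. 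Careful bookkeeping of the oscillation count then gives the claimed $i+1$ intersections of $\rt_i$ with $\rt_f$.

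The hard part will be obtaining uniform control of the construction as $y_* \to y_f^+$: in this limit the sonic point approaches the far-field sonic locus, the Larson-Penston-Hunter Taylor expansion at $y_*$ degenerates, and the invariant regions threaten to collapse. Producing quantitative estimates on the radius of convergence, the trapping regions, and the phase/oscillation count that are robust through this degeneration is where the bulk of the work should lie, and is the reason one only recovers solutions for $i \geq N$ rather than for every $i$. A secondary obstacle, specific to the energy-supercritical range $1 < \gamma < \frac{6}{5}$, is that the energy-based arguments available to \cite{ghjs} in the mass-supercritical Larson-Penston case are not accessible here, so one must rely entirely on sharper phase-plane invariants adapted to the Hunter branch.
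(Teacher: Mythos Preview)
Your proposal has a genuine conceptual gap in the inward analysis. You assert that as $y_*\to y_f^+$ the inner system linearizes around the \emph{Friedman} profile $(\rt_F,\ut_F)=(\tfrac{1}{6\pi},-\tfrac{2}{3}y)$. This is not what happens: the Hunter-type solutions have central density $\rt(0)\to\infty$ along the sequence, so near the origin they are nowhere close to the constant-density Friedman solution. The correct inner approximation is a rescaled \emph{Lane--Emden} profile $Q_\la$ (with $\la\to 0$), and the oscillation that drives the counting comes from the asymptotic behavior of $Q$ as $y\to\infty$, governed by the complex Frobenius exponents $-\mu\pm i\nu$ of the operator $\Ht$ in \eqref{eq:Htdef}. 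This is exactly where the restriction $\gamma<\tfrac65$ enters (so that $\mu>0$ and $Q$ oscillates around $Q_f$); your Friedman-based picture gives no mechanism for infinitely many sign changes and misses the role of $\gamma<\tfrac65$ entirely.

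There is also a structural mismatch with the paper's argument. The paper does not shoot from the sonic point and propagate inward to the origin via invariant regions. Instead it runs a \emph{two-sided perturbative matching}: an exterior family on $[y_0,\infty)$ built as a small perturbation of the far-field solution (parametrized by $\ep$, which encodes the sonic-point displacement), and an interior family on $[0,y_0]$ built as a small perturbation of $(Q_\la,u_{*,\la})$ (parametrized by the scaling $\la$). Both are constructed by contraction mapping in weighted $C^1$ spaces, not by phase-plane trapping. Matching $(\rt,\ut)$ at the intermediate point $y_0$ reduces, after solving $\ep=\ep(\la)$ from the first condition, to an equation of the form $\sin(\nu\log\la+d_1-d_2)=O(y_0^{2/(2-\gamma)})$, whose infinitely many roots give the discrete family and make the intersection count with $\rt_f$ transparent. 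Analyticity at the sonic point and the origin is then obtained a posteriori from the general normal-form Lemmas \ref{lem:local_C1_wellposedness}--\ref{lem:main_analyticity_lemma}, not from the construction itself. Your one-parameter inward shooting from $y_*$ would have to reproduce all of this control on $(0,y_*]$ without the Lane--Emden scaffolding, and your outline gives no indication of how to do so.
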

This theorem is proved in Subsection \ref{sec:proof}. In contrast to the isothermal case, as far as the author is aware, there are no numerical constructions of these Hunter-like solutions in the existing literature for $\g>1$. It is not hard to generate the first few Hunter-type solutions numerically by straightforwardly generalizing the approach of \cite{hunter} to $1<\g<\frac{6}{5}$.

We also mention that in contrast to \cite{ghjs}, we are not able to rigorously construct solutions for $\frac{6}{5}\leq \g<\frac{4}{3}$. It is unclear whether Hunter-type solutions would exist in this range or not. This point is discussed further in Subsection \ref{sec:nlwcomparison} via analogy with a scalar wave equation.

We also comment that the existence proof in \cite{ghjs} used rigorous interval arithmetic to control the signs of certain quantities. One feature of the approach we use is that due to its more perturbative nature, we do not need to use computer-assisted techniques.

We mention two other classes of self-similar solutions for the Euler-Poisson system in the literature. Firstly, if one considers dynamic entropy, then a new regime of solutions has been found in \cite{ahs}. Secondly, for the mass-critical polytropic index $\g=\frac{4}{3}$, there is a family of self-similar solutions to \eqref{eq:epsystem} found by Goldreich and Weber (\cite{goldreichweber}). These solutions different in character from the type of solutions mentioned so far in that they are semi-explicit and compactly supported. We refer interested readers to \cite{hadzicsurvey} for an introduction to the mathematical literature regarding these solutions and their stability.

We also comment that one source of motivation for studying these types of blow-ups for the Euler-Poisson system is the cosmic censorship conjectures of general relativity originally formulated by Penrose (\cite{penrose}, \cite{penroseb}). Indeed, for the Einstein-Euler system (describing a relativistic self-gravitating gas) a relativistic Larson-Penston solution was numerically constructed by Ori-Piran for an isothermal equation of state and sufficiently small values of the (constant) speed of sound (\cite{op1}, \cite{op2}, \cite{op3}). These authors argued that the solution could be suitably truncated to exhibit a naked singularity. A rigorous construction of the Ori-Piran solution and a dynamically forming naked singularity was established in \cite{ghj2}, which also contains further references on related literature. It is currently unclear if our methods would extend to the Einstein-Euler system to prove existence of relativistic Hunter solutions (or even, as far as the author is aware, if such solutions exist in the first place). For additional discussions on self-similarity, gravitational collapse and related models, we refer readers to the (living) review \cite{gundlach}.

Another related problem is constructing imploding solutions to the compressible Euler equations in the absence of a gravitational force. In this case, the system admits an additional scaling symmetry and the self-similar equations can be expressed as a system of \emph{autonomous} ODEs. Existence of (not necessarily smooth) solutions was first established by Guderley and Sedov (\cite{guderley}, \cite{sedov}). Smooth self-similar solutions were first constructed by Merle-Rapha\"el-Rodnianski-Szeftel (\cite{mrrs1}). The (finite-codimension) stability of these smooth profiles allowed the authors to construct blow-ups for the compressible Navier-Stokes and defocusing Schr\"odinger equations (\cite{mrrs2}, \cite{mrrs3}). Their results on singularity formation have been generalized to additional regimes. These include, in particular, implosions with mass density bounded from below (\cite{buckmastercaolaboragomezserrano}), implosions outside of radial symmetry (\cite{caolaboragomezserranoshistaffilani}, \cite{caolaboragomezserranoshistaffilani2}) and also implosions where the vorticity simultaneously blows up (\cite{ccsv}, \cite{chen}). Moreover, the instability of these blow-ups have been studied numerically in \cite{biasi}. We emphasize that in the non-gravitational problem, the additional scaling symmetry leads to a choice in scaling parameter (labeled $r$ in the above-mentioned works). There is no analog of this choice for the isentropic Euler-Poisson system, and so one must use non-autonomous methods.

We conclude this subsection by mentioning that, even in the isothermal case, the stability of the Larson-Penston and Hunter solutions has not been rigorously established. One would like to show that, in analogy with the numerics, the Larson-Penston solution is stable and that the $i$th Hunter solution is codimension-$i$ unstable. One aspect of this problem that is difficult is that the linearized problem is non-self-adjoint and non-explicit. A speculative comparison of this problem with the related problem for a scalar wave equation is discussed in Subsection \ref{sec:nlwcomparison}.
\subsection{Sonic point boundary conditions and local solvability}
\label{sec:sonicpoints}
We see that the ODE system \eqref{polytropicselfsimeq} is singular when
\begin{equation*}
  \left(\ut+(2-\gamma)y\right)^2-\gamma \rt^{\gamma-1}=0.
\end{equation*}
Following the literature, we refer to a point $y$ where this occurs as a sonic point. We comment that in terms of the physical coordinates $(t,x)$, a sonic point corresponds to an integral curve of the scaling vector-field $t\p_t+(2-\g)\sum_{i=1}^3x^i\p_{x^i}$ being null with respect to the acoustical Lorentzian  metric, $-c_{s}^2dt^2+\sum_{i=1}^3(dx^i-u^i dt)^2$, where $c_{s}=\g^{\frac{1}{2}}\rho^{\frac{\g-1}{2}}$ is the speed of sound. When we transition to the $y$ coordinate, we quotient along this curve introducing a singularity into the equations.

We now discuss the local theory of smooth solutions to \eqref{polytropicselfsimeq} near a singular point. In particular, we shall provide a derivation of the Larson-Penston-Hunter-type expansion. In contrast to previous, more direct, treatments, such as \cite{ghjs} and \cite{sandine}, we will treat \eqref{polytropicselfsimeq} as a special case of a more general class of singular quasilinear ODEs. This treatment is the most significant development from \cite{sandine}, and we emphasize it as we believe the methods could be useful for studying self-similar solutions to other quasilinear systems.

The class of ODEs we consider will be those that can be put into the following normal form. Note that we abuse notation and the $(u_1,u_2)$ below do not refer to velocity.
\begin{defin}
  \label{def:normal_form}
  We say that a singular quasilinear ODE system for $(u_1(t),u_2(t))$ is in normal form if it is of the form
  \begin{equation*}
    \begin{cases}
      A(t,u_1,u_2)\mqty{u_1'\\u_2'}+B(t,u_1,u_2)=0\\
      u_1(0)=u_2(0)=0
    \end{cases}
  \end{equation*}
  for smooth functions $A(z_0,z_1,z_2)$, $B(z_0,z_1,z_2)$ valued in $\R^{2\times 2}$, $\R^2$ respectively with
  \begin{equation*}
    A(0)=\mqty{1 & 0\\ 0 & 0},\qquad B(0)=\mqty{0\\ 0}.
  \end{equation*}
  For such a system, we define four characteristic parameters to be
  \begin{equation*}
    a:=\pdv[A_{22}]{z_0}(0),\qquad b:=\pdv[A_{22}]{z_2}(0),\qquad c:=\pdv[B_2]{z_0}(0),\qquad d=\pdv[B_2]{z_2}(0).
  \end{equation*}
\end{defin}
The author was unable to find a reference for the theory of smooth solutions to ODEs of this form on a neighborhood of $z=0$ (in particular in the quasilinear case $b\neq 0$), so the necessary lemmas are proved in the Appendix using elementary methods. 

To motivate these lemmas, we now suppose that $(u_1(t),u_2(t))$ is a $C^1$ solution to an equation of the form descirbed by in Definition \ref{def:normal_form}. From evaluating the first line of the matrix equation at $t=0$, we see that $u_1'(0)=0$. Now suppose that $u_2'(0)=U$. By expanding the second line to $o(t)$, we see that $U$ must satisfy the quadratic equation
\begin{equation}
  \label{eq:characteristic_quadratic}
  (a+bU)U+c+dU=0.
\end{equation}
Thus, we see that, depending on the parameters $a,b,c,d$, there will generically be at most two possible choices for $U$. Indeed, for our problem one of these choices will correspond to the Larson-Penston-Hunter Taylor expansion.

Going further, supposing now that $(u_1,u_2)$ is $C^{(k)}$ for $k\geq 2$ and $u_2'(0)=U$, if we expand the first equation to order $o(t^{k-1})$ and the second equation to $o(t^k)$, we obtain an equation schematically of the form (suppressing the dependence on the coefficients $A,B$)
\begin{equation*}
  \begin{cases}
    u_{1}^{(k)}(0)=F_{1,k}(u_1^{\leq(k-1)}(0),u_2^{\leq(k-1)}(0))\\
    \left(a+dU+(a+bU)k\right)u_2^{(k)}(0)=F_{2,k}(u_1^{\leq(k)}(0),u_2^{\leq(k-1)}(0))
  \end{cases}.
\end{equation*}
In particular, we can solve the first equation for $u_1^{(k)}(0)$ and substitute it into the second equation. The second equation is a linear equation for $u_{2}^{(k)}(0)$ and it is solvable if
\begin{equation}
  \label{eq:characteristic_nondegeneracies}
  a+dU+(a+bU)k\neq 0.
\end{equation}
Thus, assuming this non-degeneracy condition on the characteristic parameters, the higher derivatives are uniquely determined. Heuristically one should be able to solve the ODE exactly if one has a solution to the quadratic \eqref{eq:characteristic_quadratic} and if for $k\geq 2$ one has the conditions \eqref{eq:characteristic_nondegeneracies}. Justifying these heuristics rigorously is the content of the following lemmas. The proofs of these lemmas (which are elementary in terms of tools) are provided in the Appendix. As a comment, the statements of these lemmas do not give quantative bounds on the intervals of existence, although in principle a closer examination of the proof would yield such bounds, which would depend on the coefficients. The first lemma deals with solutions of regularity $C^1$.
\begin{lem}
  \label{lem:local_C1_wellposedness}
  Consider a singular ODE system for $u(t)=(u_1(t),u_2(t))$ as in Definition \ref{def:normal_form} with characteristic indices $a,b,c,d$. Suppose that the quadratic equation \eqref{eq:characteristic_quadratic} has a root $U$. Suppose that $a+bU\neq 0$ and define $\kappa=\frac{d+bU}{a+bU}$. Suppose that $\kappa>-1$. Then, for $T$ sufficiently small the ODE system subject to the condition $u_2'(0)=U$ admits a unique $C^1$ solution $u(t)$ defined on $(-T,T)$. 
\end{lem}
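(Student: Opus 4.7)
The plan is to reduce the two-equation system to a single scalar Fuchsian ODE for the unknown $v_2(t) := u_2'(t) - U$ and then solve this reduced equation by a fixed-point argument. The first step exploits the non-degeneracy $A_{11}(0) = 1$: the first row of the matrix equation can be algebraically solved for $u_1'$, yielding an expression $u_1' = G_1(t, u_1, u_2, u_2')$ with $G_1$ smooth and $G_1(0, 0, 0, U) = 0$ (using $A_{12}(0) = 0$ and $B_1(0) = 0$). Substituting $u_2 = Ut + w$ with $w(0) = 0$ and $u_2' = U + v_2$, and treating $(w, v_2)$ temporarily as given small continuous inputs, this becomes a regular ODE for $u_1$ with initial condition $u_1(0) = 0$, which I would solve by standard Picard iteration to produce a functional $u_1 = u_1[w, v_2]$ satisfying $u_1(t) = O(t^2)$ as $t \to 0$.

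Substituting this $u_1$ into the second row of the matrix equation gives a scalar equation $Q(t, w, v_2) = 0$. A Taylor expansion of $Q$ around $(t, 0, 0, 0)$, using the normal-form assumptions on $(A, B)$ and crucially the cancellation produced by the quadratic identity \eqref{eq:characteristic_quadratic}, exhibits the structure
\[
Q(t, w, v_2) = (bU + d) w + (a + bU) t v_2 + \mathcal{E}(t, w, v_2),
\]
where $\mathcal{E}$ is smooth and vanishes quadratically at the origin (both $\mathcal{E}$ and its first-order partials vanish there). Differentiating $Q = 0$ in $t$ along the trajectory, using $w' = v_2$, and dividing by the nonzero constant $a + bU$, produces a first-order ODE of the schematic form
\[
[t + \sigma(t, w, v_2)] v_2' + [(1 + \kappa) + \tau(t, w, v_2)] v_2 = \mathcal{F}(t, w, v_2),
\]
where $\sigma, \tau, \mathcal{F}$ are smooth and vanish at the origin, to be treated perturbatively against the leading Fuchsian operator $t\partial_t + (1 + \kappa)$.

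With $\lambda := 1 + \kappa > 0$, the leading operator $t\partial_t + \lambda$ is inverted on continuous functions vanishing at $0$ by the Volterra integral $f \mapsto t^{-\lambda} \int_0^t s^{\lambda - 1} f(s)\, ds$, whose integrand is integrable at $s = 0$ precisely when $\lambda > 0$, i.e.\ $\kappa > -1$ --- exactly the lemma's hypothesis. Using this inversion, I would set up a closed fixed-point equation $v_2 = \Phi(v_2)$ on a small ball in $C^0([0, T])$ of functions vanishing at $0$, with $w$ and $u_1$ reconstructed from $v_2$ as in the first step, and verify that $\Phi$ is a contraction for $T$ sufficiently small. The argument on $(-T, 0]$ is identical after the substitution $\tau = -t$, and uniqueness follows from the Banach fixed-point theorem. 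The main obstacle will be the contraction estimate: one must verify that every nonlinear term in $\sigma, \tau, \mathcal{F}$ carries enough structural vanishing in $(t, w, v_2)$ --- inherited from the normal-form assumptions and the quadratic identity --- so that composition with the singular $t^{-\lambda}$ inversion yields a self-map of a small ball with Lipschitz constant strictly less than one.
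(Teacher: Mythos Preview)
Your proposal shares the correct core idea with the paper---reduce to a Fuchsian scalar equation and invert $t\partial_t + (\text{const})$ using the Volterra kernel whose integrability is equivalent to $\kappa>-1$---but the route differs in one place that creates a real difficulty. The paper does \emph{not} differentiate. After the same reduction to $A_{11}=1$, $A_{21}=0$, it Taylor-expands the second row and, using only the quadratic identity \eqref{eq:characteristic_quadratic}, rewrites it directly as
\[
t\,u_2' + \kappa\,u_2 \;=\; (\kappa+1)U\,t \;-\;\tfrac{f+eU}{a+bU}\,u_1 \;+\; F_2(t,u_1,u_2,u_2'),
\]
where every occurrence of $u_2'$ on the right is multiplied by a factor that vanishes at least linearly at $t=0$ (namely $u_2-Ut$, $u_1$, or a quadratic monomial in $(t,u_1,u_2)$). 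A simultaneous fixed-point iteration for $(u_1,u_2)$ in $C^1$---with norm $\|u_2'\|_{L^\infty}$---then closes, because $F_2/t$ is bounded by quantities quadratic in $\|u_2'-U\|_\infty$ and $\|u_1'\|_\infty$.

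Your extra differentiation of $Q=0$ promotes the quasilinear coefficient $\partial_{v_2}\mathcal{E}$ into a genuine $\sigma(t,w,v_2)\,v_2'$ term. You then propose to iterate in $C^0$ for $v_2$, treating $\sigma v_2'$ as a perturbation of $t\,v_2'$; but in $C^0$ the quantity $v_2'$ is not defined, so the map $\Phi$ as described is not well-posed. This is fixable---either absorb $\sigma$ into the integrating factor and invert the full quasilinear operator $(t+\sigma)\partial_t + (1+\kappa+\tau)$, or run the iteration in $C^1$ for $v_2$---but the first fix requires controlling $\sigma/t$ (which in turn forces quantitative vanishing of $v_2$ at $0$ that you have not yet established), and the second fix yields existence in $C^2$ while uniqueness in the stated class $C^1$ then needs a separate argument. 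The paper's undifferentiated formulation sidesteps all of this: staying at the level of $u_2$ keeps the operator at $t\partial_t+\kappa$, keeps the iteration in $C^1$ for $u_2$ (exactly the regularity claimed), and gives existence and uniqueness in one stroke.
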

For the proof, see Subsection \ref{sec:local_C1_lemma}.
The second lemma deals with analytic regularity.
\begin{lem}
  \label{lem:main_analyticity_lemma}
  Consider a singular ODE system for $u(t)=(u_1(t),u_2(t))$ as in Definition \ref{def:normal_form} with characteristic indices $a,b,c,d$. Suppose that the coefficients $A(z),B(z)$ are real analytic. Suppose that the quadratic equation \eqref{eq:characteristic_quadratic}  has a root $U$. Suppose that $a+bU\neq 0$ and define $\kappa=\frac{d+bU}{a+bU}$. Suppose that $\kappa\notin \Z_{\leq -2}$. Then, for $T$ sufficiently small the ODE system subject to the condition $u_2'(0)=U$ admits a unique real-analytic solution $u(t)$ defined on $(-T,T)$. 
\end{lem}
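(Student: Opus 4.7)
The plan is to construct the solution as a convergent Taylor series about $t=0$. First, I would observe that the recursion derived in the body of the section,
\begin{align*}
  u_1^{(k)}(0) &= F_{1,k}\bigl(u_1^{\leq (k-1)}(0), u_2^{\leq (k-1)}(0)\bigr),\\
  (a+bU)(k+\kappa)\,u_2^{(k)}(0) &= F_{2,k}\bigl(u_1^{\leq (k)}(0), u_2^{\leq (k-1)}(0)\bigr),
\end{align*}
uniquely determines every Taylor coefficient because the hypothesis $\kappa \notin \Z_{\leq -2}$ (together with $a+bU \neq 0$) guarantees the prefactor $(a+bU)(k+\kappa)$ is nonzero for every $k \geq 2$. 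Since $\{k+\kappa\}_{k \geq 2}$ is a discrete subset of $\R$ tending to $+\infty$, there is $\delta > 0$ with $|k+\kappa| \geq \delta$ for all $k \geq 2$, and in fact $|k+\kappa|\sim k$ as $k \to \infty$. Uniqueness of any real-analytic solution is immediate from the recursion.

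For existence, I would establish convergence of the formal series by the method of majorants. It is notationally convenient to first reduce to the case $U = 0$ by the substitution $v_1 = u_1,\ v_2 = u_2 - Ut$: a direct computation shows that the resulting system for $(v_1, v_2)$ is again in the normal form of Definition \ref{def:normal_form}, with new characteristic parameters $a' = a + Ub,\ b' = b,\ c' = 0,\ d' = d + Ub$, so that $V = 0$ is a root of the new characteristic quadratic and $\kappa = d'/a' = (d+Ub)/(a+Ub)$ is unchanged. In this reduced form one has $v_1'(0) = v_2'(0) = 0$, which tidies the combinatorics. Because $A$ and $B$ are real analytic at the origin, there exist $r, M > 0$ such that each Taylor coefficient of $A - A(0)$ and of $B$ is entrywise dominated by that of the scalar analytic majorant $\Phi(z_0,z_1,z_2) = M\bigl(1/(1 - (z_0+z_1+z_2)/r) - 1\bigr)$. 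The $F_{j,k}$ are polynomials with non-negative structure constants in the Taylor coefficients of $A, B$ and the previously computed $v_j^{(\ell)}(0)$, and the singular divisor contributes at most a factor $1/(|a+bU|\,\delta)$ uniformly in $k$ (with additional $1/k$ decay for large $k$). Standard Cauchy-Kovalevskaya / Briot-Bouquet-style techniques then allow one to construct a scalar analytic majorant $V(t)$, obtained as the unique analytic solution of a simpler scalar equation built from $\Phi$ and $\delta$, whose Taylor coefficients dominate $|v_j^{(k)}(0)|/k!$ by induction on $k$. Since $V$ has positive radius of convergence, so do the formal series for $v_1, v_2$, producing a real-analytic solution on some interval $(-T, T)$.

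The main obstacle is executing the majorant argument cleanly in this singular quasilinear setting, where $A$ itself depends on the unknowns and the singular denominator $(a+bU)(k+\kappa)$ is merely bounded below rather than of order $k$ for small $k$. Unlike the semilinear Cauchy-Kovalevskaya setup, the recursion produces genuine Cauchy products between the unknowns and their derivatives, requiring one to set up the majorizing equation with care. However, the rank deficiency $\operatorname{rank} A(0) = 1$ is entirely captured by the single scalar factor $(a+bU)(k+\kappa)$, so one can effectively decouple the argument into a standard semilinear majorant for the non-singular first equation and a Briot-Bouquet-style scalar majorant for the singular second equation; the uniform lower bound on $|k+\kappa|$ from the first paragraph is exactly what is needed to close the latter. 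Combined with the uniqueness from the first paragraph, this yields the claimed unique real-analytic solution on a neighborhood of $0$.
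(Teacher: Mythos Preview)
Your outline matches the paper's strategy: normalize (the paper also first arranges $A_{11}=1$, $A_{21}=0$, which is cosmetic), shift $v_2=u_2-Ut$ to reduce to $U=0$, observe that the recursion determines all Taylor coefficients uniquely since $\kappa\notin\Z_{\le-2}$, and prove convergence by majorants.

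Where your sketch has a real gap is precisely the point you flag as the main obstacle. After the shift the singular equation reads $tv_2'+\kappa v_2=H(t,v_1,v_2,tv_2')/t$, and the slot $tv_2'$ forces the recursion for $(v_2)_k$ to contain factors $j\,(v_2)_j$ with $j$ up to $k-1$. A plain majorant $|(v_2)_k|\le W_k$ does not close: one would need $W_k\gtrsim \tfrac1k\sum_j j\,W_jW_{k-j}$, and the right side is not the $k$th coefficient of an analytic function of $W$. The paper's device is to build the $1/k$ into the induction hypothesis, proving instead $|(v_2)_k|\le(\Vr_2)_k/k$; then $|(tv_2')_k|=k|(v_2)_k|\le(\Vr_2)_k$, so both the $v_2$-slot and the $tv_2'$-slot of $H$ are dominated by the single function $\Vr_2$, and the majorizing second equation becomes the \emph{algebraic} relation $\Vr_2=\tfrac12\bar H(t,\Vr_1,\Vr_2)/t$. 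Solving this quadratic for $\Vr_2$ in terms of $(t,\Vr_1/t)$ and substituting into the first equation collapses the majorizing system to a single scalar semilinear singular ODE for $w=\Vr_1/t$, handled by a separately proved Briot--Bouquet-type lemma. Your ``decouple into semilinear plus Briot--Bouquet'' is therefore morally right, but the decoupling only becomes available after this $1/k$-weighted induction; that weighting is the missing ingredient in your sketch.
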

For the proof, see Subsection \ref{sec:appendix_analyticity}.

A lengthy but straightforward computation shows that the system \eqref{polytropicselfsimeq} can be placed into the desired form near a sonic point. We summarize the desired information in the following lemma. 
\begin{lem}
  \label{lem:normalform}
  Consider a solution $(\rt,\ut)$ to \eqref{polytropicselfsimeq} with a sonic point at $y_*$. Define
  \begin{equation*}
    \rho_0:=\rt(y_*),\qquad u_0=\ut(y_*),\qquad \o_0:=\frac{u_0}{y_*}+2-\g.
  \end{equation*}
  Suppose that the following two equations in $(\rho_0,\o_0,y_*)$ are satisfied;
  \begin{align}
    y_*\o_0&=\g^{\frac12}\rho_0^{\frac{\g-1}{2}}    \label{intermedstep0},\\
    \frac{4\pi \rho_0\o_0}{4-3\g}&=2\o_0^2+(\g-1)\o_0+(2-\g)(\g-1). \label{eq:rho0sonic}
  \end{align}
  Then, there exists an invertible change of variables from $(\rt,\ut)$ to $(\rc,\uc)$ so that if $(\rt,\ut)$ is a $C^1$ solution of \eqref{polytropicselfsimeq} then $(\rc,\uc)$ is a $C^1$ solution to an equation in the normal form of Definition \ref{def:normal_form} with characteristic parameters
  \begin{align*}
    a&=\frac{2(2-\g)+(\g-3)(\o_0+(\g-1))}{4y_*\o_0},\\
    b&=-\frac{(\g+1)}{4\rho_0},\\
    c&=\frac{\rho_0}{4(y_*\o_0)^2}\left(-(\g+3)\o_0^2+(-2\g^2+\g+3)\o_0+(2-\g)(\g-1)^2\right)\\
     &\qquad-\frac{\rho_0^2}{4(y_*\o_0)^2}\left(\frac{4\pi}{(4-3\g)}\left(4-3\g-2\o_0\right)\right),\\
    d&=\frac{1}{4y_*\o_0}\left(3(\g-1)+(\g-3)(\o_0+(\g-1))\right).
  \end{align*}
  The conditions $\rc'(0)=0$, $\uc'(0)=U$ are equivalent to the equations for $\rt'(y_*)$ and $\ut'(y_*)$ determined by letting
  \begin{equation*}
    R=\frac{y_* \rt'(y_*)}{\rho_0},\qquad W=\ut'(y_*)-y_*^{-1}u_0
  \end{equation*}
  and solving
  \begin{equation*}
    \begin{cases}
      U-\frac{\rho_0}{y_*\o_0}(\o_0 R+(\o_0+(\g-1)))=0\\
      \omg_0R+W+3\omg_0-(4-3\g)=0
    \end{cases}.
  \end{equation*}
  The Larson-Penston-Hunter branch corresponds to the solution of \eqref{eq:characteristic_quadratic} given by
  \begin{equation}
    U=\frac{-(a+d)+\sqrt{(a+d)^2-4bc}}{2b}.
    \label{eq:lph_branch}
  \end{equation}
  Moreover, the far-field solution obeys the Larson-Penston-Hunter sonic point conditions at
  \begin{equation}
    \label{eq:yf_def}
    y_f:=\frac{\g^{\frac12}}{2-\g}(\frac{4-3\g}{2\pi})^{\frac{\g-1}{2}}=\frac{k^{\frac{(\g-1)(2-\g)}{2}}\g^{\frac{2-\g}{2}}}{(2-\g)^{2-\g}}
    .
  \end{equation}
  Finally, for the far-field solution we have
  \begin{equation*}
    a+bU=\frac{1}{2(2-\g)y_f},\qquad \kappa=\frac{5(\g-1)}{2}.
  \end{equation*}

\end{lem}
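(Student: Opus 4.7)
The plan is to construct an explicit change of variables centering the sonic point at the origin and diagonalizing the principal part of the linear system, then to verify the normal form and read off the characteristic parameters by direct Taylor expansion. First I would translate by $t = y - y_*$ and introduce new dependent variables $(\uc_1, \uc_2)$ as suitable affine combinations of $(\rt - \rho_0)$ and $(\ut - u_0)$. At the sonic point, the coefficient matrix in \eqref{polytropicselfsimeq} has trace $2y_*\o_0$ and vanishing determinant by \eqref{intermedstep0}, hence eigenvalues $2y_*\o_0$ and $0$. Choosing $(\uc_1, \uc_2)$ along the corresponding eigenvectors and rescaling so that the first eigenvalue becomes $1$ produces the required leading form $A(0) = \text{diag}(1, 0)$. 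The sonic-point relation \eqref{eq:rho0sonic} is precisely the requirement that the corresponding linear combination of source terms vanishes at $y_*$, giving $B(0) = 0$.

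With the normal form set up, I would Taylor-expand $A_{22}(t, \uc_1, \uc_2)$ and $B_2(t, \uc_1, \uc_2)$ to first order and read off $a, b, c, d$. This is the main bookkeeping step: contributions come from the expansion of $\g \rt^{\g-2}$ in the first column of the original matrix, the differentiation of $\ut + (2-\g)y$ on its diagonal, and the non-linear source $\frac{4\pi}{4-3\g}\rt(\ut + (2-\g)y)$ in the second component. The affine change of variables then distributes these across the new equations. Careful comparison with the respective partial derivatives at the origin yields the stated formulas for $a, b, c, d$.

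For the boundary condition translation, I would differentiate $\uc_1$ and $\uc_2$ at $t = 0$, re-express in terms of $R$ and $W$, and note that the relation $\o_0 R + W + 3\o_0 - (4 - 3\g) = 0$ is simply the first component of \eqref{polytropicselfsimeq} evaluated at $y = y_*$ and divided by $\rho_0 \o_0$. Indeed, after the change of basis this component remains regular at the sonic point, and so produces an algebraic constraint among $R, W, \o_0$. The relation $U = \frac{\rho_0}{y_*\o_0}(\o_0 R + (\o_0+(\g-1)))$ then follows from the explicit form of $\uc_2'(0)$. To identify the Larson-Penston-Hunter branch, I would compare the $\pm$ signs in the quadratic formula for \eqref{eq:characteristic_quadratic} with the Larson-Penston-Hunter Taylor expansion of \cite{ghjs}, which corresponds to the $+$ sign in \eqref{eq:lph_branch}.

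Finally, for the far-field solution $(\rt_f, 0)$, one has $\o_0 = 2 - \g$ and $\rho_0 = k y_f^{-2/(2-\g)}$. Substituting into \eqref{intermedstep0} and solving for $y_f$ using \eqref{eq:kdef} gives \eqref{eq:yf_def}; \eqref{eq:rho0sonic} is then automatic from the definition of $k$. With $\o_0 = 2-\g$ the expressions for $a, b, c, d$ simplify considerably, and direct computation of $U$ from \eqref{eq:lph_branch} followed by substitution yields $a + bU = \frac{1}{2(2-\g)y_f}$ and $\kappa = \frac{5(\g-1)}{2}$ after cancellation. The main obstacle throughout is the algebraic bookkeeping in the second paragraph; the simplifications in the far-field step, while non-trivial, are mechanical.
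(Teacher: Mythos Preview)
Your approach is essentially the paper's, and the overall strategy is sound. One imprecision is worth flagging: the eigenvector-based change of variables alone does \emph{not} yield $B(0)=0$. The relation \eqref{eq:rho0sonic} ensures only that the \emph{second} component $B_2(0)$ vanishes (the component along the null eigenvector of $A_0$); the first component comes out to $\frac{\rho_0}{y_*\o_0}(\o_0+(\g-1))\neq 0$. The paper handles this with a second change of variables: since the first (regular) equation at $y=y_*$ forces the first intermediate variable to have derivative $-\frac{\rho_0}{y_*\o_0}(\o_0+(\g-1))$, one subtracts off this linear-in-$(y-y_*)$ part to define $\rc$, achieving $\rc(y_*)=\rc'(y_*)=0$ and hence $B(0)=0$ without disturbing $A(0)=\mathrm{diag}(1,0)$. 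You would discover this immediately upon computation, and once this shear is in place the rest of your outline---reading off $a,b,c,d$ from the first-order expansion, translating the boundary conditions via $R,W$, and specializing to the far-field values $\o_0=2-\g$, $\rho_0=\frac{4-3\g}{2\pi}$---matches the paper's argument.
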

The proof of this lemma is deferred to Appendix \ref{sec:normal_form_appendix}. As a consistency check, we mention that if one substitutes $U=\frac{\rho_0}{y_*}(\o_0R+(\o_0+(\g-1)))$ into the quadratic \eqref{eq:characteristic_quadratic} then one obtains the quadratic
\begin{multline}
  \label{eq:Rquadratic}
  -(1+\gamma)\omega_0^2R^2+(9-7\gamma-8\omega_0)\omega_0R\\
  -6\omega_0^2+(8-6\gamma)\omega_0+(-3\gamma^2+9\gamma-6)+\frac{(4-3\gamma)(2-\gamma)(1-\gamma)}{\omega_0}=0,
\end{multline}
which is consistent with \cite{ghjs} (their equation (2.69)).

We finally comment that the proof of local analyticity given by \cite{ghjs} for this system (their Theorem 2.15) is similar in terms of techniques to our proof of Lemma \ref{lem:main_analyticity_lemma} and could in principle be modified for use with Hunter-type solutions. We treat the problem more abstractly to clarify precisely the assumptions needed and to allow future works to ``black box'' this part of the argument.
\subsection{Analogous results and open problems for a focusing scalar wave equation}
\label{sec:nlwcomparison}

In the isothermal case, the author's previous construction of Hunter solutions was motivated by similarities between the Euler-Poisson system and a class of semilinear heat equations (see \cite{sandine} for more details and references). In this section, we will compare the theory of self-similar solutions of \eqref{eq:epsystem} with that of solutions to the following semilinear focusing wave equation (NLW) for $\phi\colon I_t\times \R_x^d\to \R$:
\begin{equation}
  \label{eq:nlw}
  \p_t^2\phi-\Lap \phi-\phi^p=0.
\end{equation}
One advantage of using this equation as a model, instead of a heat equation, is that it is hyperbolic, so the theory of non-linear stability of self-similar solutions to \eqref{eq:nlw} should be a good model for the Euler-Poisson system. As a first step towards deriving the self-similar ODE, we note that this equation is invariant under the scaling transformation ($\la>0$)
\begin{equation*}
  \phi(t,x)\mapsto \la^{-\frac{2}{p-1}}\phi(t/\la,x/\la).
\end{equation*}
If a solution is invariant with respect to this symmetry, its profile will satisfy the ODE
\begin{equation}
  \label{eq:nlw_selfsim}
  (1-y^2)\p_y^2\phi+(\frac{d-1}{y}-\frac{2(p+1)}{p-1}y)\p_y \phi-\frac{2(p+1)}{(p-1)^2}\phi+\phi^p.
\end{equation}
In particular, we see that this equation admits (in our language) a sonic point, but unlike the Euler-Poisson problem, the location of the sonic point is independent of the solution. We also comment that, in the case $d=3$, the energy-supercritical range of $p$ for NLW is $p>5$ (this was $\g<\frac{6}{5}$ for Euler-Poisson) and the conformal-supercritical range of $p$ for NLW is $p>3$ (this corresponds to $\g<\frac{4}{3}$ for Euler-Poisson).

For \eqref{eq:nlw_selfsim} there also exist versions of the Friedman and far-field solutions. The Friedman (or ODE blow-up solution) is given explicitly by
\begin{align*}
  \phi_F=(\frac{2(p+1)}{(p-1)^2})^{\frac{1}{p-1}}.
\end{align*}
The far-field solution (in the case $p>\frac{d}{d-2}$) is given by
\begin{align*}
  \phi_{F}=(\frac{2((p-1)(d-2)-2)}{(p-1)^2})^{\frac{1}{p-1}}\cdot y^{-\frac{2}{p-1}}.
\end{align*}

Uniqueness and stability of the Friedman solution for \eqref{eq:nlw_selfsim} in 1 spatial dimension was shown by Merle  and Zaag (\cite{mz03,mz05,mz07,mz08,mz12a,mz12b}). In higher dimensions the stability problem is more delicate and has been the subject of much recent work (\cite{cd,d10,d17,dr,ds12,ds14,ds16,ds17,liu,mz15,mz16,ostermann,wallauch}).  We comment however, that for the purposes of modeling gravitational collapse from isolated bodies, we expect the relevant profiles to decay as $\abs{x}\to \infty$. Consequentially, we shall focus on the non-Friedman self-similar solutions to \eqref{eq:nlw_selfsim}. 

We now briefly discuss some literature on the existence of such solutions. To rule out the Friedman solution, we will restrict attention to self-similar solutions whose leading order decay rate as $y\to \infty$ is at least that of the far-field solution, namely $y^{-\frac{2}{p-1}}$. This can be viewed as a localization requirement. A numerical study of several exponents is carried out in \cite{bct} in $d=3$. In particular, for the energy-supercritical exponent $p=7$ a family of self-similar solutions was found (with decay rate $y^{-\frac{2}{p-1}}$), including a solution with a single unstable mode which was observed to play a role in describing the threshold between blow-up and dispersion. Self-similar solutions in $d=3$ for $p$ which are odd and energy-supercritical were rigorously constructed in \cite{bmw}. We emphasize the close resemblance of Figure $1$ of \cite{bmw}, depicting their shooting construction of self-similar solutions to NLW, and Figure $2$ of \cite{hunter}, depicting the shooting construction of  Hunter solutions. In higher dimensions, energy-supercritical self-similar blow-up profiles were analyzed numerically in \cite{kycia} and constructed rigorously in \cite{dd}. One surprising feature of NLW is that in higher spatial dimensions, $d\geq 5$, the (energy-supercritical) cubic NLW admits an explicit self-similar solution found and studied by Glogi\'c-Sch\"orkhuber (\cite{gs}). We also remark that it decays faster ($y^{-\frac{2}{p-1}-1}$) than the far-field solution ($y^{-\frac{2}{p-1}}$). This solution's role as a threshold was studied numerically in \cite{gms}. For the quadratic NLW, in the energy-supercritical dimensions $d\geq 7$, there is again an explicit self-similar solution found and analyzed in \cite{cgs}, which differs from the previously discussed solutions in that it is non-positive. This work was extended beyond the past-light cone of the singularity in \cite{cdgms}.

One key difference between NLW and Euler-Poisson is that for NLW, each self-similar solution known to exist has at least one linear instability direction, whereas for the Euler-Poisson system, the Larson-Penston-type solutions are expected to be linearly stable.

We also comment that as far as the author is aware, for $d=3$, there is currently no theoretical or numerical evidence of existence of self-similar blow-up profiles to NLW for the conformal-supercritical, energy-subcritical range $3<p<5$ with the far-field decay rate $\phi=O(y^{-\frac{2}{p-1}})$. This is a bit surprising as for the more complicated Euler-Poisson system, this corresponds to the range of exponents $\frac{6}{5}<\g<\frac{4}{3}$ for which a solution is known to exist (\cite{ghjs} following numerics of \cite{yahil}).

The work on NLW which seems most potentially applicable for understanding the stability of Hunter-type solutions to the Euler-Poisson system is \cite{kim}. In this work (for $d=3$), the author constructed self-similar solutions in the full energy-supercritical range, and showed finite-codimensional stability of the solutions by adapting the framework of \cite{mrrs3}. As our construction of solutions is very similar to the approach taken in this work, one could hope that the author's framework could be used to show the finite-codimension stability of the Hunter-type solutions we contruct. We comment that unlike the preceding work on the non-linear heat equation (\cite{crs}), or the scenerio of the explicit solutions (\cite{gs}, \cite{cgs}), the dimension of instability is not revealed in \cite{kim}.

Indeed, if one is either interested in whether Hunter-type solutions exist for $\frac{6}{5}<\g<\frac{4}{3}$ or interested in if one can show non-linear stability with the expected codimension, the related, probably easier, open problems for the NLW equation \eqref{eq:nlw} would be the following.
\begin{itemize}
\item For $d=3$ and $3<p<5$ do there exist any smooth self-similar blow-up profiles of \eqref{eq:nlw} such that $\phi=O(y^{-\frac{2}{p-1}})$ as $y\to \infty$? If yes, do there exist infinitely many solutions?
\item Can one rigorously determine the exact codimension of instability for the solutions constructed by Kim in \cite{kim}?
\end{itemize}
\subsection{Comparison with the isothermal problem, numerology and proof outline}
At a broad level, the proof of the theorem is an adaptation of the isothermal proof (\cite{sandine}), which was in turn based on techniques from simpler models (see, in particular, \cite{crs}). In fact, the methods we use are largely insensitive to the choice of $\g$ in $[1,\frac{6}{5})$ and the main challenges of the polytropic case are that the algebraic expressions get more complicated. We comment that in \cite{ghjs}, the exponent $\g=\frac{10}{9}$ played a special role. One point in their argument where this exponent appears is when the authors parameterize the curve of triples $(\rho_0,\o_0,y_*)$ which satisfy \eqref{intermedstep0} and \eqref{eq:rho0sonic} by $y_*$. We parameterize this curve by $\omg_0$ and consequentially do not see $\g=\frac{10}{9}$ at this step.

This work contains several technical improvents over \cite{sandine}. The most significant of these is in the treatment of local analyticity, as outlined in Subsection \ref{sec:sonicpoints} and carried out in Subsection \ref{sec:proof}. Moreover, the derivation of difference equations in Subsections \ref{sec:exterior} and \ref{section:int} is streamlined. In particular, in Section \ref{sec:exterior}, our linear lemma now accomodates compatible non-zero Taylor coefficients. Moreover, in Section \ref{section:int} we do not rescale the coordinate $y$. In addition, in this work (through the use of Lemma \ref{lem:implicit_function_theorem}) we perform the matching using only a Lipschitz dependence of the interior and exterior solutions on $\la$, $\ep$, which reduces the number of estimates required. As these improvements are all technical, the paper is organized identically to \cite{sandine}, which the reader is recommended to consult as a guide since the algebraic expressions are much cleaner and there is more commentary in the proof.

We will now present some heuristics which allow relatively quick derivations of the numerology. In particular, this section should illuminate why our methods cannot work for $\g\geq \frac{6}{5}$.

The basic structure of the argument is to consider a fixed, small $y_0$, construct a 1-parameter family of ``exterior solutions'' on $[y_0,\infty)$, construct a 1-parameter family of ``interior'' solutions on $[0,y_0]$ and then match them to obtain an infinite, discrete family. These solutions are a priori $C^1$ and higher regularity is obtained through application of the theory outlined in Subsection \ref{sec:sonicpoints}.

We begin with the (non-self-similar) Euler-Poisson system, \eqref{eq:epsystem}. We shall define the enthalpy to be
\[
  w=\frac{\gamma}{\gamma-1}\rho^{\gamma-1}.
\]
Setting $u=0$, taking the divergence of the second equation and changing variables from $\rho$ to $w$ gives the Lane-Emden equation
\begin{equation}
  \Delta w+4\pi (\frac{\gamma-1}{\gamma})^{\frac{1}{\gamma-1}}w^{\frac{1}{\gamma-1}}=0.
  \label{laneemdeneq}
\end{equation}
We now summarize properties of solutions to this equation that can be found, for instance, in \cite{chandrasekhar}.
We shall define $Q$ to be the unique solution such that
\[
  Q(0)=\frac{\gamma}{\gamma-1}.
\]
The family of regular solutions to \eqref{laneemdeneq} is parameterized by
\[
  \lambda\mapsto Q_{\lambda}=\frac{1}{\lambda^{\frac{2(\gamma-1)}{2-\gamma}}}Q(\cdot/\lambda).
\]
The far-field solution in this case is
\[
  Q_f=\frac{\gamma}{\gamma-1}k^{\gamma-1}r^{-\frac{2(\gamma-1)}{2-\gamma}}
\]
which, if we change back to $\rho$, is
\[
  \rho_f=(\frac{\gamma-1}{\gamma})^{\frac{1}{\gamma-1}}Q_f^{\frac{1}{\gamma-1}}=kr^{-\frac{2}{2-\gamma}}.
\]
We comment that for $\g<\frac{6}{5}$, the Lane-Emden solutions are positive for all $r$ and their spatial decay matches the far-field solution ($r^{-\frac{2}{2-\g}}$). This is not true for $\g\geq \frac{6}{5}$. In contrast to \cite{ghjs}, the Lane-Emden equation and its linearization plays a key role in our analysis, so it would be unable to alter our proof to deal with $\g\geq \frac{6}{5}$. 

We now return to the self-similar system \eqref{polytropicselfsimeq}. We recall that the far-field solution, \eqref{eq:farfield_def}, has a sonic point at $y=y_f$ (defined in \eqref{eq:yf_def}). If we linearize the self-similar system about the far-field solution $(\rt_f,\ut_f)$, we obtain the following linear ODE which is singular at $y\in \{0,y_f\}$,
\begin{multline*}
  \p_y \begin{pmatrix}\rt\\ \ut\end{pmatrix}+\frac{1}{(2-\gamma)^2y^2-k^{\gamma-1}\gamma y^{2-\frac{2}{2-\gamma}}}\cdot\\
  \begin{pmatrix}2(2-\gamma)y-k\frac{4\pi(2-\g)(3-\g)}{4-3\gamma}y^{1-\frac{2}{2-\gamma}} & -3(\gamma-1)k y^{-\frac{2}{2-\gamma}}-\frac{4\pi}{4-3\gamma}k^2y^{-\frac{4}{2-\gamma}}\\ (-2k^{\gamma-2}\gamma +\frac{4\pi(2-\g)^2(3-\g)}{4-3\gamma})y^2 & (2\frac{\gamma-1}{2-\gamma}k^{\gamma-1}\gamma +\frac{4\pi(2-\g)}{4-3\gamma}k)y^{1-\frac{2}{2-\gamma}}+(2-\gamma)(\gamma-1) y\end{pmatrix}\begin{pmatrix}\rt\\ \ut\end{pmatrix}=0.
\end{multline*}
If we then define $\tilde{p}=y^{\frac{2}{2-\gamma}}\tilde{\rho}$, $\tilde{v}=y^{-1}\tilde{u}$, then we get the system
\begin{multline*}
  \p_y \begin{pmatrix}\tilde{p}\\ \tilde{v}\end{pmatrix}+\frac{1}{y}\mqty{-\frac{2}{2-\gamma} & 0\\ 0 & 1}\begin{pmatrix}\tilde{p}\\ \tilde{v}\end{pmatrix}+\frac{1}{(2-\gamma)^2y^2-k^{\gamma-1}\gamma y^{2-\frac{2}{2-\gamma}}}\cdot\\
  \mqty{2(2-\gamma)y-k\frac{4\pi(2-\g)(3-\g)}{4-3\gamma}y^{1-\frac{2}{2-\gamma}} & -3(\gamma-1)k y-\frac{4\pi}{4-3\gamma}k^2y^{1-\frac{2}{2-\gamma}}\\ (-2k^{\gamma-2}\gamma +\frac{4\pi(2-\gamma)^2(3-\gamma)}{4-3\gamma})y^{1-\frac{2}{2-\gamma}} & (2\frac{\gamma-1}{2-\gamma}k^{\gamma-1}\gamma +\frac{4\pi(2-\gamma)k}{4-3\gamma})y^{1-\frac{2}{2-\gamma}}+(2-\gamma)(\gamma-1) y}\mqty{\tilde{p}\\ \tilde{v}}=0.
\end{multline*}
We can simplify this to
\begin{multline*}
  \p_y \mqty{\tilde{p}\\ \tilde{v}}+\frac{1}{(2-\gamma)^2y^2-k^{\gamma-1}\gamma y^{2-\frac{2}{2-\gamma}}}\cdot\\\
  \mqty{-\frac{4\pi k(2-\g)^2}{4-3\g}y^{1-\frac{2}{2-\g}}& -3(\gamma-1)k y-\frac{4\pi}{4-3\gamma}k^2y^{1-\frac{2}{2-\gamma}}\\ \frac{4\pi(2-\g)^3}{4-3\g}y^{1-\frac{2}{2-\gamma}} & (2-\g)y+\frac{2\pi(2-\g)k(3\g-2)}{4-3\g}y^{1-\frac{2}{2-\g}}}\mqty{\rt\\ \vt}=0.
\end{multline*}
We can then simplify the residue at $y=0$ to be
\[
  \mqty{\frac{4\pi k^{2-\g}(2-\g)^2}{\g(4-3\g)} & \frac{4\pi k^2}{(4-3\gamma)k^{\gamma-1}\gamma}\\ 2k-\frac{4\pi(2-\gamma)^2(3-\gamma)}{(4-3\gamma)k^{\gamma-1}\gamma} & \frac{-2\pi (2-\g)k^{2-\g}(3\g-2)}{2(4-3\g)}}=\mqty{2 & \frac{2k}{(2-\gamma)^2}\\ -\frac{2(2-\gamma)}{k} & -\frac{3\gamma-2}{2-\gamma}}.
\]
We define 
\begin{equation}
  \label{eq:munudef}
  \mu:=\frac{6-5\gamma}{2(2-\gamma)},\qquad \nu:=\frac{\sqrt{-\gamma^2-20\gamma+28}}{2(2-\gamma)}.
\end{equation}
We may then diagonalize (the negative of) this residue to be
\begin{multline*}
  \mqty{-2 & -\frac{2k}{(2-\gamma)^2}\\ \frac{2(2-\gamma)}{k} & \frac{3\gamma-2}{2-\gamma}}=\mqty{1 & 1\\ \frac{(2-\gamma)^{\frac{3}{2}}}{k}e^{i\theta_0} & \frac{(2-\gamma)^{\frac{3}{2}}}{k}e^{-i\theta_0}}\\
  \cdot\mqty{-\mu+i\nu & 0\\ 0 &-\mu-i\nu }\mqty{1 & 1\\ \frac{(2-\gamma)^{\frac{3}{2}}}{k}e^{i\theta_0} & \frac{(2-\gamma)^{\frac{3}{2}}}{k}e^{-i\theta_0}}^{-1}.
\end{multline*}
where we define
\begin{equation}
  \label{eq:angledef}
  \theta_0=\arctan(\frac{\sqrt{-\gamma^2-20\gamma+28}}{2+\gamma})+\pi.
\end{equation}
We comment that this angle satisfies
\[
  \sin(\theta_0)=-\frac{\sqrt{-\gamma^2-20\gamma+28}}{4\sqrt{2-\gamma}}\neq 0.
\]
From this, we may read off the asymptotics of the homogenous solution to the linearized equations to be
\begin{align*}
  p_{hom}&=\frac{c_1}{y^{\mu}}\sin(\nu \log(y)+d_1)+O(y^{-\mu+\frac{2}{2-\g}}),\\
  \omega_{hom}&=\frac{c_1}{y^{\mu}}\frac{(2-\gamma)^{\frac{3}{2}}}{k}\sin(\nu \log(y)+d_1+\theta_0)+O(y^{-\mu+\frac{2}{2-\g}}).
\end{align*}
The exterior solutions we construct (for small $\ep$) will then be of the form
\begin{equation}
  \label{eq:exterior_heuristics}
  \begin{cases}
    \tilde{\rho}_{ext}=\frac{k}{y^{\frac{2}{2-\gamma}}}+\frac{\epsilon }{y^{\frac{2}{2-\gamma}}}p_{hom}+\cdots\\
    \tilde{u}_{ext}=\epsilon y \omega_{hom}+\cdots
  \end{cases}.
\end{equation}
We comment that in order for the $O(\ep)$ terms in the expansion for $\rt_{ext}$ to have faster decay than the principal term, we need $\mu>0$ which is equivalent to $\g<\frac{6}{5}$. We also comment that when $\g=1$, $(\mu,\nu)=\frac{1}{2},\frac{\sqrt{7}}{2}$, which recovers the numerology observed in the isothermal problem.

Turning now to the interior problem, we shall define a universal profile $u_*$ by solving the linear equation
\begin{equation}
  \label{eq:ustardef}
  \begin{cases}
    ((2-\gamma)y\p_y+2)(\frac{\gamma-1}{\gamma}Q)^{\frac{1}{\gamma-1}}+\div((\frac{\gamma-1}{\gamma}Q)^{\frac{1}{\gamma-1}} u_*)=0\\
    u_*(0)=0,\qquad u_*'(0)=-\frac{2}{3}
  \end{cases}.
\end{equation}
where $Q$ is the solution to the Lane-Emden Equation \eqref{laneemdeneq} with $Q(0)=\frac{\gamma}{\gamma-1}$. When we linearize \eqref{laneemdeneq} about the far-field solution, the linearized operator is (after flipping signs)
\[
  \Ht:=-(\Delta+4\pi(\frac{\gamma-1}{\gamma})^{\frac{1}{\gamma-1}}\frac{1}{\gamma-1}Q_f^{\frac{2-\gamma}{\gamma-1}}),
\]
which is simplified to
\begin{equation}
  \label{eq:Htdef}
  \tilde{H}:=-(\Delta+\frac{2(4-3\gamma)}{(2-\gamma)^2}\frac{1}{y^2}).
\end{equation}
In this case, the fundamental solutions of $\tilde{H}$ are given by $y^{-\frac{1}{2}\pm i\nu}$. This is used to justify the asymptotics
\[
  Q=\frac{\gamma}{\gamma-1}\frac{k^{\gamma-1}}{y^{\frac{2(\gamma-1)}{2-\gamma}}}+\frac{\gamma}{k^{2-\gamma}}\frac{c_2 \sin(\nu \log(y)+d_2)}{y^{\frac{2(\g-1)}{2-\g}+\mu}}+O(\frac{1}{y^{\frac{2(\g-1)}{2-\g}+2\mu}}).
\]
We comment again, that here the condition $\g<\frac{6}{5}$ plays a role to ensure that the first term is in fact of leading order as $y\to \infty$. 

This is in terms of enthalpy variables, converting to density yields
\[
  (\frac{\gamma-1}{\gamma})^{\frac{1}{\gamma-1}}Q^{\frac{1}{\gamma-1}}=\frac{k}{y^{\frac{2}{2-\gamma}}}+\frac{c_2 \sin(\nu \log(y)+d_2)}{y^{\frac{2}{2-\g}+\mu}}+O(\frac{1}{y^{\frac{2}{2-\gamma}+2\mu}})
\]
To solve for $u_*$ it is useful to use the Lane-Emden equation to rewrite $(\frac{\gamma-1}{\gamma}Q)^{\frac{1}{\gamma-1}}=-\frac{1}{4\pi}\Delta Q$ and then to use the identity
\[
  ((2-\gamma)y\p_y+2)\circ \Delta=\Delta\circ \left((2-\gamma)y\p_y+2(\gamma-1)\right)
\]
This allows us to solve for $u_*$ directly to be
\[
  u_*=(\frac{\gamma-1}{\gamma}Q)^{-\frac{1}{\gamma-1}}\frac{1}{4\pi}\p_y((2-\gamma)y\p_y Q+2(\gamma-1)Q).
\]
From this, we see that as $y\to \infty$, we have that
\begin{equation}
  u_*(y)=k^{-1}(2-\gamma)^{\frac{3}{2}}c_2\sin(\nu \log(y)+d_2+\theta_0)y^{1-\mu}+O(y^{1-2\mu}).
  \label{ustarasymptot}
\end{equation}
For completeness, we also mention that from the Lane-Emden equation and $Q(0)=\frac{\gamma}{\gamma-1}$, we see that as $y\to 0$, $Q(y)=\frac{\gamma}{\gamma-1}-\frac{4\pi}{6}y^2+o(y^2)$ which gives $u_*'(0)=-\frac{2}{3}$.

For $\la$ sufficiently small, we will construct an interior solution of the form
\begin{equation}
  \label{eq:interior_heuristics}
  \begin{cases}
    \tilde{\rho}_{int}=\frac{1}{\lambda^{\frac{2}{2-\gamma}}}(\frac{\gamma-1}{\gamma}Q)^{\frac{1}{\gamma-1}}(\cdot/\lambda)+\cdots\\
    \tilde{u}_{int}=\lambda u_*(\cdot/\lambda)+\cdots
  \end{cases}.
\end{equation}

Turning our attention to the matching, when we evaluate the exterior solutions, \eqref{eq:exterior_heuristics}, at $y_0$ we will obtain
\[
  \begin{cases}
    \rt_{ext}(y_0)-\frac{k}{y_0^{\frac{2}{2-\gamma}}}&=\epsilon \frac{1}{y_0^{\frac{2}{2-\g}+\mu}}c_1\sin(\nu \log(y_0)+d_1)+O(\ep y_0^{-\mu}) \\
    \frac{\ut_{ext}(y_0)}{y_0^{\frac{4-\gamma}{2-\gamma}}}&=\frac{\epsilon}{y_0^{\frac{2}{2-\g}+\mu}}\frac{(2-\gamma)^{\frac{3}{2}}}{k}c_1 \sin(\nu \log(y_0)+d_1+\theta_0)+O(\ep y_0^{-\mu})
  \end{cases}.
\]
Similarly, evaluating the interior solutions, \eqref{eq:interior_heuristics}, at $y_0$ will give
\[
  \begin{cases}
    \tilde{\rho}_{int}(y_0)-\frac{k}{y_0^{\frac{2}{2-\gamma}}}&=\frac{\lambda^{\mu}}{y_0^{\frac{2}{2-\g}+\mu}}c_2 \sin(\nu\log(y_0/\lambda)+d_2)+O(\la^{\mu} y_0^{-\mu})\\
    \frac{\tilde{u}_{int}(y_0)}{y_0^{\frac{4-\gamma}{2-\gamma}}}&=\frac{\lambda^{\mu}}{y_0^{\frac{2}{2-\g}+\mu}}\frac{c_2(2-\gamma)^{\frac{3}{2}}}{k}\sin(\nu \log(y_0/\la)+d_2+\theta_0)+O(\la^{\mu} y_0^{-\mu})
  \end{cases}.
\]
For simplicity, by the flexibility in choosing $y_0$, we may impose
\begin{equation}
  \label{eq:wigglecond}
  \nu\log(y_0)+d_1=\frac{\pi}{2}\mod 2\pi.
\end{equation}
Matching the interior and exterior solutions will give the equations
\[
  \begin{cases}
    \epsilon c_1+O(\ep y_0^{\frac{2}{2-\g}})&=c_2\lambda^{\mu}\cos(\nu\log(\lambda)+d_1-d_2)+O(\la^{\mu}y_0^{\frac{2}{2-\g}})\\
    \epsilon c_1\cos(\theta_0)+O(\ep y_0^{\frac{2}{2-\g}})&=c_2\lambda^{\mu}\cos(\nu\log(\lambda)+d_1-d_2-\theta_0)+O(\la^{\mu} y_0^{\frac{2}{2-\g}})
  \end{cases}.
\]
Solving the first equation for $\ep$ as a function of $\la$, and then substituting it into the second equation will yield
\[
  0=\sin(\nu \log(\lambda)+d_1-d_2)+O(y_0^{\frac{2}{2-\g}}).
\]
By choosing $y_0$ sufficiently small, we see that this equation will have an infinite discrete family of solutions which will correspond to the Hunter-type solutions. Finally, non-trivial task of verifying analyticity of the solutions is accomplished through application of the framework described in Subsection \ref{sec:sonicpoints}.

\subsection{Organization and notations}
We now summarize the structure of the paper (which is parallel with \cite{sandine}). Section \ref{sec:exterior} details the construction of the exterior family of solutions. Section \ref{section:int} contains the construction of the interior family of solutions. The matching is performed in Section \ref{sec:matching} culminating in the proof of the main theorem. Sections \ref{sec:ext_proofs}, \ref{sec:interior_proofs} contain proofs defered from Sections \ref{sec:exterior} and \ref{section:int} respectively. Finally, Appendix \ref{sec:appendix} contains the relatively self-contained proofs of lemmas used in Section \ref{sec:matching}.

For the reader's convenience we also provide a partial dictionary of notational conventions used in the paper. The meanings of some notations are depend on the section of the paper in which they occur.

Selected section-independent notations:
\begin{itemize}
\item $\g$: polytropic index for the equation of state.
\item $\rt,\ut$: self-similar mass density and velocity.
\item $y$: self-similar coordinate.
\item $\ev{y}$: $\sqrt{1+y^2}$.
\item $\rt_F,\ut_F$: Friedman solution.
\item $\rt_f,\ut_f$: far-field solution.
\item $k$: an explicit constant, see \eqref{eq:kdef}.
\item $y_*$: a sonic point.
\item $y_f$: the sonic point for the far-field solution, see \eqref{eq:yf_def}.
\item $Q$: Lane-Emden solution (at the level of enthalpy).
\item $Q_f$: far-field enthalpy.
\item $u_*$: a leading order velocity derived from the Lane-Emden solution, see \eqref{eq:ustardef}.
\item $\mu,\nu$: exponents arising from linearized analysis, see \eqref{eq:munudef}.
\item $\theta_0$: angle arising from linearized analysis, see \eqref{eq:angledef}.
\item $y_0$: self-similar coordinate at which the matching occurs.
\item $\ep$: smallness parameter for the exterior region.
\item $\la$: used for scaling, used as a smallness parameter for the interior region.
\end{itemize}

Selected notations used in Section \ref{sec:intro}:
\begin{itemize}
\item $(t,x)$: physical time and space coordinates.
\item $(\rho,u)$: the physical (i.e. non-self-similar) mass density and velocity.

\item $r=\abs{x}$: physical radial coordinate.
\item $(u_1,u_2)$: variables for the ODE normal form definition.
\item $z_0,z_1,z_2$: dummy variables for $t,u_1,u_2$ for the ODE normal form definition. 
\item $a,b,c,d$: characteristic parameters for the ODE normal form definition.
\item $U$: solution to a quadratic equation associated to the ODE normal form.
\item $\kappa$: parameter for the ODE lemmas.   
\item $(\rc,\uc)$: variables for putting the self-similar Euler-Poisson system in normal form.
\end{itemize}

Selected notations used in Sections \ref{sec:exterior}, \ref{sec:ext_proofs}:
\begin{itemize}
\item $(\pt,\ot)$: convenient variables for the exterior analysis.
\item $\alpha,\beta$: parameters for changing coordinates to $z$.
\item $z$: choice of coordinates which fixes the location of the sonic point.
\item $(\pul,\oul)$: a convenient choice of variables in $z$ coordinates.
\item $(\ph,\oh)$: variables for the difference equation.
\item $(\Et,\Ft)$, $(\Eul,\Ful)$ and $(\Eh,\Fh)$: coefficient functions for the equations involving $(\pt,\ot)$, $(\pul,\oul)$ and $(\ph,\oh)$ respectively.
\item $(a,b)$ dummy variables for $(\pt,\ot)$, $(\pul,\oul)$ or $(\ph,\oh)$.
\item $\pr_0,\omgr_0,\yr_*,\pr_1,\omgr_1$: smooth functions of $\ep$ that describe the sonic point conditions.
\item $\pr_{0,d}$, $\omgr_{0,d}$, $\pr_{1,d}$, $\omgr_{1,d}$, difference quotients associated to the above functions.
\item $\pr_{0,e}$, $\omgr_{0,e}$, $\pr_{1,e}$, $\omgr_{1,e}$, further ``error'' difference quotients associated to the above functions.
\item $\sr_1,\sr_2,\sr_3$: smooth functions of $\ep$ that describe compatibility conditions.
\item $E_{lin}$, $F_{lin}$: linearizations of $\Eh$, $\Fh$.
\item $L$ and $L_1,L_2$, the linearized operator and its components.
\item $\Nh$: the non-linearity in the difference equation.
\item $\s_1,\s_2,\s_3$: variables used for describing combatibility conditions.
\item $(p_{hom},\o_{hom})$: homogeneous solution for $L$.
\item $U_{0}$, $U_{\infty}$: fundamental matrices for $L$ on $(0,1)$ and $(1,\infty)$.
\item $R$: left-inverse to $L$ constructed in Lemma \ref{ext_fun_matrix_lemma}.
\item $X_{y_0},N_{y_0}$: Banach spaces used for the iteration.
\item $X_{y_0}[\ep],N_{y_0}[\ep]$, $X_{y_0}[\ep_1,\ep_2]$, $N_{y_0}[\ep_1,\ep_2]$: affine subspaces of $X_{y_0},N_{y_0}$.
\item $(p,\o)$: final difference variables.
\end{itemize}

Selected notations used in Sections \ref{section:int}, \ref{sec:interior_proofs}:
\begin{itemize}
\item $\ot$: self-similar enthalpy.
\item $H$: linearization of Lane-Emden equation about $Q$.
\item $\Ht$: linearization of Lane-Emden equation about $Q_f$.
\item $\wb,\ub$: a change of variables used for a heuristic.
\item $J,T,L,S$: linear operators arising from the asymptotic expansion.
\item $J_{\la},L_{\la},H_{\la},S_{\la},T_{\la},K_{\la}$: rescaled linear operators.
\item $F_{\la},G_{\la,1},G_{\la,2},G_{\la}$: rescaled non-linear operators.
\item $Z,Y,\calG,\calF$: Banach spaces used for the iteration.
\item $(w,u)$: difference variables. 
\end{itemize}

Selected notations used in Section \ref{sec:matching}:
\begin{itemize}
\item $\calF,\calG$: functions describing the matching.
\end{itemize}

Selected notations used in Appendix \ref{sec:appendix}.
\begin{itemize}
\item $\rs,\us$: preliminary variables for putting the equations in normal form.
\end{itemize}
\section{Solving in the exterior region}
\label{sec:exterior}
We begin by choosing convenient variables for the exterior problem. In this case, we define the variables $\pt,\ot$ implicitly via
\[
  \tilde{\rho}=\frac{\pt}{y^{\frac{2}{2-\gamma}}},\qquad \ut=y(\ot-(2-\g)).
\]
In these variables, the self-similar equations \eqref{polytropicselfsimeq} take the form
\begin{equation}
  \begin{cases}
    (\pt \ot y)'+2\frac{1-\g}{2-\g}\pt \ot-(4-3\g)\pt&=0\\
    \ot(y(\ot-(2-\gamma)))'+\frac{\g \pt'}{y^{\frac{\g}{2-\g}\pt^{2-\g}}}+(\g-1)(\ot-(2-\g))&\\
    \qquad+\frac{1}{y^{\frac{2}{2-\g}}}(\frac{4\pi \pt \ot}{4-3\g}-\frac{2\g}{2-\g}\pt^{\g-1})&=0
  \end{cases}.
  \label{pomegaeq}
\end{equation}
In these variables, the far-field solution is
\[
  (\pt,\ot)=(k,2-\g).
\]
We will be considering a perturbation of the far-field solution, and so we note that we may write the system more abstractly as
\begin{equation}
  \Et(y,\pt,\ot)\mqty{\pt'\\ \ot'}+\Ft(y,\pt,\ot)=0.
  \label{pomegaeqabstract}
\end{equation}
where (using dummy variables $\a,\b$ for $\pt,\ot$)
\[
  \Et(y,\a,\b)=\mqty{\b y & \a y\\ \frac{\g}{y^{\frac{\g}{2-\g}}} \frac{1}{\a^{2-\g}}& \b y},
\]
\[
  \Ft(y,\a,\b)=\mqty{\frac{4-3\g}{2-\g}\a(b-(2-\g))\\\b(\b-(2-\g))+(\g-1)(b-(2-\g))+\frac{1}{y^{\frac{2}{2-\g}}}(\frac{4\pi}{4-3\g}\a\b-\frac{2\g}{2-\g} \a^{\g-1})}.
\]
We note that $\Et,\Ft$ are smooth with respect to $a,b$ in a neighbhorhood of $\pt,\ot=(k,2-\g)$ and we have
\[
  \Ft(y,k,2-\g)=0.
\]
We now discuss boundary conditions at the sonic points in these variables. We comment that in terms of $p,\o$, the condition \eqref{intermedstep0} becomes
\begin{equation}
  y_*=\frac{\g^{\frac{2-\g}{2}}\pt(y_*)^{\frac{(\g-1)(2-\g)}{2}}}{\ot(y_*)^{2-\g}}.
  \label{nonlineq1}
\end{equation}
If we substitute this into \eqref{eq:rho0sonic} we get the equation
\begin{equation}
  \pt(y_*)=\frac{\g^{\frac{1}{2-\g}}}{\ot(y_*)^{\frac{2}{2-\g}}}(\frac{4-3\g}{4\pi}\frac{2\ot(y_*)^2+(\g-1)\ot(y_*)+(2-\g)(\g-1)}{\ot(y_*)})^{\frac{1}{2-\g}}.
  \label{nonlineq2}
\end{equation}
Indeed, we see that if $\ot(y_*)$ is determined, then by \eqref{nonlineq2}, $\pt(y_*)$ is determined, and these together determine $y_*$ by \eqref{nonlineq1}.

To compute the Larson-Penston-Hunter-type conditions for the derivatives of $\pt,\ot$, we see that $R$, defined in Lemma \ref{lem:normalform}, takes the form
\[
  R=\frac{y_* \pt'(y_*)}{\pt(y_*)}-\frac{2}{2-\g}
\]
and recall that it has to satisfy a quadratic equation, \eqref{eq:Rquadratic}. Once $R$ is determined, then we can solve for $W=y_*\tilde{\omega}'(y_*)$ by recalling that it must satisfy
\begin{equation}
  \ot(y_*)R+W=4-3\g-3\ot(y_*).
  \label{nonlineq3}
\end{equation}
We comment that from \eqref{eq:yf_def}, we may derive the useful identities
\[
  ky_f^{-\frac{2}{2-\g}}=\frac{4-3\g}{2\pi},\qquad \g k^{\g-2}=\frac{(2-\g)^2 y_f^{\frac{2}{2-\g}}}{k}.
\]
We parameterize the exterior solutions by the value $\ot(y_*)$ which we set to be $\omgr_0[\ep]=\frac{2-\g}{1+\epsilon}$. One then defines $\pr_0[\ep]$ by substituting $\ot(y_*)=\omgr_0[\ep]$ into \eqref{nonlineq2} and solving for $\pt(y_*)$. Similarly, we then define $\yr_*[\ep]$ via \eqref{nonlineq1}. One then defines, for $\epsilon$ close to $0$, that $\mathring{R}[\epsilon]$ is the solution branch near $\mathring{R}[0]=-\frac{2}{2-\g}$ of \eqref{eq:Rquadratic}. Finally, one defines $\mathring{W}[\epsilon]$ via \eqref{nonlineq3}.
We then define
\[
  \pr_1[\ep]:=\frac{\mathring{p}[\epsilon]}{\mathring{y}_*[\epsilon]}\left(\mathring{R}[\epsilon]+\frac{2}{2-\g}\right),\qquad \omgr_1[\ep]:=\frac{\mathring{W}[\epsilon]}{\mathring{y}_*[\epsilon]}.
\]
Thus, in terms of the variables $(\pt,\ot)$, Larson-Penston-Hunter-type expansion conditions at $y_*=\mathring{y}_*[\epsilon]$ become
\begin{equation}
  \pt(y_*)=\pr_0[\epsilon],\qquad \ot(y_*)=\omgr_0[\epsilon],\qquad \pt'(y_*)=\pr_1[\ep],\qquad \ot'(y_*)=\omgr_1[\ep].
  \label{posonicconds}
\end{equation}
As a comment, we see from the above identities that as $\epsilon\to 0$, we have
\begin{equation}
  \omgr_0[\epsilon]=(2-\g)-(2-\g)\epsilon+O(\epsilon^2),
  \label{oepsilonexpansion}
\end{equation}
\begin{equation}
  \pr_0[\epsilon]=k+\frac{3\g-1}{2(2-\g)}k\epsilon+O(\epsilon^2),
  \label{pepsilonexpansion}
\end{equation}
\[
  \mathring{y}_*[\epsilon]=y_f+\frac{3\g^2-8\g+9}{4}y_f\epsilon+O(\epsilon^2),
\]
\[
  \mathring{R}[\epsilon]=-\frac{2}{2-\g}+\frac{-9\g^2+9\g+2}{(5\g-3)(2-\g)}\epsilon+O(\epsilon^2),
\]
\[
  \mathring{W}[\epsilon]=\frac{2(7-3\g)(\g-1)}{5\g-3}\epsilon+O(\epsilon^2).
\]
In particular, these imply that
\begin{equation}
  \pr_{1}[\ep]=\frac{(-9\g^2+9\g+2)}{(5\g-3)(2-\g)}\frac{k}{y_f}\epsilon+O(\epsilon^2),
  \label{pderepsilonexpansion}
\end{equation}
\begin{equation}
  \omgr_1[\ep]=\frac{2(7-3\g)(\g-1)}{(5\g-3)y_f}\epsilon+O(\epsilon^2).
  \label{oderepsilonexpansion}
\end{equation}
We now do an $\epsilon$-dependent change of variables that will move the sonic point from $y_0$ to $y_f$. In particular, we will define
\[
  \alpha:=\frac{y_f-y_0}{\mathring{y}_*-y_0},\qquad \beta=:\frac{\mathring{y}_*-y_f}{\mathring{y}_*-y_0}y_0,\qquad z:=\alpha y+\beta,
\]
\begin{equation}
  \pt:=\ul{p}(\alpha y+\beta),\qquad \ot:=\ul{\o}(\alpha y+\beta).
  \label{ulsub}
\end{equation}
As a comment, $\al,\be$ are chosen precisely so that
\[
  \al y_0+\be=y_0,\qquad \al y_*+\be=y_f.
\]
We note that as $\ep\to 0$, we have
\[
  \al=1+O(\ep),\qquad \be=O(\ep).
\]
This gives the system
\[
  \Eul(z,\pul,\oul,y_0,\ep)\mqty{\pul'\\ \oul'}+\Ful(z,\pul,\oul,y_0,\ep)=0
\]
where (in terms of the dummy variables $a,b$ for $\pt,\ot$)
\[
  \Eul(z,a,b,y_0,\ep):=\al \Et(\frac{z-\beta}{\al},a,b),\qquad \Ful(z,\a,\b,y_0,\ep):=\Ft(\frac{z-\be}{\al},a,b).
\]
We note that in these variables, the sonic point boundary conditions become
\[
  \pul(y_f)=\pr_0[\ep],\qquad \oul(y_f)=\omgr_0[\ep],\qquad \pul'(y_f)=\frac{1}{\al}\pr_0[\ep],\qquad \oul'(y_f)=\frac{1}{\al}\omgr_1[\ep].
\]
We then define $\ph,\oh$ to be first order difference quotients, which are defined implicitly by
\begin{equation}
  \begin{cases}
    \pul=k+\ep \ph\\
    \oul=2-\g+\ep\oh
  \end{cases}.
  \label{hatsub}
\end{equation}
To translate the boundary conditions to these variables, we define
\[
  \begin{cases}
    \pr_{0,d}:=\frac{\pr[\ep]-k}{\ep}\\
    \omgr_{0,d}:=\frac{\omgr[\ep]-(2-\g)}{\ep}\\
    \pr_{1,d}:=\frac{1}{\al \ep}\pr_1[\ep]\\
    \omgr_{1,d}:=\frac{1}{\al \ep}\omgr_1[\ep]
  \end{cases}.
\]
and note that the conditions for $\ph,\oh$ are
\[
  \ph(\yf)=\pr_{0,d}[\ep],\qquad 
  \oh(\yf)=\omgr_{0,d}[\ep],\qquad 
  \ph'(\yf)=\pr_{1,d}[\ep],\qquad
  \oh'(\yf)=\omgr_{1,d}[\ep].
\]
We also note that
\begin{equation}
  \label{eq:sonic_cond_epsilon_a}
  \pr_{0,d}[\ep]=\frac{(3\g-1)}{2(2-\g)}k+O(\ep),\qquad \pr_{1,d}[\ep]=\frac{-9\g^2+9\g+2}{(5\g-3)(2-\g)}\frac{k}{y_f}+O(\ep),
\end{equation}
\begin{equation}
  \label{eq:sonic_cond_epsilon_b}
  \omgr_{0,d}[\ep]=-(2-\g)+O(\ep),\qquad \omgr_{1,d}[\ep]=\frac{2(\g-1)(7-3\g)}{5\g-3}\frac{1}{y_f}+O(\ep).
\end{equation}
As a comment, unlike \cite{sandine}, we do not perform additional manipulations to make these constants independent of $\ep$, since this seems to add additional complications in the case of $\g>1$. Substituting the change of variables \eqref{ulsub} into \eqref{pomegaeqabstract} yields
\begin{equation}
  \Eul(z,\pul,\oul,y_0,\epsilon)\p_z \mqty{\pul\\ \oul}+\Ful(z,\pul,\oul,y_0,\epsilon)=0.
  \label{pouleqabstract}
\end{equation}
where
\[
  \Eul(z,\a,\b,y_0,\ep)=\alpha\cdot \Et(\frac{z-\be}{\alpha},\a,\b),\qquad \Ful(z,\pul,\oul,y_0,\ep)=\Ft(\frac{z-\be}{\alpha},\a,\b).
\]
We then substitute change of variables \eqref{hatsub} into \eqref{pouleqabstract} and obtain the equation
\begin{equation}
  \Eh(z,\ph,\oh,y_0,\ep)\p_{z}\mqty{\ph\\ \oh}+\Fh(z,\ph,\oh,y_0,\ep)=0,
  \label{hateq}
\end{equation}
where
\[
  \Eh(z,\a,\b,y_0,\ep):=\Eul(z,k+\ep \a,2-\g+\ep \b,y_0,\ep),
\]
\[
  \Fh(z,\a,\b,y_0,\ep):=\frac{1}{\ep}\Ful(z,k+\ep \a,2-\g+\ep \b,y_0,\ep).
\]
\subsection{First order difference quotients in $\ep$}
We now perform several Taylor expansions, beginning by expanding the symbols in terms of the dummy variables $\a,\b$. We start by noting that
\[
  \Et(y,\a,\b)=\mqty{(2-\g)y & k y \\ \frac{(2-\g)^2 y_f^{\frac{2}{2-\g}}}{k}\frac{y}{y^{\frac{2}{2-\g}}} & (2-\g)y}+O(\a-k)+O(\b-(2-\g)),
\]
It follows that uniformly for $z\in (\frac12,2)$, we have that
\[
  \Eul(z,a,b,y_0,\ep)=\mqty{(2-\g)z & k z \\ \frac{(2-\g)^2 y_f^{\frac{2}{2-\g}}}{k}\frac{z}{z^{\frac{2}{2-\g}}} & (2-\g)z}+O(\a-k)+O(\b-(2-\g))+O(\ep).
\]
We then have that
\[
  \Eh(z,\a,\b,y_0,\ep)=\mqty{(2-\g)z & k z \\ \frac{(2-\g)^2 y_f^{\frac{2}{2-\g}}}{k}\frac{z}{z^{\frac{2}{2-\g}}} & (2-\g)z}+O(\ep).
\]

We similarly Taylor expand
\begin{multline*}
  \Ft(y,a,b)=\mqty{0 & \frac{4-3\g}{2-\g}k\\ \frac{2(2-\g)^2 y_f^{\frac{2}{2-\g}}}{y^{\frac{2}{2-\g}}k} & 1+\frac{2y_f^{\frac{2}{2-\g}}}{y^{\frac{2}{2-\g}}}}\mqty{a-k\\ b-(2-\g)}\\+O((a-k)^2)+O((a-k)(b-(2-\g)))+O((b-(2-\g))^2).
\end{multline*}
From this it follows that
\begin{multline*}
  \Ful(z,a,b,y_0,\ep)=\mqty{0 & \frac{4-3\g}{2-\g}k\\ \frac{2(2-\g)^2 y_f^{\frac{2}{2-\g}}}{z^{\frac{2}{2-\g}}k} & 1+\frac{2y_f^{\frac{2}{2-\g}}}{z^{\frac{2}{2-\g}}}}\mqty{a-k\\ b-(2-\g)}\\+O((a-k)^2)+O((a-k)(b-(2-\g)))+O((b-(2-\g))^2)+O(\ep).
\end{multline*}
It then follows that
\[
  \Fh(z,a,b,y_0,\ep)=\mqty{0 & \frac{4-3\g}{2-\g}k\\ \frac{2(2-\g)^2 y_f^{\frac{2}{2-\g}}}{z^{\frac{2}{2-\g}} k} & 1+\frac{2y_f^{\frac{2}{2-\g}}}{z^{\frac{2}{2-\g}}}}\mqty{a\\ b}+O(\ep).
\]
Identitfying the leading terms for $\Eh$, $\Fh$, we introduce the notations (note that $F_{lin}$ is a $2\times 2$ matrix while $\hat{F}$ is a vector)
\[
  E_{lin}(z)=\mqty{(2-\g)z & k z \\ \frac{(2-\g)^2 y_f^{\frac{2}{2-\g}}}{k}\frac{z}{z^{\frac{2}{2-\g}}} & (2-\g)z},\qquad F_{lin}(z)=\mqty{0 & \frac{4-3\g}{2-\g}k\\ \frac{2(2-\g)^2 y_f^{\frac{2}{2-\g}}}{z^{\frac{2}{2-\g}}k} & 1+\frac{2y_f^{\frac{2}{2-\g}}}{z^{\frac{2}{2-\g}}}}.
\]
We then define the linear operator
\[
  L(\hat{p},\hat{\o})=\mqty{L_1(\hat{p},\hat{\o})\\ L_{2}(\hat{p},\hat{\o})}:=E_{lin}\dv{z} \mqty{\hat{p}\\ \hat{\o}}+F_{lin}\mqty{\hat{p}\\ \hat{\o}}.
\]
and note formally that as $\ep\to 0$, the linearization of \eqref{hateq} is $L(\hat{p},\hat{\o})=0$.

As a comment, before writing down the difference equations, we will let
\[
  S[\ep]=\mqty{1 & 1\\ -\frac{\pr_0[\ep]}{\omgr_0[\ep]} & \frac{\pr_0[\ep]}{\omgr_0[\ep]}}\mqty{1 & 1\\ -\frac{k}{2-\g} & \frac{k}{2-\g}}^{-1}=\mqty{1 & 0\\ 0 & \frac{2-\g}{k}\frac{\pr_0[\ep]}{\omgr_0[\ep]}}.
\]
and multiply on the left by $S^T$ \eqref{hateq}. This symmetrization is done to fix a loss of derivatives that occurs when doing difference estimates, and we note that to zeroth order in $\ep$, $S$ is the identity matrix. Using this matrix we define
\begin{multline}
  \label{nhdef}
  \Nh(\ep,\ph,\oh)=\mqty{\Nh_1\\ \Nh_2}\\:=-\left(S^T \Eh(y,\ph,\oh,y_0,\ep)-E_{lin}\right)\mqty{\ph'\\ \oh'}-\left(S^T \Fh(y,\ph,\oh,y_0,\ep)-F_{lin}\mqty{\ph\\ \oh}\right).
\end{multline}
We rewrite the non-linear system as
\begin{equation}
  \begin{cases}
    L(\hat{p},\hat{\o}):=\Nh(\ep,\ph,\oh)\\
    \ph(\yf)=\pr_{0,d}[\ep],\qquad 
    \oh(\yf)=\omgr_{0,d}[\ep],\qquad 
    \ph'(\yf)=\pr_{1,d}[\ep],\qquad
    \oh'(\yf)=\omgr_{1,d}[\ep]
  \end{cases}.
  \label{nonlinhatvars}
\end{equation}
\subsection{Properties of the linearized operator}
We consider the linearized equation with an inhomogeneity,
\begin{equation}
  \mqty{(2-\g)z & kz\\ \frac{(2-\g)^2 y_f^{\frac{2}{2-\g}}}{k} z^{1-\frac{2}{2-\g}} & (2-\g)z}\mqty{p'\\ \o'}+\mqty{0 & \frac{4-3\g}{2-\g}k\\ \frac{2(2-\g)^2 y_f^{\frac{2}{2-\g}}}{k z^{\frac{2}{2-\g}}} & 1+\frac{2y_f^{\frac{2}{2-\g}}}{z^{\frac{2}{2-\g}}}}\mqty{p\\ \o}=\mqty{P\\ \Omega}.
  \label{inhomsys}
\end{equation}
We note that at $z=y_f$, we can simplify these matrices to zeroth order as
\[
  \mqty{(2-\g)y_f & k y_f\\ \frac{(2-\g)^2}{k}y_f & (2-\g)y_f},\qquad \mqty{0 & \frac{4-3\g}{2-\g}k\\ \frac{2(2-\g)^2}{k} & 3}
\]
and in particular the coefficient matrix has eigenvalues $\{0,2(2-\g)y_f\}$. Left-eigenvectors corresponding to each of these are
\[
  \mqty{1 & -\frac{k}{2-\g}},\qquad \mqty{1 & \frac{k}{2-\g}}
\]
When we multiply the equation (evaluated at $y=y_f$) on the left by each of these respectively, we obtain the following constraints:
\begin{equation}
  \begin{cases}
    -2(2-\g)p(y_f)-\frac{3\g-1}{2-\g}k\omega(y_f)&=P(y_f)-\frac{k}{2-\g}\Omega(y_f)\\
    2(2-\g)y_f p'(y_f)+2k y_f \omega'(y_f)&\\
    \qquad+2(2-\g)p(y_f)+\frac{7-3\g}{2-\g}k\omega(y_f)&=P(y_f)+\frac{k}{2-\g}\Omega(y_f)
  \end{cases}.
  \label{firstmatchingconds}
\end{equation}
If we start with \eqref{inhomsys}, invert the coefficient matrix, and multiply on the left by $\mqty{1 & -\frac{k}{2-\g}}$, we obain
\[
  -2(2-\g)\frac{p}{z-y_f}-\frac{k(3\g-1)}{2-\g}\frac{\o}{z-y_f}=\frac{(P-\frac{k}{2-\g}\Omega)}{z-y_f}+o(\frac{1}{z-y_f}).
\]
We see that if $(p,\o)\in C^1$ then
\[
  \frac{P-\frac{k}{2-\g}\Omega-(P(y_f)-\frac{k}{2-\g}\Omega(y_f))}{z-y_f}
\]
must be continuous on a neighborhood of $z=y_f$. Multiplying the original equation by $\mqty{1 & -\frac{k}{2-\g}}$ and expanding to $o(z-y_f)$ gives
\begin{multline*}
  2(z-y_f)p'(y_f)+\mqty{-2(2-\g)p(y_f)-\frac{3\g-1}{2-\g} \omega(y_f) }\\
  +\left(-2(2-\g)p'(y_f)-\frac{(3\g-1)k \omega'(y_f)}{2-\g}\right)(z-y_f)+(\frac{4p(y_f)}{y_*}+\frac{4 k \omega(y_f)}{(2-\g)^2y_f})(z-y_f)\\
  =P(y_f)-\frac{k}{2-\g}\Omega(y_f)+(P-\frac{k}{2-\g}\Omega)'(y_f)\cdot (z-y_f)+o(z-y_f).
\end{multline*}
By \eqref{firstmatchingconds}, the equations agree to order $0$. Comparing the terms of order $1$, we get the following constraint
\begin{equation}
  2(\g-1)y_f p'(y_f)-\frac{(3\g-1)}{2-\g}k y_fw'(y_f)+4p(y_f)+\frac{4}{(2-\g)^2}k\omega(y_f)=y_f(P-\frac{k}{2-\g}\Omega)'(y_f).
  \label{secondmatchingconds}
\end{equation}
We now suppose that
\begin{equation}
  P=\sigma_1+o(1),\qquad \O=\sigma_2+o(1),\qquad \frac{P-\frac{k}{2-\g}\O}{z-y_f}=\frac{(\sigma_1-\frac{k}{2-\g}\sigma_2)}{z-y_f}+\sigma_3+o(1).
  \label{rhsconstraints}
\end{equation}
We will also assume
\begin{equation}
  p=p_0+p_1(z-y_f)+o(z-y_f),\qquad \omg=\omg_0+\frac{1}{y_f}\omg_1(z-y_f)+o(z-y_f).
  \label{lhscoes}
\end{equation}
In terms of these notations, the system \eqref{firstmatchingconds}, \eqref{secondmatchingconds} becomes
\[
  \begin{cases}
    -2(2-\g)p_0-\frac{3\g-1}{2-\g}k \omg_0=\sigma_1-\frac{k}{2-\g}\sigma_2\\
    2(2-\g)y_fp_1+2 k y_f \o_1+2(2-\g)p_0+\frac{7-3\g}{2-\g}k \omg_0=\sigma_1+\frac{k}{2-\g}\sigma_2\\
    2(\g-1)y_fp_1-\frac{(3\g-1)}{2-\g}y_f k \o_1+4p_0+\frac{4}{(2-\g)^2}k \omg_0=y_f\sigma_3
  \end{cases}.
\]
We will consider $\o_0$ to be a free variable. From the first equation, we have that
\begin{equation}
  p_0=-\frac{(3\g-1)}{2(2-\g)^2}k \o_0-\frac{\sigma_1}{2(2-\g)}+\frac{k \sigma_2}{2(2-\g)^2}.
  \label{p0affine}
\end{equation}
We may then substitute this into the next two equations to obtain the system
\[
  \begin{cases}
    2(2-\g)y_fp_1+2y_fk\o_1+\frac{2(4-3\g)}{(2-\g)}k\omg_0=2\sigma_1\\
    2(\g-1)y_fp_1-\frac{3\g-1}{2-\g}y_f k\omg_1-\frac{6(\g-1)}{(2-\g)^2}k\omg_0=\frac{2}{2-\g}\sigma_1-\frac{2k\sigma_2}{(2-\g)^2}+y_f\sigma_3
  \end{cases}.
\]
We may multiply the first of these equations by $\frac{\g-1}{2-\g}$, and then subtract the second equation from it to obtain
\begin{equation}
  \o_1=-\frac{2(\g-1)(7-3\g)}{(2-\g)(5\g-3)}\frac{1}{y_f}\o_0-\frac{2(2-\g)}{5\g-3}\frac{1}{y_f k}\sigma_1+\frac{2}{(5\g-3)(2-\g)}\frac{1}{y_f}\sigma_2-\frac{2-\g}{5\g-3}\frac{1}{k}\sigma_3.
  \label{o1affine}
\end{equation}
\begin{multline}
  p_1=-\frac{-9\g^2+9\g+2}{(2-\g)^2(5\g-3)}\frac{k}{y_f}\o_0+\frac{3\g+1}{(2-\g)(5\g-3)}\frac{1}{y_f}\sigma_1\\
  -\frac{2}{(5\g-3)(2-\g)^2}\frac{k}{y_f}\sigma_2+\frac{1}{5\g-3}\sigma_3.
  \label{p1affine}
\end{multline}
We comment that if $\sigma_1=\sigma_2=\sigma_3=0$, then, by plugging in $\o_0=-(2-\g)$, the homogeneous solution has first-order Taylor expansion
\begin{equation}
  \begin{cases}
    p&=\frac{(3\g-1)}{2(2-\g)}k+\frac{-9\g^2+9\g+2}{(5\g-3)(2-\g)}\frac{k}{y_f}(z-y_f)+o(z-y_f)\\
    \omega&=-(2-\g)+\frac{2(\g-1)(7-3\g)}{5\g-3}\frac{1}{y_f}(z-y_f)+o(z-y_f)
  \end{cases}.
  \label{lineartaylorcoes}
\end{equation}
These Taylor coefficients are consistent with those computed in \eqref{eq:sonic_cond_epsilon_a}, \eqref{eq:sonic_cond_epsilon_b}. We now state the main estimate regarding the linearized operator. As a comment, in comparison to the analogous statement in \cite{sandine}, part (e) is now modified, as in this problem it is more convenient to allow for non-zero values of $\sigma_1,\sigma_2,\sigma_3$.
\begin{lem}
  \label{ext_fun_matrix_lemma}
  \begin{enumerate}
  \item
    There exists a unique homogeneous solution $(p_{hom}(z),\o_{hom}(z))$ such that $L(p_{hom},\o_{hom})=0$ and such that its first order Taylor polynomials at $z=y_f$ agree with \eqref{lineartaylorcoes}.
  \item
    There exists analytic fundamental matrices $U_{\infty}(z)$, $U_0(z)$ for $L$ defined on $(1,\infty)$ and $(0,1)$ respectively. Moreover, these can be chosen such that as $z\to \infty$
    \[
      U_{\infty}(z)=
      \mqty{1& -\frac{3(\g-1)k}{2-\g}\frac{1}{z^{\frac{1}{2-\g}}}\\
        0 & \frac{1}{z^{\frac{1}{2-\g}}}}+\mqty{O(\frac{1}{z^{\frac{2}{2-\g}}}) & O(\frac{1}{z^{\frac{3}{2-\g}}})\\ O(\frac{1}{z^{\frac{2}{2-\g}}}) & O(\frac{1}{z^{\frac{3}{2-\g}}})},
    \]
    \[
      U_{\infty}^{-1}=\mqty{1 & \frac{3(\g-1)k}{2-\g}\\ 0 &  z^{\frac{1}{2-\g}}}+\mqty{O(\frac{1}{z^{\frac{2}{2-\g}}}) & O(\frac{1}{z^{\frac{2}{2-\g}}})\\ O(\frac{1}{z^{\frac{1}{2-\g}}}) & O(\frac{1}{z^{\frac{1}{2-\g}}})},
    \]
    and as $z\to 0$,
    \[
      U_0(z)=\frac{1}{z^{\mu}}\mqty{\cos(\nu \log(z)) & \sin(\nu \log(z))\\ \cos(\nu \log(z)+\theta_0) & \sin(\nu \log(z)+\theta_0)}+O(z^{-\mu+\frac{2}{2-\g}}),
    \]
    \[
      U_0^{-1}(z)=\frac{z^{\mu}}{\sin(\theta_0)}\mqty{\sin(\nu \log(z)+\theta_0) & -\sin(\nu \log(z))\\ -\cos(\nu \log(z)+\theta_0) & \cos(\nu \log(z))}+O(z^{\mu+\frac{2}{2-\g}}).
    \]
  \item
    There exists $\mu_3,\mu_4,c_1,d_1$with $\sqrt{\mu_3^2+\mu_4^2}\neq 0$, $c_1>0$, $d_1\in \R/2\pi \Z$ such that on $(1,\infty)$ and $(0,1)$ respectively we have
    \begin{equation}
      \mqty{p_{hom}\\ \o_{hom}}=U_{\infty}\mqty{\mu_3\\ \mu_4},\qquad \mqty{p_{hom}\\ \o_{hom}}=U_0\mqty{c_1 \sin(d_1)\\ c_1\cos(d_1)}.
      \label{hom0asymptots}
    \end{equation}
  \item
    Also, there exists constant invertible matrices $M_0,M_{\infty}$ such that as $z\to 1^{\pm}$, we have
    \begin{multline*}
      U_{\infty}M_{\infty},U_0M_0=\mqty{\frac{(3\g-1)k}{2(2-\g)} &\frac{(3\g-1)k}{2(2-\g)} \abs{z-y_f}^{-\frac{5(\g-1)}{2}}\\ -(2-\g)& -\frac{3\g-1}{2}\abs{z-y_f}^{-\frac{5(\g-1)}{2}}}\\
      +\mqty{\frac{(-9\g^2+9\g+2)}{(2-\g)(5\g-3)}\frac{k}{y_f}(z-y_f) & \frac{(3 \gamma -1) \left(25 \gamma ^3-136 \gamma ^2+191 \gamma -72\right) k}{8 (2-\gamma )^2 (7-5 \gamma ) y_f}(z-y_f)\abs{z-y_f}^{-\frac{5(\g-1)}{2}}\\ \frac{2(\g-1)(7-3\g)}{y_f(5\g-3)}(z-y_f) & -\frac{(\gamma -1) (3 \gamma -1) \left(25 \gamma ^2-111 \gamma +104\right)}{8 (2-\gamma ) (7-5 \gamma ) y_f}(z-y_f)\abs{z-y_f}^{-\frac{5(\g-1)}{2}}}\\
      +\mqty{o(z-y_f) & o(z-y_f)\abs{z-y_f}^{-\frac{5(\g-1)}{2}}\\ o(z-y_f) & o(z-y_f)\abs{z-y_f}^{-\frac{5(\g-1)}{2}}}.
    \end{multline*}
  \item
    If we define the principal matrix solution as
    \[
      S(z,z')=\begin{cases}
        U_0(z)U_0^{-1}(z')\mqty{(2-\g)z' & kz'\\ \frac{2\pi(2-\g)^2}{4-3\g}(z')^{1-\frac{2}{2-\g}} & (2-\g)z'}^{-1},\qquad z,z'\in (0,1)\\
        U_\infty(z)U_\infty^{-1}(z')\mqty{(2-\g)z' & kz'\\ \frac{2\pi(2-\g)^2}{4-3\g}(z')^{1-\frac{2}{2-\g}} & (2-\g)z'}^{-1},\qquad z,z'\in (1,\infty)
      \end{cases},
    \]
    then if $P(z),\Omega(z),\frac{P-\frac{k}{2-\g}\Omega-P(y_f)-\frac{k}{2-\g}\Omega(y_f)}{y-y_f}$ are continuous on $(0,\infty)$ and satisfy \eqref{rhsconstraints} then
    \[
      (p,\o):=R(P(z),\O(z)):=\int_{1}^zS(z,z')\mqty{P(z')\\ \O(z')}dz'
    \]
    defines an element of $C^1((0,\infty))$ which solves $L(p,\o)=(P,\O)$ and the coefficients (defined by \eqref{lhscoes}) satisfy \eqref{p0affine}, \eqref{o1affine}, \eqref{p1affine} and also
    \[
      \o_0=-\frac{\frac{(2-\gamma )}{k} \sigma _1-\sigma _2}{5 (\gamma -1)}.
    \]
  \end{enumerate}
\end{lem}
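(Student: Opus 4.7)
The plan is to apply the local analyticity lemmas from Subsection \ref{sec:sonicpoints} at each singular point of $L$ — the regular singular points at $z=0$ and $z=\infty$, and the sonic point at $z=y_f$ — and then to assemble the pieces by linear algebra. For part (a), I would apply Lemma \ref{lem:main_analyticity_lemma} at $z=y_f$, using the characteristic data produced by applying Lemma \ref{lem:normalform} to the linearization of \eqref{polytropicselfsimeq} about the far-field solution. This gives $\kappa = 5(\gamma-1)/2 \in (0, 1/4)$ for $\gamma \in (1, 6/5)$, which avoids $\Z_{\leq -2}$, so the lemma produces a unique analytic local solution whose two-jet is prescribed; the derivation leading to \eqref{lineartaylorcoes} shows the Taylor data there is exactly consistent with the sonic conditions \eqref{firstmatchingconds}, \eqref{secondmatchingconds}. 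Analytic continuation along the smooth intervals on either side of $y_f$ then gives the global $(p_{hom}, \omega_{hom})$. For part (b), $L$ has analytic coefficients on each of $(0,y_f)$ and $(y_f,\infty)$, so standard ODE theory gives analytic fundamental matrices there; the asymptotic forms at $z=0$ and $z=\infty$ come from Frobenius analysis at these regular singular points. At $z=0$ the residue matrix was computed explicitly in Subsection \ref{sec:intro} and has eigenvalues $-\mu \pm i\nu$; diagonalization via the basis exhibited earlier, followed by a geometric-series control of the subleading Fuchsian tail (whose spectral gap is $2/(2-\gamma)$), produces the stated columns with $O(z^{-\mu + 2/(2-\gamma)})$ errors. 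An analogous analysis at $z=\infty$ via $z \mapsto 1/z$ yields the exponents $0$ and $1/(2-\gamma)$ and the claimed remainder.

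Part (c) is then pure linear algebra: on $(0,y_f)$ the analytic homogeneous solution $(p_{hom},\omega_{hom})$ must equal $U_0$ times a constant vector, and the parameterization $(c_1 \sin d_1, c_1 \cos d_1)$ is just a choice of coordinates; the same applies on $(y_f,\infty)$ with $(\mu_3,\mu_4)$. Nonvanishing of $c_1$ (and of $(\mu_3,\mu_4)$) follows by contradiction, since otherwise $(p_{hom},\omega_{hom})$ would vanish on a half-interval and hence, by uniqueness, would also vanish at $y_f$, contradicting the prescribed nonzero two-jet. For part (d), I would use Lemma \ref{lem:main_analyticity_lemma} once more to identify a local basis at $y_f$. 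One element is the analytic Larson-Penston-Hunter branch with two-jet \eqref{lineartaylorcoes}; the other corresponds to the non-LPH root of the characteristic quadratic \eqref{eq:characteristic_quadratic}, which by the structure of Lemma \ref{lem:normalform} produces a solution behaving as $\abs{z-y_f}^{-5(\gamma-1)/2}$ times an analytic function. The matrices $M_0, M_\infty$ are the (invertible) transition matrices from the $U_0, U_\infty$ normalizations to this sonic-point basis; invertibility is automatic because the two local branches differ in their order of singularity at $y_f$. The explicit $(z-y_f)^1$ correction coefficients displayed are then produced by pushing both local expansions to one more order and solving the resulting explicit $2\times 2$ linear systems, whose coefficients are determined by the once-differentiated ODE at $y_f$.

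For part (e), I would build $R$ by variation of parameters, so that differentiating the displayed integral and using $L U_\ast = 0$ gives $L R(P,\Omega) = (P,\Omega)$ on each of $(0,y_f)$ and $(y_f,\infty)$ by inspection. The main obstacle, and the technical heart of the lemma, is establishing $C^1$ extension across $z = y_f$: the inverse matrix $U_\ast^{-1}$ has a column scaling like $\abs{z'-y_f}^{-5(\gamma-1)/2}$ by part (d), so pairing against a generic inhomogeneity would give a non-integrable integrand. The compatibility assumption \eqref{rhsconstraints} is precisely the statement that the component of $(P,\Omega)$ along the singular left-eigenvector vanishes to first order at $y_f$, so that the integrand becomes locally integrable across $y_f$. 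Granted integrability, expanding $S(z,z')$ via part (d), substituting the expansions \eqref{rhsconstraints}, and collecting orders at $z=y_f$ yields continuity and then recovers the stated Taylor coefficients \eqref{p0affine}, \eqref{o1affine}, \eqref{p1affine}, together with the supplementary formula for $\omega_0$; this last formula reflects the specific constant of integration built into starting the integral at $z=1$. The careful bookkeeping in extracting these explicit coefficients, and in ensuring they match the sonic-point compatibility calculations leading to \eqref{p0affine}--\eqref{p1affine}, is where I expect the bulk of the work to lie.
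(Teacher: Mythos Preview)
Your outline is sound and would establish the lemma, but the paper takes a completely different and much more explicit route. Rather than invoking the abstract normal-form lemmas or Frobenius theory, the paper performs a sequence of linear changes of variable that reduces $L(p,\omega)=0$ to a scalar second-order ODE for $\bar{p} = p + \tfrac{3(\gamma-1)k}{2-\gamma}\omega$, and then the substitutions $x = \tfrac{4-3\gamma}{2\pi k}\, z^{2/(2-\gamma)}$, $\xi = 1 - 1/x$ identify that ODE with Gauss's hypergeometric equation with parameters $a=\tfrac{2-\gamma}{2}(-\mu-i\nu)$, $b=\tfrac{2-\gamma}{2}(-\mu+i\nu)$, $c=\tfrac{5\gamma-3}{2}$. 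All of parts (a)--(d) are then read off from explicit hypergeometric functions: $(p_{hom},\omega_{hom})$ comes from the solution $F(a,b,c,\xi)$ analytic at $\xi=0$; the columns of $U_\infty$ and $U_0$ from the standard Kummer solutions at $\xi=1$ and $\xi=\infty$ respectively; and the singular local solution at $y_f$ in part (d) from $g_2(\xi)=|\xi|^{-5(\gamma-1)/2}F(\cdot,\cdot,\tfrac{7-5\gamma}{2},\xi)$. Part (e) is then a direct matrix computation (labeled $A_0,A_1,A_2,I_0,I_1$ in the paper) using these explicit expansions.

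Your abstract approach via the normal-form lemmas and Frobenius expansions would succeed and is arguably more in line with the paper's stated philosophy of black-boxing the local singular theory. The cost is that the explicit next-order constants in part (d) and the formula $\omega_0 = -\tfrac{(2-\gamma)\sigma_1/k - \sigma_2}{5(\gamma-1)}$ in part (e) would have to be extracted by brute-force order-by-order Frobenius computation rather than by reading off known hypergeometric Taylor coefficients; the hypergeometric identification makes that bookkeeping substantially more transparent and also delivers analyticity of the fundamental matrices directly, without any appeal to Lemma~\ref{lem:main_analyticity_lemma}. One small inaccuracy in your sketch of (e): the compatibility condition \eqref{rhsconstraints} does not force the singular-direction component of $(P,\Omega)$ to vanish at $y_f$; it only forces the relevant difference quotient to be continuous, which is what actually makes the variation-of-parameters integral $C^1$ across the sonic point.
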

For the proof see Section \ref{ext_fun_matrix_proof}.
\subsection{Sonic point conditions at the difference equation level}
\label{sec:ext_diffeq}

We now note that if $p,\omg$ are $C^1$ then as $y\to y_*$, we have
\begin{align*}
  \Et(y_*,\pt,\ot)&:=\Et(y_*,\pt(y_*),\ot(y_*))+\Et_1(y_*,\pt(y_*),\ot(y_*),\pt'(y_*),\ot'(y_*))(y-y_*)+o(y-y_*),\\
  \Ft(y_*,\pt,\ot)&:=\Ft(y_*,\pt(y_*),\ot(y_*))+\Ft_1(y_*,\pt(y_*),\ot(y_*),\pt'(y_*),\ot'(y_*))(y-y_*)+o(y-y_*),
\end{align*}
where
\begin{align*}
  \Et_1(y,\a,\b,\a_1,\b_1)&=(\pdv[\Et]{y}+\pdv[\Et]{\a}\a_1+\pdv[\Et]{\b}\b_1)|_{y,\a,\b},\\
  \Ft_1(y,\a,\b,\a_1,\b_1)&=(\pdv[\Ft]{y}+\pdv[\Ft]{\a}\a_1+\pdv[\Ft]{\b}\b_1)|_{y,\a,\b}.
\end{align*}
We now note that by \eqref{nonlineq1}, \eqref{nonlineq2}, \eqref{nonlineq3} and \eqref{eq:Rquadratic} that the following equations hold
\begin{equation}
  \begin{cases}
    \mqty{1& -\frac{\pr_0[\ep]}{\omgr_0[\ep]}}\Et(\yr_*[\ep],\pr_0[\ep],\omgr_0[\ep])&=0\\
    \mqty{1& -\frac{\pr_0[\ep]}{\omgr_0[\ep]}}\Ft(\yr_*[\ep],\pr_0[\ep],\omgr_0[\ep])&=0\\
    \mqty{1& \frac{\pr_0[\ep]}{\omgr_0[\ep]}}\Et(\yr_*[\ep],\pr_0[\ep],\omgr_0[\ep])\cdot \mqty{\pr_1[\ep]\\ \omgr_1[\ep]}+\mqty{1& \frac{\pr_0[\ep]}{\omgr_0[\ep]}}\Ft(\yr_*[\ep],\pr_0[\ep],\omgr_0[\ep])&=0\\
    \mqty{1& -\frac{\pr_0[\ep]}{\omgr_0[\ep]}}\Et_1(\yr_*[\ep],\pr_0[\ep],\omgr_0[\ep],\pr_1[\ep],\omgr_1[\ep])\mqty{\pr_1[\ep]\\ \omgr_1[\ep]}&\\
    \qquad+\mqty{1& -\frac{\pr_0[\ep]}{\omgr_0[\ep]}}\Ft_1(\yr_*[\ep],\pr_0[\ep],\omgr_0[\ep],\pr_1[\ep],\omgr_1[\ep])&=0
  \end{cases}.
  \label{constrainttilde}
\end{equation}
We next transition variables. We define $\Eul_1,\Ful_1,\Eh_1,\Eh_2$ by requiring
\begin{align*}
  \Eul(y_f,\pul,\oul)&=\Eul(y_f,\pul(y_f),\oul(y_f))+\Eul_1(y_f,\pul(y_f),\oul(y_f),\pul'(y_f),\oul'(y_f))(z-y_f)+o(z-y_f),\\
  \Ful(y_f,\pul,\oul)&=\Ful(y_f,\pul(y_f),\oul(y_f))+\Ful_1(y_f,\pul(y_f),\oul(y_f),\pul'(y_f),\oul'(y_f))(z-y_f)+o(z-y_f),\\
  \Eh(y_f,\ph,\oh)&=\Eh(y_f,\ph(y_f),\oh(y_f))+\Eh_1(y_f,\ph(y_f),\oh(y_f),\ph'(y_f),\oh'(y_f))(z-y_f)+o(z-y_f),\\
  \Fh(y_f,\ph,\oh)&=\Fh(y_f,\ph(y_f),\oh(y_f))+\Fh_1(y_f,\ph(y_f),\oh(y_f),\ph'(y_f),\oh'(y_f))(z-y_f)+o(z-y_f)
\end{align*}
and note that these are given in terms of the previous functions by
\begin{align*}
  \Eul_1(z,\a,\b,\a_1,\b_2,y_0,\ep)&=\Et_1(\frac{z-\be}{\al},a,b,\al a_1,\al b_1),\\
  \Ful_1(z,\a,\b,\a_1,\b_2,y_0,\ep)&=\frac{1}{\al}\Ft_1(\frac{z-\be}{\al},a,b,\al a_1,\al b_1),\\
  \Eh_1(z,\a,\b,\a_1,\b_2,y_0,\ep)&=\Eul_1(z,k+\ep a,2-\g+\ep b,\ep a_1,\ep b_1,y_0,\ep),\\
  \Fh_1(z,\a,\b,\a_1,\b_2,y_0,\ep)&=\frac{1}{\ep}\Ful_1(z,k+\ep a,2-\g+\ep b,\ep a_1,\ep b_1,y_0,\ep).
\end{align*}
We then define the ``difference'' equation boundary condition functions
\[
  \begin{cases}
    \pr_{0,d}=\frac{\pr[\ep]-k}{\ep}\\
    \omgr_{0,d}=\frac{\omgr[\ep]-(2-\g)}{\ep}\\
    \pr_{1,d}=\frac{1}{\al \ep}\pr_1[\ep]\\
    \omgr_{1,d}=\frac{1}{\al \ep}\omgr_1[\ep]
  \end{cases}.
\]
From this we obtain the identities
\begin{equation}
  \begin{cases}
    \mqty{1& -\frac{\pr_0[\ep]}{\omgr_0[\ep]}}\Eh(\yf,\pr_{0,d}[\ep],\omgr_{0,d}[\ep])&=0\\
    \mqty{1& -\frac{\pr_0[\ep]}{\omgr_0[\ep]}}\Fh(\yf,\pr_{0,d}[\ep],\omgr_{0,d}[\ep])&=0\\
    \mqty{1& \frac{\pr_0[\ep]}{\omgr_0[\ep]}}\Eh(\yf,\pr_{0,d}[\ep],\omgr_{0,d}[\ep])\cdot \mqty{\pr_{1,d}[\ep]\\ \omgr_{1,d}[\ep]}+\mqty{1& \frac{\pr_0[\ep]}{\omgr_0[\ep]}}\Fh(\yf,\pr_{0,d}[\ep],\omgr_{0,d}[\ep])&=0\\
    \mqty{1& -\frac{\pr_0[\ep]}{\omgr_0[\ep]}}\Eh_1(\yf,\pr_{0,d}[\ep],\omgr_{0,d}[\ep],\pr_{1,d}[\ep],\omgr_{1,d}[\ep])\mqty{\pr_{1,d}[\ep]\\ \omgr_{1,d}[\ep]}&\\
    \qquad+\mqty{1& -\frac{\pr_0[\ep]}{\omgr_0[\ep]}}\Fh_1(\yf,\pr_{0,d}[\ep],\omgr_{0,d}[\ep],\pr_{1,d}[\ep],\omgr_{1,d}[\ep])&=0
  \end{cases}.
  \label{constrainthat}
\end{equation}
In particular, we return now to the difference equation \eqref{nonlinhatvars}. We suppose that the input functions, $(\ph_{in},\oh_{in})$, satisfy
\begin{equation}
  \ph_{in}(\yf)=\pr_{0,d}[\ep],\qquad 
  \oh_{in}(\yf)=\omgr_{0,d}[\ep],\qquad 
  \ph_{in}'(\yf)=\pr_{1,d}[\ep],\qquad
  \oh_{in}'(\yf)=\omgr_{1,d}[\ep].
  \label{inputboundaryconds}
\end{equation}
We may then note that
\begin{align*}
  \mqty{1 & -\frac{k}{2-\g}}\hat{N}(\ep,\ph_{in},\oh_{in})&=-\mqty{1 & -\frac{\pr_0}{\omgr_0}}\Eh\mqty{\ph_{in}'\\ \oh_{in}'}+\mqty{1 & -\frac{k}{2-\g}}E_{lin}\mqty{\ph_{in}'\\ \oh_{in}'}\\
          &\qquad-\mqty{1 & -\frac{\pr_0}{\omgr_0}}\Fh+\mqty{1 & -\frac{k}{2-\g}}F_{lin}\mqty{\ph_{in}\\ \oh_{in}},\\
  \mqty{1 & \frac{k}{2-\g}}\hat{N}(\ep,\ph_{in},\oh_{in})&=-\mqty{1 & \frac{\pr_0}{\omgr_0}}\Eh\mqty{\ph_{in}'\\ \oh_{in}'}+\mqty{1 & \frac{k}{2-\g}}E_{lin}\mqty{\ph_{in}'\\ \oh_{in}'}\\
          &\qquad-\mqty{1 & \frac{\pr_0}{\omgr_0}}\Fh+\mqty{1 & \frac{k}{2-\g}}F_{lin}\mqty{\ph_{in}\\ \oh_{in}}.
\end{align*}
We comment that by \eqref{inputboundaryconds}, \eqref{constrainthat} we have
\begin{align*}
  \mqty{1 & -\frac{\pr_0}{\omgr_0}}\Eh\mqty{\ph_{in}'\\ \oh_{in}'}+\mqty{1 & -\frac{\pr_0}{\omgr_0}}\Fh&=o(z-y_f),\\
  \mqty{1 & -\frac{k}{2-\g}}E_{lin}&=\mqty{2 & 0}(z-y_f)+o(z-y_f),\\
  \mqty{1 & -\frac{k}{2-\g}}F_{lin}&=\mqty{-2(2-\g) & -\frac{(3\g-1)k}{2-\g}}+\mqty{\frac{4}{y_f} & \frac{4k}{(2-\g)^2 y_f}}(z-y_f)+o(z-y_f),\\
  \mqty{1 & \frac{\pr_0}{\omgr_0}}\Eh\mqty{\ph_{in}'\\ \oh_{in}'}+\mqty{1 & \frac{\pr_0}{\omgr_0}}\Fh&=o(1),\\
  \mqty{1 & \frac{k}{2-\g}}E_{lin}&=\mqty{2(2-\g)y_f & 2k y_f}+o(1),\\
  \mqty{1 & \frac{k}{2-\g}}F_{lin}&=\mqty{2(2-\g) & (7-3\g)\frac{k}{2-\g}}+o(1).
\end{align*}
In particular, $\Nh$ satisfies \eqref{rhsconstraints} with coefficients
\[
  \begin{cases}
    \sr_1[\ep]:=(2-\g)y_f\pr_{1,d}+y_f k \omgr_{1,d}+\frac{4-3\g}{(2-\g)}k \omgr_{0,d}\\
    \sr_2[\ep]:=\frac{(2-\g)^2 y_f}{k}\pr_{1,d}+(2-\g)y_f \omgr_{1,d}+\frac{2(2-\g)^2}{k}\pr_{0,d}+3\omgr_{0,d}\\
    \sr_3[\ep]:=2(\g-1)\pr_{1,d}-\frac{(3\g-1)}{2-\g}k\omgr_{1,d}+\frac{4}{y_f}\pr_{0,d}+\frac{4 k }{(2-\g)^2y_f}\omgr_{0,d}
  \end{cases}.
\]
In this case, if one substitutes $\sr_1[\ep],\sr_2[\ep],\sr_3[\ep],\omgr_{0,d}$ along with into the equations \eqref{p0affine}, \eqref{o1affine}, \eqref{p1affine} then they are consistent with
\[
  (p_0,p_1,\omg_1)=(\pr_{0,d},\pr_{1,d},\omgr_{1,d}).
\]
\subsection{Difference equations and iteration scheme}
We now define the ``error'' boundary condition parameterization functions to be
\[
  \pr_{0,e}[\ep]=\pr_{0,d}[\ep]-\frac{(3\g-1)}{2(2-\g)}k,\qquad \omgr_{0,e}[\ep]=\omgr_{0,d}[\ep]+(2-\g)
\]
\[
  \pr_{1,e}[\ep]=\pr_{1,d}[\ep]-\frac{-9\g^2+9\g+2}{(5\g-3)(2-\g)}\frac{k}{y_f},\qquad \omgr_{1,e}=\omgr_{1,d}[\ep]-\frac{2(\g-1)(7-3\g)}{5\g-3}\frac{1}{y_f}.
\]
We note that
\[
  \pr_{0,e},\omgr_{0,e},\pr_{1,e},\omgr_{1,e}=O(\ep).
\]
We also have that in terms of these coefficients
\[
  \begin{cases}
    \sr_1[\ep]=(2-\g)y_f\pr_{1,e}+y_f k \omgr_{1,e}+\frac{4-3\g}{(2-\g)}k \omgr_{0,e}\\
    \sr_2[\ep]=\frac{(2-\g)^2 y_f}{k}\pr_{1,e}+(2-\g)y_f \omgr_{1,e}+\frac{2(2-\g)^2}{k}\pr_{0,d}+3\omgr_{0,e}\\
    \sr_3[\ep]=2(\g-1)\pr_{1,e}-\frac{(3\g-1)}{2-\g}k\omgr_{1,e}+\frac{4}{y_f}\pr_{0,e}+\frac{4 k }{(2-\g)^2y_f}\omgr_{0,e}
  \end{cases},
\]
so in particular,
\[
  \sr_1[\ep],\sr_2[\ep],\sr_{3}[\ep]=O(\ep).
\]
We now consider \eqref{nonlinhatvars} and substitute
\[
  (\hat{p},\hat{\o})=(p_{hom},\o_{hom})+(p,\o)
\]
which gives the following difference equations:
\begin{equation}
  \begin{cases}
    L(p,\o)=N(\epsilon,p,\o)\\
    p(y_f)=\pr_{0,e},\qquad \o(y_f)=\omgr_{0,e},\qquad p'(y_f)=\omgr_{1,e},\qquad \o'(y_f)=\omgr_{1,e}
    \label{ext_diffeq}
  \end{cases},
\end{equation}
where
\[
  N(\epsilon,p,\o):=\hat{N}(\epsilon,p_{hom}+p,\o_{hom}+\o).
\]
We then rewrite this as
\begin{equation}
  (p,\o)=RN(\epsilon,p,\o)+\left(\frac{\frac{2-\g}{k}\sr_1-\sr_2}{5(\g-1)}+\omgr_{0,e}\right)\frac{1}{-(2-\g)}(p_{hom},\o_{hom})
  \label{ext_integral_diffeq}.
\end{equation}
As a comment, to ensure that we are not dividing by zero in the original equation, we need solutions to satisfy
\[
  k+\ep(p_{hom}+p)>0
\]
The integral equation will be solved using a fixed point method. We first introduce
\[
  D_{\infty}f(z)=-z^{1+\frac{1}{2-\g}}\p_zf
\]
\begin{defin}
  We define the ``solution'' norm to be
  \[
    \norm{(p,\o)}_{X_{y_0}}:=\sup_{z\geq y_f}\abs{p}+\abs{z^{\frac{1}{2-\g}}\o}+\abs{D_{\infty}p}+\abs{D_{\infty}z^{\frac{1}{2-\g}}\o}+\sup_{y_0\leq z\leq y_f}z^{\mu}(\abs{p}+\abs{\o})+z^{\mu+1}(\abs{p'}+\abs{\o'}).
  \]
  We define our ``solution'' Banach space to be
  \begin{multline*}
    X_{y_0}=\{(p,\o)\in C^1([y_0,\infty))\colon \lim_{z\to \infty}D_{\infty}(p),D_{\infty}(z^{\frac{1}{2-\g}} \o) \text{\qquad exists},\qquad \norm{(p,\o)}_{X_{y_0}}<\infty\}.
  \end{multline*}
\end{defin}
\begin{defin}
  We define our ``nonlinear'' norm to be
  \begin{multline*}
    \norm{(P,\O)}_{N_{y_0}}:=\sup_{\frac{1}{2}\leq z\leq 2}\abs{\frac{P-\frac{k}{2-\g}\O-(P(0)-\frac{k}{2-\g}\O(0))}{z-y_f}}\\
    +\sup_{z\geq y_f}\abs{z^{\frac{1}{2-\g}} P}+\abs{z^{\frac{2}{2-\g}}\O}+\sup_{y_0\leq z\leq y_f}\abs{z^{2\mu} P}+\abs{z^{2\mu+\frac{2}{2-\g}} \O}.
  \end{multline*}
  We define our ``nonlinear'' Banach space to be
  \begin{multline*}
    N_{y_0}=\{(P,\O)\in C^0([y_0,\infty))\colon \frac{P-\frac{k}{2-\g}\O}{z-y_f}\in C^0([y_0,\infty)),\qquad \lim_{z\to \infty}(z^{\frac{1}{2-\g}}P,z^{\frac{2}{2-\g}}\O)\text{\qquad exists},\\
    \norm{(P,\O)}_{N_{y_0}}<\infty\}.
  \end{multline*}
\end{defin}
Now for each $\ep>0$, we shall define affine subspaces on which the iteration will take place. We define the following $\ep$-dependent affine spaces,
\[
  X_{y_0}[\ep]=\{(p,\o)\in X_{y_0}\colon p(y_f)=\pr_{0,e}[\ep],\qquad \o(y_f)=\omgr_{0,e}[\ep],\qquad p'(y_f)=\pr_{1,e}[\ep],\qquad \o'(y_f)=\omgr_{1,e}[\ep]\},
\]
\[
  N_{y_0}[\ep]=\{(P,\O)\in N_{y_0}\colon P(y_f)=\sr_1[\ep],\qquad \O(y_f)=\sr_2[\ep],\qquad (P-\frac{k}{2-\g}\O)'(y_f)=\sr_3[\ep]\}.
\]
We also define the difference spaces
\begin{multline*}
  X_{y_0}[\ep_1,\ep_2]=\{(p,\o)\in X_{y_0}\colon p(y_f)=\pr_{0,e}[\ep_1]-\pr_{0,e}[\ep_2],\qquad \o(y_f)=\omgr_{0,e}[\ep_1]-\omgr_{0,e}[\ep_2]\\
  \qquad p'(y_f)=\pr_{1,e}[\ep_1]-\pr_{1,e}[\ep_2],\qquad \o'(y_f)=\omgr_{1,e}[\ep_1]-\omgr_{1,e}[\ep_2]\},
\end{multline*}
\begin{multline*}
  N_{y_0}[\ep_1,\ep_2]=\{(P,\O)\in N_{y_0}\colon P(y_f)=\sr_1[\ep_1]-\sr_1[\ep_2],\qquad \O(y_f)=\sr_2[\ep_1]-\sr_2[\ep_2]\\
  \qquad (P-\frac{k}{2-\g}\O)'(y_f)=\sr_3[\ep_1]-\sr_3[\ep_2]\}.
\end{multline*}
\begin{lem}
  \label{ext_linear_lemma}
  For $y_0\leq \frac{y_f}{2}$ there exists $C_1>1$ such that if $(P,\O)\in N_{y_0}[\ep]$ then
  \[
    (p,\o):=R(P,\O)+\left(\frac{\frac{2-\g}{k}\sr_1[\ep]-\sr_2[\ep]}{5(\g-1)}+\omgr_{0,e}[\ep]\right)\frac{1}{-(2-\g)}(p_{hom},\o_{hom})\in X_{y_0}[\ep]
  \]
  and 
  \begin{equation}
    \label{eq:ext_linear_bound}
    \norm{(p,\o)}_{X_{y_0}}\leq \frac{1}{2}C_1 y_0^{-\mu}\norm{(P,\O)}_{N_{y_0}}+\frac{1}{2}C_1 \abs{\ep}.
  \end{equation}
  Moreover, if for some $\ep_1,\ep_2$, we have that $(P_i,\O_i)\in N_{y_0}[\ep_i]$ and we define
  \[
    (p_i,\o_i)=R(P_i,\O_i)+\left(\frac{\frac{2-\g}{k}\sr_1-\sr_2}{5(\g-1)}+\omgr_{0,e}\right)\frac{1}{-(2-\g)}(p_{hom},\o_{hom})
  \]
  then $(p_1-p_2,\o_1-\o_2)\in X_{y_0}[\ep_1,\ep_2]$ and
  \begin{equation}
    \label{eq:ext_linear_lipschitz}
    \norm{(p_1-p_2,\o_1-\o_2)}_{X_{y_0}}\leq \frac{1}{2}C_1 y_0^{-\mu}\norm{P_1-P_2,\O_1-\O_2}_{N_{y_0}}+\frac{1}{2}C_1\abs{\ep_1-\ep_2}.
  \end{equation}
\end{lem}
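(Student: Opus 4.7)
The plan is to separate the verification of the boundary conditions defining $X_{y_0}[\ep]$ from the $X_{y_0}$-norm bound, exploiting that every map in sight is linear in $(P,\O)$ and smooth in $\ep$, so that the Lipschitz estimate \eqref{eq:ext_linear_lipschitz} follows from essentially the same calculation as the primary bound \eqref{eq:ext_linear_bound}.

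For the boundary conditions, write $(p_R,\o_R):=R(P,\O)$ and $\kappa_\ep := -\frac{1}{2-\g}\left(\frac{\frac{2-\g}{k}\sr_1[\ep]-\sr_2[\ep]}{5(\g-1)}+\omgr_{0,e}[\ep]\right)$, so that $(p,\o)=(p_R,\o_R)+\kappa_\ep(p_{hom},\o_{hom})$. Lemma \ref{ext_fun_matrix_lemma}(e) applied to $(P,\O)\in N_{y_0}[\ep]$ gives $\o_R(y_f)=-\frac{\frac{2-\g}{k}\sr_1-\sr_2}{5(\g-1)}$ and determines the remaining Taylor data $p_R(y_f),p_R'(y_f),\o_R'(y_f)$ from $\o_R(y_f),\sr_1,\sr_2,\sr_3$ via \eqref{p0affine}, \eqref{o1affine}, \eqref{p1affine}. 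Since \eqref{lineartaylorcoes} identifies the Taylor data of $(p_{hom},\o_{hom})$ with the output of those same affine formulas evaluated at $\o_0=-(2-\g)$ and $\sigma_i=0$, the correction $\kappa_\ep(p_{hom},\o_{hom})$ shifts $\o(y_f)$ by exactly $\frac{\frac{2-\g}{k}\sr_1-\sr_2}{5(\g-1)}+\omgr_{0,e}$, producing $\omgr_{0,e}[\ep]$; the simultaneous matching of $p(y_f),p'(y_f),\o'(y_f)$ with $\pr_{0,e},\pr_{1,e},\omgr_{1,e}$ follows by affine linearity combined with the compatibility identity at the end of Subsection \ref{sec:ext_diffeq}, placing $(p,\o)\in X_{y_0}[\ep]$.

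For the norm bound I would estimate $R(P,\O)$ using the integral representation against $S(z,z')$ from Lemma \ref{ext_fun_matrix_lemma}(e), splitting the integral at $z'=1$ and invoking the asymptotics of $U_0,U_\infty$ from part (b). In the region $z\in[y_0,y_f]$, the $z^{-\mu}$ growth of $U_0$ combined with the $z^{2\mu}$ weight built into $\norm{\cdot}_{N_{y_0}}$ produces the $y_0^{-\mu}$ factor in \eqref{eq:ext_linear_bound}. For $z\geq y_f$ the normalization of $U_\infty$ and the decay weights $z^{-1/(2-\g)}$, $z^{-2/(2-\g)}$ built into $\norm{\cdot}_{N_{y_0}}$ control $p$ and $z^{1/(2-\g)}\o$ uniformly, while the explicit expansions of $U_\infty,U_\infty^{-1}$ let me verify that $D_\infty p$ and $D_\infty(z^{1/(2-\g)}\o)$ admit limits at infinity. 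The homogeneous correction contributes at most $|\kappa_\ep|\cdot\norm{(p_{hom},\o_{hom})}_{X_{y_0}}\lesssim|\ep|$, since $\sr_1[\ep],\sr_2[\ep],\omgr_{0,e}[\ep]=O(\ep)$ and $(p_{hom},\o_{hom})\in X_{y_0}$ by parts (a)--(c).

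The hard part is the contribution near the sonic point $z=y_f$, where by Lemma \ref{ext_fun_matrix_lemma}(d) a single column of $U_0 M_0$ and $U_\infty M_\infty$ blows up like $|z-y_f|^{-5(\g-1)/2}$: integrable for $\g<\tfrac{6}{5}$, but requiring genuine cancellation to produce a $C^1$ remainder at $y_f$. The singular column is annihilated on the left by $(1,-\frac{k}{2-\g})$, so its contribution to the integral is driven entirely by $\frac{P-\frac{k}{2-\g}\O-(P(y_f)-\frac{k}{2-\g}\O(y_f))}{z-y_f}$---precisely the Lipschitz-in-$z$ quantity controlled by $\norm{\cdot}_{N_{y_0}}$---and multiplying this by the $|z-y_f|^{-5(\g-1)/2}$ singularity still yields an integrable, bounded contribution. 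The Lipschitz bound \eqref{eq:ext_linear_lipschitz} then follows without further work: $R$ is linear, $(P,\O,\ep)\mapsto\kappa_\ep$ is linear in $(P,\O)$ with smooth $\ep$-dependence, and the parameter differences $\sr_i[\ep_1]-\sr_i[\ep_2],\pr_{j,e}[\ep_1]-\pr_{j,e}[\ep_2],\omgr_{j,e}[\ep_1]-\omgr_{j,e}[\ep_2]$ are by construction the boundary data associated with $N_{y_0}[\ep_1,\ep_2]$ and $X_{y_0}[\ep_1,\ep_2]$, so applying the same argument to $(P_1-P_2,\O_1-\O_2)\in N_{y_0}[\ep_1,\ep_2]$ yields \eqref{eq:ext_linear_lipschitz} with the same constant $C_1$.
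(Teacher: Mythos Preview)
Your overall approach matches the paper's: boundary data via Lemma~\ref{ext_fun_matrix_lemma}(e) together with the compatibility computation at the end of Subsection~\ref{sec:ext_diffeq}; the norm bound via the integral representation of $R$, split into $z\to 0$, $z\to\infty$, and the sonic window; the homogeneous correction contributing $O(|\ep|)$; and the Lipschitz version following by linearity.

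There is, however, a genuine gap in the sonic-point step. First, the claim that the singular column $(p_{sub},\o_{sub})$ is annihilated on the left by $(1,-\tfrac{k}{2-\g})$ is false: from the leading term in Lemma~\ref{ext_fun_matrix_lemma}(d) one gets
\[
(1,-\tfrac{k}{2-\g})\begin{pmatrix}\tfrac{(3\g-1)k}{2(2-\g)}\\-\tfrac{3\g-1}{2}\end{pmatrix}=\tfrac{(3\g-1)k}{2-\g}\neq 0.
\]
The actual cancellation in the kernel is different: near $y_f$ the singular part of $E_{lin}^{-1}(z')$ is rank-one with range spanned by $(1,-\tfrac{2-\g}{k})^T$, which is exactly proportional to the \emph{second} column of $X_0$; hence $X_0^{-1}$ sends it to the $(0,*)$ direction, so the $p_{hom}$-coefficient of the integrand stays bounded while the $p_{sub}$-coefficient carries the $(z'-y_f)^{-1}$ pole but is damped by the $|z'-y_f|^{5(\g-1)/2}$ already present in the second row of $(UM)^{-1}$. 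Second, even with the mechanism corrected, your sketch only delivers boundedness of $(p,\o)$ near $y_f$, not the derivative control $|p'|+|\o'|$ that the $X_{y_0}$-norm requires on $[y_0,y_f]$ (and, via $D_\infty$, on $[y_f,\infty)$). The paper closes this differently: from the integral kernel one first extracts both $C^0$ bounds and a bound on the specific quotient $\bigl|\frac{p+\frac{3\g-1}{2(2-\g)^2}k\o+\text{const}}{z-y_f}\bigr|$ (this is \eqref{intermed52}), and then returns to the ODE in the form $E_{lin}(p,\o)'=(P,\O)-F_{lin}(p,\o)$, inverts $E_{lin}$, and uses that the simple pole of $E_{lin}^{-1}$ is cancelled by the $N_{y_0}$-controlled quotient of $(P,\O)$ together with the just-established quotient of $(p,\o)$, producing a uniform bound on $(p',\o')$ near $y_f$.
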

For a proof see Subsection \ref{ext_linear_proof}.

Also, we have that the non-linearity is bounded as follows.
\begin{lem}
  \label{ext_nonlinear_lemma}
  If $y_0\leq \frac{y_f}{2}$, then for all $C>0$ there exists $C_0(C)\ll 1$ such that if $\epsilon\leq C_0(C)y_0^{\mu}$ then there exists $C_2(C)\geq 1$ such that if $(p,\o)\in X_{y_0}[\ep],\norm{(p,\o)}_{X_{y_0}}\leq C$ then
  \begin{equation}
    \label{eq:ext_nonlinear_bound}
    (P,\O):=N(\epsilon,p,\o)\in N_{y_0}[\ep],\qquad \norm{(P,\O)}_{N_{y_0}}\leq \abs{\epsilon}C_2.
  \end{equation}
  Moreover, if for some $\ep_1,\ep_2$. we have $(p_i,\o_i)\in X_{y_0}[\ep_i]$ then $N(\ep_1,p_1,\o_1)-N(\ep_2,p_2,\o_2)\in N_{y_0}[\ep_1,\ep_2]$ and
  \begin{equation}
    \label{eq:ext_nonlinear_lipschitz}
    \norm{N(\ep_1,p_1,\o_1)-N(\ep_2,p_2,\o_2)}_{N_{y_0}}\leq C_2 \abs{\ep}\norm{(p_1,\o_1)-(p_2,\o_2)}_{X_{y_0}}+C_2\abs{\ep_1-\ep_2}.
  \end{equation}
\end{lem}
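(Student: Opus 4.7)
My plan is to prove the lemma by exploiting the structural decomposition $\hat{N} = \ep \cdot Q(z,\ph,\oh,\ph',\oh',\ep)$ where $Q$ is a smooth function arising from the Taylor expansion of $\Eh, \Fh, S^T$ around the far-field solution. The factor of $\ep$ appears for two distinct reasons. First, $S^T - I = O(\ep)$ by construction. Second, writing $\Eh(z,\a,\b,y_0,\ep) = \al \Et(\tfrac{z-\be}{\al}, k+\ep\a, 2-\g+\ep\b)$ and using $\al = 1+O(\ep)$, $\be = O(\ep)$, Taylor expansion gives $\Eh - E_{lin} = O(\ep)(1+|\a|+|\b|)$. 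Likewise, since $\Ft(y,k,2-\g)\equiv 0$, the constant term in $\Fh$ vanishes, and dividing by $\ep$ in the definition of $\Fh$ produces $\Fh = F_{lin}\mqty{\ph\\\oh} + O(\ep)\cdot p(\ph,\oh)$ for a polynomial $p$. Therefore each term contributing to $\hat{N}$ carries at least one factor of $\ep$, and the proof reduces to tracking the $z$-weighted bounds on a polynomial in $(\ph,\oh,\ph',\oh')$ where $\ph = p_{hom}+p$, $\oh = \o_{hom}+\o$.

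The first task is to verify membership $N(\ep,p,\o)\in N_{y_0}[\ep]$. Since $(p,\o)\in X_{y_0}[\ep]$ combined with \eqref{lineartaylorcoes} forces $(\ph,\oh)=(p_{hom}+p,\o_{hom}+\o)$ to satisfy the input conditions \eqref{inputboundaryconds}, the computations in Subsection~\ref{sec:ext_diffeq} apply verbatim: projecting $\hat{N}(y_f)$ onto $\mqty{1 & \mp k/(2-\g)}$ and using the identities \eqref{constrainthat} shows that $P(y_f)=\sr_1[\ep]$, $\O(y_f)=\sr_2[\ep]$, and that $(P-\tfrac{k}{2-\g}\O)'(y_f)=\sr_3[\ep]$. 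These are precisely the defining conditions of $N_{y_0}[\ep]$, including the regularity condition $(P-\tfrac{k}{2-\g}\O)/(z-y_f)\in C^0$.

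Next I would estimate the three pieces of the $N_{y_0}$ norm. In the exterior region $z\geq y_f$, I would use the asymptotics from Lemma~\ref{ext_fun_matrix_lemma}(b) to bound $p_{hom}=O(1)$ and $\o_{hom}=O(z^{-1/(2-\g)})$, combine with the assumed bounds $|p|\lesssim 1$, $|\o|\lesssim z^{-1/(2-\g)}$, $|D_\infty p|\lesssim 1$, $|D_\infty(z^{1/(2-\g)}\o)|\lesssim 1$ from $X_{y_0}$, and check that each term in $\hat{N}$ inherits the required decay. In the interior region $y_0\leq z\leq y_f$, the homogeneous solution and the perturbation both grow like $z^{-\mu}$, so quadratic combinations yield $O(z^{-2\mu})$ in the first component and $O(z^{-2\mu-2/(2-\g)})$ in the second component (the extra decay comes from the explicit $z^{-2/(2-\g)}$ weights in $\Fh$); these match the $z^{2\mu}|P|$ and $z^{2\mu+2/(2-\g)}|\O|$ weights in the $N_{y_0}$ norm. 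In the near-sonic region $1/2\leq z\leq 2$, the quotient $\big(P-\tfrac{k}{2-\g}\O - [\sr_1-\tfrac{k}{2-\g}\sr_2]\big)/(z-y_f)$ is controlled using the $C^1$ regularity of $\hat{N}$ and its Taylor coefficient at $y_f$ given by $\sr_3=O(\ep)$.

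The Lipschitz estimate \eqref{eq:ext_nonlinear_lipschitz} follows by the standard splitting
\[
N(\ep_1,p_1,\o_1)-N(\ep_2,p_2,\o_2) = [N(\ep_1,p_1,\o_1)-N(\ep_1,p_2,\o_2)] + [N(\ep_1,p_2,\o_2)-N(\ep_2,p_2,\o_2)].
\]
The first bracket is $O(\ep_1)\cdot\norm{(p_1-p_2,\o_1-\o_2)}_{X_{y_0}}$ by the fundamental theorem of calculus in $(\a,\b)$ applied to the already-$\ep$-factored smooth function $Q$, while the second is $O(|\ep_1-\ep_2|)$ by smoothness in $\ep$ (using $C^0$ bounds on the $\ep$-derivative of $Q$). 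The membership $N(\ep_1,p_1,\o_1)-N(\ep_2,p_2,\o_2)\in N_{y_0}[\ep_1,\ep_2]$ is automatic from linearity of the boundary-value trace map combined with part one. The main obstacle will be the interior weighted bounds: one must check that no term of the form (linear in $\oh$) times (something singular at $0$) overshoots the $z^{2\mu+2/(2-\g)}$ weight on $\O$, and the near-$y_f$ difference quotient bound, which requires expansion to second order at the sonic point, must be kept compatible with the identities in \eqref{constrainthat}. These are direct but bookkeeping-intensive computations, largely parallel to the isothermal case in \cite{sandine}.
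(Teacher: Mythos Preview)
Your plan is correct and matches the paper's approach: extract the overall factor of $\ep$ (the paper writes $\hat N=\ep\bigl(\Eh_e(\ph',\oh')^T+\Fh_e\bigr)$ with $\Eh_e=-\int_0^1\p_\ep(S^T\Eh)\,dt$ and similarly for $\Fh_e$), invoke Subsection~\ref{sec:ext_diffeq} for the $N_{y_0}[\ep]$ membership, and do weighted estimates in three zones. The paper organizes the large-$z$ bounds through symbol classes, but your direct route is equivalent.

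One point deserves care. In the exterior region you propose to ``check that each term in $\hat N$ inherits the required decay,'' but for the second component this fails term by term: the $S^T-I$ contribution produces $(s_2-1)(2-\g)z\,\oh'$ from $\Eh$ and $(s_2-1)\oh$ from $\Fh$, each individually only $O(\ep z^{-1/(2-\g)})$, which is too slow for $z^{2/(2-\g)}|\O|\lesssim\ep$. These must be paired, giving $(s_2-1)\bigl[(2-\g)z\oh'+\oh\bigr]=(s_2-1)(2-\g)z^{1-\frac{1}{2-\g}}\p_z(z^{\frac{1}{2-\g}}\oh)=O(\ep z^{-2/(2-\g)})$, and it is precisely the $D_\infty(z^{1/(2-\g)}\oh)$ bound that delivers this. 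The paper flags this as ``since the contributions of $(2-\g)z\p_z\oh+\oh$ will cancel.'' Since you already list that bound among your tools, you have what you need; just be aware the cancellation has to be exhibited. A smaller inaccuracy: in the interior your claimed rate $|\O|=O(z^{-2\mu-2/(2-\g)})$ is not achieved by every term (the $b^2 O(1)$ contribution in $(\Fh)_2$ gives only $O(\ep z^{-2\mu})$), but this is harmless because $z^{2/(2-\g)}\le y_f^{2/(2-\g)}$ on $[y_0,y_f]$.
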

For a proof see Subsection \ref{ext_nonlinear_proof}.
\begin{lem}
  \label{ext_contraction_lemma}
  There exists a universal constant $C_3$ so that if one fixes $y_0\leq \frac{y_f}{2}$ then for $\abs{\ep}\ll y_0^{\mu}$ there exists a unique solution $(p[\ep](y),\o[\ep](y))\in X_{y_0}[\ep]$ to \eqref{ext_diffeq} with
  \[
    \norm{(p[\ep],\o[\ep])}_{X_{y_0}}\leq C_3 \abs{\ep}y_{0}^{-\mu}.
  \]
  Moreover, if $\abs{\ep_1},\abs{\ep_2}\ll y_0^{\mu}$, then we have the Lipschitz estimate
  \begin{equation}
    \norm{(p[\ep_1],\o[\ep_1])-(p[\ep_2],\o[\ep_2])}_{X_{y_0}}\leq C_3y_0^{-\mu}\abs{\ep_1-\ep_2}.
    \label{ext_lipschitz_1}
  \end{equation}
\end{lem}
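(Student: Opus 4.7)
The strategy is a Banach fixed-point argument applied to the integral reformulation \eqref{ext_integral_diffeq}. For each admissible $\ep$, I would introduce the map
\[
  T_\ep(p,\o) := R\,N(\ep,p,\o) + \Bigl(\frac{\frac{2-\g}{k}\sr_1[\ep]-\sr_2[\ep]}{5(\g-1)}+\omgr_{0,e}[\ep]\Bigr)\frac{-1}{2-\g}(p_{hom},\o_{hom}).
\]
Combining Lemma \ref{ext_nonlinear_lemma} (which sends $X_{y_0}[\ep]$ into $N_{y_0}[\ep]$) with Lemma \ref{ext_linear_lemma} (which sends $N_{y_0}[\ep]$ back into $X_{y_0}[\ep]$) shows $T_\ep$ is well-defined as a map $X_{y_0}[\ep]\to X_{y_0}[\ep]$; moreover, the computation at the end of Subsection \ref{sec:ext_diffeq} guarantees that the boundary data of $R\,N(\ep,p,\o)$ plus the homogeneous correction exactly match the boundary data of $X_{y_0}[\ep]$. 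By construction any fixed point of $T_\ep$ solves \eqref{ext_diffeq}.

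Next I would fix once and for all the constant $C=1$ in Lemma \ref{ext_nonlinear_lemma} to obtain a universal $C_2=C_2(1)$, and work in the closed ball $B_\ep \subset X_{y_0}[\ep]$ of radius $C_3|\ep|y_0^{-\mu}$, with $C_3$ to be selected. Provided $|\ep|\leq C_3^{-1} y_0^{\mu}$, elements of $B_\ep$ have $X_{y_0}$-norm $\leq 1$, so chaining \eqref{eq:ext_nonlinear_bound} into \eqref{eq:ext_linear_bound} yields
\[
  \norm{T_\ep(p,\o)}_{X_{y_0}}\leq \tfrac{1}{2}C_1 C_2 \abs{\ep} y_0^{-\mu}+\tfrac{1}{2}C_1\abs{\ep}\leq C_1C_2 \abs{\ep}y_0^{-\mu},
\]
where I used $y_0^\mu \leq (y_f/2)^\mu$. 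Choosing $C_3 := C_1 C_2$ then gives $T_\ep(B_\ep)\subset B_\ep$. For the contraction property, I would apply \eqref{eq:ext_nonlinear_lipschitz} and \eqref{eq:ext_linear_lipschitz} at the common value $\ep_1=\ep_2=\ep$ (so the explicit $\abs{\ep_1-\ep_2}$ terms vanish) to conclude
\[
  \norm{T_\ep(p_1,\o_1)-T_\ep(p_2,\o_2)}_{X_{y_0}}\leq \tfrac{1}{2}C_1 C_2 \abs{\ep}y_0^{-\mu}\norm{(p_1,\o_1)-(p_2,\o_2)}_{X_{y_0}},
\]
which is a strict contraction once $\abs{\ep}\ll y_0^{\mu}$. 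Banach's fixed point theorem then delivers the unique solution $(p[\ep],\o[\ep])\in B_\ep$ with the claimed bound; uniqueness within the whole affine space $X_{y_0}[\ep]$ (or at least within the unit ball) follows because any fixed point automatically satisfies the a priori estimate that places it inside $B_\ep$.

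For the Lipschitz dependence \eqref{ext_lipschitz_1}, I would subtract the two fixed-point equations and apply the full Lipschitz versions of Lemmas \ref{ext_linear_lemma}, \ref{ext_nonlinear_lemma} (now retaining the $\abs{\ep_1-\ep_2}$ contributions):
\[
  \norm{(p[\ep_1],\o[\ep_1])-(p[\ep_2],\o[\ep_2])}_{X_{y_0}}\leq \tfrac{1}{2}C_1C_2\abs{\ep}y_0^{-\mu}\cdot\norm{(p[\ep_1],\o[\ep_1])-(p[\ep_2],\o[\ep_2])}_{X_{y_0}}+C'\abs{\ep_1-\ep_2},
\]
where $\abs{\ep}:=\max(\abs{\ep_1},\abs{\ep_2})$ and $C'$ is built from $C_1,C_2$ and the $y_0^{-\mu}$ factor in \eqref{eq:ext_linear_lipschitz}. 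Smallness of $\abs{\ep}y_0^{-\mu}$ lets me absorb the first term on the right, yielding the desired estimate after relabeling $C_3$ if necessary. The main obstacle in the lemma is really just the bookkeeping needed to verify that the map preserves the affine boundary conditions (handled by the setup in Subsection \ref{sec:ext_diffeq}) and the careful choice of ball radius so that the $C$-dependence in $C_2(C)$ of Lemma \ref{ext_nonlinear_lemma} can be frozen at a universal constant; all the analytic content is already contained in the preceding two lemmas.
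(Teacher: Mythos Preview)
Your proposal is correct and follows essentially the same approach as the paper: both run a Banach fixed-point argument on the integral reformulation \eqref{ext_integral_diffeq}, chain Lemma~\ref{ext_nonlinear_lemma} into Lemma~\ref{ext_linear_lemma} to get a self-map and a contraction on a ball of radius $\sim C_1C_2\abs{\ep}y_0^{-\mu}$, and then obtain the Lipschitz estimate by subtracting the two fixed-point equations and absorbing. The only cosmetic differences are that the paper freezes the constant in Lemma~\ref{ext_nonlinear_lemma} at $C=\tfrac12$ rather than $C=1$ and writes the iteration out explicitly instead of invoking the abstract contraction mapping theorem.
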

For a proof see Subsection \ref{ext_contraction_proof}.

If we convert this back to the original variables we obtain
\[
  \rt=\frac{k}{y^{\frac{2}{2-\g}}}+\ep \frac{p_{hom}(\al y+\be)}{y^{\frac{2}{2-\g}}}+\ep \frac{p[\ep](\al y+\be)}{y^{\frac{2}{2-\g}}},\qquad \ut=\ep y\left(\o_{hom}(\al y+\be)+\o[\ep](\al y+\be)\right).
\]
We may express this as
\[
  \rt=\frac{k}{y^{\frac{2}{2-\g}}}+\ep \frac{p_{hom}(y)}{y^{\frac{2}{2-\g}}}+\ep \rho_{ext}[\ep](y),\qquad \ut=\ep y \o_{hom}(y)+\ep u_{ext}[\ep](y)
\]
where
\begin{align}
  \rho_{ext}[\ep](y)&:=\frac{p_{hom}(\al y+\be)-p_{hom}(y)}{y^{\frac{2}{2-\g}}}+\frac{p[\ep](\al y+\be)}{y^{\frac{2}{2-\g}}},\label{ext_change_variable_1}\\
  u_{ext}[\ep](y)&:=y(\o_{hom}(\al y+\be)-\o_{hom}(y))+y \o[\ep](\al y+\be)\label{ext_change_variable_2}.
\end{align}
Converting the lemma to these variables gives the following result.
\begin{lem}
  \label{ext_construction_lemma}
  For $y_0\leq 1$, $\ep\ll y_0^{\mu}$, there exists $\rt[\ep],\ut[\ep]\in C^1([y_0,\infty))$ which satisfy \eqref{selfsimsys} and may be decomposed as
  \[
    \begin{cases}
      \rt_{ext}[\ep](y)=\frac{k}{y^{\frac{2}{2-\g}}}+\frac{\ep}{y^{\frac{2}{2-\g}}}p_{hom}(y)+\ep \rho_{ext}[\ep](y)\\
      \ut_{ext}[\ep](y)=\ep y \o_{hom}(y)+\ep u_{ext}[\ep](y)
    \end{cases},
  \]
  where we have the bounds
  \begin{equation}
    \label{eq:ext_construction_bound_1}
    \sup_{y_0\leq y\leq 1}\abs{y^{\mu+\frac{2}{2-\g}} \rho_{ext}},\abs{y^{\mu-1} u_{ext}},\abs{y^{\mu+\frac{2}{2-\g}+1}\rho_{ext}'},\abs{y^{\mu}u_{ext}'}\lesssim \ep y_0^{-\mu}
  \end{equation}
  and
  \begin{equation}
    \label{eq:ext_construction_bound_2}
    \sup_{1\leq y\leq \infty}\abs{y^{\frac{2}{2-\g}} \rho_{ext}},\abs{y^{\frac{\g-1}{2-\g}}u_{ext}},\abs{y^{\frac{2}{2-\g}+1}\rho_{ext}'},\abs{y^{\frac{\g-1}{2-\g}+1}u_{ext}'}\lesssim \ep y_0^{-\mu}.
  \end{equation}
  Also we have that $\ep\mapsto \rho_{ext}[\ep](y_0),u_{ext}[\ep](y_0)$ is Lipschitz on a neighborhood of zero, with
  \begin{equation}
    \rho_{ext}[0](y_0)=u_{ext}[0](y_0)=0,
    \label{extremainder1}
  \end{equation}
  \begin{equation}
    \frac{\abs{\rho_{ext}[\ep_1](y_0)-\rho_{ext}[\ep_2](y_0)}}{\ep_1-\ep_2}\lesssim y_0^{-\frac{8-5\g}{2-\g}},\qquad \abs{\frac{u_{ext}[\ep_1](y_0)-u_{ext}[\ep_2](y_0)}{\ep_1-\ep_2}}\lesssim y_0^{\frac{4(\g-1)}{2-\g}}.
    \label{extremainder2}
  \end{equation}
  Finally, we have the monotonicities
  \begin{equation}
    \label{eq:ext_monotonicities}
    (u+(2-\g)y)'\geq \frac{1}{2}(2-\g),\qquad \rt'\leq \frac{1}{2}\frac{2}{2-\g}\frac{k}{y^{\frac{2}{2-\g}+1}}.
  \end{equation}
\end{lem}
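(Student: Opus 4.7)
The plan is to apply the fixed-point result of Lemma \ref{ext_contraction_lemma} and then unwind the successive changes of variables \eqref{ulsub}, \eqref{hatsub}. Let $(p[\ep],\o[\ep])\in X_{y_0}[\ep]$ denote the solution produced there, with $\norm{(p[\ep],\o[\ep])}_{X_{y_0}}\leq C_3\abs{\ep}y_0^{-\mu}$. Setting $(\ph,\oh):=(p_{hom}+p[\ep],\o_{hom}+\o[\ep])$ gives a $C^1$ solution of \eqref{hateq} on $[y_0,\infty)$ satisfying the sonic boundary data, and inverting \eqref{hatsub}, \eqref{ulsub} yields $(\pt,\ot)$, from which $\rt_{ext}[\ep]=\pt/y^{2/(2-\g)}$ and $\ut_{ext}[\ep]=y(\ot-(2-\g))$ give the desired pair. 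The decomposition in the statement is then just a rearrangement of \eqref{ext_change_variable_1}, \eqref{ext_change_variable_2}.

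For the pointwise bounds \eqref{eq:ext_construction_bound_1}, \eqref{eq:ext_construction_bound_2}, I would estimate the two contributions to $\rho_{ext}, u_{ext}$ separately. Since $\al=1+O(\ep)$ and $\be=O(\ep)$ with $\abs{\ep}\ll y_0^{\mu}$, the map $y\mapsto \al y+\be$ is uniformly comparable to the identity on $[y_0,\infty)$, so the terms $p[\ep](\al y+\be)$, $\o[\ep](\al y+\be)$ inherit the weighted bounds of the $X_{y_0}$-norm directly. The difference terms $p_{hom}(\al y+\be)-p_{hom}(y)$ and $\o_{hom}(\al y+\be)-\o_{hom}(y)$ are controlled by the mean value theorem combined with the asymptotics of $(p_{hom},\o_{hom})$ from Lemma \ref{ext_fun_matrix_lemma}: on $[y_0,1]$ one exploits $\abs{p_{hom}'(z)}\lesssim z^{-\mu-1}$, $\abs{\o_{hom}'(z)}\lesssim z^{-\mu-1}$, while on $[1,\infty)$ one uses the analytic decay given by $U_\infty$. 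In both regions $\abs{\al y+\be-y}\lesssim \ep(1+y)$, so the difference acquires the requisite extra factor of $\ep$, which combines with the bound on $(p[\ep],\o[\ep])$ to yield the advertised $\ep y_0^{-\mu}$ estimates.

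For the Lipschitz estimates \eqref{extremainder1}, \eqref{extremainder2}, the key observation is that by the very definition of $\al,\be$ we have $\al y_0+\be=y_0$, so the $p_{hom},\o_{hom}$ differences in \eqref{ext_change_variable_1}, \eqref{ext_change_variable_2} vanish identically at $y=y_0$ and we are reduced to
\[
\rho_{ext}[\ep](y_0)=\frac{p[\ep](y_0)}{y_0^{2/(2-\g)}},\qquad u_{ext}[\ep](y_0)=y_0\,\o[\ep](y_0).
\]
The vanishing at $\ep=0$ is immediate since $(p[0],\o[0])=(0,0)$ by uniqueness. Applying \eqref{ext_lipschitz_1} and the interior $z^{\mu}$ weight in the $X_{y_0}$-norm gives $\abs{p[\ep_1](y_0)-p[\ep_2](y_0)}+\abs{\o[\ep_1](y_0)-\o[\ep_2](y_0)}\lesssim y_0^{-2\mu}\abs{\ep_1-\ep_2}$. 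Dividing through by the appropriate power of $y_0$ and using the identities $2\mu+\frac{2}{2-\g}=\frac{8-5\g}{2-\g}$ and $1-2\mu=\frac{4(\g-1)}{2-\g}$ (immediate from $\mu=\frac{6-5\g}{2(2-\g)}$) produces the claimed exponents.

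The monotonicities \eqref{eq:ext_monotonicities} follow from the smallness of the perturbation against the explicit far-field contribution. Indeed $\ut_{ext}[\ep]+(2-\g)y=y\ot$, and differentiating gives $(\ut_{ext}[\ep]+(2-\g)y)'=\ot+y\ot'=(2-\g)+\ep(\o_{hom}+\o+y(\o_{hom}'+\o'))$; the weighted $X_{y_0}$-control of the perturbation together with the asymptotic bounds on $\o_{hom}$ from Lemma \ref{ext_fun_matrix_lemma}(d) force the $O(\ep)$ correction to be much smaller than $(2-\g)/2$ once $\ep\ll y_0^{\mu}$. The bound on $\rt_{ext}'$ is analogous: the leading far-field contribution $-\frac{2}{2-\g}k y^{-\frac{2}{2-\g}-1}$ is negative, so the stated upper bound is robust to the $O(\ep y_0^{-\mu})$ perturbation. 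I expect the main technical obstacle to be the sharp estimate on $p_{hom}(\al y+\be)-p_{hom}(y)$ uniformly down to $y=y_0$, where the singularity of $p_{hom}$ at zero competes with the smallness of $\ep$; handling this requires carefully absorbing the extra $\ep$ into the weight $y^{-\mu-1}$ and using $\ep\ll y_0^{\mu}$ throughout.
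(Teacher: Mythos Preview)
Your approach is the same as the paper's: derive everything from Lemma \ref{ext_contraction_lemma} together with the definitions \eqref{ext_change_variable_1}--\eqref{ext_change_variable_2}, and for \eqref{eq:ext_monotonicities} bound $\ut'$ and $(\rt-ky^{-2/(2-\g)})'$ by $O(\ep y_0^{-\mu})$ using the asymptotics of $(p_{hom},\o_{hom})$ and the $X_{y_0}$-control on the remainder. Your observation that $\al y_0+\be=y_0$ kills the $p_{hom}$-difference at $y_0$ is exactly what makes \eqref{extremainder2} immediate.

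One detail needs sharpening. From the explicit formula $\be=\frac{\yr_*-y_f}{\yr_*-y_0}y_0$ one has $\be=O(\ep y_0)$, not merely $O(\ep)$, and hence $|\al y+\be-y|\lesssim \ep y$ uniformly on $[y_0,\infty)$. With only your stated bound $|\al y+\be-y|\lesssim \ep(1+y)$, the mean-value estimate on $[y_0,1]$ gives
\[
y^{\mu}\abs{p_{hom}(\al y+\be)-p_{hom}(y)}\lesssim y^{\mu}\cdot \ep\cdot y^{-\mu-1}=\ep y^{-1}\leq \ep y_0^{-1},
\]
which is strictly larger than the required $\ep y_0^{-\mu}$ since $\mu<1$; the argument as written does not close. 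With the sharper input $\lesssim \ep y$ the right-hand side becomes $\ep$, and everything goes through. Also, for the monotonicity step you want the global asymptotics of $(p_{hom},\o_{hom})$ from Lemma \ref{ext_fun_matrix_lemma}(b),(c), not part (d), which only describes behavior near $z=y_f$.
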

\begin{proof}
  The bounds \eqref{eq:ext_construction_bound_1}-\eqref{extremainder2} follow from Lemma \ref{ext_contraction_lemma}, and then the definitions \eqref{ext_change_variable_1}, \eqref{ext_change_variable_2}.
  We then compute
  \begin{equation*}
    \ut'=\ep y \o_{hom}'+\ep \o_{hom}+\ep u',\qquad (\rt-\frac{k}{y^{\frac{2}{2-g}}})'=\ep \frac{p_{hom}'}{y^{\frac{2}{2-\g}}}-\frac{2}{2-\g}\frac{\ep p_{hom}}{y^{\frac{2}{2-\g}+1}}+\ep \rho'.
  \end{equation*}
  We then use the asymptotics for $p_{hom},\o_{hom}$ from Lemma \ref{ext_linear_lemma} to get
  \begin{equation*}
    \abs{\o_{hom}}\lesssim y^{-\mu}\ev{y}^{\mu-\frac{1}{2-\g}},\qquad \abs{y\o_{hom}'}\lesssim y^{-\mu}\ev{y}^{\mu-\frac{1}{2-\g}},
  \end{equation*}
  \begin{equation*}
    \abs{y^{-\frac{2}{2-\g}-1}p_{hom}}\lesssim y^{-\frac{2}{2-\g}-1-\mu}\ev{y}^{\mu},\qquad \abs{y^{-\frac{2}{2-\g}}p_{hom}'}\lesssim y^{-\frac{2}{2-\g}-1-\mu}\ev{y}^{\mu}.
  \end{equation*}
  We also note that by \eqref{eq:ext_construction_bound_1} and \eqref{eq:ext_construction_bound_2} we have
  \begin{equation*}
    \abs{u'}\lesssim \ep y_0^{-\mu}y^{-\mu}\ev{y}^{\mu-\frac{1}{2-\g}},\qquad \abs{\rho'}\lesssim \ep y_0^{-\mu}y^{-\frac{2}{2-\g}-1-\mu}\ev{y}^{\mu}.
  \end{equation*}
  Combining these gives
  \begin{equation*}
    \abs{\ut}'\lesssim \ep y^{-\mu}\ev{y}^{\mu-\frac{1}{2-\g}},\qquad \abs{y^{\frac{2}{2-\g}+1}(\rt-\frac{k}{y^{\frac{2}{2-\g}}})'}\lesssim \ep y^{-\mu}\ev{y}^{\mu}.
  \end{equation*}
  We then note that as $y\geq y_0$, we have that $\ev{y}\lesssim y y_0^{-1}$, so we have
  \begin{equation*}
    \abs{\ut}'\lesssim \ep y_0^{-\mu}\ev{y}^{-\frac{1}{2-\g}},\qquad \abs{y^{\frac{2}{2-\g}+1}(\rt-\frac{k}{y^{\frac{2}{2-\g}}})'}\lesssim \ep y_0^{-\mu}.
  \end{equation*}
  Since we may without loss of generality choose $\ep y_0^{-\mu}\ll 1$, we may assume that these are bounded by $\frac{1}{2}\frac{2}{2-\g}$ and $\frac{1}{2}\frac{2}{2-\g}k$, which proves \eqref{eq:ext_monotonicities}.
\end{proof}
\section{Solving in the interior region}
\label{section:int}
\subsection{Preliminaries on the Lane-Emden Equation}
We introduce the variable $\wt$ (self-similar enthalpy) by 
\[
  \rt=(\frac{\g-1}{\g}\wt)^{\frac{1}{\g-1}}.
\]
We then take the divergence of the second equation of \eqref{selfsimsys} and obtain
\begin{equation}
  \label{eq:enthalpyeq}   
  \begin{cases}
    \left(2+(2-\g)y\p_y\right)(\frac{\g-1}{\g}\wt)^{\frac{1}{\g-1}}+\div\left((\frac{\g-1}{\g}\wt)^{\frac{1}{\g-1}} \ut\right)=0\\
    \div\left(\left((\g-1)+(\ut+(2-\g) y)\p_y \right)\ut\right)+\Lap \wt+4\pi(\frac{\g-1}{\g})^{\frac{1}{\g-1}}\wt^{\frac{1}{\g-1}}=0\\
    \rt'(0)=0,\qquad \ut(0)=0,\qquad \ut'(0)=-\frac{2}{3}
  \end{cases}.
\end{equation}
We then recall that in this case the far-field solution is given by
\[
  Q_f(y)=\frac{\g}{\g-1}k^{\g-1}y^{-\frac{2(\g-1)}{2-\g}},\qquad (\frac{\g-1}{\g})^{\frac{1}{\g-1}}Q_f^{\frac{1}{\g-1}}=ky^{-\frac{2}{2-\g}}.
\]
The asymptotfics of the ground state (normalized by $Q(0)=\frac{\g}{\g-1}$) as $y\to \infty$ can then be seen to be
\[
  Q=\frac{\g}{\g-1}k^{\g-1}y^{-\frac{2(\g-1)}{2-\g}}(1+O(y^{-\mu})).
\]
In particular, we have the global pointwise bounds
\begin{equation}
  \label{eq:Qpointwise}
  \ev{y}^{-\frac{2(\g-1)}{2-\g}}\lesssim Q\lesssim \ev{y}^{-\frac{2(\g-1)}{2-\g}}.
\end{equation}
At the level of the mass-density, we have
\[
  (\frac{\g-1}{\g}Q)^{\frac{1}{\g-1}}=\frac{k}{y^{\frac{2}{2-\g}}}(1+O(y^{-\mu})).
\]
We now define
\[
  H:=-(\Lap+\frac{4\pi}{\g-1}(\frac{\g-1}{\g})^{\frac{1}{\g-1}}Q^{\frac{2-\g}{\g-1}}).
\]
We may then compute the leading asymptotics of the potential to be
\begin{equation}
  \label{eq:potentialasymptot}
  (\frac{4\pi}{\g-1}(\frac{\g-1}{\g})^{\frac{1}{\g-1}}Q^{\frac{2-\g}{\g-1}})=\frac{2(4-3\g)}{(2-\g)^2}\frac{1}{y^2}+O(\frac{1}{y^{2+\mu}})
\end{equation}
From the scaling symmetry of the Lane Emden equation \eqref{laneemdeneq} we see that one element of the kernel of $H$ is
\[
  v_1:=y\p_y Q+\frac{2(\g-1)}{2-\g}Q.
\]
We then define $v_2$ by imposing that $v_1v_2'-v_1'v_2=y^{-2}$, which gives
\begin{equation}
  \label{eq:v2def}
  v_2:=-v_1(\int_{y}^{Y_0} \frac{1}{(v_1)^2(y')^2}dy').
\end{equation}
We note that $v_1,v_2=O(\frac{1}{y^{\frac12}})$ as $y\to \infty$. The following lemma describes the next order asymptotics.
\begin{lem}
  \label{int_asymptotics_lemma}
  As $y\to \infty$, the functions $v_1,v_2$ satisfy, for some $c_3,c_4>0$ and $d_3,d_4\in \R/2\pi \Z$,
  \[
    (v_1,v_2)=(c_3 \frac{\sin(\nu\log(y)+d_3)}{y^{\frac12}},c_4 \frac{\sin(\nu\log(y)+d_4)}{y^{\frac12}})+O(\frac{1}{y^{\frac{1}{2}+\mu}})
  \]
  and one can differentiate these asymptotics arbitrarily many times. Moreover, although $v_2=O(\frac{1}{y})$ as $y\to 0$, $v_1,yv_2$ are smooth up to $y=0$. As a consequence, we have that on $[0,\infty)$, we have the pointwise bounds that for $k\geq 0$
  \[
    \abs{\p_y^{k}v_1}\lesssim \ev{y}^{-\frac{1}{2}-k},\qquad \abs{v_2}\lesssim \frac{1}{y^{1+k}}\ev{y}^{\frac{1}{2}}.
  \]
\end{lem}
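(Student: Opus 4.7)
My plan is to treat the asymptotics at infinity by perturbing an Euler-type model ODE, read off the asymptotics of $v_1$ directly from the known two-term expansion of $Q$, deduce those of $v_2$ from linear independence together with the Wronskian normalization, and handle the behavior at the origin using the regularity of $Q$. By (\ref{eq:potentialasymptot}) one writes $H = -\Delta - \frac{2(4-3\gamma)}{(2-\gamma)^2 y^2} - V_1(y)$ with $V_1 = O(y^{-2-\mu})$, and the substitution $u=w/y$ reduces $Hu=0$ to $w'' + \bigl(\frac{2(4-3\gamma)}{(2-\gamma)^2 y^2}+V_1\bigr)w = 0$. The indicial equation of the unperturbed Euler operator has roots $\frac{1}{2}\pm i\nu$, directly from the definition of $\nu$ in (\ref{eq:munudef}), giving unperturbed real solutions $y^{-1/2}\cos(\nu\log y)$ and $y^{-1/2}\sin(\nu\log y)$ at the level of $u$. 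A variation-of-parameters fixed-point argument of the form $w = w_0 + \int_y^\infty K(y,y')V_1(y')w(y')\,dy'$ converges in a suitable weighted norm thanks to the decay gap $\mu > 0$, producing actual solutions $\phi_1,\phi_2$ of $H\phi=0$ with $\phi_1 = y^{-1/2}\cos(\nu\log y) + O(y^{-1/2-\mu})$ and $\phi_2 = y^{-1/2}\sin(\nu\log y) + O(y^{-1/2-\mu})$; differentiating the integral equation and using the ODE itself to rewrite higher derivatives yields asymptotics that can be differentiated arbitrarily often.

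With these fundamental solutions in hand, I pin down the asymptotic of $v_1$ from the two-term expansion $Q = \frac{\gamma}{\gamma-1}\frac{k^{\gamma-1}}{y^{2(\gamma-1)/(2-\gamma)}} + \frac{\gamma c_2}{k^{2-\gamma}}\frac{\sin(\nu\log y+d_2)}{y^{2(\gamma-1)/(2-\gamma)+\mu}} + O(y^{-2(\gamma-1)/(2-\gamma)-2\mu})$ recorded in the preamble, with $c_2>0$ (itself justified by the same fundamental-solution construction applied to Lane--Emden). The operator $y\partial_y + \frac{2(\gamma-1)}{2-\gamma}$ annihilates the leading power of $Q$; acting on the sinusoidal correction it returns $y^{-2(\gamma-1)/(2-\gamma)-\mu}[\nu\cos(\nu\log y + d_2) - \mu\sin(\nu\log y + d_2)]$, which combines into a single sinusoid of amplitude $\sqrt{\mu^2+\nu^2}$ and shifted phase. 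Using the algebraic identity $\frac{2(\gamma-1)}{2-\gamma}+\mu = \frac{1}{2}$, one obtains $v_1 = \frac{c_3\sin(\nu\log y+d_3)}{y^{1/2}} + O(y^{-1/2-\mu})$ with $c_3 = \frac{\gamma c_2}{k^{2-\gamma}}\sqrt{\mu^2+\nu^2} > 0$ and differentiable remainder inherited from the first step.

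For $v_2$: differentiating the Wronskian identity $v_1 v_2' - v_1' v_2 = y^{-2}$ and eliminating $v_1''$ via $Hv_1=0$ yields $Hv_2=0$ on every interval on which $v_1$ does not vanish, and uniqueness for the regular linear ODE $Hv=0$ then extends $v_2$ to a global smooth solution of $Hv_2=0$. Since $v_2$ is linearly independent of $v_1$ (the Wronskian is nonzero), $v_2$ must be a nontrivial combination of $\phi_1, \phi_2$, so $v_2 = \frac{c_4\sin(\nu\log y+d_4)}{y^{1/2}} + O(y^{-1/2-\mu})$ with differentiable remainder. Matching the leading orders in $v_1 v_2' - v_1' v_2 = y^{-2}$ produces the identity $c_3 c_4 \nu\sin(d_3-d_4) = 1$, which together with the conventional sign choice in the sinusoid forces $c_4 > 0$.

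Finally, for the behavior at $0$ and the pointwise bounds: $Q$ is smooth and even near $0$ with $Q(0) = \gamma/(\gamma-1)$, so $v_1 = yQ' + \frac{2(\gamma-1)}{2-\gamma}Q$ is smooth and even near $0$ with $v_1(0) = \frac{2\gamma}{2-\gamma} \neq 0$ and $v_1'(0) = 0$. Consequently $1/v_1(y')^2 = 1/v_1(0)^2 + h(y')$ with $h$ smooth and $h(y') = O((y')^2)$, so $\int_y^{Y_0}\frac{dy'}{(y')^2 v_1(y')^2} = \frac{1}{v_1(0)^2 y} + g(y)$ with $g$ smooth on a neighborhood of $0$, and (\ref{eq:v2def}) gives $yv_2(y) = -\frac{v_1(y)}{v_1(0)^2} - yv_1(y)g(y)$, smooth up to $y=0$. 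The global pointwise bounds then follow by combining smoothness of $v_1$ and $yv_2$ on compact subsets of $[0,\infty)$ (viewing $v_2$ globally as an ODE solution to sidestep the isolated interior zeros of $v_1$) with the decay from the infinity asymptotics obtained above. The main obstacle is the first step: setting up the variation-of-parameters contraction in the right weighted space so that the error is genuinely $O(y^{-1/2-\mu})$ and so that all derivative orders can be bootstrapped; once this infrastructure is in place, the remaining work is essentially direct computation.
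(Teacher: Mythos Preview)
Your proposal is correct and uses essentially the same mechanism as the paper: perturb $H$ around the explicit Euler-type operator $\tilde H$ and run a variation-of-parameters argument at infinity, then combine with regularity at the origin coming directly from the definitions of $v_1,v_2$.

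The one place you take a detour is in pinning down the asymptotics of $v_1$. You go via the two-term expansion of $Q$ and then apply $y\partial_y+\tfrac{2(\gamma-1)}{2-\gamma}$. The paper is more direct: since $v_1$ and $v_2$ themselves solve $Hv=0$, one applies the perturbation argument straight to them, writing each as a combination of the explicit $\tilde v_1,\tilde v_2$ plus an $O(y^{-1/2-\mu})$ remainder; nonvanishing of $c_3,c_4$ then drops out immediately from the nonvanishing Wronskian $v_1v_2'-v_1'v_2=y^{-2}$. Your route works, but note that in the paper's logical order the two-term expansion of $Q$ is derived \emph{from} this lemma, so invoking it as input requires you to run the same perturbative construction separately for $Q$ first---extra work that the direct argument avoids. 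Your handling of $v_2$ (showing $Hv_2=0$ via the Wronskian identity and then appealing to linear independence) and of the behavior at $y=0$ matches the paper's proof.
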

This lemma is proved in Subsection \ref{int_asymptotics_proof}. It then follows that for some $c_2>0$ and $d_2\in \R/2\pi \Z$ we have that as $y\to \infty$,
\[
  Q=\frac{\g}{\g-1}k^{\g-1}\frac{1}{y^{\frac{2(\g-1)}{2-\g}}}+\frac{\g}{k^{2-\g}}\frac{c_2 \sin(\nu \log(y)+d_2)}{y^{\frac12}}+O(\frac{1}{y^{\frac{1}{2}+\mu}})
\]
At the level of the mass density we obtain that as $y\to \infty$
\begin{equation}
  (\frac{\g-1}{\g}Q)^{\frac{1}{\g-1}}=\frac{k}{y^{\frac{2}{2-\g}}}+c_2\frac{\sin(\nu \log(y)+d_2)}{y^{\frac52}}+O(\frac{1}{y^{\frac{8-5\g}{2-\g}}}).
  \label{laneemdenasymptotics}
\end{equation}
\subsection{Motivating asymptotic expansion}
We now assume $\wt(0)=\frac{\g}{\g-1}\la^{-\frac{2(\g-1)}{2-\g}}$ and perform the scaling
\[
  \wt=\la^{-\frac{2(\g-1)}{2-\g}}\wb(\cdot/\la),\qquad \ut=\la \ub(\cdot/\la).
\]
We then substitute this into \eqref{eq:enthalpyeq} to obtain
\begin{equation}
  \begin{cases}
    (2+(2-\g)y\p_y)(\frac{\g-1}{\g}\wb)^{\frac{1}{\g-1}}+\div\left((\frac{\g-1}{\g}\wb)^{\frac{1}{\g-1}}\ub \right)=0\\
    \Delta \wb+4\pi(\frac{\g-1}{\g})^{\frac{1}{\g-1}}\wb^{\frac{1}{\g-1}}=-\la^{\frac{2}{2-\g}}\div\left(\left((\g-1)+(\ub+(2-\g) y)\p_y \right)\ub\right)\\
    \wb(0)=\frac{\g}{\g-1},\qquad \wb'(0)=0,\qquad \ub(0)=0,\qquad \ub'(0)=-\frac{2}{3}.
  \end{cases}.
\end{equation}
In this case one can consider doing a formal expansion in powers of $\lambda^{\frac{2}{2-\g}}$. The zeroth order contribution for $u$ is the equation
\[
  \begin{cases}
    (2+(2-\g)y\p_y)(\frac{\g-1}{\g} Q)^{\frac{1}{\g-1}}+J u_0=0\\
    u_0(0)=0,\qquad u_0'(0)=-\frac{2}{3}
  \end{cases}.
\]
where
\[
  Ju:=\div\left((\frac{\g-1}{\g}Q)^{\frac{1}{\g-1}} u\right).
\]
We then recall that the system
\[
  \begin{cases}
    Ju+f=0\\
    u(0)=0,\qquad u'(0)=-\frac{f(0)}{3}
  \end{cases}
\]
is solved by $u=T(f)$, where
\[
  T(f):=-\frac{1}{y^2(\frac{\g-1}{\g}Q)^{\frac{1}{\g-1}}}\int_0^y f(y')(y')^2 dy'.
\]
One then defines
\[
  u_*:=T((2+(2-\g)y\p_y)(\frac{\g-1}{\g}Q)^{\frac{1}{\g-1}}).
\]
We then recall that from \eqref{ustarasymptot} that as $y\to \infty$
\[
  u_*=k^{-1}(2-\g)^{\frac32}c_2\sin\left(\nu\log(y+d_2+\theta_0)\right)y^{1-\mu}+O(y^{\frac{4(\g-1)}{2-\g}}).
\]
In particular, $u_*$ obeys the global pointwise bound
\begin{equation}
  \label{eq:ustarpointwise}
  \abs{u_*}\lesssim \abs{y}\ev{y}^{-\mu}.
\end{equation}
As a comment, in this case, the next order equations one obtains are
\[
  \begin{cases}
    \div((\frac{\g}{\g-1} Q)^{\frac{1}{\g-1}} u_1)+Lw_1=0\\
    -Hw_1+\div(((\g-1)+(u_*+(2-\g)y)\p_y)u_*)=0\\
    w_1(0)=w_1'(0)=u_1(0)=u_1'(0)=0
  \end{cases},
\]
where
\[
  Lw:=(2+(2-\g)y\p_y)(\frac{\g-1}{\g})^{\frac{1}{\g-1}}\frac{1}{\g-1}(Q^{\frac{2-\g}{\g-1}}w)+\div((\frac{\g-1}{\g})^{\frac{1}{\g-1}}\frac{1}{\g-1} Q^{\frac{2-\g}{\g-1}} u_* w).
\]
We then note that since $S$ will again multiply expansions by $y^2$, we will get that
\[
  w_1=O(y^{2-\mu}).
\]
Since $T$ will multiply expansions by $y^{1+\frac{2}{2-\g}}$, we will get that
\[
  u_1=O(y^{1+\frac{2}{2-\g}-\mu}).
\]
These growth rates motivate the function spaces, described in the next section, in which the iteration will take place.
\subsection{Difference equations and iteration scheme}
As a comment, in contrast to the previous section, in this section we will not rescale space and all the functions will be defined on $[0,y_0]$. This is to make more clear the continuity of the solutions with respect to changing $\la$. We now define
\[
  Q_{\la}=\la^{-\frac{2(\g-1)}{2-\g}}Q(\cdot/\la),\qquad u_{\la}=\la u(\cdot/\la),\qquad v_{i,\la}=v_i(\cdot/\la).
\]
We now show the existence of solutions with $\rt(0)=\la^{-\frac{2}{2-\g}}$ (equivalently $\wt(0)=\frac{\g}{\g-1}\la^{-\frac{2(\g-1)}{2-\g}}$). We define difference variables $(w,u)$ implicitly by
\[
  \wt(y)=Q_{\la}(y)+\la^{-\frac{2(\g-1)}{2-\g}+\frac{2}{2-\g}}w(y),\qquad \ut(y)=u_{\la}(y)+\la^{1+\frac{2}{2-\g}} u(y).
\]
We then have that \eqref{eq:enthalpyeq} with the boundary condition $\wt(0)=\frac{\g}{\g-1}\la^{-\frac{2(\g-1)}{2-\g}}$ is equivalent to the following system on $[0,y_0]$:
\[
  \begin{cases}
    (2+(2-\g)y\p_y)(\frac{\g-1}{\g}(Q_{\la}+\la^{-\frac{2(\g-1)}{2-\g}+\frac{2}{2-\g}} w))^{\frac{1}{\g-1}}\\
    +\div\left((\frac{\g-1}{\g}(Q_{\la}+\la^{-\frac{2(\g-1)}{2-\g}+\frac{2}{2-\g}} w))^{\frac{1}{\g-1}} (u_\la+\la^{1+\frac{2}{2-\g}} u)\right)=0\\
    \div\left(\left((\g-1)+(u_\la+\la^{1+\frac{2}{2-\g}} u+(2-\g)y)\p_y\right)(u_\la+\la^{1+\frac{2}{2-\g}}u)\right)\\
    +\Lap(Q_\la+\la^{-\frac{2(\g-1)}{2-\g}+\frac{2}{2-\g}}w)+4\pi (\frac{\g-1}{\g})^{\frac{1}{\g-1}}(Q_\la+\la^{-\frac{2(\g-1)}{2-\g}+\frac{2}{2-\g}} w)^{\frac{1}{\g-1}}=0\\
    w(0)=w'(0)=u(0)=u'(0)=0
  \end{cases}.
\]
We then define
\begin{align*}
  J_\la u&:=\la^{1+\frac{2}{2-\g}}\div((\frac{\g-1}{\g}Q_\la)^{\frac{1}{\g-1}} u),\\
  L_\la w&:=\la^{-\frac{2(\g-1)}{2-\g}+\frac{2}{2-\g}}\left((2+(2-\g)y\p_y)(\frac{\g-1}{\g})^{\frac{1}{\g-1}}\frac{1}{\g-1}(Q_\la^{\frac{2-\g}{\g-1}}w)+\div((\frac{\g-1}{\g})^{\frac{1}{\g-1}}\frac{1}{\g-1} Q_\la^{\frac{2-\g}{\g-1}} u_\la w)\right),\\
  H_\la&:=-\la^{\frac{-2(\g-1)}{2-\g}+\frac{2}{2-\g}}(\Lap+\frac{4\pi}{\g-1}(\frac{\g-1}{\g})^{\frac{1}{\g-1}}Q_\la^{\frac{2-\g}{\g-1}}),\\
  S_\la f&:=\frac{1}{\la^{1-\frac{2(\g-1)}{2-\g}+\frac{2}{2-\g}}}\left(\left(\int_0^y f v_{2,\la}(y')^2 dy'\right)v_{1,\la}-\left(\int_0^y f v_{1,\la}(y')^2 dy'\right)v_{2,\la}\right),\\
  T_\la f&:=-\frac{1}{\la^{1+\frac{2}{2-\g}} y^2(\frac{\g-1}{\g}Q_\la)^{\frac{1}{\g-1}}}\int_0^y f(y')(y')^2 dy',\\
  K_\la&:=T_\la\circ L_\la\circ S_\la.
\end{align*}
We note that these operators satisfy
\[
  H_\la S_\la=\id,\qquad -J_\la T_\la = \id,\qquad \frac{1}{\la}J_\la u_\la+\la^{\frac{2}{2-\g}}(2+(2-\g)y\p_y)(\frac{\g-1}{\g}Q_\la)^{\frac{1}{\g-1}}=0.
\]
We will use the expansion
\begin{align*}
  &(\frac{\g-1}{\g}(Q_\la+\la^{-\frac{2(\g-1)}{2-\g}+\frac{2}{2-\g}} w))^{\frac{1}{\g-1}}\\
  &=(\frac{\g-1}{\g}Q_\la)^{\frac{1}{\g-1}}+\la^{-\frac{2(\g-1)}{2-\g}+\frac{2}{2-\g}} w(\frac{\g-1}{\g})^{\frac{1}{\g-1}}\frac{1}{\g-1}\int_0^{1}(Q_\la+\la^{-\frac{2(\g-1)}{2-\g}+\frac{2}{2-\g}} w t)^{\frac{2-\g}{\g-1}}dt\\
  &=(\frac{\g-1}{\g}Q_\la)^{\frac{1}{\g-1}}+\la^{-\frac{2(\g-1)}{2-\g}+\frac{2}{2-\g}}(\frac{\g-1}{\g})^{\frac{1}{\g-1}}w\frac{1}{\g-1}Q_\la^{\frac{2-\g}{\g-1}}\\
  &\qquad +\la^{-\frac{4(\g-1)}{2-\g}+\frac{4}{2-\g}}w^2(\frac{\g-1}{\g})^{\frac{1}{\g-1}}\frac{2-\g}{(\g-1)^2}\int_0^1 (1-t)(Q_\la+\la^{-\frac{2(\g-1)}{2-\g}+\frac{2}{2-\g}} w t)^{\frac{3-2\g}{\g-1}}dt
\end{align*}
to define
\begin{align*}
  F_{\la}(w,u)&=\la^{-\frac{4(\g-1)}{2-\g}+\frac{4}{2-\g}}(2+(2-\g)y\p_y)(w^2(\frac{\g-1}{\g})^{\frac{1}{\g-1}}\frac{2-\g}{(\g-1)^2}\int_0^1(1-t)(Q_\la+t\la^{-\frac{2(\g-1)}{2-\g}+\frac{2}{2-\g}} w)^{\frac{3-2\g}{\g-1}}dt)\\
              &\qquad+\la^{1-\frac{2(\g-1)}{2-\g}+\frac{4}{2-\g}}\div\left((\frac{\g-1}{\g})^{\frac{1}{\g-1}} \frac{1}{\g-1} u w\int_0^1(Q_\la+\la^{-\frac{2(\g-1)}{2-\g}+\frac{2}{2-\g}} w t)^{\frac{2-\g}{\g-1}}dt\right)\\
              &\qquad+\la^{-\frac{4(\g-1)}{2-\g}+\frac{4}{2-\g}}\div\left(u_\la w^2(\frac{\g-1}{\g})^{\frac{1}{\g-1}}\frac{2-\g}{(\g-1)^2}\int_0^1 (1-t) (Q_\la+\la^{-\frac{2(\g-1)}{2-\g}+\frac{2}{2-\g}} w t)^{\frac{3-2\g}{\g-1}}dt\right),\\
  G_{\la,1}(w,u)&=\div\left(\left((\g-1)+(u_\la+(2-\g)y)\p_y\right)u_\la\right)\\
              &\qquad +\la^{1+\frac{2}{2-\g}}\div\left(u\p_y u_\la+\left((\g-1)+(u_\la+(2-\g)y)\p_y\right)u+\la^{1+\frac{2}{2-\g}}u\p_y u\right),\\
  G_{\la,2}(w,u)&=4\pi \la^{-\frac{4(\g-1)}{2-\g}+\frac{4}{2-\g}}w^2(\frac{\g-1}{\g})^{\frac{1}{\g-1}}\frac{2-\g}{(\g-1)^2}\int_0^1 (1-t)(Q_\la+\la^{-\frac{2(\g-1)}{2-\g}+\frac{2}{2-\g}} w t)^{\frac{3-2\g}{\g-1}}dt,\\
  G_\la(w,u)&=G_{\la,1}(w,u)+G_{\la,2}(w,u).
\end{align*}
we then obtain the system
\[
  \begin{cases}
    0=L_\la w+J_\la u+F_\la(w,u)\\
    0=-H_\la w+G_\la(w,u)\\
    w(0)=w'(0)=u(0)=u'(0)=0
  \end{cases}
\]
which we can solve using the linear solution operators as
\begin{equation}
  \label{eq:int_integral_formulation}
  \begin{cases}
    u=T_\la(F_\la(w,u))+K_\la(G_\la(w,u))\\
    w=S_\la(G_\la(w,u))
  \end{cases}.
\end{equation}
We will now define the spaces in which our iteration will take place in and the norms which we will use.
\begin{defin}We have the following spaces
  \begin{align*}
    Z&:=\{f\in C^2([0,y_0])\colon f(0)=f'(0)=0\},\\
    Y&:=\{f\in C^2([0,y_0])\colon f(0)=f'(0)=0\},\\
    \calG&:=C^{0}([0,y_0]),\\
    \calF&:=\{f\in C^1([0,y_0]\colon f(0)=0)\}.
  \end{align*}
  On each of these spaces, we define a $\la$-parameterized set of norms as follows.
  \begin{align*}
    \norm{f}_{Z_\la}&:=\sup_{y\in [0,y_0]}\ev{\frac{y}{\la}}^{\mu}(\frac{\la^2}{y^2}\abs{f}+\frac{\la}{y}\la \abs{\p_y f}+\la^2\abs{\p_y^2f}),\\
    \norm{f}_{Y_\la}&:=\sup_{y\in [0,y_0]}\ev{\frac{y}{\la}}^{1+\mu-\frac{2}{2-\g}}(\frac{\la^2}{y^2}\abs{f}+\frac{\la}{y}\la \abs{\p_y f}+\la^2\abs{\p_y^2 f}),\\
    \norm{f}_{\calG_\la}&:=\sup_{y\in [0,y_0]}\ev{\frac{y}{\la}}^{\mu}\abs{f},\\
    \norm{f}_{\calF_\la}&:=\sup_{y\in [0,y_0]}\ev{\frac{y}{\la}}^{1+\mu}(\frac{\la}{y}\abs{f}+\la \abs{\p_y f}).
  \end{align*}
\end{defin}
As a comment, to read off these exponents, one can first motivate the growth rates of $Z_{y_1},Y_{y_1}$ by considering $w_1,u_1$ in the previous section. From these, one can get the rates for $\calG_{y_1},\calF_{y_1}$ by requiring that $S,T$ be bounded.

We have the following lemma regarding the linearized operators.
\begin{lem}
  \label{int_linear_lemma}
  If $y_0\ll 1$, $\la_0\leq \frac{1}{10}y_0$, we have that if $G,F\in \calG,\calF$, then $T_\la(F),K_\la(G)\in Y$, and $S_\la(G)\in Z$. Moreover, we have that there exists a constant $C_1$ independent of $y_0,\la$ such that
  \begin{align*}   
    \norm{T_\la(F)}_{Y_\la}&\leq C_1 \norm{F}_{\calF_{\la}},\\
    \norm{S_\la(G)}_{Z_\la}&\leq C_1 \norm{G}_{\calG_{\la}},\\
    \norm{K_\la(G)}_{Y_\la}&\leq C_1 \norm{G}_{\calG_{\la}}.\\
  \end{align*}
  Moreover, these family of operators have a Lipschitz dependence on $\la$, so that in particular if $\la_1<\la_2<\frac{y_0}{10}$, then we have
  \begin{align*}
    \max_i \norm{T_{\la_2}(F)-T_{\la_1}(F)}_{Y_{\la_i}}&\leq C_1 \frac{\la_2-\la_1}{\la_1}\min_i \norm{F}_{\calF_{\la_i}},\\
    \max_i \norm{S_{\la_2}(G)-S_{\la_1}(G)}_{Z_{\la_i}}&\leq C_1 \frac{\la_2-\la_1}{\la_1}\min_i \norm{G}_{\calG_{\la_i}},\\
    \max_i \norm{K_{\la_2}(G)-K_{\la_1}(G)}_{Y_{\la_i}}&\leq C_1 \frac{\la_2-\la_1}{\la_1}\min_i \norm{G}_{\calG_{\la_i}}.
  \end{align*}
\end{lem}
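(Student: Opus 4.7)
The plan is to reduce the entire lemma to the case $\la = 1$ by scale invariance, establish the bounds for the resulting reference operators on arbitrary intervals $[0,M]$ with constants independent of $M$, and then extract the Lipschitz-in-$\la$ estimate by differentiating the explicit $\la$-dependence of each integral formula.

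First I would verify the scaling identities. For any function $f$ on $[0, y_0]$, set $\tilde f(z) := f(\la z)$ for $z \in [0, y_0/\la]$. Using
\[
Q_\la(y) = \la^{-\frac{2(\g-1)}{2-\g}} Q(y/\la), \qquad u_\la(y) = \la u_*(y/\la), \qquad v_{i,\la}(y) = v_i(y/\la),
\]
a direct computation from the definitions yields
\[
(T_\la F)(\la z) = T(\tilde F)(z), \qquad (S_\la G)(\la z) = S(\tilde G)(z), \qquad (L_\la w)(\la z) = L(\tilde w)(z),
\]
where $T, S, L$ denote the reference operators obtained by setting $\la = 1$, and consequently $(K_\la G)(\la z) = K(\tilde G)(z)$. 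The substitution $y = \la z$ in each norm identifies $\norm{f}_{\calF_\la} = \norm{\tilde f}_{\calF_1}$ on $[0, y_0/\la]$ and similarly for $Z_\la, Y_\la, \calG_\la$, so it suffices to prove uniform-in-$M$ bounds for $T, S, K$ on $[0, M]$.

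Next I would prove the reference bounds by combining the pointwise estimates from Lemma \ref{int_asymptotics_lemma} with \eqref{eq:Qpointwise} and \eqref{eq:ustarpointwise}. The definition of $\calF_1$ gives $|F(y)| \lesssim y \ev{y}^{-1-\mu} \norm{F}_{\calF_1}$; inserting this together with $(\tfrac{\g-1}{\g}Q)^{-\frac{1}{\g-1}} \lesssim \ev{y}^{\frac{2}{2-\g}}$ into the integral formula for $T(F)$ and splitting the integration into $y \leq 1$ and $y \geq 1$ yields $|T(F)(y)| \lesssim y^2 \ev{y}^{-1-\mu+\frac{2}{2-\g}} \norm{F}_{\calF_1}$, which is precisely the $Y_1$-norm on the undifferentiated piece; matching bounds on $\p_y T(F)$ and $\p_y^2 T(F)$ come from differentiating the integral formula and applying the companion bounds on $\p_y F$ and on the smooth, positive profile $Q$. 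The estimate on $S(G)$ follows by the same method using $|v_1| \lesssim \ev{y}^{-1/2}$ and $|v_2| \lesssim y^{-1}\ev{y}^{1/2}$; the $1/y$ singularity of $v_2$ at the origin is absorbed by the $(y')^2$ weight in the integrand. The bound on $K(G) = T L S(G)$ then follows by composition once one checks that $L : Z_1 \to \calF_1$ is bounded, which is immediate from the product rule and the pointwise bounds on $Q^{\frac{2-\g}{\g-1}}, u_*$ and their derivatives.

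For the Lipschitz dependence on $\la$ (which does not follow directly from the scaling identity because $\tilde F$ itself depends on $\la$) I would write $T_{\la_2}F - T_{\la_1}F = \int_{\la_1}^{\la_2} \p_\la (T_\la F) \, d\la$ and estimate $\p_\la T_\la F$ directly from the formula. Each time $\p_\la$ falls on $Q_\la, u_\la$, or $v_{i,\la}$ it contributes a factor $\la^{-1}$ times a new profile obeying the same pointwise bounds as the original (for instance $\p_\la Q_\la$ has the same decay in $y$ as $Q_\la$), so $\p_\la T_\la F$ has the same structure as $T_\la F$ up to a factor $\la^{-1}$. Applying the estimates from the previous paragraph gives $\norm{\p_\la T_\la F}_{Y_\la} \lesssim \la^{-1} \norm{F}_{\calF_\la}$, and integrating, together with the equivalence $\ev{y/\la_1} \asymp \ev{y/\la_2}$ for $\la_2 \leq 2 \la_1$ (which one may arrange after chaining over a short sequence of intermediate points), yields the claimed Lipschitz bound with constant $(\la_2 - \la_1)/\la_1$; the same argument applies verbatim to $S$ and $K$. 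The main obstacle throughout is tracking the many weight factors, which the scaling reduction largely eliminates; the only remaining subtlety is verifying that the pointwise estimates hold uniformly on $[0,\infty)$, and this follows immediately from the asymptotic expansions of $Q, v_1, v_2, u_*$ together with their smoothness at the origin.
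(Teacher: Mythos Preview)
Your proposal is correct, and it differs from the paper's proof mainly in organization. The paper works directly with the $\la$-dependent operators: it first records the scaled pointwise bounds such as $Q_\la^{-1/(\g-1)}\lesssim \la^{2/(2-\g)}\ev{y/\la}^{2/(2-\g)}$, $|\p_y^k v_{1,\la}|\lesssim \la^{-k}\ev{y/\la}^{-1/2-k}$, $|u_\la|\lesssim |y|\ev{y/\la}^{-\mu}$, and then inserts these into the integral formulas for $S_\la, T_\la, L_\la$ one by one, tracking all the powers of $\la$ by hand. Your scaling reduction is genuinely cleaner: once you verify $(T_\la F)(\la z)=T(\tilde F)(z)$, $(S_\la G)(\la z)=S(\tilde G)(z)$, $(L_\la w)(\la z)=L(\tilde w)(z)$ and the corresponding norm identifications, every power of $\la$ cancels and the boundedness reduces to a single estimate for the reference operators with weight $\ev{z}$. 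Both routes rest on exactly the same ingredients (Lemma~\ref{int_asymptotics_lemma}, \eqref{eq:Qpointwise}, \eqref{eq:ustarpointwise}), so the content is the same; your version just avoids the bookkeeping.

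For the Lipschitz-in-$\la$ estimate the two approaches coincide in spirit: the paper invokes bounds of the form $|y\p_y v_i|\lesssim \ev{y}^{-1/2}$ and $|y\p_y Q|\lesssim \ev{y}^{-2(\g-1)/(2-\g)}$ to control $S_{\la_2}-S_{\la_1}$, $T_{\la_2}-T_{\la_1}$, which is precisely what your $\p_\la$-differentiation produces (since $\p_\la v_{i,\la}(y)=-\la^{-1}(y/\la)v_i'(y/\la)$, etc.). Both you and the paper are terse about comparing the $\la$-dependent norms $Y_{\la_1}$ and $Y_{\la_2}$ when $\la_2/\la_1$ is not close to $1$; your chaining reduction to $\la_2\le 2\la_1$ is a reasonable way to address this, and in any case the downstream application (Lemma~\ref{int_contraction_lemma}) only uses the estimate in the regime $\la_2\approx \la_1$.
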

For the proof see Subsection \ref{int_linear_proof}

The analog of the nonlinear bound is the following.
\begin{lem}
  \label{int_nonlinear_lemma}
  There exists $C_2>0$ (independent of $y_0,\la$) such that if $u,w\in Y, Z$ respectively and if $\norm{u}_{Y_\la}+\norm{w}_{Z_\la}\leq C$ then if $C\cdot y_0^{\frac{2}{2-\g}}\ll 1$, $y_0^{\frac{2}{2-\g}}C^2\leq 1$ and $\la\leq \frac{y_0}{10}$, then we have $F_\la(w,u),G_{\la}(w,u)$ are well-defined, lie in $\calF$ and $\calG$ respectively, and satisfy
  \begin{equation*}
    \norm{F_\la(w,u)}_{\calF_\la}+\norm{G_{\la}(w,u)}_{\calG_\la}\leq C_2.
  \end{equation*}
  Moreover we have the Lipschitz estimate that
  \begin{multline}
    \label{eq:int_lipschitz_ineq}
    \norm{F_\la(w_2,u_2)-F_{\la}(w_1,u_1)}_{\calF_\la}+\norm{G_{\la}(w_2,u_2)-G_{\la}(w_1,u_1)}_{\calG_\la}\leq C_2 \cdot (1+C) y_0^{\frac{2}{2-\g}}(\norm{u_2-u_1}_{Y_\la}+\norm{w_2-w_1}_{Z_\la}).
  \end{multline}
  Finally, we have the following estimate on how these operators depend on $\la$, for $0<\la_1<\la_2<\frac{y_0}{10}$,
  \begin{equation*}
    \max_i\norm{F_{\la_2}(w,u)-F_{\la_1}(w,u)}_{\calF_{\la_i}}+\max_i\norm{G_{\la_2}(w,u)-G_{\la_1}(w,u)}_{\calG_{\la_i}}\leq C_2\frac{\la_2-\la_1}{\la_1}.
  \end{equation*}
\end{lem}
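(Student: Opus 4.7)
The plan is to estimate each of the explicit terms defining $F_\la$, $G_{\la,1}$, $G_{\la,2}$ individually in the weighted $\calF_\la,\calG_\la$ norms, exploiting the scaling of $Q_\la,u_\la$. Throughout, I will use the pointwise bound $Q_\la(y)\gtrsim \la^{-\frac{2(\g-1)}{2-\g}}\ev{y/\la}^{-\frac{2(\g-1)}{2-\g}}$ (coming from \eqref{eq:Qpointwise}) and $\abs{u_\la(y)}\lesssim y\ev{y/\la}^{-\mu}$ (coming from \eqref{eq:ustarpointwise}).

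First I would establish well-definedness of the Taylor remainder integrals. For $w\in Z$ with $\norm{w}_{Z_\la}\leq C$, the definition of $Z_\la$ gives $\abs{w(y)}\leq C y^2\la^{-2}\ev{y/\la}^{-\mu}$. Comparing with the lower bound on $Q_\la$, one checks that the ratio $\la^{-\frac{2(\g-1)}{2-\g}+\frac{2}{2-\g}}\abs{w}/Q_\la$ is $\lesssim C\la^{\frac{2}{2-\g}}(y/\la)^2\ev{y/\la}^{-\mu+\frac{2(\g-1)}{2-\g}}\lesssim Cy_0^{\frac{2}{2-\g}}$ after simplifying the exponents using $\mu=\frac{6-5\g}{2(2-\g)}$. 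Hence the smallness assumption $Cy_0^{\frac{2}{2-\g}}\ll 1$ ensures $Q_\la+t\la^{-\frac{2(\g-1)}{2-\g}+\frac{2}{2-\g}}w\geq \tfrac{1}{2}Q_\la>0$ uniformly in $t\in[0,1]$, so all fractional powers are smooth and bounded by a fixed multiple of $Q_\la^{\text{(exponent)}}$.

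Next I would bound each term in the $\calF_\la,\calG_\la$ norms. For the purely $w^2$ contributions to $F_\la$ and $G_{\la,2}$, the product $w^2$ supplies the factor $y^4\la^{-4}\ev{y/\la}^{-2\mu}$, which combined with the $\la$-prefactor and the $Q_\la^{\text{(exponent)}}$ factor collapses to the target weight for $\calF_\la$ or $\calG_\la$ with an extra $y_0^{\frac{2}{2-\g}}$ to spare (this gives room for the $\epsilon$-Lipschitz estimates later). For the $uw$ and $u_\la w^2$ terms in $F_\la$, the $Y_\la$ bound $\abs{u}\lesssim y^2\la^{-2}\ev{y/\la}^{-1-\mu+\frac{2}{2-\g}}$ combined with the $w$-bound yields the same conclusion. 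For $G_{\la,1}(w,u)$ the linear-in-$u$ terms coming from $\la^{1+\frac{2}{2-\g}}\div(u\p_y u_\la+\ldots)$ are $O(1)$ in $\calG_\la$ with no small prefactor, while the quadratic $\la^{1+\frac{2}{2-\g}}u\p_y u$ terms contribute a factor $\la^{1+\frac{2}{2-\g}}$ which is absorbed by a power of $y_0^{\frac{2}{2-\g}}$ since $\la\leq y_0/10$. Vanishing of these expressions at $y=0$ to the order required by $\calF$ follows from the factor of $y$ (or $y^2$) inside $u,w,u_\la$.

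The Lipschitz estimate \eqref{eq:int_lipschitz_ineq} in $(w,u)$ is then obtained by writing
\[
  F_\la(w_2,u_2)-F_\la(w_1,u_1)=\int_0^1 \tfrac{d}{ds}F_\la(w_1+s(w_2-w_1),u_1+s(u_2-u_1))\,ds
\]
and similarly for $G_\la$, and applying the same estimates to the Fréchet derivative — each term is at most linear in $(w_2-w_1,u_2-u_1)$ with one remaining factor of $w$ or $u$ or small prefactor of $\la^{1+\frac{2}{2-\g}}$, yielding the factor $(1+C)y_0^{\frac{2}{2-\g}}$. The Lipschitz estimate in $\la$ is handled by writing $Q_{\la_2}-Q_{\la_1}=\int_{\la_1}^{\la_2}\partial_\la Q_\la\,d\la$, and noting that by homogeneity $\la\partial_\la Q_\la$ satisfies the same weighted bounds as $Q_\la$; similarly for $u_\la$ and $v_{i,\la}$. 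Since each $\partial_\la$ brings out a factor of $1/\la$ and $|\la_2-\la_1|\leq \int_{\la_1}^{\la_2}d\la$, integrating produces the factor $\frac{\la_2-\la_1}{\la_1}$, and the maximum over $i$ on the left side is absorbed by the worst weight (the one with smallest $\la$) because the $Y_\la$ norms are monotone in $\la$ in the relevant direction.

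The main technical obstacle is bookkeeping: each of the half-dozen explicit terms in $F_\la,G_\la$ carries a different combination of $\la$ prefactors, $Q_\la$ powers, and $u_\la$ or $\p_y u_\la$ factors, and one must verify case by case that the weights in the definitions of $Z_\la,Y_\la,\calG_\la,\calF_\la$ are chosen so that every contribution lands correctly without logarithmic loss and with the extra smallness $y_0^{\frac{2}{2-\g}}$ available for the quadratic terms. The algebraic identities such as $\frac{2(\g-1)}{2-\g}+2=\frac{2}{2-\g}$ used above are what makes these exponents close; the precise power $y_0^{\frac{2}{2-\g}}$ in the statement is driven by the single worst exponent across all terms.
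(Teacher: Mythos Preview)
Your proposal is correct and matches the paper's own proof essentially step for step: both first use the smallness $Cy_0^{\frac{2}{2-\g}}\ll 1$ together with \eqref{eq:Qpointwise} to show $Q_\la+t\la^{-\frac{2(\g-1)}{2-\g}+\frac{2}{2-\g}}w\sim Q_\la$, then bound each term of $F_\la,G_{\la,1},G_{\la,2}$ pointwise in the weighted norms, and finally obtain the Lipschitz estimates in $(w,u)$ and in $\la$ by the same differentiation/difference argument you describe. One small correction of phrasing: in $G_{\la,1}$ it is the $(w,u)$-\emph{independent} term $\div\bigl(((\g-1)+(u_\la+(2-\g)y)\p_y)u_\la\bigr)$ that is $O(1)$ in $\calG_\la$ with no small prefactor (and hence forces the bound $C_2$ rather than $C_2 y_0^{\frac{2}{2-\g}}$), while the linear-in-$u$ terms already carry the factor $\la^{1+\frac{2}{2-\g}}\lesssim y_0^{\frac{2}{2-\g}}$.
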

For the proof see Subsection \ref{int_nonlinear_proof}.

\begin{lem}
  \label{int_contraction_lemma}
  There exists a constant $C_3>0$ (independent of $y_0,\la$) such that for $y_0^{\frac{2}{2-\g}}\ll 1$, $\la\leq \frac{y_0}{10}$, there exists a unique $C^2$ solution to \eqref{eq:enthalpyeq} with $\wt>0$ and the property that it can be decomposed as
  \begin{equation*}
    \wt(y)=Q_\la(y)+\la^{-\frac{2(\g-1)}{2-\g}+\frac{2}{2-\g}}w[\la](y),\qquad \ut=u_\la+\la^{1+\frac{2}{2-\g}}u[\la](y)
  \end{equation*}
  with $\norm{w}_{Z_\la}+\norm{u}_{Y_\la}\leq \frac{C_3}{100}$. Moreover, if $0<\la_1<\la_2<\frac{y_0}{10}$, then we have
  \begin{equation}
    \label{eq:contraction_lipschitz}
    \max_i\norm{w[\la_2]-w[\la_1]}_{Z_\la}+\norm{u[\la_2]-u[\la_1]}_{Y_\la}\leq \frac{\la_2-\la_1}{\la_1}C_3.
  \end{equation}
\end{lem}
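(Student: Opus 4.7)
\textbf{Proof proposal for Lemma \ref{int_contraction_lemma}.} The plan is to solve the fixed-point equation \eqref{eq:int_integral_formulation} by a Banach contraction argument in the product space $Z_\la \times Y_\la$, with the contraction constant and the ball radius chosen uniformly in $\la$ (by exploiting that the constants $C_1, C_2$ from Lemmas \ref{int_linear_lemma} and \ref{int_nonlinear_lemma} are $\la$-independent). Define the map
\begin{equation*}
  \Phi_\la(w,u) := \bigl(\, S_\la(G_\la(w,u)),\ T_\la(F_\la(w,u)) + K_\la(G_\la(w,u)) \,\bigr).
\end{equation*}
Fix $C_3 = 400\, C_1 C_2$, say, and let $B_\la$ denote the closed ball of radius $C_3/100$ in $Z_\la \times Y_\la$ (with the sum norm). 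First I would verify self-mapping: for $(w,u) \in B_\la$, combining Lemma \ref{int_linear_lemma} and Lemma \ref{int_nonlinear_lemma} (noting the smallness hypotheses $C\cdot y_0^{\frac{2}{2-\g}}\ll 1$ and $\la \leq y_0/10$ are met when $y_0$ is small enough) yields
\begin{equation*}
  \norm{\Phi_\la(w,u)}_{Z_\la \times Y_\la} \leq C_1 \norm{G_\la(w,u)}_{\calG_\la} + C_1\bigl(\norm{F_\la(w,u)}_{\calF_\la} + \norm{G_\la(w,u)}_{\calG_\la}\bigr) \leq 3 C_1 C_2 \leq C_3/100.
\end{equation*}

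For contraction, the Lipschitz bound \eqref{eq:int_lipschitz_ineq} combined with the linear estimates gives, for any $(w_i,u_i)\in B_\la$,
\begin{equation*}
  \norm{\Phi_\la(w_2,u_2)-\Phi_\la(w_1,u_1)}_{Z_\la\times Y_\la} \leq 3 C_1 C_2 (1 + C_3/100)\, y_0^{\frac{2}{2-\g}}\bigl(\norm{w_2-w_1}_{Z_\la} + \norm{u_2-u_1}_{Y_\la}\bigr),
\end{equation*}
which is $\leq \tfrac{1}{2}$ times the right-hand input norm provided $y_0^{\frac{2}{2-\g}}$ is sufficiently small (depending only on $C_1, C_2$). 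The Banach fixed point theorem then produces a unique solution $(w[\la], u[\la]) \in B_\la$. Since the $S_\la, T_\la, K_\la$ preserve the vanishing $(f(0)=f'(0)=0)$ conditions, $(w,u) \in Z \times Y$ automatically, so the boundary conditions at $0$ in \eqref{eq:enthalpyeq} are satisfied. The positivity $\wt > 0$ follows because $Q_\la(y) \gtrsim \ev{y/\la}^{-2(\g-1)/(2-\g)}$ by \eqref{eq:Qpointwise}, while the correction $\la^{-\frac{2(\g-1)}{2-\g}+\frac{2}{2-\g}}w(y)$ is controlled by $\norm{w}_{Z_\la}\cdot \la^{\frac{2}{2-\g}} y^2/\la^2 \ev{y/\la}^{-\mu}$, which is much smaller (since $y \leq y_0 \ll 1$).

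The main technical point is the Lipschitz dependence on $\la$, because the very norms in which we measure the solution depend on $\la$. My plan is to use the $\la$-Lipschitz clauses of Lemmas \ref{int_linear_lemma} and \ref{int_nonlinear_lemma} (which are phrased with $\max_i/\min_i$ so they compare across both norms), together with the contraction property. Writing
\begin{equation*}
  \Phi_{\la_2}(w[\la_2],u[\la_2]) - \Phi_{\la_1}(w[\la_1],u[\la_1]) = \bigl[\Phi_{\la_2}(w[\la_2],u[\la_2]) - \Phi_{\la_2}(w[\la_1],u[\la_1])\bigr] + \bigl[\Phi_{\la_2}(w[\la_1],u[\la_1]) - \Phi_{\la_1}(w[\la_1],u[\la_1])\bigr],
\end{equation*}
the first bracket is controlled in $\norm{\cdot}_{Z_{\la_2}\times Y_{\la_2}}$ by the contraction estimate applied to $\Phi_{\la_2}$, giving a factor $\tfrac{1}{2}\norm{(w[\la_2]-w[\la_1], u[\la_2]-u[\la_1])}_{Z_{\la_2}\times Y_{\la_2}}$; the second bracket is bounded by a constant multiple of $(\la_2 - \la_1)/\la_1$ using the $\la$-Lipschitz estimates in the two lemmas (applied to the uniformly bounded input $(w[\la_1], u[\la_1])$, which lies in $B_{\la_1}\cap B_{\la_2}$ for comparable $\la_1, \la_2$). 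Absorbing the $\tfrac{1}{2}$ term to the left yields \eqref{eq:contraction_lipschitz}. Uniqueness in the class $\norm{w}_{Z_\la}+\norm{u}_{Y_\la}\leq C_3/100$ is immediate from the contraction, and uniqueness among $C^2$ solutions with $\wt>0$ satisfying the decomposition then follows since any such solution automatically lies in $B_\la$ after choosing $C_3$ large (or, equivalently, by a standard open/closed continuation argument).
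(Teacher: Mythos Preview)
Your proposal is correct and follows essentially the same route as the paper: a Banach fixed point argument for $\Phi_\la$ on a ball of $\la$-independent radius using Lemmas \ref{int_linear_lemma} and \ref{int_nonlinear_lemma}, followed by a splitting argument for the $\la$-Lipschitz estimate. The only cosmetic difference is in the decomposition for \eqref{eq:contraction_lipschitz}: the paper writes three pieces (e.g.\ for the $w$-component, $(S_{\la_2}-S_{\la_1})G_{\la_2}(w[\la_2],u[\la_2]) + S_{\la_1}\bigl(G_{\la_2}-G_{\la_1}\bigr)(w[\la_2],u[\la_2]) + S_{\la_1}\bigl(G_{\la_1}(w[\la_2],u[\la_2])-G_{\la_1}(w[\la_1],u[\la_1])\bigr)$) and absorbs the last via the $(w,u)$-Lipschitz bound at parameter $\la_1$, whereas you group into two brackets and absorb via the contraction of $\Phi_{\la_2}$; both work and yield the same conclusion.
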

For the proof see Subsection \ref{int_contraction_proof}

As a comment, the analog of Lemma 3.6 is then the following.
\begin{lem}
  \label{int_construction_lemma}
  If $y_0^{\frac{2}{2-\g}}\ll 1$ is fixed, then for $\la\leq \frac{y_0}{10}$ there exists $C^2$ functions $\rt_{int},\ut_{int}$ defined on $[0,y_0]$ which solve \eqref{selfsimsys} of the form
  \[
    \begin{cases}
      \rt_{int}[\lambda](y)=(\frac{\g-1}{\g}Q_{\la})^{\frac{1}{\g-1}}+\rho_{int}[\la](y)\\
      \ut_{int}[\lambda](y)=u_{\la}+\la^{1+\frac{2}{2-\g}} u_{int}[\la](y)\\
      \rho_{int}(0)=\rho_{int}'(0)=u_{int}(0)=u_{int}'(0)=0
    \end{cases}
  \]
  and they obey the following bounds on $[0,y_0]$,
  \begin{equation}
    \label{eq:int_construction_bound_1}
    \abs{\rho_{int}}\lesssim (\frac{y}{\la})^{2}\ev{\frac{y}{\la}}^{-2-\mu},\qquad \abs{u_{int}}\lesssim \frac{y^2}{\la^2} \ev{\frac{y}{\la}}^{\frac{2}{2-\g}-1-\mu}.
  \end{equation}
  \begin{equation}
    \label{eq:int_construction_bound_2}
    \abs{\rho_{int}'}\lesssim \frac{y}{\la^2}\ev{\frac{y}{\la}}^{-2-\mu},\qquad \abs{u_{int}'}\lesssim \frac{y}{\la^2} \ev{\frac{y}{\la}}^{\frac{2}{2-\g}-1-\mu}.
  \end{equation}
  Moreover, $\la\to 0$, $(\rho_{int}[\la](y_0),u_{int}[\la](y_0))$ satisfies the following on $(0,\frac{1}{10}y_0)$
  \begin{equation}
    \lim_{\la\to 0}\rho_{int}[\la]=0,\qquad \lim_{\la\to 0}(\frac{\la}{y_0})^{1+\frac{2}{2-\g}}u_{int}[\la]=0
    \label{intremainder1}
  \end{equation}
  as well as the following Lipchitz estimates with respect to $\la$ for $0<\la_1<\la_2<\frac{y_0}{10}$:
  \begin{equation}
    \label{eq:lambdalipschitz}
    \begin{cases}
      \abs{\rho_{int}[\la_2](y_0)-\rho_{int}[\la_1](y_0)}&\lesssim \frac{\la_2-\la_1}{\la_1}(\frac{\la_2}{y_0})^{\mu}\\
      \abs{(\frac{\la_2}{y_0})^{1+\frac{2}{2-\g}}u_{int}[\la_2](y_0)-(\frac{\la_1}{y_0})^{1+\frac{2}{2-\g}}u_{int}[\la_1](y_0)}&\lesssim \frac{\la_2-\la_1}{\la_1}(\frac{\la_2}{y_0})^{\mu}
    \end{cases}.
  \end{equation}
  We also have the following bounds on $[0,y_0]$, for some absolute constants independent of $y_0,\la$,
  \begin{equation}
    \label{eq:int_sonic_bounds}
    \abs{\ut+(2-\g)y}\lesssim y_0,\qquad \rt\gtrsim y_0^{-\frac{2}{\g-1}}.
  \end{equation}
\end{lem}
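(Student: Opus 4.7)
The plan is to unwind the decomposition produced by Lemma \ref{int_contraction_lemma}, which supplies a $(w[\la], u[\la]) \in Z \times Y$ with $\norm{w}_{Z_\la} + \norm{u}_{Y_\la}$ small. Since the statement writes $\ut_{int} = u_\la + \la^{1+\frac{2}{2-\g}} u_{int}$, I would simply set $u_{int}[\la] := u[\la]$. For the mass density, note that the exponent $-\frac{2(\g-1)}{2-\g}+\frac{2}{2-\g}$ simplifies to $2$, so $\wt = Q_\la + \la^2 w[\la]$. Converting to mass density via Taylor's formula,
\begin{equation*}
  \rho_{int}[\la] = (\tfrac{\g-1}{\g})^{\frac{1}{\g-1}} \cdot \la^2 w[\la] \cdot \frac{1}{\g-1} \int_0^1 (Q_\la + t\la^2 w[\la])^{\frac{2-\g}{\g-1}}\, dt.
\end{equation*}

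\textbf{Pointwise bounds.} The key preliminary is that $|\la^2 w[\la]| \leq \tfrac{1}{2} Q_\la$ uniformly on $[0,y_0]$. This combines the $Z_\la$-bound $|w| \lesssim \frac{y^2}{\la^2}\ev{y/\la}^{-\mu}$ with the lower bound $Q_\la(y) \gtrsim \la^{-\frac{2(\g-1)}{2-\g}} \ev{y/\la}^{-\frac{2(\g-1)}{2-\g}}$ from \eqref{eq:Qpointwise}, and reduces to a smallness condition satisfied for $y_0 \ll 1$ since $1 < \g < \tfrac{6}{5}$. Consequently the integrand is comparable to $Q_\la^{\frac{2-\g}{\g-1}} \sim \la^{-2}\ev{y/\la}^{-2}$, and after cancellation of the $\la$-factors one reads off $|\rho_{int}| \lesssim \frac{y^2}{\la^2}\ev{y/\la}^{-2-\mu}$, which is \eqref{eq:int_construction_bound_1}. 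The derivative estimate \eqref{eq:int_construction_bound_2} follows by differentiating the Taylor formula and using the $\p_y$ parts of the $Z_\la$-norm together with the differentiable asymptotics of $Q_\la$ from Lemma \ref{int_asymptotics_lemma}. The analogous bounds on $u_{int}$ are immediate from the $Y_\la$-norm definition.

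\textbf{Limits, Lipschitz and sonic bounds.} Specializing the pointwise bound at $y = y_0$ with $\la \to 0$, one has $\ev{y_0/\la} \sim y_0/\la$, giving $|\rho_{int}[\la](y_0)| \lesssim (\la/y_0)^{\mu} \to 0$ and analogously for the rescaled $u_{int}$, which is \eqref{intremainder1}. For \eqref{eq:lambdalipschitz}, the Lipschitz control on $w[\la]$ in $Z_{\la_i}$-norm provided by \eqref{eq:contraction_lipschitz} is propagated through the Taylor formula, also tracking the explicit $\la$-dependence in $Q_\la$ and its powers; the envelope factor $(\la_2/y_0)^{\mu}$ arises exactly as in the pointwise step upon evaluating at $y_0$. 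For \eqref{eq:int_sonic_bounds}, the pointwise bound $|u_*(z)| \lesssim z\ev{z}^{-\mu}$ from \eqref{eq:ustarpointwise} gives $|u_\la(y)| = \la |u_*(y/\la)| \lesssim y \leq y_0$, while the correction $\la^{1+\frac{2}{2-\g}}|u_{int}|$ is dominated by a positive power of $y_0$ via the pointwise bound; combined with $(2-\g)y \leq (2-\g)y_0$, this yields $|\ut + (2-\g)y| \lesssim y_0$. The density lower bound follows from the already-verified $\wt \geq \tfrac{1}{2} Q_\la$ and the pointwise bound \eqref{eq:Qpointwise} minimized at $y = y_0$. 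The main obstacle in executing this proof is the bookkeeping of the exponents of $\la$ and $\ev{y/\la}$ in the Taylor formula so that the many scaling factors cancel correctly; once this is done, every estimate reduces directly to the contents of Lemmas \ref{int_asymptotics_lemma} and \ref{int_contraction_lemma}.
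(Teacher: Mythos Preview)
Your proposal is correct and follows the same approach as the paper: define $u_{int}=u[\la]$ and $\rho_{int}$ via the Taylor-remainder formula applied to $(\tfrac{\g-1}{\g}\wt)^{1/(\g-1)}$, then read off all bounds from the $Z_\la$ and $Y_\la$ norms supplied by Lemma \ref{int_contraction_lemma} together with the pointwise bounds on $Q_\la$ and $u_\la$. The paper's proof is much more terse (it simply writes the Taylor formula for $\rho_{int}$ and asserts that \eqref{eq:int_construction_bound_1}--\eqref{eq:lambdalipschitz} ``follow directly''), so you are essentially filling in the omitted bookkeeping; your treatment of \eqref{eq:int_sonic_bounds} matches the paper's argument almost verbatim.
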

\begin{proof}
  We begin with Lemma \ref{int_contraction_lemma} and then define
  \begin{equation*}
    \rho_{int}[\la]:=(\frac{\g-1}{\g})^{\frac{1}{\g-1}}\frac{1}{\g-1}\la^{\frac{2}{2-\g}-\frac{2(\g-1)}{2-\g}}w[\la]\cdot\int_0^1 \left(Q_\la+t \la^{\frac{2}{2-\g}-\frac{2(\g-1)}{2-\g}} w[\la]\right)^{\frac{2-\g}{\g-1}}dt.
  \end{equation*}
  The estimates \eqref{eq:int_construction_bound_1}-\eqref{eq:lambdalipschitz} then follow directly. We may then use
  \begin{equation*}
    \abs{u_\la}\lesssim \abs{y}\leq y_0,\qquad \abs{Q_\la}\gtrsim \la^{-\frac{2(\g-1)}{2-\g}}\ev{\frac{y}{\la}}^{-\frac{2(\g-1)}{2-\g}}\geq 2^{\frac{-2(\g-1)}{2-\g}}y_0^{\frac{-2(\g-1)}{2-\g}}.
  \end{equation*}
  Moreover, from \eqref{eq:int_construction_bound_1} we have
  \begin{equation*}
    \abs{\la^{1+\frac{2}{2-\g}}u_{int}}\lesssim \abs{y}\cdot \ev{\frac{y}{\la}}^{-\mu}\leq \abs{y_0},\qquad \abs{\rho_{int}}\lesssim \ev{\frac{y}{\la}}^{-\mu}\leq 1.
  \end{equation*}
  As we impose $y_0^{\frac{2}{2-\g}}\ll 1$, applying the triangle inequality yields \eqref{eq:int_sonic_bounds}.
\end{proof}
\section{Proof of the theorem}
\label{sec:matching}
\subsection{Matching and existence of $C^1$ polytropic Hunter solutions}
\label{sec:matching_subsection}
In this section the following lemma is proved.
\begin{lem}
  \label{lem:C1_result}
  For some integer $N\gg 1$ there exists $C^1$ solutions to \eqref{polytropicselfsimeq} on $[0,\infty)$ subject to the boundary conditions \eqref{eq:selfsim_boundary_conds} and also $\rt_i>0$. In addition, $\rt_i$ intersects the far-field solution, $\rt_f$, exactly $i+1$ times. Moreover, the sonic point condition $(\ut+(2-\g)y)^2-\g\rt^{\g-1}=0$ is satisfied at exactly one point $y=y_*\in (0,2y_f)$, where $y_f$ is the sonic point for the far-field solution, \eqref{eq:yf_def}.
\end{lem}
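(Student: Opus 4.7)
The plan is to construct each Hunter-type solution by gluing an interior solution from Lemma \ref{int_construction_lemma} to an exterior solution from Lemma \ref{ext_construction_lemma} at a small matching radius $y_0$ chosen once and for all. Fix $y_0 \ll 1$ satisfying the normalization \eqref{eq:wigglecond}, namely $\nu \log y_0 + d_1 \equiv \pi/2 \pmod{2\pi}$. With $|\epsilon| \ll y_0^\mu$ parameterizing the exterior family and $\lambda \leq y_0/10$ parameterizing the interior family, a $C^1$ match at $y = y_0$ is equivalent to solving the system
\begin{equation*}
\calF(\lambda,\epsilon) := \rt_{ext}[\epsilon](y_0) - \rt_{int}[\lambda](y_0) = 0, \qquad \calG(\lambda,\epsilon) := \ut_{ext}[\epsilon](y_0) - \ut_{int}[\lambda](y_0) = 0,
\end{equation*}
since both constructions solve \eqref{polytropicselfsimeq} and $C^0$ matching of $(\rt,\ut)$ at a non-sonic point automatically upgrades to $C^1$.

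Plugging in the decompositions from the two construction lemmas, and using the leading asymptotics of $p_{hom},\o_{hom}$ from Lemma \ref{ext_fun_matrix_lemma}(c) on the exterior side together with \eqref{laneemdenasymptotics} and \eqref{ustarasymptot} on the interior side, the two matching equations reduce (after subtracting the common far-field piece $ky_0^{-2/(2-\g)}$ and rescaling) to the schematic system
\begin{equation*}
\epsilon c_1 = c_2 \lambda^\mu \cos(\nu \log \lambda + d_1 - d_2) + \mathrm{err}_1,
\end{equation*}
\begin{equation*}
\epsilon c_1 \cos \theta_0 = c_2 \lambda^\mu \cos(\nu \log \lambda + d_1 - d_2 - \theta_0) + \mathrm{err}_2,
\end{equation*}
where $\mathrm{err}_j = O\bigl(y_0^{2/(2-\g)}(|\epsilon|+\lambda^\mu)\bigr)$ and, crucially, the Lipschitz estimates \eqref{extremainder2} and \eqref{eq:lambdalipschitz} control the $(\epsilon,\lambda)$-dependence of the remainders with the same smallness factor.

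I would then apply the appendix implicit function theorem (Lemma \ref{lem:implicit_function_theorem}) in two stages. First, the first equation solves uniquely for $\epsilon = \epsilon(\lambda) = O(\lambda^\mu)$ as a Lipschitz function on a neighborhood of $\lambda = 0$: it is a small Lipschitz perturbation of the explicit identity $\epsilon \mapsto \epsilon c_1$ once $y_0$ is small enough. Substituting back and using $\sin\theta_0 \neq 0$, which holds throughout the energy-supercritical range by \eqref{eq:angledef}, a trigonometric identity collapses the second equation to
\begin{equation*}
0 = \sin(\nu \log \lambda + d_1 - d_2) + O(y_0^{2/(2-\g)}).
\end{equation*}
Each simple zero of $\sin(\nu\log\lambda + d_1-d_2)$ persists under the small perturbation, yielding a countable family $\{\lambda_i\}_{i \geq N} \to 0$ with $\lambda_i/\lambda_{i+1} \to e^{\pi/\nu}$, and thereby solutions $(\rt_i,\ut_i)$ of \eqref{polytropicselfsimeq} on $[0,\infty)$.

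The structural properties are then checked directly. Positivity of $\rt_i$ is immediate from the decomposition plus \eqref{eq:Qpointwise} in the interior and smallness of $\epsilon$ in the exterior. For the sonic-point claim, \eqref{eq:int_sonic_bounds} rules out sonic points on $[0,y_0]$ once $y_0 \ll 1$, while the monotonicity \eqref{eq:ext_monotonicities} makes the quantity $(\ut+(2-\g)y)^2 - \g\rt^{\g-1}$ strictly monotone through its unique zero in $[y_0,2y_f]$. Finally, the intersection count with $\rt_f$ amounts to counting sign changes of $p_{hom} + p[\epsilon_i]$ on $[y_0,\infty)$ (plus one intersection in $[0,y_0]$ forced by the interior oscillation of $Q$ about $\rt_f$). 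Since $p_{hom}$ has a logarithmic oscillation with frequency $\nu$ and each successive $\lambda_i$ shifts the matching phase by exactly $\pi$, the count increases by one between $\lambda_i$ and $\lambda_{i+1}$, giving the claimed $i+1$ intersections after relabeling the index so that it starts at the correct value. The main obstacle is organizing the matching so the implicit function argument goes through with only Lipschitz (rather than $C^1$) dependence on the parameters, since that is all our Banach-space fixed-point scheme delivers; this is exactly the setting in which Lemma \ref{lem:implicit_function_theorem} is formulated.
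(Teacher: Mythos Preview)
Your matching argument---setting up $\calF,\calG$, solving $\calF=0$ for $\epsilon(\lambda)$ via Lemma \ref{lem:implicit_function_theorem}, reducing $\calG=0$ to $\sin(\nu\log\lambda+d_1-d_2)+O(y_0^{2/(2-\g)})=0$, and extracting the $\lambda_i$ by persistence of simple zeros---is essentially the paper's approach and is correct. The positivity and sonic-point checks are also fine.

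The intersection-count paragraph, however, has the mechanism backwards. On $[y_0,\infty)$ the zeros of $y^{2/(2-\g)}\rt_i-k$ are those of $p_{hom}+p[\epsilon_i]$, and since $\|p[\epsilon_i]\|=O(\epsilon_i y_0^{-\mu})$ is small this count equals the zero count of $p_{hom}$, which is a \emph{fixed} finite number independent of $i$. The phase $\nu\log\lambda_i$ does not enter here at all: $p_{hom}$ is a single fixed function, and the small perturbation $p[\epsilon_i]$ cannot create or destroy zeros. The part of the count that varies with $i$ lives entirely in the interior $[0,y_0]$, where $\rt_i$ is a small perturbation of the rescaled Lane--Emden profile $(\tfrac{\g-1}{\g}Q_{\lambda_i})^{1/(\g-1)}$. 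In the rescaled variable $y/\lambda_i\in[0,y_0/\lambda_i]$, the asymptotic \eqref{laneemdenasymptotics} shows $(\tfrac{\g-1}{\g}Q)^{1/(\g-1)}-ky^{-2/(2-\g)}$ oscillates like $c_2\sin(\nu\log y+d_2)y^{-5/2}$; since $\lambda_{i+1}/\lambda_i\approx e^{-\pi/\nu}$, passing from $i$ to $i+1$ lengthens the rescaled interval by exactly one half-period and adds one interior zero. So your phrase ``plus one intersection in $[0,y_0]$'' badly undercounts (there are roughly $i$ of them), and the increment you attribute to $p_{hom}$ should instead be attributed to the growing domain of $Q$'s oscillation. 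The paper handles this in three separate perturbation steps: the exterior count is stable in $\epsilon$, the interior count matches that of $Q_{\lambda_i}$, and the latter increments by one between consecutive $\lambda_i$.
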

\begin{proof}
  We follow the same eight steps as in \cite{sandine}. As a comment, it is easier technically to use a variant of the implicit function theorem which only requires (quantitative) Lipschitz regularity. This variant is non-standard, and so the statement and (elementary) proof are provided in Subsection \ref{implicit_proof}.

  \begin{enumerate}[label=\arabic*)]
  \item
    We pick $y_0\ll 1$ (dictated by the assumptions of Lemma \ref{int_construction_lemma}). Without loss of generality, we may require that \eqref{eq:wigglecond} is satisfied. Then, we pick $\ep_0\ll y_0^{\mu}$ (dictated by the assumptions of Lemma \ref{ext_construction_lemma}). For $\ep\in [0,\ep_0]$ we apply Lemma \ref{ext_construction_lemma} to obtain a family of solutions $(\rt_{ext}[\ep],\ut_{ext}[\ep])$ on $[y_0,\infty)$. Then, for $\la\leq \frac{y_0}{10}$, we apply Lemma \ref{int_construction_lemma} to obtain a family of solutions $(\rt_{int}[\la],\ut_{int}[\la])$ defined on $[0,y_0]$.
  \item
    We define
    \begin{align*}
      \calF[y_0](\ep,\la):&=\rt_{ext}[\ep](y_0)-\rt_{int}[\la](y_0)\\
                          &=\left(\frac{k}{y_0^{\frac{2}{2-\g}}}+\frac{\ep}{y_0^{\frac{2}{2-\g}}}p_{hom}(y_0)-(\frac{\g-1}{\g}Q_{\la})^{\frac{1}{\g-1}}\right)+(\ep \rho_{ext}[\ep](y_0)-\rho_{int}[\la](y_0)).
    \end{align*}
    We let $q(y)=(\frac{\g-1}{\g}Q)^{\frac{1}{\g-1}}-\frac{k}{y^{\frac{2}{2-\g}}}-c_2\frac{\sin(\nu \log(y)+d_2)}{y^{\frac{5}{2}}}$, and note that by \eqref{laneemdenasymptotics} we have
    \[
      q(y)=O(\frac{1}{y^{\frac{2}{2-\g}+2\mu}}).
    \]
    From \eqref{hom0asymptots}, \eqref{extremainder1}, \eqref{extremainder2}, \eqref{intremainder1}, \eqref{eq:lambdalipschitz}, we see that we may express
    \[
      \calF[y_0](\ep,\la)=\frac{\ep}{y_0^{\frac52}}c_1\sin(\nu\log(y_0)+d_1)-c_2\frac{\la^{\frac{6-5\g}{2-\g}}}{y_0^{\frac52}}\sin(\nu \log(y_0/\la)+d_2)+
      \frac{1}{y_0^{\frac52}}q_1(\ep)+\frac{1}{y_0^{\frac52}}q_2(\la)
    \]
    where
    \[
      q_{1}(0)=0,\qquad \frac{q_1(\ep_1)-q_{1}(\ep_2)}{\ep_1-\ep_2}=O(y_0^{\frac{2}{2-\g}})+O(\ep y_0^{-\mu}).
    \]
    \[
      \lim_{\la\to 0^+}q_2(\la)=0,\qquad \frac{q_2(\la_1)-q_2(\la_2)}{\la_1^{\frac{6-5\g}{2(2-\g)}}-\la_2^{\frac{6-5\g}{2(2-\g)}}}=O(y_0^{\frac{2}{2-\g}})+O(\la^{\mu}y_0^{-\mu}).
    \]
    We then recall \eqref{eq:wigglecond} which gives
    \begin{equation}
      \calF_{y_0}=y_0^{-\frac52}\left(\ep c_1-\la^{\mu}c_2 \cos(\nu \log(\la)+d_1-d_2)+q_1(\ep)+q_{2}(\la)\right),
      \label{firstmatchingasymptots}
    \end{equation}
    and note that if we consider the regime $\ep,\la^{\mu}\lesssim y_0^{\frac{2}{2-\g}+\mu}$ then
    \begin{equation}
      \abs{\frac{q_1(\ep_1)-q_2(\ep_2)}{\ep_1-\ep_2}}\lesssim y_0^{\frac{2}{2-\g}},\qquad \abs{\frac{q_2(\la_1^{\mu})-q_2(\la_2^{\mu})}{\la_1^{\mu}-\la_2^{\mu}}}\lesssim y_0^{\frac{2}{2-\g}}.
      \label{remainderlipschitz}
    \end{equation}
    We then note that if $y_0^{\frac{2}{2-\g}}\ll 1$, then by Lemma \ref{lem:implicit_function_theorem}, we have a well-defined function $\ep(\la)$, Lipschitz in $\la^{\mu}$, which satisfies $\lim_{\la\to 0^+}\ep(\la)=0$ is locally uniquely determined for $\la>0$ by
    \[
      \calF[y_0](\ep(\la),\la)=0.
    \]
  \item
    We observe that from \eqref{firstmatchingasymptots} and \eqref{remainderlipschitz} we may read off
    \[
      \ep(\la)=\frac{c_2}{c_1}\la^{\mu}\left(\cos(\nu \log(\la)+d_1-d_2)+O(y_0^{\frac{2}{2-\g}})\right).
    \]
  \item
    We now define the second matching function to be
    \[
      \calG[y_0](\la):=\frac{1}{c_2 y_0^{\frac{3\g-2}{2(2-\g)}} \la^{\mu}\sin(\th_0)}\frac{k}{(2-\g)^{\frac32}}\left(\ut_{ext}[\ep(\la)](y_0)-\ut_{int}[\la](y_0)\right)
    \]
    This will be a Lipschitz function of $\la^{\mu}$. 
    We then may compute that (by a similar computation as for the first matching function), using \eqref{hom0asymptots}, \eqref{extremainder1}, \eqref{extremainder2}, \eqref{intremainder1}, \eqref{eq:lambdalipschitz}, 
    \[
      \calG[y_0](\la)=-\sin(\nu \log(\la)+d_1-d_2)+O(y_0^{\frac{2}{2-\g}}).
    \]
  \item
    We then pick
    \begin{equation*}
      \la_{i,\pm}=\exp(\frac{1}{-\nu}(\pm \frac{1}{10}-d_1+d_2+k\pi))
    \end{equation*}
    and we have that if $i$ is sufficiently large so that $\la_{i,\pm}\leq \la_0$ $\la_{i,\pm}^{\mu}\lesssim y_0^{\frac{2}{2-\g}+\mu}$ then $\calG[y_0](\la_{i,\pm})$ is well-defined and
    \begin{equation*}
      \calG[y_0](\la_{i,\pm})=(-1)^k\sin(\pm \frac{1}{10})+O(y_0^{\frac{2}{2-\g}}).
    \end{equation*}
    We may choose $y_0^{\frac{2}{2-\g}}\ll 1$ if necessary to ensure that the remainder is bounded in magnitude by $\frac{1}{2}\sin(\frac{1}{10})$. Thus, for each $i>N$ for some $N$, we have that by the intermediate value theorem there exists $\la_i\in (\la_{i,+},\la_{i,-})$ such that $\calG[y_0](\la_i)=0$. Thus, we may glue together the interior and exterior solutions to get a continuous solution on $[0,\infty)$. Moreover, by \eqref{eq:int_sonic_bounds}, we see that the determinant
    \begin{equation*}
      D(y):=(u+(2-\g)y)^2-\g\rt^{\g-1}
    \end{equation*}
    is negative at $y_0$ (the negative contribution is $O(y_0^{\frac{-2(\g-1)}{2-\g}})$ while the positive contribution is $O(y_0)$). Thus, $D(y)$ does not vanish at the sonic point, so the ODE system is regular, and so the solutions $(\rho_{int},u_{int})$ and $(\rho_{ext},u_{ext})$ can be glued at $y_0$ to obtain a classical solution on $[0,\infty)$. We note that as previously mentioned, on $[0,y_0]$, we have that $D(y)$ is negative. Moreover, we see from \eqref{eq:ext_monotonicities} that $D(y)$ is increasing on $[y_0,\infty)$, and so there is exactly one sonic point. it remains to count the intersections with the far-field solution.
  \item
    We first show that $y^{\frac{2}{2-\g}}\rt-k$ and $p_{hom}$ have the same number of zeroes on $[y_0,\infty)$. To do this, we define
    \begin{equation*}
      \psi(t):=e^{\mu t}p_{hom}(e^t),\qquad \psit(t):=e^{(\mu+\frac{2}{2-\g})t}\rt_{ext}(e^t).
    \end{equation*}
    The problem then becomes showing that $\psi(t)$ and $\psi(t)+\psit(t)$ have the same number of zeroes on $[-\log(y_0),\infty)$. We then note that by Lemma \ref{ext_fun_matrix_lemma} we have that as $t\to \infty$,
    \begin{equation*}
      \psi(t)=\mu_3e^{\mu t}+O(e^{(\mu-\frac{1}{2-\g}) t})
    \end{equation*}
    and hence has no roots. Similarly, as $t\to -\infty$, we have
    \begin{equation*}
      \psi(t)=c_1\sin(\nu t+d_1)+O(e^{\frac{2}{2-\g} t}).
    \end{equation*}
    In particular, by \eqref{eq:wigglecond} we have $\psi(\log(y_0))=c_1+O(y_0^{\frac{2}{2-\g}})$ which is bounded away from zero. It follows that $\psi$ will have finitely many roots and they will all be contained in an interval $[\log(y_0),\log(Y_0)]$ for some large $Y_0$. Moreover, there exists $\de>0$ so that if $\{t\}_l$ are these roots then $\abs{\psi'}>\de$ on $\cup_l[t_l-\frac{1}{100},t_l+\frac{1}{100}]$ and $\abs{\psi}>\de$ on the complement of this set. It follows from \eqref{eq:ext_construction_bound_1} and \eqref{eq:ext_construction_bound_2} that
    \begin{equation*}
      \abs{\psit},\abs{\psit'}\lesssim \ep y_0^{-\mu}\ev{e^t}^{\mu}.
    \end{equation*}
    Since the roots occured in a compact set, by choosing $\ep y_0^{-\mu}\ll 1$, $\abs{\psit}$, we may ensure $\abs{\psit'}\leq \frac{\de}{100}$ which will ensure that $\psi$ and $\psi+\psit$ have the same number of roots.
  \item
    We next claim that if $y_0^{\frac{2}{2-\g}}\ll 1$ and $\la^{\mu}\lesssim y_0^{\frac{2}{2-\g}+\mu}$, then we will have that $y^{\frac{2}{2-\g}}\rt-k$ and $y^{\frac{2}{2-\g}}(\frac{\g-1}{\g}Q_\la)^{\frac{1}{\g-1}}-k$ will intersect $0$ the same number of times on $[0,y_0]$. To see this, we define
    \begin{align*}
      \chi(t)&=\ev{e^t}^{\mu}\left((\la e^t)^{\frac{2}{2-\g}}(\frac{\g-1}{\g}Q_\la(\la e^t))^{\frac{1}{\g-1}}-k\right),\\
      \chit(t)&=\ev{e^t}^{\mu}\left((\la e^t)^{\frac{2}{2-\g}}\rt(\la e^t)\right)
    \end{align*}
    and note that as $t\to -\infty$ we have
    \begin{equation*}
      \chi(t)=-k+O(e^{\frac{2}{2-\g} t})
    \end{equation*}
    and as $t\to \infty$ we have, using \eqref{laneemdenasymptotics},
    \begin{equation*}
      \chi(t)=c_2\sin(\nu t+d_2)+O(e^{-\mu t}).
    \end{equation*}
    We note that we will have $y\lesssim y_0$, so we have that, using $\la^{\mu}\lesssim y_0^{\frac{2}{2-\g}+\mu}$
    \begin{equation*}
      e^{-\mu t}=(\frac{y}{\la})^{-\mu}\leq \frac{\la^{\mu}}{y_0^{\mu}}\lesssim y_0^{\frac{2}{2-\g}}.
    \end{equation*}
    Thus, if we switch variables to $x$, which is defined implicitly by
    \begin{equation*}
      t=\log(y_0)-\frac{1}{\nu}(x-d_1+d_2)
    \end{equation*}
    we will have be \eqref{eq:wigglecond} that
    \begin{equation*}
      \chi(t)=c_2\cos(x)+O(y_0^{\frac{2}{2-\g}}).
    \end{equation*}
    and $t_{k,\pm}=\log(y_0/\la_{k,\pm})$ corresponds to $x_{k,\pm}=-k \pi\pm \frac{1}{10}$. We can thus see that $\abs{\chi(t)}$ is bounded away from zero on a neighboorhood of $\log(y_0/\la_k)$. Similarly on neighboorhoods of the roots of $\chi(t)$ one can show that the derivative is bouned away from zero. Finally, it follows from \eqref{eq:int_construction_bound_1} and \eqref{eq:int_construction_bound_2} that on $(-\infty,\log(y_0/\la))$ we have
    \begin{equation*}
      \abs{\chi(t)},\abs{\chi'(t)}\lesssim \ev{e^t}^{\mu}(\la e^t)^{\frac{2}{2-\g}}\ev{e^t}^{-2-\mu}\leq (\la e^t)^{\frac{2}{2-\g}}\leq y_0^{\frac{2}{2-\g}}.
    \end{equation*}
    From this, similarly to Step 6, we may conclude that $\chi$ and $\chit$ have the same number of zeroes. It then follows that $y^{\frac{2}{2-\g}}\rt_{int}[\la_i]-k$ and $y^2e^{Q_{\la_i}}-k$ intersect $0$ the same number of times.
  \item
    We see that the number of zeroes of $y^{\frac{2}{2-\g}}\rt-k$ will be the sum of the number of zeroes of $p_{hom}$ on $[y_0,\infty)$ (a finite number, indepedent of $i$) along with the number of zeroes of $y^{\frac{2}{2-\g}}(\frac{\g-1}{\g}Q_{\la_i})^{\frac{1}{\g-1}}-k$ on $[0,y_0]$. We now show that this latter number increases by $1$ when $i\to i+1$. It suffices to show that the function $\chi(t)$ defined in the previous section has exactly one root in $[\log(y_0/\la_i,\log(y_0/\la_{i+1}))]$. We define
    \begin{equation*}
      \chib(t)=\chi(t)-c_2\sin(\nu t+d_2).
    \end{equation*}
    and note that by \eqref{laneemdenasymptotics} we have that as $t\to \infty$
    \begin{equation*}
      \abs{\chib(t)},\abs{\chib'(t)}\lesssim y_0^{\frac{2}{2-\g}}
    \end{equation*}
    and so we see that $\chi(t)$ has the same number of roots on this interval as $c_2\sin(\nu t+d_2)$. By a similar argument as in the previous step, we may see that $c_2\sin(\nu t+d_2)$ has precisely one root on this interval.

    Thus, for $i$ sufficiently large, the number of roots of $y^{\frac{2}{2-\g}}\rt-k$ increases by 1 as $i$ increases by $1$, so by re-enumerating if necessary we see that $y^{\frac{2}{2-\g}}\rt_i-k$ has precisely $i+1$ zeroes.
  \end{enumerate}
\end{proof}
\subsection{Proof that $C^1$ polytropic Hunter solutions are real analytic}
\label{sec:proof}
We recall that by Lemma \ref{lem:normalform} the Lane-Emden equation can be put into the desired normal form at the sonic point. We now state and proof an (easier) lemma regarding the singular ODE problem at the origin.
\begin{lem}
  \label{lem:origin_normal_form}
  Near $y=0$, assuming $\rt(0)=\rho_0$, $\ut(0)=0$ (and $\rho_0>0$) we can put \eqref{polytropicselfsimeq} in normal form. The parameters in this case are
  \begin{equation*}
    a=\rho_0,\qquad b=0,\qquad c=2\rho_0,\qquad d=2\rho_0.
  \end{equation*}
  In particular, $U=\ut'(0)=\frac{2}{3}$ is determined uniquely and we have the nondegeneracies
  \begin{equation*}
    a+bU=\rho_0,\qquad \kappa=2.
  \end{equation*}
\end{lem}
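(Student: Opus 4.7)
The plan is to exhibit an explicit row rearrangement of \eqref{polytropicselfsimeq} together with a linear change of dependent variables that places the system in the normal form of Definition \ref{def:normal_form}, with the abstract variable $t$ taken to be $y$. The natural new unknowns are $u_1 = \rt - \rho_0$ and $u_2 = \ut$, so that $u_1(0)=u_2(0)=0$ reflects the hypotheses $\rt(0)=\rho_0$ and $\ut(0)=0$. Two technical features of \eqref{polytropicselfsimeq} prevent us from taking its two components as the rows of $Au'+B=0$ as written: the coefficient matrix is antidiagonal at $y=0$, not of the shape $\operatorname{diag}(1,0)$, and the $\frac{2\rt(\ut+y)}{y}$ term is not jointly smooth in $(y,u_1,u_2)$.

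Both issues are handled by a single row scaling. First, divide the second scalar equation of \eqref{polytropicselfsimeq} by $\gamma \rt^{\gamma-2} = \gamma(\rho_0+u_1)^{\gamma-2}$, which is smooth and nonzero near $u_1=0$ because $\rho_0>0$; the result has $A_{11}(0)=1$, $A_{12}(0)=0$ and $B_1(0)=0$. Second, multiply the first scalar equation of \eqref{polytropicselfsimeq} by $y$, which simultaneously makes the second row of $A$ vanish at the origin and clears the apparent pole in the nonlinearity. The outcome is $A_{22}(y,u_1,u_2) = y(\rho_0+u_1)$ and $B_2(y,u_1,u_2) = 2(\rho_0+u_1)(u_2+y)$, both manifestly polynomial, together with smooth $A_{11}, A_{12}, A_{21}, B_1$; one then verifies directly that $A(0)=\operatorname{diag}(1,0)$ and $B(0)=0$, placing the system in normal form.

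The characteristic parameters follow from a single round of partial differentiation of the explicit $A_{22}$ and $B_2$ above at $(y,u_1,u_2)=(0,0,0)$, yielding $a=\rho_0$, $b=0$, $c=2\rho_0$, $d=2\rho_0$. Because $b=0$, the characteristic quadratic \eqref{eq:characteristic_quadratic} collapses to the linear equation $(a+d)U+c=0$, whose unique solution $U=-c/(a+d)=-2/3$ agrees with the regularity boundary condition $\ut'(0)=-2/3$ from \eqref{eq:selfsim_boundary_conds}. The nondegeneracies $a+bU = a = \rho_0$ and $\kappa = (d+bU)/(a+bU) = d/a = 2$ are then immediate. No real obstacle arises here: unlike the sonic-point setting, the ODE at $y=0$ is not genuinely singular (the original coefficient matrix has full rank there, just with the wrong pattern of zeros), so once the correct row scaling is identified the remainder of the argument is an elementary Taylor expansion.
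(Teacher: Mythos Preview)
Your proof is correct and essentially identical to the paper's: both swap the rows of \eqref{polytropicselfsimeq}, multiply the mass-conservation equation by $y$, and divide the momentum equation to normalize $A_{11}(0)=1$, then read off the characteristic parameters. The only cosmetic difference is that you divide by $\gamma\rt^{\gamma-2}$ while the paper divides by the constant $\gamma\rho_0^{\gamma-2}$; both yield the same normal form and the same $a,b,c,d,U,\kappa$ (and you correctly note that $U=-2/3$, matching \eqref{eq:selfsim_boundary_conds}, despite the sign typo in the lemma statement).
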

\begin{proof}
  We consider the ODE system \eqref{polytropicselfsimeq}. We assume that $\rt(0)=\rho_0>0$, and multiply the mass-conservation equation by $y$. We divide the $u$ equation by $\g\rho_0^{\g-2}$. Finally, we substitute $\rt=\rho_0+\rhor$. This gives the system
  \begin{equation*}
    \mqty{\frac{(\rho_0+\rhor)^{\g-2}}{\rho_0^{\g-2}} & \frac{\ut+(2-\g)y}{\g \rho_0^{\g-2}}\\ y(\ut+(2-\g)y) & y(\rho_0+\rhor)}\mqty{\rhor'\\ \ut'}+\mqty{\frac{(\g-1)\ut}{\g \rho_0^{\g-2}}+\frac{4\pi}{(4-3\g)\g \rho_0^{\g-2}}(\rho_0+\rhor)(\ut+(2-\g)y)\\ 2(\rho_0+\rhor)(\ut+y)}=0.
  \end{equation*}
  This equation is in the desired normal form. We may then read off
  \begin{equation*}
    a=\rho_0,\qquad b=0,\qquad c=2\rho_0,\qquad d=2\rho_0.
  \end{equation*}
  In this case, the characteristic quadratic, \eqref{eq:characteristic_quadratic}, is linear and the unique solution is $U=-\frac{2}{3}$. We thus have
  \begin{equation*}
    a+bU=\rho_0,\qquad \kappa=2.
  \end{equation*}
\end{proof}

We now prove Theorem \ref{thm:result}.
\begin{proof}
  By Lemma \ref{lem:C1_result} there exists a $C^1$ solution with these properties. The solution is real analytic for $y\in (0,\infty)\setminus y_*$ by standard ODE theory (since the ODE is non-singular). We comment that by Lemma \ref{lem:normalform}, in a neighborhood of the soinc point, the system \eqref{polytropicselfsimeq} can be put in the form of Definition \ref{def:normal_form}. We then comment that if we replace $\o_0=2-\g$ by $\o_{\ep}=\omgr_0[\ep]$, then the characteristic parameters will change contiunuously. Thus, by Lemma \ref{lem:normalform}, for $\ep$ sufficiently small we will have $a+bU\neq 0$, $\kappa>-\frac{1}{2}$. Thus, by the existence part of Lemma \ref{lem:main_analyticity_lemma}, by choosing the Larson-Penston-Hunter boundary condition there exists locally an analytic solution. By the uniqueness aspect of Lemma \ref{lem:local_C1_wellposedness}, this agrees with the previously constructed solution, establishing analyticity at $y_f$. Similarly, by Lemma \ref{lem:origin_normal_form}, on a neighborhood of the origin there is a unique analytic solution. Thus, the solutions provided by Lemma \ref{lem:C1_result} are in fact analytic on $[0,\infty)$ as desired.
\end{proof}
\section{Deferred proofs of Section \ref{sec:exterior}}
\label{sec:ext_proofs}
\subsection{Proof of Lemma \ref{ext_fun_matrix_lemma}}
\label{ext_fun_matrix_proof}
\begin{proof}
  In this case, the linearized ODE we are considering is
  \[
    \mqty{(2-\g)z & kz\\ \frac{2\pi(2-\g)^2}{4-3\g}z^{1-\frac{2}{2-\g}} & (2-\g)z}\mqty{p'\\ \o'}+\mqty{0 & \frac{4-3\g}{2-\g}k\\ \frac{4\pi(2-\g)^2}{(4-3\g)z^{\frac{2}{2-\g}}} & 1+\frac{4\pi k}{(4-3\g)z^{\frac{2}{2-\g}}}}\mqty{p\\ \o}=0.
  \]
  We comment that the coefficient matrix is singular when $kz^{\frac{-2}{2-\g}}=\frac{4-3\g}{2\pi}$. Assuming that this is not the case, we may invert the first matrix to obtain
  \[
    \mqty{p'\\ \o'}+\frac{1}{1-\frac{2\pi}{4-3\g}k z^{-\frac{2}{2-\g}}}\mqty{\frac{-4\pi k}{(4-3\g)z^{1+\frac{2}{2-\g}}} & -3\frac{(\g-1)k}{(2-\g)^2 z}-\frac{4\pi k^2}{(4-3\g)(2-\g)^2 z^{1+\frac{2}{2-\g}}}\\ \frac{4\pi(2-\g)}{(4-3\g)z^{1+\frac{2}{2-\g}}} & \frac{1}{(2-\g)z}+\frac{2\pi k(3\g-2)}{(2-\g)(4-3\g)z^{1+\frac{2}{2-\g}}}}\mqty{p\\ \o}=0.
  \]
  We first let
  \[
    \bar{p}=p+3\frac{(\g-1)k}{2-\g}\omega
  \]
  and then we have
  \[
    \mqty{\bar{p}'\\ \omega'}+\frac{1}{1-\frac{2\pi}{(4-3\g)}k z^{-\frac{2}{2-\g}}}\mqty{-\frac{4\pi k}{z^{1+\frac{2}{2-\g}}} & -\frac{2\pi(6\g^2-15\g+11)k^2}{(2-\g)^2z^{1+\frac{2}{2-\g}}}\\ \frac{4\pi(2-\g)}{(4-3\g)z^{1+\frac{2}{2-\g}}} & \frac{1}{(2-\g)z}+\frac{2\pi k(6\g^2-15\g+10)}{(2-\g)(4-3\g)z^{1+\frac{2}{2-\g}}}}\mqty{\bar{p}\\ \omega}=0
  \]
  We then let
  \[
    q=\bar{p}+\frac{(6\g^2-15\g+11)k}{2(2-\g)^2}\omega
  \]
  and in terms of $(\bar{p},q)$ we have
  \[
    \mqty{\bar{p}'\\ q'}+\frac{1}{1-\frac{2\pi}{4-3\g}kz^{-\frac{2}{2-\g}}}\mqty{0 & -\frac{4\pi k}{z^{1+\frac{2}{2-\g}}}\\ -\frac{1}{(2-\g)z}+\frac{2\pi k}{(2-\g)(4-3\g)z^{1+\frac{2}{2-\g}}} & \frac{1}{(2-\g)z}-\frac{2\pi k(6-5\g)}{(4-3\g)(2-\g)z^{1+\frac{2}{2-\g}}}}\mqty{\bar{p}\\ q}=0.
  \]
  From the first equation we have that
  \[
    q=(\frac{1}{4\pi k})z^{1+\frac{2}{2-\g}}(1-\frac{2\pi}{4-3\g}k z^{-\frac{2}{2-\g}})\bar{p}'
  \]
  and we may substitute this into the second line to obtain the following second order equation for $\bar{p}$ (note a factorization is used to simplify the coefficient of $\bar{p}$)
  \[
    \bar{p}''+\frac{1}{1-\frac{2\pi k}{(4-3\g)}z^{-\frac{2}{2-\g}}}(\frac{5-\g}{(2-\g)z}-\frac{4\pi k}{(2-\g)z^{1+\frac{2}{2-\g}}})\bar{p}'-\frac{1}{1-\frac{2\pi k}{(4-3\g)}z^{-\frac{2}{2-\g}}}\frac{4\pi k}{(2-\g)}z^{-2-\frac{2}{2-\g}}\bar{p}=0.
  \]
  We then let $\bar{p}(z)=f(\frac{4-3\g}{2\pi k}z^\frac{2}{2-\g})$. We let $x=\frac{4-3\g}{2\pi k}z^{\frac{2}{2-\g}}$ and obtain the following equation for $f(x)$:
  \[
    f''+\frac{1}{2x(x-1)}(5x-(8-5\g))f'-\frac{(2-\g)(4-3\g)}{2x^2(x-1)}f=0.
  \]
  We then let $f(x)=g(1-\frac{1}{x})$, $\xi=1-\frac{1}{x}$ to obtain the following equation
  \begin{equation}
    g''+\frac{(-4+5\g)\xi+3-5\g}{2\xi(\xi-1)}g'+\frac{(2-\g)(4-3\g)}{2\xi(\xi-1)}g=0.
    \label{hypergeomeq}
  \end{equation}
  We recall the definitions
  \[
    \mu:=\frac{6-5\g}{2(2-\g)},\qquad \nu:=\frac{\sqrt{-\g^2-20\g+28}}{2(2-\g)}.
  \]
  We note that this is the hypergeometric equation with
  \[
    a=\frac{2-\g}{2}(-\mu-i\nu),\qquad b=\frac{2-\g}{2}(-\mu+i\nu),\qquad c=\frac{5\g-3}{2}.
  \]
  The unique solution (up to a constant multiple) which is smooth on a neighborhood of $\xi=0$ is given by
  \[
    g_{1}(\xi)=\frac{6\g^2-15\g+11}{3\g-1}F(\frac{2-\g}{2}(-\mu-i\nu),\frac{2-\g}{2}(-\mu+i\nu),\frac{5\g-3}{2},\xi).
  \]
  We recall that as $\xi\to 0$, we have
  \[
    F(\frac{2-\g}{2}(-\mu-i\nu),\frac{2-\g}{2}(-\mu+i\nu),\frac{5\g-3}{2},\xi)=1+\frac{(4-3\g)(2-\g)}{5-3\g}\xi+o(\xi).
  \]
  where $F(a,b,c,\xi)$ denotes Gauss's hypergeometric function. By inverting the change of variables, we get the following homogeneous solution for $L$:
  \[
    \mqty{p_{hom,pre}(z)\\ \o_{hom,pre}(z)}=\mqty{\frac{3\g-1}{(6\g^2-15\g+11)} & -\frac{6(\g-1)}{(4-3\g)(6\g^2-15\g+11)}(1-\frac{2\pi k}{(4-3\g)z^{\frac{2}{2-\g}}})\\ -\frac{2(2-\g)^2}{(6\g^2-15\g+11)k} & \frac{2(2-\g)}{(4-3\g)(6\g^2-15\g+11)k}(1-\frac{2\pi k}{(4-3\g)z^{\frac{2}{2-\g}}})}\mqty{g_1(1-\frac{2\pi k}{(4-3\g) z^{\frac{2}{2-\g}}})\\ g_1'(1-\frac{2\pi k}{(4-3\g) z^{\frac{2}{2-\g}}})}.
  \]
  One can then check that as $z\to y_f$, we have
  \[
    \begin{cases}
      p_{hom,pre}&=1+\frac{-18\g^2+18\g+4}{(5\g-3)(3\g-1)}\frac{1}{y_f}(z-y_f)+o(z-y_f)\\
      \omega_{hom,pre}&=-\frac{2(2-\g)^2}{(3\g-1)k}+\frac{4(2-\g)(\g-1)(7-3\g)}{(3\g-1)(5\g-3)}\frac{1}{k y_f}(z-y_f)+o(z-y_f)
    \end{cases}.
  \]
  Finally, letting $(p_{hom},\o_{hom})=\frac{k(3\g-1)}{2(2-\g)}(p_{hom,pre},\o_{hom,pre})$ and this finishes proving part (a).

  For the next part, we start by considering
  \begin{equation*}
    \begin{cases}
      g_3(\xi)&:=F(\frac{2-\g}{2}(-\mu-i\nu),\frac{2-\g}{2}(-\mu+i\nu),-\frac{1}{2},1-\xi)\\
      g_{4}(\xi)&:=-\frac{2}{3}(1-\xi)^{\frac{3}{2}}F(\frac{15\g-12}{4}+i\frac{2-\g}{2}\nu,\frac{15\g-12}{4}-i\frac{2-\g}{2}\nu,\frac{5}{2},1-\xi)
    \end{cases}.
  \end{equation*}
  This gives the following fundamental matrix for \eqref{hypergeomeq}:
  \begin{equation*}
    V_{\infty}(\xi):=\mqty{g_{3}(\xi) & g_4(\xi)\\ g_3'(\xi) & g_4'(\xi)}.
  \end{equation*}
  In this case, the determinant satisfies the following ODE and asymptotic expansion as $\xi\to 1$:
  \[
    W_{\infty}'+\frac{(-4+5\g)\xi+3-5\g}{2\xi(\xi-1)}W_{\infty}=0,\qquad W_{\infty}=(1-\xi)^{\frac{1}{2}}+O((1-\xi)^{\frac{3}{2}}).
  \]
  Solving this ODE gives
  \[
    W_{\infty}(\xi)=\frac{\sqrt{1-\xi}}{\xi^{\frac{5\g-3}{2}}},\qquad V_{\infty}^{-1}(\xi)=\frac{\xi^{\frac{5\g-3}{2}}}{\sqrt{1-\xi}}\mqty{g_4'(\xi) & -g_4(\xi)\\ -g_3'(\xi) & g_3(\xi)}.
  \]
  This leads to the following fundamental matrix
  \[
    \tilde{U}_{\infty}(z):=\mqty{\frac{3\g-1}{(6\g^2-15\g+11)} & -\frac{6(\g-1)}{(4-3\g)(6\g^2-15\g+11)}(1-\frac{2\pi k}{(4-3\g)z^{\frac{2}{2-\g}}})\\ -\frac{2(2-\g)^2}{(6\g^2-15\g+11)k} & \frac{2(2-\g)}{(4-3\g)(6\g^2-15\g+11)k}(1-\frac{2\pi k}{(4-3\g)z^{\frac{2}{2-\g}}})}V_{\infty}(1-\frac{2\pi k}{(4-3\g)z^{\frac{2}{2-\g}}}).
  \]
  This will satisfy $L U_{\infty}=0$. Taking the limit as $z\to \infty$ gives 
  \[
    \tilde{U}_{\infty}(z)=
    \mqty{1& -\frac{6(\g-1)}{(4-3\g)(6\g^2-15\g+11)}(\frac{2\pi k}{4-3\g})^{\frac{1}{2}}\frac{1}{z^{\frac{1}{2-\g}}}\\
      0 & \frac{2(2-\g)}{(4-3\g)(6\g^2-15\g+11)k}(\frac{2\pi k}{4-3\g})^{\frac{1}{2}}\frac{1}{z^{\frac{1}{2-\g}}}}+\mqty{O(\frac{1}{z^{\frac{2}{2-\g}}}) & O(\frac{1}{z^{\frac{3}{2-\g}}})\\ O(\frac{1}{z^{\frac{2}{2-\g}}}) & O(\frac{1}{z^{\frac{3}{2-\g}}})}.
  \]
  Finally, for the fundamental matrix of the lemma, we define
  \[
    U_{\infty}(z)=\tilde{U}_{\infty}\cdot \mqty{1 & 0\\ 0 & \frac{(4-3\g)(6\g^2-15\g+11)k}{2(2-\g)}(\frac{4-3\g}{2\pi k})^{\frac{1}{2}}}.
  \]
  To consider the asymptotics of the inverse, we first compute
  \begin{multline*}
    \det \mqty{\frac{3\g-1}{(6\g^2-15\g+11)} & -\frac{6(\g-1)}{(4-3\g)(6\g^2-15\g+11)}(1-\frac{2\pi k}{(4-3\g)z^{\frac{2}{2-\g}}})\\ -\frac{2(2-\g)^2}{(6\g^2-15\g+11)k} & \frac{2(2-\g)}{(4-3\g)(6\g^2-15\g+11)k}(1-\frac{2\pi k}{(4-3\g)z^{\frac{2}{2-\g}}})}\\
    =\frac{2(2-\g)}{(4-3\g)(6\g^2-15\g+11)k}(1-\frac{2\pi k}{(4-3\g)z^{\frac{2}{2-\g}}}),
  \end{multline*}
  which in particular gives
  \begin{multline*}
    \mqty{\frac{3\g-1}{(6\g^2-15\g+11)} & -\frac{6(\g-1)}{(4-3\g)(6\g^2-15\g+11)}(1-\frac{2\pi k}{(4-3\g)z^{\frac{2}{2-\g}}})\\ -\frac{2(2-\g)^2}{(6\g^2-15\g+11)k} & \frac{2(2-\g)}{(4-3\g)(6\g^2-15\g+11)k}(1-\frac{2\pi k}{(4-3\g)z^{\frac{2}{2-\g}}})}^{-1}\\
    =\mqty{1 & \frac{3(\g-1)k}{2-\g}\\ (2-\g)(4-3\g)(1-\frac{2\pi k}{(4-3\g)z^{\frac{2}{2-\g}}})^{-1} & \frac{(4-3\g)(3\g-1)k}{2(2-\g)}(1-\frac{2\pi k}{(4-3\g)z^{\frac{2}{2-\g}}})^{-1}}.
  \end{multline*}
  This allows us to compute the asymptotics for the inverse of $\tilde{U}_{\infty}$ to be
  \[
    \tilde{U}_{\infty}^{-1}=\mqty{1 & \frac{3(\g-1)k}{2-\g}\\ 0 & \frac{(4-3\g)(6\g^2-15\g+11)k}{2(2-\g)}(\frac{4-3\g}{2\pi k})^{\frac{1}{2}} z^{\frac{1}{2-\g}}}+\mqty{O(\frac{1}{z^{\frac{2}{2-\g}}}) & O(\frac{1}{z^{\frac{2}{2-\g}}})\\ O(\frac{1}{z^{\frac{1}{2-\g}}}) & O(\frac{1}{z^{\frac{1}{2-\g}}})}.
  \]
  Finally, the asymptotics of $U_{\infty}^{-1}$ follow from
  \[
    U_{\infty}^{-1}=\mqty{1 & 0\\ 0 & \frac{(4-3\g)(6\g^2-15\g+11)k}{2(2-\g)}(\frac{4-3\g}{2\pi k})^{\frac{1}{2}}}^{-1}\tilde{U}_{\infty}^{-1}.
  \]
  To construct $U_0$, we start by defining
  \[
    \begin{cases}
      g_{a}:=(-\xi)^{\frac{2-\g}{2}(\mu+i\nu)}F(\frac{2-\g}{2}(-\mu-i\nu),\frac{2-\g}{2}(-\mu-i\nu)-\frac{5(\g-1)}{2},1-i(2-\g)\nu,\frac{1}{\xi})\\
      g_b:=(-\xi)^{\frac{2-\g}{2}(\mu-i\nu)}F(\frac{2-\g}{2}(-\mu+i\nu),\frac{2-\g}{2}(-\mu+i\nu)-\frac{5(\g-1)}{2},1+i(2-\g)\nu,\frac{1}{\xi})
    \end{cases}.
  \]
  We then define
  \[
    g_5=\frac{g_a+g_b}{2},\qquad g_6=-\frac{g_a-g_b}{2i}
  \]
  which are real-valued. We then define for $\xi\in (-\infty,0)$,
  \[
    V_0(\xi)=\mqty{g_5(\xi) & g_6(\xi)\\ g_5'(\xi)& g_6'(\xi)},\qquad W_0=\det V_0.
  \]
  In this case, the boundary condition is that as $\xi \to -\infty$,
  \[
    W_0=\frac{2-\g}{2}\nu (-\xi)^{\frac{4-5\g}{2}}+O((-\xi)^{\frac{2-5\g}{2}}).
  \]
  Thus, we get
  \[
    W_0=\frac{(2-\g)\nu}{2} \frac{\sqrt{1-\xi}}{(-\xi)^{\frac{5\g-3}{2}}},\qquad V_0^{-1}(\xi)=\frac{2}{(2-\g)\nu}\frac{(-\xi)^{\frac{5\g-3}{2}}}{\sqrt{1-\xi}}\mqty{g_6'(\xi)& -g_6(\xi)\\ -g_5'(\xi) & g_5(\xi)}.
  \]
  We then get the following fundamental matrix for $z\in (0,1)$:
  \[
    \tilde{U}_{0}(z):=\mqty{\frac{3\g-1}{(6\g^2-15\g+11)} & -\frac{6(\g-1)}{(4-3\g)(6\g^2-15\g+11)}(1-\frac{2\pi k}{(4-3\g)z^{\frac{2}{2-\g}}})\\ -\frac{2(2-\g)^2}{(6\g^2-15\g+11)k} & \frac{2(2-\g)}{(4-3\g)(6\g^2-15\g+11)k}(1-\frac{2\pi k}{(4-3\g)z^{\frac{2}{2-\g}}})}V_{0}(1-\frac{2\pi k}{(4-3\g)z^{\frac{2}{2-\g}}}).
  \]
  We then note that as $\xi\to \infty$, we have
  \[
    \tilde{U}_0((\frac{2\pi k}{(1-\xi)(4-3\g)})^{\frac{2-\g}{2}})=(-\xi)^{\frac{6-5\g}{4}}\mqty{T_1 & T_2\\ T_3 & T_4}+O((-\xi)^{\frac{2-5\g}{4}})
  \]
  where
  \begin{align*}
    T_1&=\frac{(4-3\g)(3\g-1)-3(\g-1)(2-\g)\mu}{(6\g^2-15\g+11)(4-3\g)}\cos(\frac{2-\g}{2}\nu\log(-\xi))\\
       &\qquad+\frac{3(\g-1)(2-\g)\nu}{(4-3\g)(6\g^2-15\g+11)} \sin(\frac{2-\g}{2}\nu \log(-\xi)),\\
    T_2&=\frac{-(4-3\g)(3\g-1)+3(\g-1)(2-\g)\mu}{(6\g^2-15\g+11)(4-3\g)}\sin(\frac{2-\g}{2}\nu \log(-\xi))\\
       &\qquad+\frac{3(\g-1)(2-\g)\nu}{(4-3\g)(6\g^2-15\g+11)}\cos(\frac{2-\g}{2}\nu \log(-\xi)),\\
    T_3&=\frac{-2(2-\g)^2(4-3\g)+(2-\g)^2\mu}{(4-3\g)(6\g^2-15\g+11)k}\cos(\frac{2-\g}{2}\nu \log(-\xi))\\
       &\qquad+\frac{-(2-\g)^2\nu}{(4-3\g)(6\g^2-15\g+11)k}\sin(\frac{2-\g}{2}\nu \log(-\xi)),\\
    T_4&=\frac{2(2-\g)^2(4-3\g)-(2-\g)^2\mu}{2(4-3\g)(6\g^2-15\g+11)k}\sin(\frac{2-\g}{2}\nu \log(-\xi))\\
       &\qquad+\frac{-(2-\g)^2\nu}{(4-3\g)(6\g^2-15\g+11)k}\cos(\frac{2-\g}{2}\nu \log(-\xi)).
  \end{align*}
  We then claim that we may factorize this as
  \[
    \mqty{T_1 & T_2\\ T_3&T_4}=\mqty{\cos(-\frac{2-\g}{2}\nu \log(-\xi)) & \sin(-\frac{2-\g}{2}\nu \log(-\xi))\\ \frac{(2-\g)^{\frac{3}{2}}}{k}\cos(-\frac{2-\g}{2}\nu \log(-\xi)+\theta_0) & \frac{(2-\g)^{\frac{3}{2}}}{k}\sin(-\frac{2-\g}{2}\nu \log(-\xi)+\theta_0)}A
  \]
  for some constant-coefficient matrix $A$. Indeed, by inspecting the upper two entries if this were to hold, we must have
  \[
    A=\mqty{\frac{(4-3\g)(3\g-1)-3(\g-1)(2-\g)\mu}{(6\g^2-15\g+11)(4-3\g)} & \frac{3(\g-1)(2-\g)\nu}{(4-3\g)(6\g^2-15\g+11)} \\ -\frac{3(\g-1)(2-\g)\nu}{(4-3\g)(6\g^2-15\g+11)} & \frac{(4-3\g)(3\g-1)-3(\g-1)(2-\g)\mu}{(6\g^2-15\g+11)(4-3\g)} }.
  \]
  From this ansatz, verifying this factorization for $T_3$ follows from showing the two equalities
  \begin{multline*}
    (2-\g)^{\frac{3}{2}}(((4-3\g)(3\g-1)-3(\g-1)(2-\g)\mu)\cos(\theta_0)-3(\g-1)(2-\g)\nu \sin(\theta_0))\\
    =-2(2-\g)^2(4-3\g)+(2-\g)^2\mu,
  \end{multline*}
  \begin{multline*}
    (2-\g)^{\frac{3}{2}}(((4-3\g)(3\g-1)-3(\g-1)(2-\g)\mu)\sin(\theta_0)+3(\g-1)(2-\g)\nu \cos(\theta_0))=-(2-\g)^2\nu.
  \end{multline*}
  Indeed, to verify these, we recall
  \[
    \cos(\theta_0)=-\frac{2+\g}{4\sqrt{2-\g}},\qquad \sin(\theta_0)=-\frac{\sqrt{-\g^2-20\g-28}}{4\sqrt{2-\g}}=-\nu\frac{\sqrt{2-\g}}{2}
  \]
  We can then simplify the first equation to be
  \[
    (2+\g)(3\g^2+3\g-10)+3(\g-1)(-\g^2-20\g+28)=-16(2-\g)(4-3\g)+4(6-5\g).
  \]
  This holds by direct computation. Similarly, the second equation can be rewritten as
  \[
    -2(4-3\g)(3\g-1)+3(\g-1)(6-5\g)-3(\g-1)(2+\g)=-4
  \]
  Another direct computation verifies this identity. In terms of the variable $z$, we have that
  \begin{multline*}
    \tilde{U}_0(z)=(\frac{2\pi}{4-3\g})^{\frac{6-5\g}{4}}z^{-\mu}\mqty{\cos(\nu \log((\frac{4-3\g}{2\pi k})^{\frac{2-\g}{2}}z)) & \sin(\nu \log((\frac{4-3\g}{2\pi k})^{\frac{2-\g}{2}}z))\\ \cos(\nu \log((\frac{4-3\g}{2\pi k})^{\frac{2-\g}{2}}z)+\theta_0) &\sin(\nu \log((\frac{4-3\g}{2\pi k})^{\frac{2-\g}{2}}z)+\theta_0)}\cdot \\
    \mqty{\frac{(4-3\g)(3\g-1)-3(\g-1)(2-\g)\mu}{(6\g^2-15\g+11)(4-3\g)} & \frac{3(\g-1)(2-\g)\nu}{(4-3\g)(6\g^2-15\g+11)} \\ -\frac{3(\g-1)(2-\g)\nu}{(4-3\g)(6\g^2-15\g+11)} & \frac{(4-3\g)(3\g-1)-3(\g-1)(2-\g)\mu}{(6\g^2-15\g+11)(4-3\g)} }+O(z^{\frac{5\g-2}{2(2-\g)}}).
  \end{multline*}
  Finally, we define
  \begin{multline*}
    U_0(z)=(\frac{2\pi}{4-3\g})^{-\frac{6-5\g}{4}}\tilde{U}_0(z)\cdot \mqty{\frac{(4-3\g)(3\g-1)-3(\g-1)(2-\g)\mu}{(6\g^2-15\g+11)(4-3\g)} & \frac{3(\g-1)(2-\g)\nu}{(4-3\g)(6\g^2-15\g+11)} \\ -\frac{3(\g-1)(2-\g)\nu}{(4-3\g)(6\g^2-15\g+11)} & \frac{(4-3\g)(3\g-1)-3(\g-1)(2-\g)\mu}{(6\g^2-15\g+11)(4-3\g)} }^{-1}\\
    \cdot \mqty{\cos(\frac{2-\g}{2}\nu\log(\frac{4-3\g}{2\pi k})) & -\sin(\frac{2-\g}{2}\nu\log(\frac{4-3\g}{2\pi k}))\\ -\sin(\frac{2-\g}{2}\nu\log(\frac{4-3\g}{2\pi k})) & (\cos(\frac{2-\g}{2}\nu\log(\frac{4-3\g}{2\pi k})))}
  \end{multline*}
  and it has the asymptotics stated in the Lemma. One can similarly compute the asymptotics for $U_{0}^{-1}$ finishing the proof of part (b). 

  Part (c) follows from the standard existence and uniqueness for linear ODEs as there are no singularities on either of these intervals. In principle one could compute these constants exactly using connection formulae (as done for $\g=1$ in \cite{sandine}), but this is omitted in this case.

  For part (d), we start by defining the following on $(-\infty,0)\cup (0,1)$:
  \[
    g_2(\xi)=\abs{\xi}^{-\frac{5(\g-1)}{2}}F(\frac{4-5\g}{4}-i\frac{\sqrt{-\g^2-20\g+28}}{4},\frac{4-5\g}{4}+i\frac{\sqrt{-\g^2-20\g+28}}{4},\frac{7-5\g}{2},\xi).
  \]
  We comment that this is a solution to \eqref{hypergeomeq} defined on a neighborhood of $\xi=0$ which is linearly independent from $g_1$ (see section 5.10.1 of \cite{olver}). We now use (and $g_1$) this to construct a fundamental matrix for $L$ on a neighborhood of $z=y_f$. We recall that as $\xi\to 0$, we have
  \begin{multline*}
    F(\frac{4-5\g}{4}-i\frac{\sqrt{-\g^2-20\g+28}}{5},\frac{4-5\g}{4}+i\frac{\sqrt{-\g^2-20\g+28}}{5},\frac{7-5\g}{2},\xi)\\
    =1+\frac{6\g^2-15\g+11}{2(7-5\g)}\xi+o(\xi).
  \end{multline*}
  We note that we have (assuming $\xi>0$)
  \[
    g_2=\xi^{-\frac{5(\g-1)}{2}}+\frac{6\g^2-15\g+11}{2(7-5\g)}\xi^{\frac{7-5\g}{2}}+o(\xi^{\frac{7-5\g}{2}}),
  \]
  \[
    g_2'=-\frac{5(\g-1)}{2}\xi^{-\frac{5\g-3}{2}}+\frac{6\g^2-15\g+11}{4}\xi^{-\frac{5(\g-1)}{2}}+o(\xi^{\frac{5(\g-1)}{2}}).
  \]
  We then recall that
  \[
    1-\frac{2\pi k}{(4-3\g)z^{\frac{2}{2-\g}}}=\frac{2}{(2-\g)y_f}(z-y_f)-\frac{4-\g}{(2-\g)^2}\frac{(z-y_f)^2}{y_f^2}+o(z-y_f)^2.
  \]
  from which it follows that 
  \begin{align*}
    g_2(1-\frac{2\pi k}{(4-3\g)z^{\frac{2}{2-\g}}})&=\left(\frac{2(z-y_f)}{(2-\g)y_f}\right)^{-\frac{5(\g-1)}{2}}\\
                                                   &-\frac{\left(25 \gamma ^2-61 \gamma +32\right) (3-\gamma )}{8 (7-5 \gamma )}\left(\frac{2(z-y_f)}{(2-\g)y_f}\right)^{\frac{7-5\g}{2}}+o(z-y_f)^{\frac{7-5\g}{2}},\\
    g_2'(1-\frac{2\pi k}{(4-3\g)z^{\frac{2}{2-\g}}})&=-\frac{5(\g-1)}{2}\left(\frac{2(z-y_f)}{(2-\g)y_f}\right)^{-\frac{5\g-3}{2}}\\
                                                   &+\frac{1}{16} \left(25 \gamma ^3-116 \gamma ^2+115 \gamma -16\right)\left(\frac{2(z-y_f)}{(2-\g)y_f}\right)^{\frac{-5(\g-1)}{2}}+o(z-y_f)^{-\frac{5(\g-1)}{2}}.
  \end{align*}
  We then note that as $z\to y_f$, we have that
  \begin{multline*}
    \mqty{\frac{3\g-1}{(6\g^2-15\g+11)} & -\frac{6(\g-1)}{(4-3\g)(6\g^2-15\g+11)}(1-\frac{2\pi k}{(4-3\g)z^{\frac{2}{2-\g}}})\\ -\frac{2(2-\g)^2}{(6\g^2-15\g+11)k} & \frac{2(2-\g)}{(4-3\g)(6\g^2-15\g+11)k}(1-\frac{2\pi k}{(4-3\g)z^{\frac{2}{2-\g}}})}\mqty{g_2(1-\frac{2\pi k}{(4-3\g) z^{\frac{2}{2-\g}}})\\ g_2'(1-\frac{2\pi k}{(4-3\g) z^{\frac{2}{2-\g}}})}=\\
    =\mqty{\frac{1}{4-3\g}\\ -\frac{(2-\g)}{(4-3\g)k}}\abs{\frac{2(z-y_f)}{(2-\g)y_f}}^{-\frac{5(\g-1)}{2}}+\mqty{\frac{25 \gamma ^3-136 \gamma ^2+191 \gamma -72}{8 (4-3 \gamma ) (7-5 \gamma )}\\ -\frac{(2-\gamma ) (\gamma -1) \left(25 \gamma ^2-111 \gamma +104\right)}{8 (4-3 \gamma ) (7-5 \gamma ) k}}\frac{2(z-y_f)}{(2-\g)y_f}\abs{\frac{2(z-y_f)}{(2-\g)y_f}}^{-\frac{5(\g-1)}{2}}\\
    +o((z-y_f)\abs{z-y_f}^{-\frac{5(\g-1)}{2}}).
  \end{multline*}
  Thus, if we define the (prescaling) subordinate homogeneous solution by
  \begin{multline*}
    \mqty{p_{sub,pre}\\ \o_{sub,pre}}=(4-3\g)(\frac{2}{2-\g}y_f^{-1})^{\frac{5(\g-1)}{2}}\mqty{\frac{3\g-1}{(6\g^2-15\g+11)} & -\frac{6(\g-1)}{(4-3\g)(6\g^2-15\g+11)}(1-\frac{2\pi k}{(4-3\g)z^{\frac{2}{2-\g}}})\\ -\frac{2(2-\g)^2}{(6\g^2-15\g+11)k} & \frac{2(2-\g)}{(4-3\g)(6\g^2-15\g+11)k}(1-\frac{2\pi k}{(4-3\g)z^{\frac{2}{2-\g}}})}\\
    \mqty{g_2(1-\frac{2\pi k}{(4-3\g) z^{\frac{2}{2-\g}}})\\ g_2'(1-\frac{2\pi k}{(4-3\g) z^{\frac{2}{2-\g}}})}.
  \end{multline*}
  We then define $(p_{sub},\o_{sub})=\frac{k(3\g-1)}{2(2-\g)}(p_{sub,pre},\o_{sub,pre})$ then
  \begin{multline*}
    \mqty{p_{hom} & p_{sub}\\ \o_{hom} & \o_{sub}}=\frac{k(3\g-1)}{2(2-\g)}\mqty{1 & \abs{z-y_f}^{-\frac{5(\g-1)}{2}}\\ -\frac{2(2-\g)^2}{k(3\g-1)}& -\frac{(2-\g)}{k}\abs{z-y_f}^{-\frac{5(\g-1)}{2}}}\\
    +\frac{k(3\g-1)}{2(2-\g)}\mqty{\frac{2(-9\g^2+9\g+2)}{(3\g-1)(5\g-3)y_f}(z-y_f) & \frac{25 \gamma ^3-136 \gamma ^2+191 \gamma -72}{4 (2-\gamma) (7-5 \gamma) y_f} (z-y_f)\abs{z-y_f}^{-\frac{5(\g-1)}{2}}\\ \frac{4(2-\g)(\g-1)(7-3\g)}{k y_f(3\g-1)(5\g-3)}(z-y_f) & -\frac{(\gamma -1) \left(25 \gamma ^2-111 \gamma +104\right)}{4 (7-5 \gamma) k y_f}(z-y_f)\abs{z-y_f}^{-\frac{5(\g-1)}{2}}}\\
    +\mqty{o(z-y_f) & o(z-y_f)\abs{z-y_f}^{-\frac{5(\g-1)}{2}}\\ o(z-y_f) & o(z-y_f)\abs{z-y_f}^{-\frac{5(\g-1)}{2}}}.
  \end{multline*}
  Then, as in the previous section existence and uniqueness yield existence of $M_{\infty},M_{0}$ such that
  \[
    U_{\infty}M_{\infty}=U_0M_0=\mqty{p_{hom} & p_{sub}\\ \o_{hom} & \o_{sub}}
  \]
  yielding the first asymptotics stated in part (d) of the Lemma. 

  For part (e), we note that we can factor
  \[
    \mqty{p_{hom} & p_{sub}\\ \o_{hom} & \o_{sub}}=\left(X_0+X_1(z-y_f)+o(z-y_f)\right)\mqty{1 & 0\\ 0 & \abs{z-y_f}^{-\frac{5(\g-1)}{2}}}
  \]
  where
  \[
    X_0=\mqty{\frac{(3\g-1)k}{2(2-\g)} &\frac{(3\g-1)k}{2(2-\g)}\\ -(2-\g)& -\frac{3\g-1}{2}},\qquad X_1=\mqty{\frac{(-9\g^2+9\g+2)}{(2-\g)(5\g-3)}\frac{k}{y_f} & -\frac{(3 \gamma -1) \left(25 \gamma ^3-136 \gamma ^2+191 \gamma -72\right) k}{8 (\gamma -2)^2 (5 \gamma -7) y_f} \\ \frac{2(\g-1)(7-3\g)}{y_f(5\g-3)} & -\frac{(\gamma -1) (3 \gamma -1) \left(25 \gamma ^2-111 \gamma +104\right)}{8 (\gamma -2) (5 \gamma -7) y_f}}.
  \]
  We note that 
  \[
    \mqty{p_{hom} & p_{sub}\\ \o_{hom} & \o_{sub}}=\left(X_0+X_1(z-y_f)+o(z-y_f)\right)\mqty{1 & 0\\ 0 & \abs{z-y_f}^{-\frac{5(\g-1)}{2}}}.
  \]
  We then write abstractly
  \[
    \mqty{(2-\g)z & k z\\ \frac{2\pi(2-\g)^2}{4-3\g}z^{1-\frac{2}{2-\g}} & (2-\g)z}^{-1}\mqty{P\\ \O}=I_0\frac{1}{z-y_f}+I_1+o(1).
  \]
  It follows that the Taylor expansion of the solution will be given by (here $\G$ can be chosen to be arbitrary, it will cancel out)
  \begin{multline}
    \mqty{p\\ \o}=X_0\mqty{\G & 0\\ 0 & \frac{2}{5(\g-1)}}X_0^{-1}I_0\left(X_1\mqty{\G & 0\\ 0 & \frac{2}{5(\g-1)}} X_0^{-1}I_0+X_0\mqty{1 & 0\\ 0 & \frac{2}{(5\g-3)}}X_0^{-1}I_1\right.\\
    \left.-X_0\mqty{1 & 0\\ 0 & \frac{2}{(5\g-3)}} X_0^{-1}X_1 X_0^{-1}I_0\right)(z-y_f)+o(z-y_f).
    \label{formalexpansion}
  \end{multline}
  We thus let
  \begin{align*}
    A_0&=X_0\mqty{\G & 0\\ 0 & \frac{2}{5(\g-1)}}X_0^{-1},\\
    A_1&=X_0\mqty{1 & 0\\ 0 & \frac{2}{5\g-3}}X_0^{-1},\\
    A_2&=X_1X_0^{-1},
  \end{align*}
  and \eqref{formalexpansion} can be simplified to
  \begin{equation}
    \mqty{p\\ \o}=A_0 I_0+\left(A_2 A_0 I_0+A_1 I_1-A_1 A_2 I_0\right)(z-y_f)+o(z-y_f).
    \label{formalexpansion2}
  \end{equation}
  We may then explicitly compute
  \begin{align*}
    A_0&=\mqty{\frac{5 (3 \gamma -1) (\gamma -1) \Gamma -4 (2-\gamma )}{25 (\gamma -1)^2}& \frac{k (5 (3 \gamma -1) (\gamma -1) \Gamma -2 (3 \gamma -1))}{25 (2-\gamma ) (\gamma -1)^2}\\ \frac{4 (2-\gamma )^2-10 (2-\gamma )^2 (\gamma -1) \Gamma }{25 (\gamma -1)^2 k} & \frac{2 (3 \gamma -1)-10 (2-\gamma ) (\gamma -1) \Gamma }{25 (\gamma -1)^2}},\\
    A_1&=\left(
         \begin{array}{cc}
           \frac{3 \gamma +1}{5 \gamma -3} & \frac{(3 \gamma -1) k}{(2-\gamma ) (5 \gamma -3)} \\
           -\frac{2 (2-\gamma )^2}{(5 \gamma -3) k} & \frac{2 (\gamma -1)}{5 \gamma -3} \\
         \end{array}
         \right),\\
    A_2&=\mqty{-\frac{25 \gamma ^3-162 \gamma ^2+197 \gamma -32}{2 (7-5 \gamma ) (5 \gamma -3) y_f} & -\frac{\left(75 \gamma ^4-331 \gamma ^3+321 \gamma ^2-81 \gamma +88\right) k}{4 (2-\gamma )^2 (7-5 \gamma ) (5 \gamma -3) y_f}\\ \frac{(2-\gamma ) \left(25 \gamma ^3-102 \gamma ^2+81 \gamma +16\right)}{2 (7-5 \gamma ) (5 \gamma -3) k y_f} & \frac{75 \gamma ^4-451 \gamma ^3+913 \gamma ^2-873 \gamma +376}{4 (2-\gamma ) (7-5 \gamma ) (5 \gamma -3) y_f}},\\
    I_0&=\left(
         \begin{array}{c}
           \frac{1}{2} \left(\sigma _1-\frac{k \sigma _2}{2-\gamma }\right) \\
           -\frac{(2-\gamma ) \left(\sigma _1-\frac{k \sigma _2}{2-\gamma }\right)}{2 k} \\
         \end{array}
         \right),\\
    I_1&=\left(
         \begin{array}{c}
           -\frac{\sigma _2 (\gamma  k)}{4 (2-\gamma )^2 y_f}+\frac{\gamma  \sigma _1}{4 (2-\gamma ) y_f}+\frac{\sigma _3}{2} \\
           \frac{(4-\gamma ) \sigma _1}{4 k y_f}+\frac{\gamma  \sigma _2}{4 (2-\gamma ) y_f}-\frac{(2-\gamma ) \sigma _3}{2 k} \\
         \end{array}
         \right),
  \end{align*}
  and then simplifying \eqref{formalexpansion2} gives the formula stated in the lemma.
\end{proof}
\subsection{Proof of Lemma \ref{ext_linear_lemma}}
\label{ext_linear_proof}

We suppose that $(P,\O)\in N_{y_0}[\ep]$. It follows from Lemma \ref{ext_fun_matrix_lemma} that $(\pr,\omgr)$ is in $C^1[y_0,\infty]$. It follows the computations in section \ref{sec:ext_diffeq} that
\[
  p(y_f)=\pr_{0,e}[\ep],\qquad \o(y_f)=\omgr_{0,e}[\ep],\qquad \pr'(y_f)=\pr_{1,e}[\ep],\qquad \omgr'(y_f)=\omgr_{1,e}[\ep].
\]
It will also follow from examining the integral kernel of $R$ that
\begin{equation}
  \sup_{\frac{1}{2}\leq z\leq 2}\abs{p}+\abs{\o}+\abs{\frac{p+\frac{3\g-1}{2(2-\g)^2}k \o+\frac{\s_1}{2(2-\g)}-\frac{k\s_2}{2(2-\g^2)}}{z-y_f}}\lesssim \sup_{\frac{1}{2}\leq z\leq 2}\abs{P}+\abs{\O}+\abs{\frac{P-\frac{k}{2-\g}\O}{z-y_f}}+O(\ep).
  \label{intermed52}
\end{equation}
We may then rewrite $L(p,\o)=(P,\O)$ as
\[
  E_{lin}\mqty{p\\ \o}'=(P,\O)-F_{lin}\mqty{p\\ \o},
\]
We recall that as $z\to y_f$, we have that
\[
  E_{lin}^{-1}=\mqty{\frac{1}{2} & -\frac{k}{2(2-\g)}\\ -\frac{2-\g}{2 k} & \frac{1}{2}}\frac{1}{z-y_f}+O(1),\qquad E_{lin}^{-1}F_{lin}=\mqty{-(2-\g) & -\frac{(3\g-1)k}{2(2-\g)}\\ \frac{(2-\g)^2}{k} & \frac{3\g-1}{2}}\frac{1}{z-y_f}+O(1).
\]
and then if we invert $E_{lin}$ and use \eqref{intermed52} we obtain
\[
  \sup_{\frac{1}{2}\leq z\leq 2}\abs{p'}+\abs{\o'}\lesssim \sup_{\frac{1}{2}\leq z\leq 2}\abs{P}+\abs{\O}+\abs{\frac{P-\frac{k}{2-\g}\O}{z-y_f}}+O(\ep).
\]
For $z\geq 2$, we recall that as $z\to \infty$,
\[
  U_{\infty}(z)=\left(\mqty{1 & -\frac{3(\g-1)k}{2-\g}\\ 0 & 1}+O(\frac{1}{z^{\frac{2}{2-\g}}})\right)\cdot \mqty{1 & 0\\ 0 & \frac{1}{z^{\frac{1}{2-\g}}}},
\]
\[
  E_{lin}(z)=\mqty{2-\g & k\\ 0 & 2-\g}z+O(z^{1-\frac{2}{2-\g}}),
\]
from which it follows that
\[
  U_{\infty}^{-1}E_{lin}^{-1}=\mqty{1 & 0\\ 0 & z^{\frac{1}{2-\g}}}\cdot \left(\mqty{\frac{1}{2-\g} & -\frac{4-3\g}{(2-\g)^2}k\\ 0 & \frac{1}{2-\g}}z^{-1}+O(z^{-1-\frac{2}{2-\g}})\right)
\]
and so for $z\geq 2$
\[
  \abs{U_{\infty}^{-1}E_{lin}^{-1}\mqty{P\\ \O}}\lesssim \frac{1}{z^\frac{3-\g}{2-\g}}\left(\sup_{z\geq 2}\abs{z^{\frac{1}{2-\g}} P}+\abs{z^{\frac{2}{2-\g}}\O} \right).
\]
Integrating this from $2$ to $\infty$ gives the existence of the desired limits and bounds as $z\to \infty$.

We similarly compute that as $z\to 0$, we have that
\[
  E_{lin}(z)=\mqty{z & 0\\ 0 & z^{1-\frac{2}{2-\g}}}\left(\mqty{2-\g & k\\ \frac{(2-\g)^2 y_f^{\frac{2}{2-\g}}}{k} & 0}+O(z^{\frac{2}{2-\g}})\right)
\]
so that
\[
  E_{lin}^{-1}(z)=\mqty{0 & 0\\ \frac{1}{k} & 0}\frac{1}{z}+O(z^{\frac{2}{2-\g}-1}),
\]
\[
  U_0^{-1}(z)E_{lin}^{-1}=\mqty{-\frac{\sin(\nu\log(z))}{k \sin(\theta_0)}&0\\\frac{\cos(\nu\log(z))}{k \sin(\theta_0)}&0}z^{\mu-1}+O(z^{\mu-1+\frac{2}{2-\g}}).
\]
This gives that for $z\leq \frac{1}{2}$
\[
  \abs{U_0^{-1}(z)E_{lin}^{-1}\mqty{P\\ \O}}\lesssim \left(z^{-\mu-1}\abs{z^{2\mu} P}+\abs{z^{2\mu+\frac{2}{2-\g}} \O}\right).
\]
Integrating from $y_0$ to $\frac12$ gives the desired limits and \eqref{eq:ext_linear_bound} (note this is where one picks up the factor of $y_0^{-\mu}$). A similar argument gives \eqref{eq:ext_linear_lipschitz}.

\subsection{Proof of Lemma \ref{ext_nonlinear_lemma}}
\label{ext_nonlinear_proof}
We first check that $p,\o$ do not vanish. Indeed, we see that if $(\ph,\oh)=(p_{hom},\o_{hom})+(p,\o)$ then
\[
  \norm{(\ph,\oh)}_{X_{y_0}}\leq C+\norm{(p_{hom},\o_{hom})}_{X_{y_0}}.
\]
Examining the weights for $y\leq y_0,y\geq y_0$ respectively, we see that this is the same as (up to a universal constant)
\[
  \abs{\ph}\leq \frac{\ev{y}^{\mu}}{y^{\mu}}(C+\norm{(p_{hom},\o_{hom})}_{X_{y_0}}),\qquad
  \abs{\oh}\leq \frac{\ev{y}^{\mu-\frac{1}{2-\g}}}{y^{\mu}}(C+\norm{(p_{hom},\o_{hom})}_{X_{y_0}}).
\]
In particular, bounding the weights by $y_0^{-\mu}$, we have that
\[
  \abs{\ep \ph},\abs{\ep \oh}\leq \ep y_0^{-\mu}(C+\norm{(p_{hom},\o_{hom})}_{X_{y_0}}).
\]
and hence by choosing $C_0$ sufficiently small, we can ensure
\begin{equation}
  \label{eq:massnonvanishing}
  \abs{\ep \ph}\leq \frac{1}{2}k,\qquad
  \abs{\ep \oh}\leq \frac{1}{2}(2-\g).  
\end{equation}
In particular, we see that $\pt=k+\ep \ph$ does not vanish.

We first comment that by the computations in Section \ref{sec:ext_diffeq}, if $p,\o\in X_{y_0}[\ep]$, then we have that $\frac{P-\frac{k}{2-\g}\O}{z-y_f}\in C^0([y_0,\infty])$, and the boundary conditions
\[
  P(y_f)=\sr_1[\ep],\qquad \O(y_f)=\sr_2[\ep],\qquad (P-\frac{k}{2-\g}\O)'(y_f)=\sr_3[\ep]
\]
are satisfied. One also obtains the proper boundedness of
\[
  \sup_{\frac12\leq z\leq 2}\abs{\frac{P-\frac{k}{2-\g}\O-(P(0)-\frac{k}{2-\g}\O(0))}{z-y_f}}.
\]

We now consider the regime in which $z\to \infty$. It is helpful at this point to use the fundamental theorem of calculus to rewrite \eqref{nhdef} as
\[
  \Nh(\ep,\ph,\oh)=\ep\left(\Eh_{e}(z,\ph,\oh,y_0,\ep)\mqty{\ph'\\ \oh'}+\Fh_e(z,\ph,\oh,y_0,\ep)\right)
\]
where
\[
  \Eh_e(z,a,b,y_0,\ep)=-\int_0^1\pdv{\ep}(S^T \Eh)|_(z,a,b,y_0,t\ep)dt,\qquad \Fh_e=-\int_0^1\pdv{\ep}(S^T \Fh)|_(z,a,b,y_0,t\ep)dt.
\]

We note that in this case, since $(p,\o)\in X_{y_0}$, $\norm{\ph,\oh}_{X_{y_0}}<\infty$, so there exists constants $p_0,p_1,\o_0,\o_1$ such that
\begin{equation}
  \label{eq:hasymptot1}
  \ph(z)=\hat{p}_{\infty,0}+\frac{\hat{p}_{\infty,1}}{z^{\frac{1}{2-\g}}}+o(\frac{1}{z^{\frac{1}{2-\g}}}),\qquad \oh(z)=\frac{\oh_{\infty,0}}{z^{\frac{1}{2-\g}}}+\frac{\oh_{\infty,1}}{z^{\frac{2}{2-\g}}}+o(\frac{1}{z^{\frac{2}{2-\g}}}).
\end{equation}
\begin{equation}
  \label{eq:hasymptot2}
  \ph'(z)=-\frac{1}{2-\g}\frac{\ph_{\infty,1}}{z^{\frac{1}{2-\g}+1}}+o(\frac{1}{z^{\frac{1}{2-\g}+1}}),\qquad \oh'(z)=-\frac{1}{2-\g}\frac{\oh_{\infty,0}}{z^{\frac{1}{2-\g}+1}}-\frac{2}{2-\g}\frac{\o_{\infty,1}}{z^{\frac{2}{2-\g}+1}}+o(\frac{1}{z^{\frac{2}{2-\g}+1}}).
\end{equation}
To do the difference estimates more cleanly, we introduce two symbol classes. In particular, we define
\begin{multline*}
  S^k=\{f(y,a,b)\in C^{\infty}([y_0,\infty)\times[-\frac{k}{2},\frac{k}{2}]\times[-\frac{2-\g}{2},\frac{2-\g}{2}])\colon\\
  \abs{\p_{y}^{l_1}\p_a^{l_2}\p_b^{l_3}f}\leq C_{l_1,l_2,l_3}\abs{y}^{k-l_1},\qquad \lim_{y\to\infty}y^{l_1-k}\p_{y}^{l_1}f\text{\qquad exists and depends smoothly $a,b$}\},
\end{multline*}

\begin{multline*}
  \Sul^k=\{f(z,a,b,\ep)\in C^{\infty}([y_0,\infty)\times[\frac{k}{2},2k]\times[\frac{2-\g}{2},2(2-\g)]\times [0,1])\colon\\
  \abs{\p_{z}^{l_1}\p_a^{l_2}\p_b^{l_3}\p_{\ep}^{l_4}f}\leq C_{l_1,l_2,l_3,l_4}\abs{z}^{k-l_1}\},\\
  \qquad \lim_{z\to\infty}z^{l_1-k}\p_{z}^{l_1}\p_a^{l_2}\p_b^{l_3}\p_{\ep}^{l_4}f\text{\qquad exists and depends smoothly $a,b,\ep$}\},
\end{multline*}

\begin{multline*}
  \Sh^k=\{f(z,a,b,\ep)\in C^{\infty}([y_0,\infty)\times[-\frac{k}{2\ep},\frac{k}{2\ep}]\times[-\frac{2-\g}{2\ep},\frac{2-\g}{2\ep}]\times [0,1])\colon\\
  \abs{\p_{z}^{l_1}\p_a^{l_2}\p_b^{l_3}\p_{\ep}^{l_4}f}\leq C_{l_1,l_2,l_3,l_4}\ep^{l_2+l_3}(1+\abs{a}+\abs{b})^{l_4}\abs{z}^{k-l_1}\},\\
  \qquad \lim_{z\to\infty}z^{l_1-k}\p_{z}^{l_1}\p_a^{l_2}\p_b^{l_3}\p_{\ep}^{l_4}f\text{\qquad exists and depends smoothly $a,b,\ep$}\}.
\end{multline*}
We abbreviate $O_{S}(y^k)$ for an error in $S^k$ (and analogously for $\Sul,\Sh$).
We then note that if $a,b$ are restricted to $(-\frac{k}{2},\frac{k}{2}),(-\frac{2-\g}{2},\frac{2-\g}{2})$, which will always be the case by \eqref{eq:massnonvanishing}, then
\[
  \Et(y,k+a,2-\g+b)=\mqty{O_{S}(y) & O_{S}(y)\\ O_{S}(y^{1-\frac{2}{2-\g}}) & (2-\g)y+b O_{S}(y)}.
\]
It then follows that
\[
  \Eul(z,k+a,2-\g+b,y_0,\ep)=\mqty{O_{\Sul}(z) & O_{\Sul}(z)\\ O_{\Sul}(z^{1-\frac{2}{2-\g}}) & (2-\g)(z-\be)+bO_{\Sul}(z))}.
\]
It then follows that
\[
  \Eh(z,a,b,y_0,\ep)=\mqty{O_{\Sh}(z) & O_{\Sh}(z)\\ O_{\Sh}(z^{1-\frac{2}{2-\g}}) & (2-\g)(z-\be)+bO_{\Sh}(z)}
\]
It then follows that
\[
  E_e(z,a,b,y_0,\ep)=\mqty{O_{\Sh}(z) & O_{\Sh}(z)\\ O_{\Sh}(z^{1-\frac{2}{2-\g}}) & \fr[\ep](2-\g)(z-\be)+bO_{\Sh}(z))}\cdot(1+\abs{a}+\abs{b})
\]
where $\fr[\ep]=-\frac{1}{\ep}(\frac{2-\g}{k}\frac{\pr[\ep]}{\omgr[\ep]}-1)$
(in fact there is an asymptotic expansion).

Similarly, we note that in the same regimes,
\[
  \Ft(y,k+a,2-\g+b)=\mqty{b O_{S}(1)\\ b+a O_{S}(y^{-\frac{2}{2-\g}})+bO_{S}(y^{-\frac{2}{2-\g}})+b^2O_{S}(1)}
\]
\[
  \Ful(z,k+a,2-\g+b,y_0,\ep)=\mqty{b O_{\Sh}(1)\\ b+a O_{\Sh}(z^{-\frac{2}{2-\g}})+b O_{\Sh}(z^{-\frac{2}{2-\g}})+b^2O_{\Sh}(1)}
\]
\[
  \Fh(z,a,b,y_0,\ep)=\mqty{b O_{\Sh}(1)\\ b+a O_{\Sh}(z^{-\frac{2}{2-\g}})+b O_{\Sh}(z^{-\frac{2}{2-\g}})+b^2O_{\Sh}(1)}.
\]
\[
  F_e(z,a,b,y_0,\ep)=\mqty{b O_{\Sh}(1)\\ \fr[\ep]b+a O_{\Sh}(z^{-\frac{2}{2-\g}})+b O_{\Sh}(z^{-\frac{2}{2-\g}})+b^2O_{\Sh}(1)}\cdot(1+\abs{a}+\abs{b})
\]
As a comment, from this we see that as $z\to \infty$, substituting \eqref{eq:hasymptot1}, \eqref{eq:hasymptot2}, since the contributions of $(2-\g)z\p_z \oh+\oh$ will cancel, we have that
\[
  \Nh=\ep \mqty{O(z^{-\frac{1}{2-\g}})\\ O(z^{-\frac{2}{2-\g}})}.
\]
From this we get the desired boundedness
\[
  \sup_{z\geq y_f}\abs{z^\frac{1}{2-\g}P}+\abs{z^\frac{2}{2-\g}\O}\leq C_2\abs{\ep}
\]
By considering the asymptotic expansions more carefully, one sees that the desired limits as $z\to \infty$ exist.

Similarly, we note that as $\abs{\ph},\abs{\oh}\lesssim y_0^{-\mu},\abs{\ph'},\abs{\oh'}\lesssim y_0^{-\mu-1}$ we have that
\[
  \sup_{y_0\leq z\leq y_f}\abs{z^{2\mu}P}+\abs{z^{2\mu+\frac{2}{2-\g}}\O}\leq C_2\abs{\ep}.
\]
This finishes the proof of \eqref{eq:ext_nonlinear_bound}. A straightforward application of the fundamental theorem of calculus and the symbol properties shows \eqref{eq:ext_nonlinear_lipschitz}.
\subsection{Proof of Lemma \ref{ext_contraction_lemma}}
\label{ext_contraction_proof}
\begin{proof}
  We begin by supposing
  \[
    \abs{\ep}\leq \min(\frac{y_0^{\mu}}{C_2(\frac{1}{2})\cdot C_1\cdot 10},y_0^{\mu}C_0(\frac{1}{2})).
  \]
  We then pick $(p_0,\o_0)\in X_{y_0}[\ep]$ so that $\norm{(p_0,\o_0)}_{X_{y_0}}\leq y_0^{-\mu}\cdot \ep\cdot C_2(\frac{1}{2})\cdot C_1$. We then define $(P_n,\O_n)=N(\ep,p_{n-1},\o_{n-1})$, and then
  \[
    (p_n,\o_n)=R(P_n,\O_n)+\left(\frac{\frac{2-\g}{k}\sr_1[\ep]-\sr_2[\ep]}{5(\g-1)}+\omgr_{0,e}[\ep]\right)\frac{1}{-(2-\g)}(p_{hom},\o_{hom}).
  \]
  We note that $(P_j,\O_j)\in N_{y_0}[\ep]$ and $(p_j,\o_j)\in X_{y_0}[\ep]$. We assume inductively that $\norm{(p_k,\o_k)}_{X_{y_0}}\leq y_0^{-\mu}\cdot \ep\cdot C_2(\frac{1}{2})\cdot C_1$. it then follows by choice of smallness of $\ep$ that $\norm{(p_k,\o_k)}_{X_{y_0}}\leq \frac{1}{2}$. It then follows from Lemma \ref{ext_nonlinear_lemma} that $\norm{(P_{k+1},\O_{k+1})}_{N_{y_0}}\leq \abs{\ep}C_2(\frac{1}{2})$. From Lemma \ref{ext_linear_lemma}, we obtain $\norm{(p_{k+1},\o_{k+1})}_{X_{y_0}}\leq  y_0^{-\mu}\cdot \abs{\ep}\cdot C_1 C_2(\frac{1}{2})$, completing the induction.

  In addition, for $k\geq 1$, we have that
  \begin{align*}
    \norm{(p_{k+1},\o_{k+1})-(p_{k},\o_{k})}_{X_{y_0}}&\leq \frac{1}{2}C_1 y_0^{-\mu}\abs{N(\ep,p_{k},\o_{k})-N(\ep,p_{k-1},\o_{k-1})}\\
                                                      &\leq \frac{1}{2}C_1 y_0^{-\mu} C_2(\frac{1}{2})\abs{\ep}\norm{(p_{k},\o_k)-(p_{k-1},\o_{k-1})}_{X_{y_0}}\\
                                                      &\leq \frac{1}{2}\norm{(p_{k},\o_{k})-(p_{k-1},\o_{k-1})}_{X_{y_0}}.
  \end{align*}
  Thus, we have a contraction, giving us a unique limit point, concluding the proof of existence and uniqueness with $C_3=C_1 C_2(\frac{1}{2})$.

  To prove \eqref{ext_lipschitz_1}, we suppose $(p[\ep_1],\o[\ep_1])$ and $(p[\ep_2],\o[\ep_2])$ each solve \eqref{ext_diffeq}. We then use \eqref{ext_integral_diffeq} and Lemma \ref{ext_linear_lemma} to estimate
  \[
    \norm{(p[\ep_1],\o[\ep_1])-(p[\ep_2],\o[\ep_2])}_{X_{y_0}}\leq \frac{1}{2}C_1 y_0^{-\mu}\norm{N(\ep_1,p[\ep_{1}],\o[\ep_1]])-N(\ep_2,p[\ep_{2}],\o[\ep_2]])}+\frac{1}{2}C_1 \abs{\ep_1-\ep_2}.
  \]
  We then use Lemma \eqref{ext_nonlinear_lemma} and smallness of $\ep$ to get
  \begin{align*}
    \norm{(p[\ep_1],\o[\ep_1])-(p[\ep_2],\o[\ep_2])}_{X_{y_0}}&\leq \frac{1}{2}C_1\cdot C_2(\frac{1}{2}) \abs{\ep} y_0^{-\mu}\cdot \norm{(p[\ep_1],\o[\ep_1])-(p[\ep_2],\o[\ep_2])}_{X_{y_0}}\\
                                                              &\qquad+\frac{2}{3}C_1C_2(\frac{1}{2}) y_0^{-\mu} \abs{\ep_1-\ep_2}\\
                                                              &\leq \frac{1}{10}\norm{(p[\ep_1],\o[\ep_1])-(p[\ep_2],\o[\ep_2])}_{X_{y_0}}+\frac{2}{3}C_1C_2(\frac{1}{2}) y_0^{-\mu} \abs{\ep_1-\ep_2}.
  \end{align*}
  We may then absorb the first term on the RHS into the LHS, and when we do this we obtain the desired estimate.
\end{proof}
\section{Deferred proofs of Section \ref{section:int}}
\label{sec:interior_proofs}
\subsection{Proof of Lemma \ref{int_asymptotics_lemma}}
\label{int_asymptotics_proof}
\begin{proof}
  We proceed analogously as in the proof of the corresponding lemma of \cite{sandine}. We note that in this case, we may rewrite $Hv=0$ as
  \[
    \Ht v=f
  \]
  where
  \[
    f:=\left({\frac{4\pi}{\g-1}(\frac{\g-1}{\g})^{\frac{1}{\g-1}}Q^{\frac{2-\g}{\g-1}}-\frac{2(4-3\g)}{(2-\g)^2}\frac{1}{y^2}}\right)v.
  \]
  Our basis of homogeneous solutions is then chosen to be
  \begin{equation*}
    \vt_1:=\frac{\cos(\nu \log(y))}{y^{\frac{1}{2}}},\qquad \vt_2:=\frac{\sin(\nu \log(y))}{y^{\frac{1}{2}}}.
  \end{equation*}
  The Wronskian ($\vt_1 \vt_2'-\vt_1'\vt_2$) is found to be $\frac{\nu}{y^2}$. A right-inverse to $\Ht$ is found to be
  \begin{equation*}
    \St_{\infty}(f):=\vt_1\int_y^{\infty}\frac{(y')^2}{\nu}\vt_2(y')\cdot f(y')dy'-\vt_2\int_{y}^{\infty}\frac{(y')^2}{\nu}\vt_1(y')\cdot f(y')dy'.
  \end{equation*}
  We then recall that by \eqref{eq:potentialasymptot} and $v_i=O(y^{2-\frac{2}{2-\g}-\mu})$, we have that $f=O(y^{-\frac{2}{2-\g}-2\mu})$ as $y\to \infty$. Thus, we have
  \[
    \St_{\infty}(f)=O(y^{2-\frac{2}{2-\g}-2\mu}).
  \]
  Since $v-\St_{\infty}\circ f\in \ker \Ht$, we obtain
  \[
    v=a_1\vt_1+a_2\vt_2+O(y^{2-\frac{2}{2-\g}-2\mu}).
  \]
  This gives the desired asymptotic expansions. The non-vanishing of the Wronskian $W$ ensures that $c_3,c_4\neq 0$ (and so they can be can to be positive). Moreover, it is clear that $v_1$ is smooth up to the origin. We then note that the equation \eqref{eq:v2def} ensures that $y v_2$ is smooth up to the origin. Combining this regularity with the asymptotics as $y\to \infty$ yields the pointwise bounds stated in the lemma.
\end{proof}
\subsection{Proof of Lemma \ref{int_linear_lemma}}
\label{int_linear_proof}
\begin{proof}
  We first note that by scaling \eqref{eq:Qpointwise} and \eqref{eq:ustarpointwise} we have that
  \begin{equation}
    \label{eq:scalebounds1}
    Q_\la^{-\frac{1}{\g-1}}\lesssim \la^{\frac{2}{2-\g}}\ev{\frac{y}{\la}}^{\frac{2}{2-\g}},\qquad \abs{u_\la}\lesssim \abs{y}\ev{\frac{y}{\la}}^{-\mu}
  \end{equation}
  and (since we may differentiate the asymptotics) for $k\geq 0$
  \begin{equation}
    \label{eq:scalebounds2}
    \abs{\p_y^k Q_\la^{\frac{1}{\g-1}}}\lesssim \la^{-\frac{2}{2-\g}-k}\ev{\frac{y}{\la}}^{-\frac{2}{2-\g}-k},\qquad \abs{\p_y^{k+1}u_\la}\lesssim \la^{-k}\ev{\frac{y}{\la}}^{-\mu-k}.
  \end{equation}
  Similarly, as a consequence of Lemma \ref{int_asymptotics_lemma} and the rescaling, we have that for $k\geq 0$
  \begin{equation*}
    \abs{\p_y^k v_{1,\la}}\lesssim \la^{-k}\ev{\frac{y}{\la}}^{-\frac{1}{2}-k},\qquad \abs{\p_y^k v_{2,\la}}\lesssim \la^{-k}(\frac{y}{\la})^{-1-k}\ev{\frac{y}{\la}}^{\frac{1}{2}}.
  \end{equation*}
  We will use these bounds repeately in the following.
  We first consider $S_\la$, and note that from the regularity of $v_i$ at the origin (Lemma \ref{int_asymptotics_lemma}) we see that if $f\in \calG$ then we have $S_\la f\in Z$. We will then use
  \begin{equation}
    \label{eq:zbound}
    \abs{f}\leq \ev{\frac{y}{\la}}^{-\mu}\norm{f}_{\calG}
  \end{equation}
  and the pointwise bounds in Lemma \ref{int_asymptotics_lemma} to estimate
  \begin{align*}
    \abs{f\cdot v_{2,\la}y^2}&\lesssim y^2\frac{\la}{y}\cdot \ev{\frac{y}{\la}}^{\frac{1}{2}}\abs{f}\\
                             &\lesssim y^2 \frac{\la}{y}\cdot \ev{\frac{y}{\la}}^{\frac{1}{2}-\mu}\norm{f}_{\calG}.
  \end{align*}
  Integrating from $0$ to $y$ then gives
  \begin{align*}
    \abs{\int_0^y f v_{2,\la}\cdot (y')^2 dy'}\lesssim y^3\frac{\la}{y}\cdot \ev{\frac{y}{\la}}^{\frac{1}{2}-\mu}\norm{f}_{\calG}.
  \end{align*}
  Combining this with the bound for $v_{1,\la}$ gives
  \begin{equation*}
    \abs{\la^{\frac{2(\g-1)}{2-\g}-1-\frac{2}{2-\g}}v_{1,\la}\int_0^y f v_{2,\la}\cdot (y')^2 dy'}\lesssim (\frac{y}{\la})^2\ev{\frac{y}{\la}}^{-\mu}\norm{f}_{\calG}.
  \end{equation*}
  Estimating the second term in the variation of constants formula similarly gives
  \begin{equation}
    \label{eq:Sbound1}
    \abs{S_\la f}\lesssim (\frac{y}{\la})^2\ev{\frac{y}{\la}}^{-\mu}\norm{f}_{\calG}.
  \end{equation}
  To bound the first derivative of $S_\la f$, we note that when we differentiate the variation of constants formulat there is a cancellation, and so we may express
  \begin{equation}
    \label{eq:Sbound2}
    (S_\la f)'=\frac{1}{\la^{1-\frac{2(\g-1)}{2-\g}+\frac{2}{2-\g}}}\left(\left(\int_0^y f v_{2,\la}\cdot (y')^2 dy'\right)v_{1,\la}'-\left(\int_0^y f v_{1,\la}\cdot (y')^2dy'\right)v_{2,\la}'\right)
  \end{equation}
  and then estimating the terms similarly yields
  \begin{equation*}
    \abs{(S_{\la}f)'}\lesssim \frac{y}{\la^2}\ev{\frac{y}{\la}}^{-\mu}\norm{f}_{\calG}.
  \end{equation*}
  We then use $H_\la\circ S_{\la}=\id$ to express
  \begin{equation*}
    (S_\la f)''=-\la^{-2}f-\frac{2}{y}(S_\la f)'-\frac{4\pi}{\g-1}(\frac{\g-1}{\g})^{\frac{1}{\g-1}}Q_\la^{\frac{2-\g}{\g-1}}(S_\la f).
  \end{equation*}
  Estimating these three terms using \eqref{eq:zbound}, \eqref{eq:Sbound2} and \eqref{eq:Sbound1} respectively gives the desired estimate
  \begin{equation*}
    \abs{(S_\la f)''}\lesssim \la^{-2}\ev{\frac{y}{\la}}^{-\mu}.
  \end{equation*}
  This yields the desired boundedness of $S_\la$. Using that, as a consequence of Lemma \ref{int_asymptotics_lemma}
  \begin{equation*}
    \abs{y\p_y v_1}\lesssim \ev{y}^{-\frac{1}{2}},\qquad \abs{y\p_y v_2}\lesssim \frac{1}{y}\ev{y}^{-\frac{1}{2}}
  \end{equation*}
  one can obtain the desired boundedness of $S_{\la_2}-S_{\la_1}$. We next consider $T_\la$. In this case, since $Q_\la$ is smooth and non-vanishing at the origin, it is clear that if $f\in \calF$ then $T_\la(f)\in Y$. We then estimate
  \begin{equation*}
    \abs{f}\lesssim \frac{y}{\la}\ev{\frac{y}{\la}}^{-1-\mu}\norm{f}_{\calF}
  \end{equation*}
  so that
  \begin{equation*}
    \abs{\int_0^y f(y')\cdot(y')^2 dy'}\lesssim y^3\frac{y}{\la}\ev{\frac{y}{\la}}^{-1-\mu}\norm{f}_{\calF}
  \end{equation*}
  We then use that
  \begin{equation*}
    \abs{Q_\la^{-\frac{1}{\g-1}}}\lesssim \la^{\frac{2}{2-\g}}\ev{\frac{y}{\la}}^{\frac{2}{2-\g}}
  \end{equation*}
  to obtain the desired bound
  \begin{equation*}
    \abs{T_\la f}\lesssim (\frac{y}{\la})^{2}\ev{\frac{y}{\la}}^{-1-\mu}\norm{f}_{\calF}.
  \end{equation*}
  Differentiating $T_\la$ allows one to similarly conclude
  \begin{equation*}
    \abs{(T_\la f)'}\lesssim \frac{y}{\la^2}\ev{\frac{y}{\la}}^{\frac{2}{2-\g}-1-\mu}\norm{f}_{\calF}.
  \end{equation*}
  One can then use
  \begin{equation*}
    \abs{f'}\lesssim \frac{1}{\la}\ev{\frac{y}{\la}}^{\frac{2}{2-\g}-1-\mu}\norm{f}_{\calF}
  \end{equation*}
  to conclude the desired estimate
  \begin{equation*}
    \abs{(T_\la f)''}\lesssim \frac{1}{\la^2}\ev{\frac{y}{\la}}^{\frac{2}{2-\g}-1-\mu}\norm{f}_{\calF}.
  \end{equation*}
  One then uses a straightforward argument, along with the estimate
  \begin{equation*}
    \abs{y\p_y Q}\lesssim \ev{y}^{-\frac{2(\g-1)}{2-\g}}
  \end{equation*}
  to obtain the desired boundedness of $T_{\la_2}-T_{\la_1}$.

  Finally, to show boundedness of the operator $K_{\la}$ we consider the operator $L_\la$ and note that it maps $Z$ to $\calF$. We then note that using
  \begin{equation*}
    \abs{f}\lesssim \frac{y^2}{\la^2}\ev{\frac{y}{\la}}^{-\mu}\norm{f}_{Z_\la},\qquad \abs{f'}\lesssim \frac{y}{\la^2}\ev{\frac{y}{\la}}^{-\mu}\norm{f}_{Z_\la}
  \end{equation*}
  along with the asymptotics for $Q_\la,Q_\la',u_\la,\p_y u_\la$, we obtain
  \begin{align*}
    \abs{L_\la f}&\lesssim (\frac{y}{\la})^2\ev{\frac{y}{\la}}^{-2-\mu}\norm{f}_{Z_\la}\\
                 &\lesssim (\frac{y}{\la})\ev{\frac{y}{\la}}^{-1-\mu}\norm{f}_{Z_\la}.
  \end{align*}
  Similarly, if one differentiates $L_\la f$ and uses
  \begin{equation*}
    \abs{f''}\lesssim \frac{1}{\la^2}\ev{\frac{y}{\la}}^{-\mu}\norm{f}_{Z_\la}
  \end{equation*}
  one obtains the desired estimate
  \begin{equation*}
    \abs{(L_\la f)'}\lesssim \frac{1}{\la}\ev{\frac{y}{\la}}^{-1-\mu}\norm{f}_{Z_\la}.
  \end{equation*}
  Once again, the bound for $L_{\la_2}-L_{\la_1}$ follows analogously, using that one may differentiate the asymptotics of $Q,u$. Combining the estimates for $S_\la$, $T_\la$ and $L_\la$ give the boundedness of $K_\la$ and $K_{\la_2}-K_{\la_1}$.
\end{proof}
\subsection{Proof of Lemma \ref{int_nonlinear_lemma}}
\label{int_nonlinear_proof}
\begin{proof}
  We first note that using the boundedness of $\norm{u}_{Y_\la}$ we have (using $\la\ev{\frac{y}{\la}}\leq 2\cdot y_0$ for the second inequality)
  \begin{equation*}
    \abs{\la u}\lesssim \abs{y}\ev{\frac{y}{\la}}^{\frac{2}{2-\g}-\mu}C,\qquad \abs{\la^{1+\frac{2}{2-\g}} u}\lesssim \abs{y}y_0^{\frac{2}{2-\g}}\ev{\frac{y}{\la}}^{-\mu}C.
  \end{equation*}
  Similarly, for $k\in \{0,1\}$ we have
  \begin{equation*}
    \abs{\p_y^{1+k}(\la u)}\lesssim \la^{-k}\ev{\frac{y}{\la}}^{\frac{2}{2-\g}-\mu-k}C,\qquad \abs{\p_y^{1+k}(\la^{1+\frac{2}{2-\g}} u)}\lesssim \la^{-k}y_0^{\frac{2}{2-\g}}\ev{\frac{y}{\la}}^{-\mu-k}C.
  \end{equation*}
  Similarly, using boundedness of $\norm{w}_{Z_\la}$, for $k\in \{0,1,2\}$ we have that
  \begin{equation*}
    \abs{\p_y^k w}\lesssim \la^{-k}(\frac{y}{\la})^{2-k}\ev{\frac{y}{\la}}^{-\mu}C,\qquad \abs{\p_y^k \la^{\frac{2}{2-\g}}w}\lesssim \la^{-k}(\frac{y}{\la})^{2-k}y_0^{\frac{2}{2-\g}}\ev{\frac{y}{\la}}^{-\frac{2}{2-\g}-\mu}C.
  \end{equation*}
  We then note that
  \begin{equation*}
    Q+t\la^{-\frac{2(\g-1)}{2-\g}+\frac{2}{2-\g}}w=\la^{-\frac{2(\g-1)}{2-\g}}\left(Q(\cdot/\la)+t \la^{\frac{2}{2-\g}}w\right).
  \end{equation*}
  To compare this to $Q$, we compute
  \begin{align*}
    \abs{\la^{\frac{2}{2-\g}} w}\lesssim (\frac{y}{\la})^2y_0^{\frac{2}{2-\g}}\ev{\frac{y}{\la}}^{-\frac{2}{2-\g}-\mu}\leq \ev{\frac{y}{\la}}^{\frac{-2(\g-1)}{2-\g}-\mu}y_0^{\frac{2}{2-\g}}C
  \end{align*}
  We note that since $y_0^{\frac{2}{2-\g}}C\ll 1$, we may use \eqref{eq:Qpointwise} and obtain that uniformly for $t\in [0,1]$,
  \begin{equation*}
    \la^{-\frac{2(\g-1)}{2-\g}}\ev{\frac{y}{\la}}^{-\frac{2(\g-1)}{2-\g}}\lesssim Q+t\la^{-\frac{2(\g-1)}{2-\g}+\frac{2}{2-\g}}w\lesssim \la^{-\frac{2(\g-1)}{2-\g}}\ev{\frac{y}{\la}}^{-\frac{2(\g-1)}{2-\g}}.
  \end{equation*}
  In particular, this quantity is always positive, so we may take powers of it, and differentiate, so  Z the nonlinearity is well-defined. We note that as a consequence we have that uniformly for $t\in [0,1]$,
  \begin{align*}
    \abs{(Q_\la+t\la^{-\frac{2(\g-1)}{2-\g}+\frac{2}{2-\g}}w)^{\frac{1}{\g-1}}}&\lesssim \la^{-\frac{2}{2-\g}}\ev{\frac{y}{\la}}^{-\frac{2}{2-\g}},\\
    \la^{-\frac{2(\g-1)}{2-\g}}\abs{(Q_\la+t\la^{-\frac{2(\g-1)}{2-\g}+\frac{2}{2-\g}}w)^{\frac{2-\g}{\g-1}}}&\lesssim \la^{-\frac{2}{2-\g}}\ev{\frac{y}{\la}}^{-2},\\
    \la^{-\frac{4(\g-1)}{2-\g}}\abs{(Q_\la+t\la^{-\frac{2(\g-1)}{2-\g}+\frac{2}{2-\g}}w)^{\frac{3-2\g}{\g-1}}}&\lesssim \la^{-\frac{2}{2-\g}}\ev{\frac{y}{\la}}^{-\frac{2(3-2\g)}{2-\g}}.
  \end{align*}
  From these bounds, along with \eqref{eq:scalebounds1} and \eqref{eq:scalebounds2} one sees directly that
  \begin{align*}
    \abs{G_{\la,1}}&\lesssim \ev{\frac{y}{\la}}^{-\mu}(1+y_0^{\frac{2}{2-\g}}C)^2,\\
    \abs{G_{\la,2}}&\lesssim y_0^{\frac{2}{2-\g}} \ev{\frac{y}{\la}}^{-2\mu}C^2,\\
    \abs{F_\la}&\lesssim y_0^{\frac{2}{2-\g}}(\frac{y}{\la})\ev{\frac{y}{\la}}^{-1-2\mu}C^2.
  \end{align*}
  We will then use $y_0^{\frac{2}{2-\g}}C^2\lesssim 1$, $\ev{\frac{y}{\la}}^{-2\mu}\leq \ev{\frac{y}{\la}}^{-\mu}$ to obtain the estimate
  \begin{equation*}
    \abs{G_{\la}}\lesssim \ev{\frac{y}{\la}}^{-\mu},\qquad \abs{F_\la}\lesssim \frac{y}{\la}\ev{\frac{y}{\la}}^{-1-\mu}.
  \end{equation*}
  One can similarly estimate
  \begin{equation*}
    \abs{F_\la'}\lesssim \frac{1}{\la}\ev{\frac{y}{\la}}^{-1-\mu}.
  \end{equation*}
  which shows the desired boundedness of the nonlinearities.
  We may similarly estimate
  \begin{align*}
    \abs{G_{\la,1}(w_2,u_2)-G_{\la,1}(w_1,u_1)}&\lesssim y_0^{\frac{2}{2-\g}}\ev{\frac{y}{\la}}^{-\mu}(\norm{u_2-u_1}_{Y_{\la}}+\norm{w_2-w_1}_{Z_\la}),\\
    \abs{G_{\la,2}(w_2,u_2)-G_{\la,2}(w_1,u_1)}&\lesssim C y_0^{\frac{2}{2-\g}}\ev{\frac{y}{\la}}^{-\mu}(\norm{u_2-u_1}_{Y_{\la}}+\norm{w_2-w_1}_{Z_\la}),\\
    \abs{F_{\la,2}(w_2,u_2)-F_{\la,2}(w_1,u_1)}&\lesssim C y_0^{\frac{2}{2-\g}}\frac{y}{\la}\ev{\frac{y}{\la}}^{-1-\mu}(\norm{u_2-u_1}_{Y_{\la}}+\norm{w_2-w_1}_{Z_\la}),\\
    \abs{F_{\la,2}(w_2,u_2)'-F_{\la,2}(w_1,u_1)'}&\lesssim C y_0^{\frac{2}{2-\g}}\frac{1}{\la}\ev{\frac{y}{\la}}^{-1-\mu}(\norm{u_2-u_1}_{Y_{\la}}+\norm{w_2-w_1}_{Z_\la}).
  \end{align*}
  Summing these gives the desired Lipschitz continuity with respect to $w,u$. Finally, the continuity with respect to $\la$ follows straightforwardly. 
\end{proof}
\subsection{Proof of Lemma \ref{int_contraction_lemma}}
\label{int_contraction_proof}
\begin{proof}
  We consider the integral formuation \eqref{eq:int_integral_formulation}. We let $(u_0,w_0)=(0,0)$. We then inductively define that for $n\geq 1$,
  \begin{align*}
    (W_n,U_n)&:=(G_{\la}(w_{n-1},u_{n-1}),F_{\la}(w_{n-1},u_{n-1})),\\
    (w_n,u_n)&:=(S_\la(W_n),T_\la(U_n)+K_\la(W_n)).
  \end{align*}
  We note that $\norm{w_1}_{Z_{\la}},\norm{u_1}_{Y_\la}\leq C_1\cdot C_2$. We then let $\Ct=2\max(C_1C_2,1)$. We shall make the assumption
  \begin{equation}
    \label{eq:y0small}
    y_0^{\frac{2}{2-\g}}\Ct^2 \ll 1.
  \end{equation}
  We then will inductively assume that for $k\geq 1$,
  \begin{equation*}
    \norm{w_k}_{Z_\la},\qquad \norm{w_{k-1}}_{Z_\la},\qquad \norm{u_k}_{Z_\la},\qquad \norm{u_{k-1}}_{Z_\la}\leq \Ct.
  \end{equation*}
  It then follows that
  \begin{align*}
    \norm{w_{k+1}-w_k}_{Z_\la}+\norm{u_{k+1}-u_k}_{Y_\la}&\leq 2 C_1\left(\norm{W_{k+1}-W_k}_{\calG_\la}+\norm{U_{k+1}-U_k}_{\calF_\la}\right)\\
                                                         &\leq 2 C_1C_2(1+\Ct)y_0^{\frac{2}{2-\g}}(\norm{w_{k}-w_{k-1}}_{Z_\la}+\norm{u_{k}-u_{k-1}}_{Y_\la}).
  \end{align*}
  By \eqref{eq:y0small}, we can conclude
  \begin{equation*}
    \norm{w_{k+1}-w_k}_{Z_\la}+\norm{u_{k+1}-u_k}_{Y_\la}\leq \frac{1}{10}\left(\norm{w_{k}-w_{k-1}}_{Z_\la}+\norm{u_{k}-u_{k-1}}_{Y_\la}\right)
  \end{equation*}
  and so we see that the sequence converges. We may choose $C_3=100 \Ct$. The uniqueness of the fixed point follows as we have constructed a contraction. For the Lipschitz dependence, we suppose that for $\in \{1,2\}$
  \begin{equation*}
    w[\la_i]=S_{\la_i}(G_{\la_i}(w[\la_i],u[\la_i])).
  \end{equation*}
  Subtracting these equations gives
  \begin{multline*}
    w[\la_2]-w[\la_1]=(S_{\la_2}-S_{\la_1}))G_{\la_2}(w[\la_2],u[\la_2])\\
    +S_{\la_1}(G_{\la_2}(w[\la_2],u[\la_2])-G_{\la_1}(w[\la_2,u[\la_2]]))+S_{\la_1}(G_{\la_1}(w[\la_2],u[\la_2])-G_{\la_1}(w[\la_1],u[\la_1])).
  \end{multline*}
  We then use the dependencies on $\la$ in the bounds of Lemmas \ref{int_linear_lemma} and \ref{int_nonlinear_lemma} to estimate the first two terms, and then difference estimate \eqref{eq:int_lipschitz_ineq} to bound the third term, which gives
  \begin{multline*}
    \max_i\norm{w[\la_2]-w[\la_1]}_{Z_{\la_i}}\leq \frac{\la_2-\la_1}{\la_1}C_1C_2+\frac{\la_2-\la_1}{\la_1}C_1C_2\\
    +C_1 C_2 y_0^{\frac{2}{2-\g}}\left(\min_i \norm{w[\la_2]-w[\la_1]}_{Z_{\la_i}}+\min_i \norm{u[\la_2]-u[\la_1]}_{Y_{\la_i}}\right).
  \end{multline*}
  A similar bound holds for $u[\la_2]-u[\la_1]$, and then one may use \eqref{eq:y0small} to absorb the last term into the left-hand side, which gives \eqref{eq:contraction_lipschitz}.
\end{proof}
\appendix
\section{}
\label{sec:appendix}
\subsection{Proof of Lemma \ref{lem:local_C1_wellposedness}}
\label{sec:local_C1_lemma}
\begin{proof}
  The key idea is that if $\kappa>-1$ then $\dv{t}+\frac{\kappa}{t}$ is ivertible as a mapping from $\{f\in C^1\colon f(0)=0\}$ into $C^0$.
  
  We consider the ODE system
  \begin{equation}
    \label{eq:system_C1_main_eq}
    \begin{cases}
      A(t,u_1,u_2)\mqty{u_1'\\u_2'}+B(t,u_1,u_2)=0\\
      u_1(0)=u_2(0)=u_1'(0)=0,\qquad u_2'(0)=U
    \end{cases},
  \end{equation}
  with $A,B$ as in the statement of the lemma.
  We first note that if we multiply the system on the left by
  \begin{equation*}
    \mqty{\frac{1}{A_{11}} & 0\\ -\frac{A_{21}}{A_{11}} & 1}
  \end{equation*}
  we obtain the system
  \begin{equation*}
    \mqty{1 & \frac{A_{12}}{A_{11}}\\ 0 & A_{22}-\frac{A_{12}A_{21}}{A_{11}}}\cdot \mqty{u_1'\\u_2'}+\mqty{\frac{B_1}{A_{11}}\\ B_2-\frac{A_{21} B_1}{A_{11}}}=0.
  \end{equation*}
  then we note that this system is of the same form (and $a,b,c,d$ do not change). Thus, without loss of generality, we may assume that
  \begin{equation*}
    A_{11}=1,\qquad A_{21}=0.
  \end{equation*}
  so that the system takes the form
  \begin{equation}
    \label{eq:simplifiedform}
    \mqty{1 & A_{12}\\0 & A_{22}}\mqty{u_1'\\u_2'}+\mqty{B_1\\B_2}=0.
  \end{equation}
  We will also define
  \begin{equation*}
    e:=\pdv[A_{22}]{z_1}(0),\qquad f:=\pdv[B_2]{z_1}(0).
  \end{equation*}
  In preparation for carrying out a contraction mapping argument, we will perform finite Taylor expansions of the coefficients in $z_0,z_1,z_2$. We will use multi-index notation, so that for $\alpha=(\alpha_0,\alpha_1,\alpha_2)$, $f(z_0,z_1,z_2)$,
  \begin{equation*}
    \abs{\alpha}=\sum_{i=0}^2 \alpha_i,\qquad z^{\alpha}=z_0^{\alpha_0}z_1^{\alpha_1}z_2^{\alpha_2},\qquad \p^{\alpha}f=\p_{z_0}^{\alpha_0}\p_{z_1}^{\alpha_1}\p_{\alpha_2}^{\alpha_2},\qquad \alpha!=\alpha_0!\alpha_1!\alpha_2!.
  \end{equation*}
  We will also adopt the convention that
  \begin{equation*}
    f_{\alpha}:=\frac{1}{\alpha!}(\p^{\alpha}f)(0).
  \end{equation*}
  We then recall that for any smooth $f(z_0,z_1,z_2)$, we may use the fundamental theorem of calculus to decompose the function to zero and first order as
  \begin{align*}
    f(z_0,z_1,z_2)&=f(0)+\sum_{\substack{\alpha\in \Z_{\geq 0}^3\\ \abs{\alpha}=1}}f_{\alpha,rem}(z)\cdot z^{\alpha}\\
                  &=\sum_{\substack{\alpha\in \Z_{\geq 0}^3\\ \abs{\alpha}\leq 1}}f_{\alpha} z^{\alpha}+\sum_{\substack{\alpha\in \Z_{\geq 0}^3\\ \abs{\alpha}=2}}f_{\alpha,rem}(z)\cdot z^{\alpha},
  \end{align*}
  where the remainder terms are smooth functions defined by
  \begin{equation*}
    f_{\alpha,rem}(z)=
    \begin{cases}
      \int_0^1\sum_{\abs{\alpha}=1}\p_{\alpha}f(t z)dt & \abs{\alpha}=1\\
      \int_0^12(1-t)\left(\sum_{\abs{\alpha}=2}\frac{\p_{\alpha}f(t z)}{\alpha!}\right)dt&\abs{\alpha}=2
    \end{cases}.
  \end{equation*}
  We will use the zeroth-order expansion for $A_{12},B_1$ to rewrite the line of the matrix equation \eqref{eq:simplifiedform} as the equation (using multi-index notation and $u_0=t$)
  \begin{equation*}
    \p_t u_1=-(\sum_{\abs{\alpha}=1}u^{\alpha}(A_{12})_{\alpha,rem})\p_t u_2-(\sum_{\abs{\alpha}=1}u^{\alpha}(B_{2})_{\alpha,rem}).
  \end{equation*}
  We will then use the second-order expansion for $A_{22},B_2$ to rewrite the second equation. We will also use the decompositions
  \begin{equation*}
    bu_2u_2'=(bUt)u_2'+bu_2 U+b(u_2-Ut)(u_2'-U)-bU^2t,\qquad eu_1 u_2'=e u_1(u_2'-U)+e u_1 U.
  \end{equation*}
  We may then write the second equation as
  \begin{multline*}
    (a+bU)tu_2'+(d+bU)u_2+(f+eU)u_1=(bU^2-c)t\\
    -b(u_2-Ut)(u_2'-U)-eu_1(u_2'-U)-\left(\sum_{\abs{\alpha}=2} u^{\alpha}(A_{22})_{\alpha,rem}\right)u_2'
    -\left(\sum_{\abs{\alpha}=2} u^{\alpha}(B_{2})_{\alpha,rem}\right).
  \end{multline*}
  We will then divide this equation by $a+bU$ and also use the algebraic identity
  \begin{equation*}
    \frac{bU^2-c}{a+bU}=(\alpha+1)U.
  \end{equation*}
  to rewrite the entire system as
  \begin{equation}
    \begin{cases}
      &u_1'=F_1(t,u)\\
      &tu_2'+\kappa u_2=-\frac{(f+eU)}{a+bU}u_1+(\alpha+1)Ut+F_2(t,u)\\
      &u_1(0)=u_1'(0)=u_2(0)=0,\qquad u_2'(0)=U.
    \end{cases}
  \end{equation}
  where
  \begin{align*}
    F_1(t,u_1,u_2)&=-(\sum_{\abs{\alpha}=1}u^{\alpha}(A_{12})_{\alpha,rem})\p_t u_2-(\sum_{\abs{\alpha}=1}u^{\alpha}(B_{2})_{\alpha,rem})\\
    F_2(t,u_1,u_2)&:=-\frac{b}{a+bU}(u_2-Ut)(u_2'-U)-\frac{e}{a+bU}u_1(u_2'-U)\\
                  &\qquad-\frac{1}{a+bU}\left(\sum_{\abs{\alpha}=2} u^{\alpha}(A_{22})_{\alpha,rem}\right)u_2'
                    -\frac{1}{a+bU}\left(\sum_{\abs{\alpha}=2} u^{\alpha}(B_{2})_{\alpha,rem}\right).
  \end{align*}
  We will now choose a large constant $D$ so that
  \begin{equation*}
    \abs{U}+1\leq D.
  \end{equation*}
  We next chose $R$ sufficiently larger than $D$ so that for $\g\in \Z_{\geq 0}^3$, $\abs{\g}\leq 3$, uniformly for $z$ in a neighbhorhood of zero,
  \begin{align*}
    \abs{(A_{12})_{\g}}&\leq \frac{1}{100}(\frac{R}{2})^{\abs{\g}}D^{-\g_2},\\
    \abs{(B_{1})_{\g}}&\leq \frac{1}{100}(\frac{R}{2})^{\abs{\g}}D^{1-\g_2},\\
    \abs{(A_{22})_{\g}}&\leq \frac{1}{100}\abs{a+bU}(\frac{R}{2})^{\abs{\g}}D^{-\g_2},\\
    \abs{(B_{2})_{\g}}&\leq \frac{1}{100}\abs{a+bU}(\frac{R}{2})^{\abs{\g}}D^{1-\g_2}.
  \end{align*}
  As a comment, this may be done since $A_{12}(0)=B_1(0)=A_{22}(0)=B_2(0)=0$, and so each term will have a positive power of $R$ or $\frac{R}{D}$. In particular, we have
  \begin{equation*}
    \abs{\frac{b}{a+bU}}\leq \frac{R}{D},\qquad \abs{\frac{e}{a+bU}}\leq R,\qquad \abs{\frac{f+eU}{a+bU}}\leq DR.
  \end{equation*}
  We will now define spaces (depending on $T$)
  \begin{equation*}
    \begin{cases}
      X_{U}:=\{f\in C^{1}(-T,T)\colon f(0)=0,f'(0)=U\},\qquad \norm{f}_X=\norm{f'}_{L^{\infty}}\\
      N_{(\alpha+1)U}:=\{f\in C^0(-T,T)\colon f(0)=(\alpha+1)U\},\qquad \norm{f}_{N}:=\norm{f}_{L^{\infty}}
    \end{cases}
  \end{equation*}
  We comment that if $\alpha>-1$, then $\dv{t}+\frac{\alpha}{t}$ is invertible $X_{U}\to X_{(\alpha+1)U}$ with inverse given by
  \begin{equation*}
    f\mapsto I_{\alpha}f:=t^{-\alpha}\int_{0}^t (t')^{\alpha}f(t')dt'.
  \end{equation*}
  Motivated by this, we will consider the integral formulation
  \begin{equation}
    \label{eq:integral_formulation}
    \begin{cases}
      u_1=\int_0^t F_1(t',u(t'))dt'\\
      u_2=I_{\alpha}\left(-\frac{f+eU}{a+bU}u_1+(\alpha+1)U t+F_2(t,u)\right)
    \end{cases}
  \end{equation}
  We will first describe boundedness of this iteration. Our goal will be to propogate
  \begin{equation*}
    \norm{u_2-Ut}_{X}\leq D\ep.
  \end{equation*}
  We first note that in orderto control the contribution of $\frac{-f+eU}{a+BU}u_1$, we will need the pointwise bound
  \begin{equation*}
    \norm{u_1'}_{L^{\infty}}\leq \frac{C_1}{R}\ep
  \end{equation*}
  for some universal small constant $C_1$ (depending on $\alpha$). Assuming these bounds on $u_1',u_2'-U$, we can bound
  \begin{equation*}
    \norm{F_1(t,u)}_{N}\lesssim TRD,\qquad \norm{\frac{F_2(t,u)}{t}}_{N}\lesssim RD\ep^2+TR^2D
  \end{equation*}
  and then choose $\ep$ sufficiently small so that 
  Thus, to close, we may choose $T$ sufficiently small so that
  \begin{equation*}
    TR^2D\ll \ep,
  \end{equation*}
  which allows us to close the bound on $u_1$ and then choose $\ep$ sufficiently small so that
  \begin{equation*}
    \ep R\ll 1
  \end{equation*}
  which allows us to close the bound on $u_2$. Similarly, if we have that for $u_a,u_b$ both in the ball $\norm{u_2-U}_{X}\leq D\ep$, $\norm{u_1}_X\leq \frac{C_1}{R}\ep$ and we have the difference estimate
  \begin{equation*}
    \norm{(u_a)_2-(u_b)_2}_X\leq \delta,\qquad \norm{(u_a)_1-(u_b)_1}_X\leq C_1 \frac{\delta}{D R},
  \end{equation*}
  we can estimate (replacing $u_a,u_b$ with the output of the the integral formulation \eqref{eq:integral_formulation})
  \begin{equation*}
    \norm{(u_a)_2-(u_b)_2}_X\leq \frac{\delta}{2},\qquad \norm{(u_a)_1-(u_b)_1}_X\leq \frac{1}{2}C_1 \frac{\delta}{D R}.
  \end{equation*}
  It then follows from a standard fixed point argument that the integral formulation \eqref{eq:integral_formulation} has a unique solution $(u_1,u_2)\in C^1(-T,T)$ such that
  \begin{equation}
    \label{eq:uniqueness_cond}
    \norm{u_1'}_{L^{\infty}}\leq \frac{\alpha+1}{100 R}\ep,\qquad \norm{u_2'-U t}_{L^{\infty}}\leq D \ep.
  \end{equation}
  We note that such a solution will be a $C^1$ solution to \eqref{eq:system_C1_main_eq} as desired, which gives the existence part of the lemma. Moreover, any solution to \eqref{eq:system_C1_main_eq} will solve the integral formulation \eqref{eq:integral_formulation}, and by continuity of the first derivative (by taking $T$ smaller if necessary) we can ensure \eqref{eq:uniqueness_cond}, which gives the uniqueness part of the theorem.
\end{proof}
\subsection{Proof of Lemma \ref{lem:main_analyticity_lemma}}
\label{sec:appendix_analyticity}
Before presenting the main proof of this section, we will first establish the following related lemma, which will be used in the proof. The key idea (for both this lemma and the main one of the subsection) is majorization, see for instance the treatment of the analytic implicit function theorem in \cite{krantz}. Moreover, dividing by $\kappa+i$ for successive integers $i$ will not affect the convergence, as long as this factor never vanishes. Our approach to dealing with this is inspired by the method of the proof of Theorem 4.5 in \cite{teschl}.
\begin{lem}{(Scalar, semilinear singular ODE)}
  \label{lem:scalar_singular_ode}
  Suppose $P(z_0,z_1)$ is an analytic function on a neighborhood of $0$ such that
  \begin{equation*}
    P(0)=\p_{z_1}P(0)=0.
  \end{equation*}
  Suppose $\kappa\notin \Z_{\leq -1}$. Then, for $T$ sufficiently small, the ODE
  \begin{equation*}
    \begin{cases}
      t w'+\kappa w=P(t,w)\\
      w(0)=0
    \end{cases}
  \end{equation*}
  has a unique real-analytic solution on $(-T,T)$.
\end{lem}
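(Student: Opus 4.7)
The plan is to prove this by the classical method of majorants, as in \cite{krantz}, modified to handle the singular factor in the equation (following the spirit of the Frobenius-style argument in Teschl). First I would look for a formal real power series solution $w(t) = \sum_{n \geq 1} w_n t^n$ (the constraint $w(0) = 0$ forces $w_0 = 0$). Substituting into $tw' + \kappa w = P(t, w)$ and matching the coefficient of $t^n$ on both sides gives a recursion of the form
\begin{equation*}
  (n + \kappa)\, w_n = F_n\bigl(\{p_{ij}\};\, w_1,\ldots,w_{n-1}\bigr),
\end{equation*}
where $p_{ij}$ denote the Taylor coefficients of $P$ at the origin and $F_n$ is a universal polynomial (with nonnegative coefficients in the $|p_{ij}|$ and the $w_k$) that depends only on the earlier unknowns $w_1,\ldots,w_{n-1}$. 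The key reason $w_n$ does not reappear on the right is that the hypotheses $P(0)=0$ and $\partial_{z_1}P(0)=0$ force $p_{00}=p_{01}=0$, so the only monomials $p_{ij}t^i w^j$ contributing to $[t^n]P(t,w(t))$ have either $i \geq 1$ (hence every $w$-index is at most $n-1$) or $j \geq 2$ (hence in any product $w_{k_1}\cdots w_{k_j}$ with $\sum k_\ell = n$, each $k_\ell \leq n-j+1 \leq n-1$). Because $\kappa \notin \Z_{\leq -1}$ ensures $n + \kappa \neq 0$ for every $n \geq 1$, this recursion solves uniquely, yielding existence and uniqueness of the formal power series.

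Next I would prove convergence by majorization. Set $c := \inf_{n \geq 1} |n + \kappa|$, which is strictly positive since $\kappa$ avoids the negative integers and $|n+\kappa| \to \infty$. The recursion then gives $|w_n| \leq c^{-1} F_n(|p_{ij}|; |w_1|,\ldots,|w_{n-1}|)$. Pick any analytic majorant $\tilde{P}(t,z)$ for $P$ whose Taylor coefficients dominate $|p_{ij}|$ and which still satisfies $\tilde{P}(0) = \partial_z \tilde{P}(0) = 0$ (for instance, a rational expression of the form $M(t+z^2/r)/[(1-t/r)(1-z/r)]$ works). The auxiliary object I would then construct is the unique analytic solution $\tilde{w}(t)$, with $\tilde{w}(0)=0$, of the \emph{algebraic} (not differential) equation
\begin{equation*}
  c\,\tilde{w} \,=\, \tilde{P}(t,\tilde{w}),
\end{equation*}
produced by the analytic implicit function theorem, which applies because $\partial_z[cz - \tilde{P}(t,z)]\big|_{(0,0)} = c - \partial_z \tilde P(0,0) = c \neq 0$.

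Writing $\tilde{w}(t) = \sum_{n \geq 1} \tilde{w}_n t^n$, the algebraic equation gives $c\,\tilde{w}_n = F_n(|p_{ij}|; \tilde{w}_1,\ldots,\tilde{w}_{n-1})$, with the same polynomial $F_n$. A straightforward induction then yields $|w_n| \leq \tilde{w}_n$ for all $n \geq 1$, so the power series for $w$ converges on the same disc as $\tilde{w}$, and differentiating term-by-term verifies that the limit $w(t)$ is a genuine analytic solution on some interval $(-T,T)$. Uniqueness among analytic solutions is immediate from the uniqueness of the formal recursion.

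The main obstacle is the careful choice of the majorizing problem. One could instead try majorizing with another singular ODE of the same shape, but this replaces one problem with a scarcely easier one; the crux is recognizing that the factor $1/(n+\kappa)$ may be replaced by the uniform constant $1/c$ at the cost of only a multiplicative loss, which allows the majorant to be the solution of an \emph{algebraic} equation whose analyticity is immediate from the implicit function theorem. Once this bookkeeping is in place, the combinatorial check that the recursion is actually triangular (i.e., that $F_n$ does not involve $w_n$) is the other point requiring care and is exactly what the structural hypotheses on $P$ were designed to guarantee.
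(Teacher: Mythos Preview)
Your proposal is correct and follows the same majorant strategy as the paper's proof. The only differences are minor: you bound $1/|n+\kappa|$ uniformly by $1/c$ with $c=\inf_{n\geq 1}|n+\kappa|>0$, whereas the paper instead uses $1/|n+\kappa|\leq 2/n$ for $n$ large and absorbs the finitely many small $n$ by enlarging the majorant radius $R$; and the paper solves its quadratic majorizing equation $V=\tfrac12\bar P(t,V)$ explicitly rather than citing the analytic implicit function theorem. (One small slip: the recursion for $\tilde w_n$ should involve the coefficients $\tilde p_{ij}$ of $\tilde P$ rather than $|p_{ij}|$, but since $F_n$ is monotone and $\tilde p_{ij}\geq |p_{ij}|$ this is harmless.)
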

\begin{proof}
  We first show the existence of a formal Taylor series solution and then show that the Taylor series converges. We will use standard multi-index notation. In particular, if $\alpha=(\alpha_0,\alpha_1)\in \Z_{\geq 0}^{2}$ and $z=(z_0,z_1)\in \R^2$, then we define
  \begin{equation*}
    \alpha!:=\alpha_0!\alpha_1!,\qquad z^{\alpha}:=z_0^{\alpha_0}z_1^{\alpha_1},\qquad \p_z^{\alpha}:=\p_{z_0}^{\alpha_0}\p_{z_1}^{\alpha_1}.
  \end{equation*}
  We will also use the abbreviation that for $m\in \Z_{\geq 0}$ and multi-indices $\alpha\in \Z_{\geq 0}^2$,
  \begin{equation*}
    w_{m}:=\frac{w^{(m)}(0)}{m!},\qquad P_{\alpha}:=\frac{P^{(\alpha)}(0)}{\alpha!}.
  \end{equation*}
  Thus, we can write the (formal) Taylor expansion of $w$ and the (necessarily convergent) Taylor expansion of $P$ as
  \begin{equation*}
    w(t)=\sum_{m=1}^{\infty}w_m t^m,\qquad P(z_0,z_1)=\sum_{\substack{\alpha\in \Z_{\geq 0}^2\\\alpha_0+2\alpha_1\geq 2}} P_{\alpha}z^{\alpha}.
  \end{equation*}
  We then note that formally, by collecting terms with the same powers of $t$, we may express
  \begin{equation*}
    P(t,w(t))=\sum_{r=1}^{\infty}Q_r(P_{\alpha},w_m) t^r,
  \end{equation*}
  where for each fixed $r$, $Q_r$ can be written as, using the indexing $\beta=(\beta_1,\dots,\beta_M)$,
  \begin{equation*}
    Q_r(P_{\alpha},w_m)=\sum_{\substack{(\alpha_0,M)\in \Z_{\geq 0}^2\\\alpha_0+M=r}}\sum_{\alpha_1\leq M}P_{(\alpha_0,\alpha_1)}\sum_{\substack{\beta\in \Z_{\geq 0}^{M}\\ \abs{\beta}=\alpha_1\\ \sum_{i=1}^M i \beta_i=M}}c_{\alpha_0,\beta}w^{\beta},
  \end{equation*}
  where the coefficients $c_{\alpha_0,\beta}$ are positive combinatorial constants independent of $P,w$. We use here the notation that if $\beta\in \Z_{\geq 0}^{M}$,
  \begin{equation*}
    w^\beta:=w_1^{\beta_1}\cdots w_{M}^{\beta_M}.
  \end{equation*}
  We also comment, that since $P_{(0,1)}=0$, we have that $Q_r$ only depends on $w_j$ for $j\leq r-1$. We note that the equation specifies that for $m\geq 1$
  \begin{equation*}
    (\kappa+m)w_m=Q_m(P_{\alpha},w_m).
  \end{equation*}
  Since by assumption we have that for all $m\geq 0$, $\kappa+m\neq 0$, we can uniquely solve for each $w_m$. Thus, the formal power series is uniquely determined by the equation, and if it can be shown to converge on an open interval containing zero we have a unique real-analytic solution.

  To show convergence we will use the method of majorants. We shall let $R>0$ be a sufficiently large constant to be determined later. We now define
  \begin{equation*}
    \Pb(z_0,z_1)=\frac{1}{(1-R z_0)(1-R z_1)}-1-R z_1.
  \end{equation*}
  We note that the Taylor coefficients of $\Pb$ are all non-negative. We comment that as $P$ is analytic, by choosing $R$ sufficiently large, we can ensure it is majorized by $\Pb$, i.e.
  \begin{equation*}
    \abs{P_{\alpha}}\leq \Pb_\alpha.
  \end{equation*}
  We then note that the implicit equation
  \begin{equation}
    \label{eq:scalar_implicit_equation}
    V=\frac{1}{2}\Pb(t,V),\qquad V(0)=0
  \end{equation}
  is solved by
  \begin{equation*}
    \Vr(t):=\frac{2(1-R t)-\sqrt{4(1-R t)^2-4 R^2 t(2+R)(1-R t)}}{2R(2+R)(1-R t)}.
  \end{equation*}
  In particular, by Taylor expanding each side of \eqref{eq:scalar_implicit_equation}, we have that for $r\geq 1$ (from now on we notationally omit the ranges of the summation indices)
  \begin{equation}
    \label{eq:scalar_majorization_identity}
    \Vr_{r}=\frac{1}{2}Q_r(P_{\alpha},\Vr_m).
  \end{equation}
  We note that (by comparing the series solution with that of $V=\frac{1}{2}(\frac{1}{1-R t}-1)$)
  \begin{equation}
    \label{eq:recursion_lower_bound}
    \Vr_{r}\geq \frac{1}{2}R^r.
  \end{equation}
  We will now prove inductively that for $R$ sufficiently large, for $k\geq 1$,
  \begin{equation*}
    w_k\leq \frac{1}{k}\Vr_{k}.
  \end{equation*}
  Indeed, as a first step, we note that there exists $K$ sufficiently large such that for all $k\geq K$,
  \begin{equation}
    \label{eq:k_large}
    \frac{1}{\abs{1+\frac{\kappa}{k}}}\leq 2.
  \end{equation}
  We then note that by the lower bound \eqref{eq:recursion_lower_bound}, by choosing $R$ sufficiently large, we may assume without loss of generality that for all $k< K$, $\abs{w_k}\leq \frac{\Vr_{k}}{k}$.
  
  For $k\geq K$, we will work inductively. In particular, we consider the equation
  \begin{equation*}
    t w'+\kappa w=P(t,w).
  \end{equation*}
  and expand each side to order $k$. This gives
  \begin{equation*}
    (k+\kappa)w_k=Q_k(P_\alpha,w_m).
  \end{equation*}
  where the RHS only depends on $w_r$ for $r\leq k-1$. We next take the absolute value of each side. We estimate the RHS using the triangle inequality, the fact that the constants $c_{\alpha_0,\beta}$ are positive and also (the second following from the inductive hypothesis)
  \begin{equation*}
    \abs{P_{\alpha}}\leq \mathring{P}_{\alpha},\qquad \abs{w_m}\leq \Vr_m.
  \end{equation*}
  This allows us to estimate
  \begin{align*}
    \abs{Q_{k}(P_{\alpha},w_m)}&\leq Q_{k}(\abs{P_{\alpha}},\abs{w_m})\\
                               &\leq Q_{k}(\Pb_{\alpha},\Vr_m)\\
                               &=\frac{1}{2}\Vr_k,
  \end{align*}
  with the last line coming from \eqref{eq:scalar_majorization_identity}.
  This gives
  \begin{equation*}
    \abs{w_k}\leq \frac{1}{2}\frac{1}{\abs{1+\frac{\kappa}{k}}}\frac{\Vr_{k}}{k}\leq \frac{\Vr_{k}}{k},
  \end{equation*}
  with the second inequality following from \eqref{eq:k_large}.
  Thus, it follows that $w$ is majorized by $\Vr$, and since the latter is analytic on a neighborhood of zero, it follows that $w$ is analytic on a neighborhood of $0$. 
\end{proof}

We now prove Lemma \ref{lem:main_analyticity_lemma}.
\begin{proof}
  As a comment, as in the proof of Lemma \ref{lem:local_C1_wellposedness}, we may assume without loss of generality that
  \begin{equation*}
    A_{11}=1,\qquad A_{21}=0.
  \end{equation*}
  Thus, we have the system
  \begin{equation*}
    \begin{cases}
      u_1'+A_{12}(t,u_1,u_2)u_2'+B_{1}(t,u_1,u_2)=0\\
      A_{22}(t,u_1,u_2)u_2'+B_2(t,u_1,u_2)=0
    \end{cases}.
  \end{equation*}
  We then switch variables from $u_1,u_2$ to $v_1,v_2$ which are defined by
  \begin{equation*}
    u_1=v_1,\qquad u_2=v_2+t U.
  \end{equation*}
  We then get the system
  \begin{equation*}
    \begin{cases}
      v_1'+\At_{12}(t,v_1,v_2)v_2'+\Bt_2(t,v_1,v_2)=0\\
      \At_{22}(t,v_1,v_2)v_2'+\Bt_{2}(t,v_1,v_2)=0
    \end{cases}.
  \end{equation*}
  where (we also divide the second equation by $a+bU$)
  \begin{align*}
    \At_{12}(z_0,z_1,z_2)&:=A_{12}(z_0,z_1,z_2+U z_0),\\
    \Bt_{1}(z_0,z_1,z_2)&:=B_{1}(z_0,z_1,z_2+U z_0)+U\At_{12}(z_0,z_1,z_2),\\
    \At_{22}(z_0,z_1,z_2)&:=\frac{1}{a+bU}A_{22}(z_0,z_1,z_2+U z_0),\\
    \Bt_{2}(z_0,z_1,z_2)&:=\frac{1}{a+bU}B_2(z_0,z_1,z_2+U z_0)+U \At_{22}(z_0,z_1,z_2).
  \end{align*}
  We note that (using \eqref{eq:characteristic_quadratic} for the third equality) we have that
  \begin{equation*}
    \pdv[\At_{22}]{z_0}(0)=1,\qquad \pdv[\At_{22}]{z_2}=\frac{b}{a+bU},\qquad \pdv[\Bt_{2}]{z_0}=0,\qquad \pdv[\Bt_2]{z_2}=\kappa.
  \end{equation*}
  We may rewrite the second equation as
  \begin{align*}
    tv_2'+\kappa v_2&=-(\At_{22}-z_0)v_2'-(\Bt_2-\kappa z_2)\\
                    &=\frac{-(\At_{22}-z_0)|_{t,v}tv_2'-(z_0 \Bt_2-\kappa z_0z_2)|_{(t,v)}}{t}.
  \end{align*}
  In particular, if we define (introducing a new dummy variable $\zt_2$ for $t v_2'$)
  \begin{align*}
    H(z_0,z_1,z_2,\zt_2)&:=-(\At_{22}-z_0)\zt_2-(z_0 \Bt_{2}-\kappa z_0z_2),\\
    S(z_0,z_1,z_2):&=-\Bt_1(z_0,z_1,z_2),\\
    T(z_0,z_1,z_2):&=-\At_{12}(z_0,z_1,z_2),
  \end{align*}
  then we can write the system as
  \begin{equation}
    \label{eq:singular_sys_normal}
    \begin{cases}
      v_1'&=S(t,v_1,v_2)+T(t,v_1,v_2)v_2'\\
      tv_2'+\kappa v_2&=\frac{H(t,v_1,v_2,tv_2')}{t}
    \end{cases}.
  \end{equation}
  We first examine the first equation. We recall that $S(0)=T(0)=0$ and further that we can Taylor expand
  \begin{align*}
    S(t,v_1,v_2)(t,z_1,z_2)&=\sum_{\gamma\in \Z_{\geq 0}^{3}} S_{\gamma} t^{\g_0}(v_1)^{\g_1}(v_2)^{\g_2}\\
                           &=\sum_{r=1}^{\infty}P_r(S_\g,(v_1)_m,(v_2)_m) t^r,
  \end{align*}
  where
  \begin{multline*}
    P_r(S_{\g},(v_1)_m,(v_2)_m)\\
    =\sum_{\g_0+M_1+M_2=r}\sum_{\g_1\leq M_1}\sum_{\g_2\leq M_2}S_{\g}\sum_{\substack{\beta_a\in \Z_{\geq 0}^{M_1}\\\abs{\beta_a}=\g_1\\ \sum_{i=1}^{M_a} i(\beta_a)_i=M_1}}\sum_{\substack{\beta_b\in \Z_{\geq 0}^{M_2}\\\abs{\beta_b}=\g_2\\ \sum_{i=1}^{M_2} i(\beta_b)_i=M_2}} c_{\g_0,\beta_a,\beta_b}(v_1)^{\beta_a}(v_2)^{\beta_b}.
  \end{multline*}
  We use here the notation that, for instance, if $\beta_a=((\beta_a)_1,\dots,(\beta_a)_M)$ then $(v_1)^{\beta_a}=(v_1)_1^{(\beta_a)_1}\cdots (v_1)_M^{(\beta_a)_M}$. The constants $c_{\g_0,\beta_a,\beta_b}$ are combinatorial in nature and necessarily positive.
  We comment that $P_r$ only depends on $(v_1)_j$ and $(v_2)_j$ such that $j\leq r$. We may similarly expand the term $T(t,v_1,v_2)v_2'$, and we see that we may expand the RHS of the first equation as
  \begin{multline*}
    S(t,v_1,v_2)+T(t,v_1,v_2)v_2'\\
    =\sum_{r=1}^{\infty}\left(P_r(S_{\g},(v_1)_m,(v_2)_m)+\sum_{l=1}^{r}{r\choose l}(r-l+1)P_r(T_{\g},(v_1)_m,(v_2)_m)\cdot (v_1)_{r-l+1}\right)t^r
  \end{multline*}
  and we see that the coefficient of $t^r$ only depends on $(v_1)_j$ and $(v_2)_j$ for $j\leq r$. 
  
  We comment that the numerator of the second equation satisfies
  \begin{equation*}
    H(0)=\p_{z_0}H(0)=\p_{z_1}H(0)=\p_{z_2}H(0)=\p_{\zt_2}H(0)=0,
  \end{equation*}
  \begin{equation*}
    \p_{z_0}^2 H(0)=\p_{z_0}\p_{z_2}H(0)=\p_{z_0}\p_{\zt_2}H(0)=0.
  \end{equation*}
  We note that if we denote $\g=(\g_0,\g_1,\g_2,\g_3)$, we may Taylor expand
  \begin{align*}
    H(t,v_1,v_2,tv_2')&=\sum_{\g\in \Z_{\geq 0}^4}H_{\g}t^{\g_0}v_1^{\g_1}v_2^{\g_2}(t v_2')^{\g_3}\\
                      &=\sum_{r=3}^{\infty} Q_{r}(H_{\g},(v_1)_m,(v_2)_m) t^r,
  \end{align*}
  where
  \begin{multline*}
    Q_r(H_{\g},(v_1)_m,(v_2)_m)=
    \sum_{\g_0+M_1+M_2+M_3=r}\sum_{\g_1\leq M_1}\sum_{\g_2\leq M_2}\sum_{\g_3\leq M_3}H_{\g}\\\cdot\sum_{\substack{\beta_a\in \Z_{\geq 0}^{M_1}\\\abs{\beta_a}=\g_1\\ \sum_{i=1}^{M_a} i(\beta_a)_i=M_1}}\sum_{\substack{\beta_b\in \Z_{\geq 0}^{M_2}\\\abs{\beta_b}=\g_2\\ \sum_{i=1}^{M_2} i(\beta_b)_i=M_2}}\sum_{\substack{\beta_c\in \Z_{\geq 0}^{M_3}\\\abs{\beta_c}=\g_3\\ \sum_{i=1}^{M_3} i(\beta_c)_i=M_3}}c_{\g_0,\beta_a,\beta_b,\beta_c}(v_1)^{\beta_a}(v_2)^{\beta_b}(tv_2')^{\beta_c}.
  \end{multline*}
  We also note that we may conclude
  \begin{equation*}
    Q_0(0)=Q_1(0)=Q_2(0)=0
  \end{equation*}
  using the fact that $v_1(0)=v_1'(0)=v_2(0)=v_2'(0)=0$ and also
  \begin{equation*}
    H(0)=\p_{z_0}H(0)=\p_{z_1}H(0)=\p_{z_2}H(0)=\p_{\zt_2}H(0)=\p_{z_0}^2H(0)=0.
  \end{equation*}
  Moreover, using $\p_{z_1}H(0)=0$, we see that $Q_r$ only depends on $(v_1)_j$ for $j\leq r-1$. Similarly, using the fact that
  \begin{equation*}
    \p_{z_2}H(0)=\p_{z_0}\p_{z_2}H(0)=\p_{\zt_2}H(0)=\p_{z_0}\p_{\zt_2}H(0)=0
  \end{equation*}
  and the first order vanishing of $v_1,v_2$, we see that $Q_r$ only depends on $(v_2)_j$ for $j\leq r-2$.
  We now return our attention to the system \eqref{eq:singular_sys_normal}. We note that if we differentiate the first equation $k-1$ times and evaluate at zero, by the above analyses, we can solve for $(v_1)_{k}$ in terms of $(v_1)_{j},(v_{2})_j$ for $j\leq k-1$. Then, differentiating the second equation $k$ times, we ay solve for $(v_2)_k$ in terms of $(v_1)_i,(v_2)_j$ for $i\leq k$, $j\leq k-1$. Thus, the Taylor series is uniquely determined by the equation. It suffices to prove that the Taylor series constructed this way converges.
  
  As a first step, motivatived by the fact that $v_2$ and $tv_2'$ will have Taylor coefficients differing only by a factor of $k$, we define
  \begin{equation*}
    \Ht(z_0,z_1,z_2)=H(z_0,z_1,z_2,z_2).
  \end{equation*}
  We note that this will have an expansion
  \begin{equation*}
    \Ht(t,v_1,v_2)=\sum_{r=3}^{\infty}\Qt_r(\Ht_{\g},(v_1)_m,(v_2)_m)t^r,
  \end{equation*}
  where
  \begin{multline*}
    \Qt_r(\Ht_{\g},(v_1)_m,(v_2)_m)=\\
    \sum_{\g_0+M_1+M_2=r}\sum_{\g_1\leq M_1}\sum_{\g_2\leq M_2}\Ht_{\g}\sum_{\substack{\beta_a\in \Z_{\geq 0}^{M_1}\\\abs{\beta_a}=\g_1\\ \sum_{i=1}^{M_a} i(\beta_a)_i=M_1}}\sum_{\substack{\beta_b\in \Z_{\geq 0}^{M_2}\\\abs{\beta_b}=\g_2\\ \sum_{i=1}^{M_2} i(\beta_b)_i=M_2}} \ct_{\g_0,\beta_a,\beta_b}(v_1)^{\beta_a}(v_2)^{\beta_b}.
  \end{multline*}
  Once again we have that the coefficients are positive, $Q_r$ only depends on $(v_1)_j$ for $j\leq r-1$ and $Q_r$ only depends on $j$ for $j\leq r-1$. We note that we have the inequality that for any $H_\g,v_1,v_2$, since $\abs{(v_2)_m}\leq \abs{(t v_2')_m}$,
  \begin{equation}
    \label{eq:system_analyticity_convert_inequality}
    Q_r(\abs{H_\g},\abs{(v_1)_m},\abs{(v_2)_m})\leq \Qt_r(\abs{\Ht_\g},\abs{(v_1)_m},\abs{(t v_2')_m}).
  \end{equation}
  We will now perform a majorization argument. We will let $D,R$ be sufficiently large constants to be determined later (in particular we will first pick $D$ to be sufficiently large, then pick $R$ large depending on $D$).
  We will pick these constants sufficiently large so that for $\g=(\g_0,\g_1,\g_2)\in \Z_{\geq 0}^3$,
  \begin{align*}
    \abs{(A_{12})_{\g}}&\leq \frac{1}{100}(\frac{R}{2})^{\abs{\g}}D^{-\g_2},\\
    \abs{(B_{1})_{\g}}&\leq \frac{1}{100}(\frac{R}{2})^{\abs{\g}}D^{1-\g_2},\\
    \abs{(A_{22})_{\g}}&\leq \frac{1}{100}\abs{a+bU}(\frac{R}{2})^{\abs{\g}}D^{-\g_2},\\
    \abs{(B_{2})_{\g}}&\leq \frac{1}{100}\abs{a+bU}(\frac{R}{2})^{\abs{\g}}D^{1-\g_2}.
  \end{align*}
  When we change variables from $u$ to $v$, and assume $\abs{U}+1\leq D$, this means that the coefficient functions satisfy
  \begin{align*}
    \abs{(\At_{12})_\g}&\leq R^{\abs{\g}}D^{-\g_2},\\
    \abs{(\Bt_1)_\g}&\leq R^{\abs{\g}}D^{1-\g_2},\\
    \abs{(\At_{22})_\g}&\leq \frac{1}{2}R^{\abs{\g}}D^{-\g_2},\\
    \abs{(\Bt_2)_\g}&\leq \frac{1}{2}R^{\abs{\g}}D^{1-\g_2}.
  \end{align*}
  From these, bounds, we obtain
  \begin{equation*}
    \abs{\Ht_{\g}}\leq R^{\abs{\g}-1}D^{1-\g_2-\g_3},\qquad \abs{S_{\g}}, \abs{T_{\g}}\leq R^{\abs{\g}}D^{1-\g_2}.
  \end{equation*}
  We then define
  \begin{align*}
    \Hb(z_0,z_1,z_2)&:=\frac{D}{R}\left(\frac{1}{(1-R z_0)(1-R z_1)(1-\frac{R}{D}z_2)}-\left(1+R z_0+R z_1+\frac{R}{D}z_2+R^2z_0^2+\frac{R^2}{D}z_0z_2\right)\right),\\
    \Sb(z_0,z_1,z_2)&:=D\left(\frac{1}{(1-R z_0)(1-R z_1)(1-\frac{R}{D} z_2)}-1\right).
  \end{align*}
  We comment that these functions majorize the terms appearing in our equation in the sense that
  \begin{equation}
    \label{eq:system_analyticity_coefficient_bound}
    \abs{\Ht_{\g}}\leq \Hb_{\g},\qquad \abs{S_{\g}}\leq \Sb_{\g},\qquad \abs{T_{\g}}\leq \Sb.
  \end{equation}
  We now consider the ODE
  \begin{equation}
    \label{eq:system_analytic_majorizing}
    \begin{cases}
      v_1'&=\Sb(t,v_1,v_2)(1+v_2')\\
      v_2&=\frac{1}{2}\frac{\Hb(t,v_1,v_2)}{t}\\
      v_1(0)&=v_1'(0)=v_2(0)=v_2'(0)=0
    \end{cases}.
  \end{equation}
  We will use an explicit solution to the ODE system to majorize the system \eqref{eq:singular_sys_normal}. To start with, we comment that if $t,v_1$ are fixed, the second equation can be viewed as a quadratic for $v_2$. In particular, if we define
  \begin{align*}
    A_2&:=\frac{1}{D^2}\left(2 R^2t+R^2+R^3t \right)(1-Rt)(1-R v_1),\\
    A_1&:=\frac{1}{D}\left(-2 Rt+R(R v_1+R^2 t^2)\right)(1-R t)(1-R v_1))),\\
    A_0&:=1-(1+R t+R v_1+R^2 t^2)(1-R t)(1-R v_1).
  \end{align*}
  then the equation becomes
  \begin{equation*}
    A_2 v_2^2+A_1 v_2+A_0=0.
  \end{equation*}
  After simplifying, this quadratic can be rewritten as
  \begin{equation*}
    \At_2(\frac{R}{D t}v_2)^2+\At_1(\frac{R}{D t}v_2)+\At_0=0,
  \end{equation*}
  where
  \begin{align*}
    \At_2&:=(2 t+1+R t)(1-R t)(1-R v_1),\\
    \At_1&:=\left(-2+R\frac{v_1}{t}+R^2 t\right)(1-Rt)(1-R v_1),\\
    \At_0&:=R^2(\frac{v_1}{t})^2(1-R t)+\frac{v_1}{t}(R^2-R^4 t)+R^3 t.
  \end{align*}
  We comment that these are each smooth functions of $t,\frac{v_1}{t}$. Introducing the dummy variables $y_0,y_1$ for $t,\frac{v_1}{t}$, we define
  \begin{align*}
    \Ar_2(y_0,y_1)&:=(2 y_0+1+R y_0)(1-R y_0)(1-R y_0 y_1),\\
    \Ar_1(y_0,y_1)&:=(-2+R y_1+R^2 y_0)(1-R y_0)(1-R y_0 y_1),\\
    \Ar_0(y_0,y_1)&:=R^2y_1^2(1-R y_0)+y_1(R^2-R^4 y_0^2)+R^3 y_0.
  \end{align*}
  Thus, if we define
  \begin{equation*}
    \mathring{Q}(y_0,y_1)=\frac{D}{R}\left(\frac{-\Ar_1-\sqrt{\Ar_1^2-4\Ar_0\Ar_2}}{2 \Ar_1}\right).
  \end{equation*}
  Then we can write the solution to the quadratic as
  \begin{equation*}
    v_2=y_0 \mathring{Q}(y_0,y_1)|_{t,\frac{v_1}{t}}.
  \end{equation*}
  We now define $w=\frac{v_1}{t}$, and we have that the second equation becomes
  \begin{equation*}
    v_2=t \Qr(t,w).
  \end{equation*}
  From this, when we differentiate in time, we get
  \begin{equation*}
    v_2'=\Qr(t,w)+t\p_{y_0}\Qr(t,w)+t\pdv[\Qr]{y_1}(t,w)w'.
  \end{equation*}
  If we substitute this into the first equation of \eqref{eq:system_analytic_majorizing}, we get, where $\Sb$ is evaluated at $(t,t w,t \Qr)$ and $\Qr$ is evaluated at $(t,w)$,
  \begin{equation*}
    t w'+w=\Sb(1+(\Qr+t\p_{z_0}\Qr))+t\Sb \pdv[\Qr]{z_1}w'.
  \end{equation*}
  We note that we can rewrite this as
  \begin{equation*}
    (1-\Sb\pdv[\Qr]{y_1})tw'+w=\Sb(1+(\Qr+t\p_{y_0}\Qr)).
  \end{equation*}
  We may then divide by the principal term, and write the resulting equation as
  \begin{equation*}
    tw'+w=-\frac{\Sb}{(1-\Sb \pdv[\Qr]{z_1})}w+\frac{\Sb}{(1-\Sb\pdv[\Qr]{z_1})}(1+(\Qr+t\p_{z_0}\Qr)).
  \end{equation*}
  This ODE has an analytic solution by Lemma \ref{lem:scalar_singular_ode}. We call the analytic solution to this ODE system $\Wr(t)$. It follows that if we define
  \begin{align*}
    \Vr_1(t)&:=t\cdot \Wr(t),\\
    \Vr_2(t)&:=t\Qr(t,\Wr(t)),
  \end{align*}
  then $\Vr_1(t)$ and $\Vr_2(t)$ solve \eqref{eq:system_analytic_majorizing}. In particular, we have the combinatorial identities
  \begin{align}
    \label{eq:system_analyticity_combinatorial_identity_1}
    (\Vr_1)_r&=P_r(\Sb_{\g},(\Vr_1)_m,(\Vr_2)_m)+\sum_{l=1}^{r}{r\choose l}(r-l+1)P_r(\Sb_\g,(\Vr_1)_m,(\Vr_2)_m)\cdot (\Vr_1)_{r-l+1},\\
    \label{eq:system_analyticity_combinatorial_identity_2}
    (\Vr_2)_r&=\frac{1}{2}\Qt_{r+1}(\Hb_\g,(\Vr_1)_m,(\Vr_2)_m).
  \end{align}
  We comment that since the right-hand-sides of the equations have non-negative Taylor series it follows that the Taylor coefficients of $\Vr_1,\Vr_2$ are non-negative. In fact, we see that
  \begin{align*}
    (\dvn{k}{t}\Sb(t,\Vr_1,\Vr_2))(0)&\geq \left(\dvn{k}{t} D\left(\frac{1}{1-R t}-1\right)\right)(0),\\
    (\dvn{k}{t}\frac{\Hb(t,\Vr_1,\Vr_2)}{t})(0)&\geq \left(\dvn{k}{t}\frac{D}{Rt} \left(\frac{1}{1-R t}-1-R t-R^2 t\right)\right)(0).
  \end{align*}
  from which we conclude the lower bounds that for $m\geq 2$,
  \begin{equation}
    \label{eq:system_analytic_coefficient_lower_bounds}
    (\Vr_1)_m\geq \frac{DR^{(m-1)}}{m},\qquad (\Vr_2)_m\geq \frac{1}{2}D R^{m}.
  \end{equation}
  We will now prove that by choosing $D,R$ sufficiently large, we have that
  \begin{equation}
    \label{eq:system_analytic_induction_goal}
    (v_1)_k\leq (\Vr_{1})_k,\qquad (v_{2})_{k}\leq \frac{1}{k}(\Vr_{2})_k.
  \end{equation}
  As a first step, we note that we may choose $K$ sufficiently large so that for all $k\geq K$,
  \begin{equation}
    \label{eq:system_k_large}
    \frac{1}{\abs{1+\frac{\kappa}{K}}}\leq 2.
  \end{equation}
  We then note that using the lower bound \eqref{eq:system_analytic_coefficient_lower_bounds} for the coefficients, by choosing $D$, $R$ sufficiently large, we may assume that \eqref{eq:system_analytic_induction_goal} holds for $k\leq K-1$.
  We now suppose $k\geq K$ and proceed inductively. We differentiate the first equation of \eqref{eq:singular_sys_normal} $k-1$ times and evaluate at zero. We have inductively that
  \begin{align*}
    \abs{(v_1)_k}&\leq\abs{P_k(S_{\g},(v_1)_m,(v_2)_m)+\sum_{l=1}^{k}{k\choose l}(k-l+1)P_k(T_{\g},(v_1)_m,(v_2)_m)\cdot (v_1)_{k-l+1}}\\
                 &\leq P_k(\abs{S_{\g}},\abs{(v_1)_m},\abs{(v_2)_m})+\sum_{l=1}^{k}{k\choose l}(k-l+1)P_k(\abs{T_{\g}},\abs{(v_1)_m},\abs{(v_2)_m})\cdot \abs{(v_1)_{k-l+1}}\\
                 &\leq P_k(\Sb_{\g},(\Vr_1)_m,(\Vr_2)_m)+\sum_{l=1}^{k}{k\choose l}(k-l+1)P_k(\Sb_\g,(\Vr_1)_m,(\Vr_2)_m)\cdot (\Vr_1)_{k-l+1},
  \end{align*}
  where to obtain the third line we have used the inductive hypothesis and the upper bounds for $\abs{S_{\g}}$ and $\abs{T_\g}$ provided in \eqref{eq:system_analyticity_coefficient_bound}.
  We then use \eqref{eq:system_analyticity_combinatorial_identity_1} to obtain the desired estimate
  \begin{equation*}
    \abs{(v_1)_k}\leq (\Vr_1)_k.
  \end{equation*}
  Similarly, differentiate the second equation of \eqref{eq:singular_sys_normal} $k$ times, evaluate at zero and divide by $k+\kappa$. This gives
  \begin{align*}
    \abs{(v_2)_k}
    &\leq \frac{1}{\abs{1+\frac{\kappa}{k}}}\frac{1}{2k}\abs{Q_{k+1}(H_{\g},(v_1)_m,(v_2)_m)},\\
    &\leq \frac{1}{\abs{1+\frac{\kappa}{k}}}\frac{1}{2k}\left( Q_{k+1}(\abs{H_{\g}},\abs{(v_1)_m},\abs{(v_2)_m})\right),\\
    &\leq \frac{1}{\abs{1+\frac{\kappa}{k}}}\frac{1}{2k}\left( \Qt_{k+1}(\abs{\Ht_{\g}},\abs{(v_1)_m},\abs{(t v_2')_m})\right),\\
    &\leq \frac{1}{\abs{1+\frac{\kappa}{k}}}\frac{1}{2k}\left( \Qt_{k+1}(\Hb_{\g},\abs{(\Vr_1)_m},\abs{(\Vr_2)_m})\right),
  \end{align*}
  where to obtain the third line we use the inequality \eqref{eq:system_analyticity_convert_inequality}, and to obtain the last line we use the inductive hypothesis and the upper bound for $\abs{H_{\g}}$ provided by the inequality \eqref{eq:system_analyticity_coefficient_bound}.
  We then use \eqref{eq:system_analyticity_combinatorial_identity_2} and \eqref{eq:system_k_large} to obtain the desired estimate
  \begin{equation*}
    \leq \frac{1}{2}\frac{1}{\abs{1+\frac{\kappa}{k}}} \frac{(\Vr_2)_k}{k}\leq \frac{(\Vr_2)_k}{k}.
  \end{equation*}
  Thus, we have established \eqref{eq:system_analytic_induction_goal}, and since the majorizing functions are analytic, we have that the Taylor expansions of $v_1,v_2$ converge uniformly as desired.
\end{proof}

\subsection{Proof of Lemma \ref{lem:normalform}}
\label{sec:normal_form_appendix}
\begin{proof}
  We consider the polytropic Euler-Poisson system \eqref{polytropicselfsimeq}. We note that the equation \eqref{intermedstep0} is equivalent to
  \begin{equation}
    \label{eq:ystardef}
    y_*\o_0=\g^{\frac{1}{2}}\rho_0^{\frac{\g-1}{2}}.
  \end{equation}
  We also will define
  \begin{equation*}
    A(y,\rt,\ut):=\mqty{\ut+(2-\g)y & \rt\\ \g\rt^{\g-2} & \ut+(2-\g)y},\qquad B(y,\rt,\ut):=\mqty{\frac{2\rt(\ut+y)}{y}\\ (\g-1)\ut+\frac{4\pi}{4-3\g}\rt(\ut+(2-\g)y)}.
  \end{equation*}
  We note that
  \begin{align*}
    A_0&:=A(y_*,\rho_0,u_0)=\mqty{y_* \o_0 & \rho_0\\ \frac{y_*^2 \o_0^2}{\rho_0} & y_*\o_0},\\
    B_0&:=B(y_*,\rho_0,u_0)=\mqty{2\rho_0\o_0+2(\g-1)\rho_0\\ (\g-1)\o_0y_*-(2-\g)(\g-1)y_*+\frac{4\pi}{4-3\g}\rho_0 y_*\o_0}.
  \end{align*}
  We also note that assuming $\rt-\rho_0,\ut-u_0=O(y-y_*)$, we have that
  \begin{align*}
    A(y,\rt,\ut)=A_0+\mqty{(2-\g)(y-y_*)+(u-u_0) & \rt-\rho_0\\(\g-2)\frac{(y_* \o_0)^2}{\rho_0^2}(\rt-\rho_0) & (2-\g)(y-y_*)+(u-u_0)}+O(y-y_*)^2.
  \end{align*}
  Similarly,
  \begin{multline*}
    B(y,\rt,\ut)=B_0\\
    +\mqty{2(\o_0+(\g-1))(\rt-\rho_0)+\frac{2\rho_0}{y_*}(\ut-u_0)-\frac{2\rho_0}{y_*}(\o_0-(2-\g))(y-y_*)\\(\g-1)(\ut-u_0)+\frac{4\pi}{4-3\g}(\rt-\rho_0)y_* \o_0+\frac{4\pi}{4-3\g}\rho_0(\ut-u_0)+\frac{4\pi(2-\g)}{4-3\g}\rho_0(y-y_*)}+O((y-y_*)^2).
  \end{multline*}
  We will then define the matrices
  \begin{equation*}
    C:=\mqty{\frac{1}{4y_* \o_0} & \frac{\rho_0}{4(y_*\o_0)^2}\\ \frac{1}{4y_* \o_0} & \frac{-\rho_0}{4(y_* \o_0)^2}},\qquad D:=\mqty{1 & 1\\\frac{y_*\o_0}{\rho_0} & -\frac{y_* \o_0}{\rho_0}}.
  \end{equation*}
  We will now perform a first change of variables
  \begin{equation*}
    \mqty{\rt\\ \ut}=\mqty{\rho_0\\ u_0}+D\mqty{\rs\\ \us}.
  \end{equation*}
  We will implement this change of variables, and then multiply the equation by $C$. We define
  \begin{equation*}
    \As=C\cdot A(y,\rho_0+D_{11}\rs+D_{12}\us,u_0+D_{21}\rs+D_{22}\us),\qquad \Bs=C\cdot B(y,\rho_0+D_{11}\rs+D_{12}\us,u_0D_{21}\rs+D_{22}\us)
  \end{equation*}
  so that the system can be written as
  \begin{equation*}
    \As(y,\rs,\us)\mqty{\rs'\\\us'}+\Bs(y,\rs,\us)=0.
  \end{equation*}
  We will expand (assuming $\rho,u=O(y-y_*)$)
  \begin{equation*}
    \As=\As_0+\As_1+O((y-y_*)^2),\qquad \Bs=\Bs_0+\Bs_1+O((y-y_*)^2).
  \end{equation*}
  where
  \begin{equation*}
    \As_1=\mqty{\frac{2(2-\g)}{4y_*\o_0}(y-y_*)+\frac{(\g+1)}{4\rho_0}\rs+\frac{(\g-3)}{4\rho_0}\us& \frac{(\g-3)}{4\rho_0}\rs+\frac{(\g-3)}{4\rho_0}\us\\ -\frac{(\g-3)}{4\rho_0}\rs-\frac{(\g-3)}{4\rho_0}\us & \frac{2(2-\g)}{4y_* \o_0}(y-y_*)-\frac{(\g-3)}{4\rho_0}\rs-\frac{(\g+1)}{4\rho_0}\us}.
  \end{equation*}
  We note that we have
  \begin{equation*}
    \As_0=C A_0 D=\mqty{1 & 0\\ 0 & 0},\qquad \Bs_0=C B_0=\frac{\rho_0}{4 y_* \o_0^2}\mqty{2\o_0^2+3(\g-1)\o_0-(2-\g)(\g-1)+\frac{4\pi\rho_0\o_0}{4-3\g}\\ 2\o_0^2+(\g-1)\o_0+(2-\g)(\g-1)-\frac{4\pi \rho_0\o_0}{4-3\g}}.
  \end{equation*}
  We then simplify using \eqref{eq:rho0sonic} to write
  \begin{equation*}
    \Bs_0=\mqty{\frac{\rho_0}{y_*\o_0}(\o_0+(\g-1))\\ 0}.
  \end{equation*}
  We also have
  \begin{equation*}
    \Bs_1=\frac{1}{4y_* \o_0}\mqty{(4\o_0+3(\g-1))\rs+(\g-1)\us-\frac{2\rho_0}{y_*}(\o_0-(2-\g))(y-y_*)+\frac{4\pi \rho_0^2}{4-3\g}(\frac{2\rs}{\rho_0}+\frac{(2-\g)(y-y_*)}{y_*\o_0})\\(4\o_0+(\g-1))\rs+3(\g-1)\us-\frac{2\rho_0}{y_*}(\o_0-(2-\g))(y-y_*)-\frac{4\pi \rho_0^2}{4-3\g}(\frac{2\rs}{\rho_0}+\frac{(2-\g)(y-y_*)}{y_*\o_0})}.
  \end{equation*}
  We then comment that the first line, evaluated at $y=y_*$ gives
  \begin{equation*}
    \rs'(y_*)+\frac{\rho_0}{y_*\o_0}(\o_0+(\g-1))=0.
  \end{equation*}
  We then do an addition change of variables from $\rs,\us$ to $\rc,\uc$ where
  \begin{equation*}
    \rs=\rc(y-y_*)-\frac{\rho_0}{y_* \o_0}(\o_0+(\g-1))\cdot (y-y_*),\qquad \us=\uc(y-y_*).
  \end{equation*}
  We then define
  \begin{align*}
    \Ac(y,\rc,\uc)&=\As(y,\rc-\frac{\rho_0}{y_* \o_0}(\o_0+(\g-1))(y-y_*),\uc),\\
    \Bc(y,\rc,\uc)&=\Bs(y,\rc-\frac{\rho_0}{y_* \o_0}(\o_0+(\g-1))(y-y_*),\uc)+\Ac\cdot \mqty{-\frac{\rho_0}{y_* \o_0}(\o_0+(\g-1))\\0}
  \end{align*}
  and then note that we get
  \begin{equation*}
    \Ac\mqty{\rc'\\ \uc'}+\Bc=0.
  \end{equation*}
  We note that now
  \begin{equation*}
    \Bc_0=\mqty{0\\0}.
  \end{equation*}
  We now compute
  \begin{equation*}
    \Ac_1=\mqty{\frac{2(2-\g)-(\g+1)(\o_0+(\g-1))}{4y_*\o_0}y+\frac{(\g+1)}{4\rho_0}\rc+\frac{(\g-3)}{4\rho_0}\uc & \frac{(\g-3)}{4\rho_0}\rc+\frac{(\g-3)}{4\rho_0}\uc-\frac{(\g-3)(\o_0+(\g-1))}{4y_*\o_0}y\\ -\frac{(\g-3)}{4\rho_0}\rc-\frac{(\g-3)}{4\rho_0}\uc +\frac{(\g-3)(\o_0+(\g-1))}{4y_*\o_0}y& \frac{2(2-\g)+(\g-3)(\o_0+(\g-1))}{4y_* \o_0}y-\frac{(\g-3)}{4\rho_0}\rc-\frac{(\g+1)}{4\rho_0}\uc}.
  \end{equation*}
  We then compute
  \begin{multline*}
    \Bc_1=\frac{1}{4y_* \o_0}\mqty{(4\o_0+3(\g-1))\rc+(\g-1)\uc+\frac{\rho_0(-2\o_0(\o_0-(2-\g))-(4\o_0+3(\g-1))(\o_0+(\g-1)))}{\o_0y_*}y\\(4\o_0+(\g-1))\rc+3(\g-1)\uc+\frac{\rho_0(-2\o_0(\o_0-(2-\g))-(4\o_0+(\g-1))(\o_0+(\g-1)))}{\o_0y_*}y}\\
    +\frac{1}{4y_* \o_0}\mqty{\frac{4\pi \rho_0^2}{4-3\g}(\frac{2\rc}{\rho_0}+\frac{\left((2-\g)-2(\o_0+(\g-1))\right)y}{y_*\o_0})\\
      -\frac{4\pi \rho_0^2}{4-3\g}(\frac{2\rc}{\rho_0}+\frac{\left((2-\g)-2(\o_0+(\g-1))\right)y}{y_*\o_0})}\\
    +\frac{(\o_0+(\g-1))}{4y_* \o_0}\mqty{-\frac{\rho_0}{y_*\o_0}(2(2-\g)-(\g+1)(\o_0+(\g-1)))y-\frac{\g+1}{y_*\o_0}\rc-\frac{\g-3}{y_*\o_0}\uc\\ -\frac{\rho_0}{y_*\o_0}(\g-3)(\o_0+(\g-1))y+(\g-3)\rc+(\g-3)\uc}.
  \end{multline*}
  From these we can read off $a,b,c,d$. Then, examining the change of variables gives
  \begin{equation*}
    \begin{cases}
      \rt'(y_*)=\rc'(0)+\uc'(0)-\frac{\rho_0}{y_* \o_0}(\o_0+(\g-1))\\
      \ut'(y_*)=\frac{y_*\o_0}{\rho_0}(\rc'(0)-\uc'(0)-\frac{\rho_0}{y_* \o_0}(\o_0+(\g-1)))\\
    \end{cases},
  \end{equation*}
  which yields the stated equations for $R,W$ in terms of $U$. Finally, according to \cite{ghjs}, the Larson-Penston-Hunter branch corresponds to the left-most root of the quadratic (in $R$), which corresponds to the branch stated in Lemma \ref{lem:normalform} since $b<0$.
  We next confirm that the far-field solution has the Larson-Penston-Hunter taylor expansion. Indeed, if we substitute $\o_0=2-\g,\rho_0=\frac{4-3\g}{2\pi}$ then we have
  We also note that we will have
  \begin{equation*}
    a+d=\frac{3\g-5}{4y_f \o_0},\qquad b=\frac{-(\g+1)}{4\rho_0},\qquad c=\frac{4(\g-1)\rho_0}{4 y_*^2\o_0^2}.
  \end{equation*}
  if we substitute this into \eqref{eq:lph_branch}, we obtain
  \begin{equation*}
    U_{farfield}=-\frac{\rho_0}{y_* \o_0}.
  \end{equation*}
  which if we substitute back in terms of $R$, we obtain
  \begin{equation*}
    \frac{y_f \rt'(y_f))}{\rho_0}=-\frac{2}{2-\g}.
  \end{equation*}
  which is consistent with the derivative of the far-field solution. We can then compute
  \begin{equation*}
    a+bU=\frac{2}{4y_f\o_0},\qquad d+bU=\frac{5(\g-1)}{4 y_f \o_0}.
  \end{equation*}
\end{proof}

\subsection{On matching with Lipschitz dependence on parameters}
\label{implicit_proof}
We will first prove a variant of the Inverse Function Theorem (where we replace the usual assumption of $C^1$ differentiability with a quantitative lipschitz bound). This is then used to prove a corresponding variant of the Implicit function theorem which is used in Section \ref{sec:matching_subsection}.

\begin{lem}
  \label{lem:inverse_function_theorem}
  Let $B_{\ep}(0)\subset \R^d$ denote the closed ball of radius $\ep$. Consider $f\colon B_{\ep}(0) \to \R^d$ of the form
  \[
    f(x)=Ax+g(x)
  \]
  where $A$ is an invertible matrix and $g$ is a Lipschitz function satisfying
  \[
    g(0)=0,\qquad \abs{g(x_1)-g(x_2)}\leq L_1 \abs{x_1-x_2}.
  \]
  We define
  \[
    a=\norm{A^{-1}}^{-1}.
  \]
  Suppose that $L_1<a$. Let $\de=\ep a(1-\frac{L_1}{a})$. Then for each $y\in B_{\de}(0)$, there exists a unique $x$ in $B_{\ep}(0)$ such that $f(x)=y$. Moreover, we have that $f^{-1}$ is Lipschitz with
  \begin{equation}
    \abs{\frac{f^{-1}(y_1)-f^{-1}(y_2)}{\abs{y_1-y_2}}}\leq \frac{1}{a}\frac{1}{1-\frac{L_1}{a}}.
    \label{inverselipschitz}
  \end{equation}  
\end{lem}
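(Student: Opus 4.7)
The plan is to use the Banach fixed point theorem applied to the natural contraction coming from inverting the linear part. Given $y \in B_\delta(0)$, I would define the map $T_y \colon B_\ep(0) \to \R^d$ by
\[
T_y(x) := A^{-1}(y - g(x)),
\]
so that the equation $f(x) = y$ is equivalent to the fixed point equation $T_y(x) = x$. The strategy is then (i) show $T_y$ is a contraction on $B_\ep(0)$ with ratio $L_1/a$, (ii) show $T_y$ maps $B_\ep(0)$ into itself when $\|y\| \leq \de$, and (iii) extract the Lipschitz bound for $f^{-1}$ by a direct comparison of two fixed point equations.

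For (i), using $\|A^{-1}\| = 1/a$ and the Lipschitz bound on $g$,
\[
|T_y(x_1) - T_y(x_2)| \leq \tfrac{1}{a}|g(x_1) - g(x_2)| \leq \tfrac{L_1}{a}|x_1 - x_2|,
\]
and $L_1/a < 1$ by hypothesis. For (ii), since $g(0)=0$ we have $|T_y(0)| = |A^{-1}y| \leq \|y\|/a \leq \de/a = \ep(1-L_1/a)$, and hence for $x \in B_\ep(0)$,
\[
|T_y(x)| \leq |T_y(0)| + \tfrac{L_1}{a}|x| \leq \ep(1-\tfrac{L_1}{a}) + \tfrac{L_1}{a}\ep = \ep.
\]
Thus $T_y \colon B_\ep(0) \to B_\ep(0)$ is a contraction, so by the Banach fixed point theorem it has a unique fixed point $x \in B_\ep(0)$, which is the desired unique preimage of $y$.

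For (iii), given $y_1, y_2 \in B_\de(0)$ with preimages $x_i = f^{-1}(y_i)$, the identity $x_i = A^{-1}(y_i - g(x_i))$ yields
\[
x_1 - x_2 = A^{-1}(y_1 - y_2) - A^{-1}(g(x_1) - g(x_2)),
\]
and taking norms gives $|x_1 - x_2| \leq \tfrac{1}{a}|y_1 - y_2| + \tfrac{L_1}{a}|x_1 - x_2|$. Rearranging yields the claimed bound \eqref{inverselipschitz}. There is no genuine obstacle here; the only mild point to be careful about is checking that the closed-ball domain is preserved by $T_y$ with the precise constant $\de = \ep a(1 - L_1/a)$, which is exactly what makes the threshold in the statement sharp for this argument.
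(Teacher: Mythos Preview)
Your proof is correct and essentially identical to the paper's: both define the same map $T_y(x)=A^{-1}(y-g(x))$, verify it is a contraction with ratio $L_1/a$ preserving $B_\ep(0)$ when $|y|\leq\de$, and obtain the Lipschitz bound by subtracting the two fixed-point identities. The only cosmetic difference is that the paper phrases existence via convergence of the iterates $T_{y_0}^n(0)$ rather than explicitly invoking the Banach fixed point theorem.
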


\begin{proof}
  For a fixed $y_0\in B_{\de}(0)$, we define $T_{y_0}(x)\colon B_\ep(0)\to \R^d$ by
  \[
    T_{y_0}(x)=A^{-1}(y_0-g(x)).
  \]
  We comment that if $x_1,x_2\in B_{\ep}(0)$ then 
  \[
    \abs{T_{y_0}(x)}\leq \frac{\abs{y_0}}{a}+\frac{L_1}{a}\abs{x},\qquad \abs{T_{y_0}(x_1)-T_{y_0}(x_2)}\leq \frac{L_1}{a}\abs{x_1-x_2}
  \]
  In particular, we see that if $\de=\ep a(1-\frac{L_1}{a})$, then if $\abs{y_0}\leq \de$, $\{T_{y_0}^{n}(0)\}_n$ will remain in $B_{\ep}(0)$ and will converge. Thus, $f^{-1}$ is well-defined. We then note that
  \[
    y_1-y_2=A(x_1-x_2)+g(x_1)-g(x_2).
  \]
  We may then solve for $x_1-x_2$ as
  \[
    x_1-x_2=A^{-1}(y_1-y_2)-A^{-1}(g(x_1)-g(x_2))
  \]
  We thus have
  \[
    \abs{x_1-x_2}\leq \frac{1}{a}\abs{y_1-y_2}+\frac{L_1}{a}\abs{x_1-x_2}
  \]
  from which we obtain \eqref{inverselipschitz}.
\end{proof}
\begin{lem}
  \label{lem:implicit_function_theorem}
  Let $B_{\ep}(0)\subset \R^2$ denote the closed ball of radius $\ep$. Consider $f\colon B_{\ep}(0)\to \R$ of the form
  \begin{equation*}
    f(x,y)=a x+b y+g(x,y).
  \end{equation*}
  Suppose $a\neq 0$. Suppose that, on $B_{\ep}$, $g(x,y)$ satisfies the Lipschitz estimate
  \begin{equation*}
    \abs{g(x_1,y_1)-g(x_2,y_2)}\leq L_2\sqrt{(x_1-x_2)^2+(y_1-y_2)^2}.
  \end{equation*}
  Suppose that $L_2$ satisfies
  \begin{equation*}
    L_2<\frac{\abs{a}}{1+\abs{\frac{b}{a}}}.
  \end{equation*}
  Then, on a neighborhood of $0$, the equation $f(x,y)=0$ uniquely determines $x=\phi(y)$ for a Lipschitz function $\phi(y)$. 
\end{lem}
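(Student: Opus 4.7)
The approach is a direct Banach fixed-point argument in $x$ with $y$ as a parameter. Since $a \neq 0$, rearranging $f(x,y) = 0$ gives the fixed-point equation $x = T_y(x)$ where
\[
T_y(x) := -\frac{1}{a}\bigl(by + g(x,y)\bigr).
\]
The Lipschitz hypothesis on $g$ immediately yields $|T_y(x_1) - T_y(x_2)| \leq (L_2/|a|)|x_1 - x_2|$, and the assumption $L_2 < |a|/(1+|b/a|) \leq |a|$ makes $T_y$ a strict contraction uniformly in $y$.

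To produce a common invariant ball for the family $\{T_y\}$ I would fix a radius $\epsilon' \leq \epsilon/2$ and estimate, using that $g(0,0) = 0$ (which is implicit in the conclusion $\phi(0) = 0$ since this is how the lemma is applied in Subsection \ref{sec:matching_subsection}),
\[
|T_y(x)| \leq \frac{(|b| + L_2)|y|}{|a|} + \frac{L_2}{|a|}|x|.
\]
For $|y|$ sufficiently small (depending on $\epsilon'$), the right-hand side is bounded by $\epsilon'$ whenever $|x| \leq \epsilon'$, so $T_y$ maps $[-\epsilon', \epsilon']$ into itself. Banach's fixed-point theorem then produces a unique $\phi(y) \in [-\epsilon', \epsilon']$ with $f(\phi(y), y) = 0$, giving both existence and local uniqueness.

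Finally, Lipschitz continuity of $\phi$ follows from the standard trick of subtracting the fixed-point identities $T_{y_1}(\phi(y_1)) = \phi(y_1)$ and $T_{y_2}(\phi(y_2)) = \phi(y_2)$, splitting via the triangle inequality into a contractive piece and the parametric shift $|T_{y_1}(x) - T_{y_2}(x)| \leq (|b| + L_2)|y_1 - y_2|/|a|$, and then absorbing to obtain
\[
|\phi(y_1) - \phi(y_2)| \leq \frac{|b|+L_2}{|a|-L_2}|y_1 - y_2|.
\]
There is no serious obstacle here; the hypothesis $L_2 < |a|/(1+|b/a|)$ is in fact somewhat stronger than the $L_2 < |a|$ strictly required by this direct approach. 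An alternative route would be to apply Lemma \ref{lem:inverse_function_theorem} to the augmented map $F(x,y) = (f(x,y), y)$, whose linear part is $\bigl(\begin{smallmatrix} a & b \\ 0 & 1 \end{smallmatrix}\bigr)$; in that setting the worst-case estimate for the operator norm of the inverse of this $2 \times 2$ matrix produces the denominator $1 + |b/a|$ appearing in the hypothesis, which motivates the particular form of the stated assumption.
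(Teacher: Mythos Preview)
Your proof is correct, and it is genuinely different from the paper's. The paper does not run the contraction mapping directly in $x$; instead it augments $f$ to a map $\tilde f\colon \R^2\to\R^2$ with linear part $\bigl(\begin{smallmatrix} a & b\\ 0 & a\end{smallmatrix}\bigr)$ and nonlinear part $(g,0)$, then invokes Lemma~\ref{lem:inverse_function_theorem}. The condition $L_2<\abs{a}/(1+\abs{b/a})$ arises there because $\norm{A^{-1}}^{-1}$ for that particular $2\times2$ matrix is bounded below by $\abs{a}/(1+\abs{b/a})$; the implicit function $\phi$ is then read off from the second component of $\tilde f^{-1}$. Your closing remark anticipates this route, though the paper uses $a$ rather than $1$ in the lower-right entry.

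Your direct argument is shorter and, as you note, only needs $L_2<\abs{a}$, so the stated hypothesis is stronger than necessary for your method. The paper's approach has the mild organizational advantage of packaging both existence and the Lipschitz bound into a single application of the already-proved inverse function lemma, but at the cost of the matrix-norm computation. Both are equally valid here; neither exploits anything special about the problem.
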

\begin{proof}
  We let
  \[
    \tilde{f}(x,y)=\mqty{a & b\\ 0& a}\mqty{x\\ y}+\mqty{g(x,y)\\ 0}.
  \]
  We recall that
  \[
    \norm{\mqty{a & b\\ 0 & a}}=\sqrt{a^2+\frac{b^2}{2}+\frac{\abs{b}}{2}\sqrt{4a^2+b^2}}.
  \]
  Similarly,
  \[
    \norm{\mqty{a & b\\ 0 & a}^{-1}}=\sqrt{a^{-2}+\frac{1}{2}b^2 a^{-4}+\frac{\abs{b}}{2a^2}\sqrt{\frac{4}{a^2}+\frac{b^2}{a^4}}}.
  \]
  We define
  \begin{align*}
    \la_1&=\frac{1}{\sqrt{a^{-2}+\frac{1}{2}b^2 a^{-4}+\frac{\abs{b}}{2a^2}\sqrt{\frac{4}{a^2}+\frac{b^2}{a^4}}}}\\
         &=\frac{\abs{a}}{\sqrt{1+\frac{1}{2}(\frac{b}{a})^2+\abs{\frac{b}{a}}\sqrt{4+(\frac{b}{a})^2}}}\\
         &\geq \frac{1}{1+\abs{\frac{b}{a}}}.
  \end{align*}
  It follows that if $L_2<\la_1$, then by Lemma \ref{lem:inverse_function_theorem} there is a Lipschitz inverse function
  \[
    \tilde{f}^{-1}(z_1,z_2)=\mqty{x(z_1,z_2)\\ y(z_1,z_2)}.
  \]
  By $f\circ \tilde{f}^{-1}=\id$, we have
  \[
    f(x(z_1,z_2),y(z_1,z_2))=z_1,\qquad a y(z_1,z_2)=z_2
  \]
  It follows that $y(z_1,z_2)=\frac{z_2}{a}$. We then have
  \[
    f(x(z_1,z_2),\frac{z_2}{a})=z_1.
  \]
  If we substitute $z_1=0$, we have
  \[
    f(x(0,z_2),\frac{z_2}{a})=0.
  \]
  Thus, if we define
  \[
    \phi(y):=x(0,a\cdot y),
  \]
  then we note that we have $f(\phi(y),y)=0$. Thus, we see that we may parameterize the curve with respect to $y$.
\end{proof}
\printbibliography
\end{document}